\documentclass[11pt]{article}

\usepackage{setspace}
\setdisplayskipstretch{1.5}

\usepackage[colorlinks=true,linkcolor=blue,citecolor=blue, urlcolor=black]{hyperref}

\usepackage{amsmath, amsthm, amssymb, fullpage,  xcolor}
\usepackage{bbm}
\usepackage{mathrsfs}
\usepackage{thmtools}
\usepackage{enumerate}
\usepackage{appendix}
\usepackage{mathtools}
\usepackage[all]{xy}
\usepackage{cleveref}
\crefformat{section}{§#2#1#3}
\Crefformat{section}{§#2#1#3}
\crefname{observation}{observation}{observations}
\Crefname{observation}{Observation}{Observations}
\usepackage{mleftright}
\mleftright
\usepackage{nicefrac}
\AddToHook{bfseries}{\boldmath}

\usepackage[backend=biber,style=alphabetic]{biblatex}
\DeclareLabelalphaNameTemplate{%
  \namepart[use=true, pre=true, strwidth=1, compound=true]{prefix}%
  \namepart[strwidth=1, compound=true]{family}%
} % :contentReference[oaicite:0]{index=0}

\DeclareLabelalphaTemplate{%
  \ifnumgreater{\value{labelname}}{3}
    {\labelelement{\field[strwidth=1,strside=left]{labelname}}%
     \labelelement{\literal{+}}}
    {\labelelement{\field[strwidth=3,strside=left]{labelname}}}%
  \labelelement{\literal{'}}%
  \labelelement{\field[strwidth=2,strside=right]{year}}%
}

\newcommand{\trace}{\mathrm{Tr}}
\newcommand{\tr}{\textnormal{tr}}

\newcommand{\calH}{\mathcal{H}}

\newcommand{\calO}{\mathcal{O}}

\newcommand{\calU}{\mathcal{U}}

\newcommand{\calI}{\mathcal{I}}

\newcommand{\bC}{\mathbb{C}}

\newcommand{\Span}{\mathrm{Span}}

\newcommand{\E}{\mathbb{E}}

\newcommand{\diag}{\mathrm{diag}}

\newcommand{\R}{\mathbb{R}}

\newcommand{\bR}{\mathbb{R}}

\newcommand{\var}{\mathrm{Var}}

\newcommand{\calS}{\mathcal{S}}

\newcommand{\abs}[1]{\left\lvert#1\right\rvert}

\newcommand{\heat}{e^{-\frac{t}{2} \partial_x^2}}

\newcommand{\disc}{\mathrm{Disc}}

\allowdisplaybreaks

\newcommand{\calV}{\mathcal{V}}

\newcommand{\bone}{\mathbbm{1}}

\newcommand{\cone}{K_{\lilconv}}
\newcommand{\bcone}{\partial K_{\lilconv}}

\newcommand{\palpha}{\mathbf{\mathrm{E}}_\beta[\alpha|\gamma]}
\newcommand{\pbeta}{\mathbf{\mathrm{E}}_\alpha[\beta|\gamma]}

\newcommand{\jac}{\mathbf{\mathrm{J}}_{\scriptscriptstyle \boxplus_n}}

\newcommand{\roots}{\Omega_{\scriptscriptstyle\boxplus_n}}
\newcommand{\rootsi}{\Omega_{\scriptscriptstyle\boxplus_n}^i}
\newcommand{\dermap}{ \Omega_{\scriptscriptstyle\partial_x}}
\newcommand{\dermapi}{\Omega_{\scriptscriptstyle \partial_x}^{i}}
\newcommand{\fder}{f_{\partial_x}}
\newcommand{\lilder}{\scriptscriptstyle\partial_x}
\newcommand{\lilconv}{\scriptscriptstyle\boxplus_n}

\newcommand{\pmap}[1]{\mathrm{Poly}[#1]}

\newcommand{\dd}{\partial}

\newcommand{\score}{\mathscr{J}_n}
\newcommand{\scoreder}{\mathscr{J}_{n-1}}

\newcommand{\jacder}{\mathbf{\mathrm{J}}_{\scriptscriptstyle \partial_x}}

\newcommand{\hessd}{\mathbf{\mathrm{H}}_{\scriptscriptstyle \partial_x}}
\newcommand{\hessder}{\mathbf{\mathrm{H}}_{\scriptscriptstyle \partial_x}^{i}}

\newcommand{\hessconv}{\mathbf{\mathrm{H}}_{\scriptscriptstyle\boxplus_n}^{i}}

\newcommand{\ga}{\alpha}     %lowercase alpha
\newcommand{\eps}{\varepsilon} %epsilon
\newcommand{\gl}{\lambda}    %lowercase lambda
\newcommand{\sN}{\mathscr{N}}

\newcommand{\lpr}[1]{\left(#1\right)}
\newcommand{\lbr}[1]{\left[#1\right]}

\newcommand{\rox}{\rho_X}
\newcommand{\roy}{\rho_Y}
\newcommand{\roz}{\rho_Z}

\newcommand{\mult}{\mathrm{mult}}

\newcommand{\fconv}{f_{\boxplus_n}}

\expandafter\let\expandafter\originald\csname\encodingdefault\string\d\endcsname
\DeclareRobustCommand*\d
    {\ifmmode\mathop{}\!\mathrm{d}\else\expandafter\originald\fi}

\DeclareRobustCommand{\rchi}{{\mathpalette\irchi\relax}}
\newcommand{\irchi}[2]{\raisebox{\depth}{$#1\chi$}} % inner command, used by \rchi

\newcommand{\norm}[1]{\left\|#1\right\|}

\DeclareMathOperator{\He}{He}
\DeclareMathOperator{\Disc}{Disc}

\newcommand{\adj}[1]{{#1}^\top}

\newcommand{\calpha}{\overset{\circ}{\alpha}}
\newcommand{\ceta}{\overset{\circ}{\beta}}
\newcommand{\cdelta}{\overset{\circ}{\delta}}
\newcommand{\camma}{\overset{\circ}{\gamma}}

\newcommand{\mesh}{\mathrm{mesh}}

\newcommand{\slambda}{\sqrt{\lambda}\,_*}
\newcommand{\sone}{\sqrt{1-\lambda}\,_*}

\newcommand{\pdelta}{\mathbf{\mathrm{E}}[\alpha|\delta]}
\newcommand{\footremember}[2]{%
    \footnote{#2}
    \newcounter{#1}
    \setcounter{#1}{\value{footnote}}%
}
\newcommand{\footrecall}[1]{%
    \footnotemark[\value{#1}]%
} 

\title{Finite Free Information Inequalities}

\author{Jorge Garza-Vargas\footremember{pu}{MIT} \and Nikhil Srivastava\footremember{uc}{UC Berkeley} \and Zachary Stier\footrecall{uc}}

\date{\today}

\begin{document}

\maketitle

\begin{abstract}
    We develop finite free information theory for real-rooted polynomials, establishing finite free analogues of entropy and Fisher information monotonicity, as well as the Stam and entropy power inequalities. These results resolve conjectures by Shlyakhtenko and Gribinski and recover inequalities in free probability in the large-degree limit. Equivalently, our results may be interpreted as potential-theoretic inequalities for the zeros of real-rooted polynomials under differential operators which preserve real-rootedness. Our proofs leverage a new connection between score vectors and Jacobians of root maps, combined with convexity results for hyperbolic polynomials. We further characterize the equality cases in our inequalities, which arise from Hermite polynomials.
    
\end{abstract}

\numberwithin{equation}{section}
\newtheorem{theorem}{Theorem}[section]
\newtheorem{definition}[theorem]{Definition}
\newtheorem{conjecture}[theorem]{Conjecture}
\newtheorem{lemma}[theorem]{Lemma}
\newtheorem{corollary}[theorem]{Corollary}
\newtheorem{proposition}[theorem]{Proposition}
\newtheorem{observation}[theorem]{Observation}
\newtheorem{remark}[theorem]{Remark}
\newtheorem{problem}[theorem]{Problem}
\newtheorem{assumption}[theorem]{Assumption}
\newtheorem{claim}[theorem]{Claim}
\newtheorem{fact}[theorem]{Fact}
\newtheorem{example}[theorem]{Example}

\maketitle
\tableofcontents
\section{Introduction}
\label{sec:introduction}
Let $\R_{\le n}[x]$ denote the real vector space of univariate polynomials of degree at most $n$.
This paper is motivated by the following question:
\begin{quote}
    {\bf  Question 1.} Suppose $T:\R_{\le n}[x]\rightarrow \R_{\le n}[x]$ is a differential operator  which preserves real-rootedness. For a real-rooted polynomial $p(x)$, how is
    the spacing of the zeros of $T(p)$ related to the spacing of the zeros of $p$?
\end{quote}
This question sits at the intersection of two lines of inquiry. The first is the P\'olya--Schur program, which seeks to understand which linear transformations preserve the property of not having zeros in a prescribed set --- in this case the complex upper half plane. Question 1 asks for quantitative information about the spacing of these zeros. The question of spacing goes back to M. Riesz \cite{stoyanoff1925theoreme}, who showed that if $p(x)$ is real-rooted, then the minimum gap between the zeros of $p'(x)$ is at least the minimum gap of $p(x)$. Riesz's result generalizes to arbitrary real-rootedness preserving $T$ by the Hermite--Poulain theorem \cite[p. 4]{obreschkoff1963verteilung} (see also \cite{katkova2011multiplier}). In this paper, we study averaged notions of spacing, namely the discriminant and a certain Coulomb potential of the zeros of $T(p)$, and prove sharp inequalities regarding them.

The second is the nascent area of finite free probability initiated in \cite{marcus2022finite, marcus2021polynomial}, which studies the expected characteristic polynomials of certain random matrices and their analogies with Voiculescu's free probability theory. It turns out that the basic operations in this theory correspond to certain differential operators $T$ as above. D.\ Shlyakhtenko suggested that there should be an analogue of ``free information theory'' in the setting of finite free probability where the convolution is the finite free convolution --- in particular, there should be analogues of the classical and free entropic inequalities. Shlyakhtenko \cite{shlyakh15} and later A.\ Gribinski \cite{gribinski2019notion} conjectured specific forms of the finite free Stam and entropy power inequalities, which in the framework of Question 1 correspond precisely to bounds on the Coulomb potential and discriminant of $T(p)$ in terms of that of $p$. 

In this paper we develop a framework for answering Question 1 which is informed by both lines of work, and use it to prove several results including the conjectures of Shlyakhtenko and of Gribinski. Let us introduce a few necessary definitions and notation. Let  $p(x)=\prod_{i=1}^n (x-\ga_i)$ be a monic real-rooted polynomial. As is customary in finite free probability, we will draw analogies from thinking of polynomials as probability distributions, and define the moments and variance of $p$ as 
\begin{equation}m_k(p):=m_k(\ga):=\frac{1}{n}\sum_{i=1}^n\ga_i^k \qquad \text{and} \qquad  \var(p):=\var(\ga):=m_2(p)-m_1(p)^2.\end{equation} 
We now introduce the main objects of study.

\begin{definition}[Score, Fisher information, and entropy]
\label{def:finite_free_entropies}
Given $\alpha\in \bR^n$, we define the {\em score vector} of $\alpha$ as
\[\score(\ga):=\lpr{\sum\limits_{j : j\neq i}\frac{1}{\ga_i-\ga_j}:1\le i\le n}\in \bR^n.\]
Then, if $p(x) = \prod_{i=1}^n(x-\alpha_i)$,  define the {\em finite free Fisher information} of $p(x)$ through the norm of the (normalized) score vector of $\alpha$ as:
\begin{equation}\label{eqn:ffdef}
   \Phi_n(p):=\frac{1}{n}\norm{\frac{2}{n-1} \score(\ga)}^2=\frac{1}{n} \sum_{i=1}^n  \left(\frac{2}{n-1} \sum_{j\neq i}\frac{1}{\ga_i-\ga_j}\right)^2.
\end{equation}
Lastly, define the {\em finite free entropy} of $p(x)$ as 
\[\rchi_n(p):=\frac{2}{n(n-1)}\sum\limits_{i<j}\log\abs{\ga_i-\ga_j}.\]
\end{definition}
\begin{remark}[Discriminant and Coulomb potential]
For monic $p(x)$, $\rchi_n(p)=\frac{1}{2\binom{n}{2}}\log\Disc(p)$ where $\Disc(p):=\prod\limits_{i<j}(\ga_i-\ga_j)^2.$ Moreover, by expanding the square in \eqref{eqn:ffdef} and observing that cross terms cancel, we see that $\Phi_n(p)$ is a scaling of $\sum_{i<j}(\ga_i-\ga_j)^{-2}$.
\end{remark}
It will be important to understand how these quantities behave under rescaling. For $c\in\R\setminus \{0\}$, write $c_*p$ for the polynomial obeying $(c_*p)(x):=c^np\lpr{\frac{x}{c}}$; if $c=0$, then we take the convention that $(c_*p)(x)=x^n$. Importantly, the roots of $c_*p$ can be obtained by multiplying the roots of $p$ by the factor  $c$. 

The quantities defined above  obey the following elementary rescaling laws, which are familiar from classical information theory. 
\begin{equation}
\var(c_*p)=c^2\var (p),\qquad
    \rchi_n(c_*p)=\rchi_n(p)+\log\abs{c}, \qquad\text{and}\qquad \Phi_n(c_*p) = \frac{1}{c^2} \Phi_n(p). \label{eq:rescaling}
\end{equation}
And, specifically $\rchi_n, \Phi_n$, and $\score$ are the finite analogues of important quantities in free probability theory, from which they inherit their name, normalization, and notation (see \Cref{sec:inspiration_free_probability} for a discussion), and from where inspiration was drawn to conjecture the results we state below.   
\subsection{Statement of results}
First, we consider the simplest differential operator, $T=\partial_x$, and prove the following. 
\begin{theorem}[Finite free Fisher information monotonicity]
\label{thm:fisher_monotonicity}
    Let $p(x)$ be a real-rooted polynomial of degree $n$. If $\widetilde{p'}(x)$ is the rescaling of $p'(x)$ satisfying $\var\lpr{\widetilde{p'}}=\var(p)$, then
    $$\Phi_{n-1}\lpr{\widetilde{p'}} \le \Phi_n(p). $$
\end{theorem}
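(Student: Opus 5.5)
We would first reduce the theorem to an inequality about raw roots. Write $p(x)=\prod_{i=1}^n(x-\ga_i)$ and let $\beta\in\R^{n-1}$ be the roots of $p'$. Differentiation preserves the normalized elementary symmetric means $e_k/\binom{\cdot}{k}$. Writing $\bar e_k$ for these means (they coincide for $\ga$ and $\beta$), one has $\var(\ga)=(n-1)(\bar e_1^2-\bar e_2)$, and the analogous formula for $\beta$ has the factor $n-2$. Hence
\[\var(p')=\tfrac{n-2}{n-1}\var(p).\]
So $\widetilde{p'}=c_*p'$ with $c^2=\frac{n-1}{n-2}$, and by \eqref{eq:rescaling} the claim becomes
\[\tfrac{n-2}{n-1}\,\Phi_{n-1}(p')\le\Phi_n(p),\]
that is,
\[\frac{1}{(n-1)(n-2)^2}\norm{\scoreder(\beta)}^2\le \frac{1}{n(n-1)^2}\norm{\score(\ga)}^2.\]
By continuity we may assume the $\ga_i$ are distinct; then the $\beta_i$ strictly interlace them and everything is smooth.

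Next we would link $\scoreder(\beta)$ to $\score(\ga)$ through the Jacobian $J=(\partial\beta_i/\partial\ga_j)$ of the root map $\ga\mapsto\beta$. Implicitly differentiating $\sum_k (\beta_i-\ga_k)^{-1}=0$ gives
\[J_{ij}=\frac{(\beta_i-\ga_j)^{-2}}{\sum_k(\beta_i-\ga_k)^{-2}},\]
so $J$ is entrywise positive and row-stochastic. To get the link, run the heat flow $p_t=e^{-\frac{t}{2}\partial_x^2}p$. For a degree-$n$ polynomial its roots evolve by $\dot\ga=\score(\ga)$ (up to a fixed constant). The same operator applied to $p'$ gives $\dot\beta=\scoreder(\beta)$ with the same constant. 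Since $\partial_x$ commutes with $e^{-\frac t2\partial_x^2}$, the chain rule gives
\[\scoreder(\beta)=J\,\score(\ga).\]
This is the score/Jacobian identity. We would verify it at $t=0$ directly, by checking that the coefficient evolution $\dot p=-\frac12 p''$ translates into these root velocities via $\dot\ga_i=\tfrac{p''(\ga_i)}{2p'(\ga_i)}\cdot(\text{sign})$.

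Writing $v=\score(\ga)$, it then suffices to show
\[\norm{Jv}^2\le \frac{(n-2)^2}{n(n-1)}\norm{v}^2\]
for vectors $v$ that are scores, not for all $v$. Two structural facts make this plausible. First, $v\perp\bone$ (since $\sum_i\score(\ga)_i=0$) and $J\bone=\bone$, so only the action of $J$ on $\bone^\perp$ matters. Second, $v$ has the extra identity $\sum_i\ga_i v_i=\binom n2$.

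The main obstacle is this norm bound. The naive Jensen estimate $\norm{Jv}^2\le\sum_j(\sum_iJ_{ij})v_j^2$ only yields column-sum constants of average size $\frac{n-1}{n}$, not the sharper $\frac{(n-2)^2}{n(n-1)}$, and individual column sums can exceed the average. What I would try first is to exploit the additional structure of $J$: it is the Jacobian of a map whose level structure comes from a hyperbolic polynomial in $\ga$. Convexity results for hyperbolic polynomials (of Bauschke--G\"uler--Lewis--Sendov type) should make the relevant quadratic form $\norm{v}^2-c\norm{Jv}^2$ positive semidefinite on the appropriate subspace. Equivalently, we would compute the symmetric matrix $J^\top J$ explicitly via partial fractions in the $\beta_i$ and prove the eigenvalue bound on $\bone^\perp$ tested against score vectors. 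Equality should occur exactly for Hermite roots, where $\score(\ga)$ is proportional to $\ga-m_1\bone$; this would serve as a check on the constant.
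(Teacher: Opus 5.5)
\textbf{There is a genuine gap: the norm bound you reduce to is false, and the bound you actually need is left unproved.}

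Several of your steps are correct and match the paper: the reduction to $\tfrac{n-2}{n-1}\Phi_{n-1}(p')\le\Phi_n(p)$, the Gauss--Lucas formula for $J$, and the identity $\scoreder(\beta)=J\,\score(\ga)$ (\Cref{obs:variancescaling}, \Cref{lem:gauss_lucas_entries}, \Cref{lem:score_to_score_der}).

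When you convert to norms, however, you drop the factor $\tfrac{n-2}{n-1}$. Since $\Phi_{n-1}(p')=\frac{4}{(n-1)(n-2)^2}\norm{\scoreder(\beta)}^2$, the correct target is $\norm{Jv}^2\le\frac{n-2}{n}\norm{v}^2$, not $\frac{(n-2)^2}{n(n-1)}\norm{v}^2$. The inequality you set out to prove already fails at Hermite polynomials, where the theorem is an equality. For $H_3$ one has $\ga=(\sqrt3,0,-\sqrt3)$, $\score(\ga)=(\tfrac{\sqrt3}{2},0,-\tfrac{\sqrt3}{2})$, $\beta=(1,-1)$ and $\scoreder(\beta)=(\tfrac12,-\tfrac12)$. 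So $\norm{Jv}^2/\norm{v}^2=\tfrac13=\tfrac{n-2}{n}$, while your constant is $\tfrac16$; your own Hermite check would have exposed this.

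With the correct constant the bound holds for every $v\perp\bone$. There is no need to restrict to score vectors or to use $\sum_i\ga_iv_i=\binom n2$. Also, the column sums of $J$ are all exactly $\tfrac{n-1}{n}$ (differentiate $m_1(\dermap(\ga+tu))=m_1(\ga+tu)$), not just on average. Jensen therefore only recovers the top singular value $\sqrt{(n-1)/n}$, attained at $\bone$.

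The more serious gap is that the core step, $\norm{Ju}^2\le\frac{n-2}{n}\norm{u}^2$ on $\bone^\perp$, is not proved. Saying that Bauschke-type convexity ``should'' make the form positive semidefinite is not an argument: convexity controls Hessians of $\sum_i c_i\dermapi$, not $J^\top J$. What is missing is an exact identity linking the two. For $u\perp\bone$ the means along $\ga+tu$ are constant. Differentiating $\var(\dermap(\ga+tu))=\tfrac{n-2}{n-1}\var(\ga+tu)$ twice at $t=0$ then gives
\[
\norm{\jacder u}^2=\tfrac{n-2}{n}\norm{u}^2-\sum_{i=1}^{n-1}\dermapi(\ga)\,\hessder(u,u).
\]
Since $\sum_i\dermapi$ is linear, $\sum_i\hessder=0$, so the decreasing weights $\dermapi(\ga)$ can be shifted to be nonnegative. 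Then \Cref{cor:bauschke}, applied to the hyperbolic polynomial $\sum_i\prod_{j\neq i}(x-\zeta_j)$ in direction $e_1$, makes the Hessian sum nonnegative. This is the paper's hyperbolicity proof.

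Alternatively, as in the paper's first proof, $J$ is the entrywise square of $P_\ga$. The rows of $P_\ga$ are orthonormal and orthogonal to $\bone$, and $P_\ga$ is a differentiator of diagonal matrices. Hence $Ju$ is the diagonal of $X=P_\ga D_uP_\ga^\top$, whose eigenvalues are the roots of $g_u'$ with $g_u=\pmap{u}$. This gives $\norm{Ju}^2\le\norm{X}_{\mathrm F}^2=\tfrac{n-2}{n}\norm{u}^2$. Without one of these mechanisms, or an equivalent, the proposal does not establish the theorem.
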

\Cref{thm:fisher_monotonicity} is sharp for Hermite polynomials. Below, we provide two proofs of this theorem. First, in \Cref{sec:matrix_analysis_fisher_monotonicity} we give a proof via matrix analysis that elucidates some interesting connections with the Gauss--Lucas matrix and differentiator matrices. Second, in \Cref{sec:geometry_of_polynomials_fisher_monotonicity} we give a more conceptual proof which proceeds by viewing the polynomial $p'(x)$ as a univariate restriction of a hyperbolic polynomial and then invokes a convexity result of H.\ Bauschke et al. \cite{bauschke2001hyperbolic}. 

Next, we consider the {\em finite free convolution} of two polynomials. Given a degree $n$ monic polynomial $p(x)$, let $\hat{p}(\cdot)$ be any  polynomial such that $p(x)=\hat{p}(\partial_x)x^n$. Then, define
\begin{equation}
    p(x)\boxplus_n q(x):= \hat{p}(\partial_x)\hat{q}(\partial_x)x^n = \hat{p}(\partial_x)q(x) = \hat{q}(\partial_x)p(x).
\end{equation}
It was shown by J.\ Walsh \cite{walsh1922location} that $\boxplus_n$ preserves real-rootedness, and noted by Br\"{a}nd\'{e}n, Krasikov, and  Shapiro \cite{branden2016elements} and by Leake and Ryder \cite[Proposition 1.2]{leake2018further} that every differential operator $T$ which preserves real-rootedness can be written as $T=\hat{q}(\partial_x)$ for some real-rooted $q$. 

We show the following, which was conjectured by Shlyakhtenko \cite{shlyakh15}.
\begin{theorem}[Finite free Stam inequality]
\label{thm:stams_inequality}
    Let $p(x)$ and $q(x)$ be any two real-rooted polynomials of degree $n$ and $\lambda\in [0, 1]$. Then 
    $$  \Phi_n(\sqrt{\lambda}\,_* p \boxplus_n \sqrt{1-\lambda }\,_* q)\le \lambda \Phi_n(p) + (1-\lambda)\Phi_n(q).$$
\end{theorem}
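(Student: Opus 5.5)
The plan is to mimic the classical proof of Stam's inequality, where the score of a sum is a conditional expectation of the scores of the summands, with the Jacobian of the root map playing the role of the conditional expectation. Write $\alpha,\beta\in\R^n$ for the roots of $p,q$, set $\tilde\alpha=\sqrt\lambda\,\alpha$ and $\tilde\beta=\sqrt{1-\lambda}\,\beta$, and let $\gamma=\roots(\tilde\alpha,\tilde\beta)$ be the (ordered) roots of $\sqrt{\lambda}\,_* p \boxplus_n \sqrt{1-\lambda }\,_* q$. By a perturbation and continuity argument we may assume all roots are simple, so that $\roots$ is smooth near $(\tilde\alpha,\tilde\beta)$ by the implicit function theorem. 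Let $A=\partial\gamma/\partial\tilde\alpha$ and $B=\partial\gamma/\partial\tilde\beta$ be the two $n\times n$ blocks of its Jacobian $\jac=[A\ B]$.

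\emph{Step 1 (score identity).} I would use the finite free heat flow $p_t=p\boxplus_n H_t$, where $H_t$ is a suitably scaled Hermite polynomial with $H_s\boxplus_n H_t=H_{s+t}$. Along this flow the roots obey the Dyson-type ODE $\dot\alpha_i=c_n\sum_{j\neq i}(\alpha_i-\alpha_j)^{-1}$, that is, $\dot\alpha=c_n\score(\alpha)$. Now flow $\tilde\alpha$ for time $\lambda t$ and $\tilde\beta$ for time $(1-\lambda)t$. By associativity and commutativity of $\boxplus_n$, $\gamma$ is then flowed for time $t$. Differentiating $\gamma_t=\roots(\tilde\alpha_{\lambda t},\tilde\beta_{(1-\lambda)t})$ at $t=0$ gives
\[
\score(\gamma)=A\,\bigl(\lambda\,\score(\tilde\alpha)\bigr)+B\,\bigl((1-\lambda)\,\score(\tilde\beta)\bigr).
\]

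\emph{Step 2 (contraction).} The key claim is that $AA^\top+BB^\top\preceq I$, i.e.\ $\jac$ is a contraction from $\R^{2n}$ to $\R^n$. Granting it, Cauchy--Schwarz gives
\[
\norm{\score(\gamma)}^2\le\lambda^2\norm{\score(\tilde\alpha)}^2+(1-\lambda)^2\norm{\score(\tilde\beta)}^2 .
\]
By the scaling $\score(c\alpha)=c^{-1}\score(\alpha)$, the right side equals $\lambda\norm{\score(\alpha)}^2+(1-\lambda)\norm{\score(\beta)}^2$. Multiplying by $\frac1n\bigl(\frac{2}{n-1}\bigr)^2$ yields exactly the claimed bound $\Phi_n(\sqrt{\lambda}\,_* p \boxplus_n \sqrt{1-\lambda }\,_* q)\le \lambda \Phi_n(p) + (1-\lambda)\Phi_n(q)$.

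\emph{Main obstacle.} Step 2 is where I expect the difficulty. Two pieces of structure come cheaply. First, $A$ and $B$ are entrywise nonnegative, since increasing one root of $p$ moves every root of $p\boxplus_n q$ to the right, by interlacing/monotonicity for real-rootedness preservers. Second, there are row and column sum identities from translation invariance and from $\sum\gamma=\sum\tilde\alpha+\sum\tilde\beta$. However, these alone do not bound the spectral norm of $[A\ B]$. To get the contraction I would realize $\gamma$ as the eigenvalues of a hyperbolic polynomial in the combined variables, namely the symmetrized polynomial whose restriction gives $\boxplus_n$. I would then use convexity of the largest eigenvalue, and more generally of sums of top-$k$ eigenvalues, from Bauschke et al.\ \cite{bauschke2001hyperbolic}. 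By a Lidskii-type majorization argument this shows $\gamma$ is majorized by a doubly-stochastic-type image of $(\tilde\alpha,\tilde\beta)$, and linearizing should give $\jac\jac^\top\preceq I$. The same mechanism in the one-variable case is what underlies Theorem~\ref{thm:fisher_monotonicity}, so I would first verify it there.
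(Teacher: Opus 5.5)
There is a genuine gap, in Step 2. Step 1 and your final bookkeeping with $\score(c\alpha)=c^{-1}\score(\alpha)$ are correct and match the paper. But the key claim $AA^\top+BB^\top\preceq I$ is false.

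Translation equivariance in each argument gives $A\bone_n=B\bone_n=\bone_n$. Hence $\jac(\bone_n\oplus\bone_n)=2\bone_n$, so $\norm{\jac(\bone_n\oplus\bone_n)}=\sqrt{2}\,\norm{\bone_n\oplus\bone_n}$. Equivalently, using the column-sum identities $\adj{\bone}_nA=\adj{\bone}_nB=\adj{\bone}_n$ that you list yourself, $(AA^\top+BB^\top)\bone_n=2\bone_n$. Thus $\norm{\jac}\ge\sqrt{2}$, and no argument can prove the full contraction.

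What is true, and all you need, is the contraction on $\calV=\{u\oplus v:\adj{u}\bone_n=\adj{v}\bone_n=0\}$, i.e.\ the second singular value of $\jac$ is at most $1$. Score vectors are orthogonal to $\bone_n$, so $\lambda\score(\tilde\alpha)\oplus(1-\lambda)\score(\tilde\beta)$ lies in $\calV$. With that restriction, your final computation goes through unchanged.

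Even after this correction, your sketch does not contain a mechanism that proves the restricted bound. Using only that $A$ and $B$ are doubly stochastic (majorizing maps) gives $\norm{Au+Bv}\le\norm{u}+\norm{v}$. That yields $\Phi_n(p\boxplus_n q)\le\Phi_n(p)$ but loses exactly the cross-term control that Stam needs. First-order consequences of convexity (tangent-plane inequalities) only give ordered partial-sum comparisons between $\jac w$ and $\roots(u,v)$. Such comparisons do not bound the Euclidean norm of the unsorted vector $\jac w$.

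The missing idea is a second-order identity. Take $w=u\oplus v\in\calV$, along which both means are constant. Differentiate variance additivity $\var(\roots(\tilde\alpha+tu,\tilde\beta+tv))=\var(\tilde\alpha+tu)+\var(\tilde\beta+tv)$ twice at $t=0$. This gives
\[\norm{\jac w}^2=\norm{w}^2-\sum_{i=1}^n\gamma_i\,\hessconv(w,w),\]
where the weights are the ordered roots $\gamma_1\ge\cdots\ge\gamma_n$ of the convolution itself.

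Now use the Bauschke et al.\ theorem: $\sum_i c_i\rootsi$ is convex whenever $c_1\ge\cdots\ge c_n\ge 0$. Apply it with $c_i=\gamma_i$ plus a constant shift to make the weights nonnegative. The shift is harmless because $\sum_i\rootsi$ is linear, so $\sum_i\hessconv=0$. This gives $\sum_i\gamma_i\hessconv(w,w)\ge 0$, hence $\norm{\jac w}\le\norm{w}$ on $\calV$, which is how the paper closes this step.
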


\begin{remark}
The alternative form of Stam's inequality
    $$\frac{1}{\Phi_n(p)} + \frac{1}{\Phi_n(q)} \leq \frac{1}{\Phi_n(p\boxplus_n q)}$$
    can easily be derived from Theorem \ref{thm:stams_inequality} by choosing $\lambda = \frac{\Phi_n(p)^{-1}}{\Phi_n(p)^{-1}+ \Phi_n(q)^{-1}}$, defining $\tilde{p} := \lambda^{-1/2}_* p$ and $\tilde{q} := (1-\lambda)^{-1/2} q$, and applying the inequality in Theorem \ref{thm:stams_inequality} to $\tilde{p}$ and $\tilde{q}$ with this choice of $\lambda$. 
\end{remark}

The inequality is sharp for Hermite polynomials. The proof of \Cref{thm:stams_inequality} appears in \Cref{sec:proof_of_stam}. It rests on a key conceptual contribution of this paper, which is a mechanism for relating the score vectors of $p\boxplus_n q$ and $p$ via the Jacobian of the induced map from the roots of $p$ to the roots of $p\boxplus_n q$; this is developed in \Cref{sec:finite_scores_to_finite_scores} and its probabilistic interpretation as a coupling is discussed in \Cref{sec:double_stochasticity}. The other key ingredient, as in the derivative case, is to view $p\boxplus_n q$ as a restriction of a multivariate hyperbolic polynomial and appeal to a convexity result of Bauschke et al.\ \cite{bauschke2001hyperbolic}, which has its roots in L.\ G{\aa}rding's foundational work on hyperbolicity \cite{gaarding1951linear}.

Each of these main results has implications for the finite free entropy. Given \Cref{thm:fisher_monotonicity}, integrating along the heat flow yields a monotonicity result for the finite free entropy, as conjectured by the first-named author \cite{garvar25}. 
\begin{theorem}[Mononicity of finite free entropy under differentiation]\label{thm:monotonicity}
    Let $p(x)$ be a monic real-rooted polynomial of degree $n$. If $\widetilde{p'}(x)$ is the rescaling of $p'(x)$ satisfying $\var\lpr{\widetilde{p'}}=\var(p)$,  then the finite free entropy obeys
    \begin{equation}
    \label{eq:entropy_monotonicity}
    \rchi_n\lpr{p}+C_n\le\rchi_{n-1}\lpr{\widetilde{p'}},
        \end{equation}
    where $C_n:=\rchi_{n-1}\lpr{\check{H}_{n-1}}-\rchi_n\lpr{\check{H}_n}$ and $\check{H}_n(x)$ and $\check{H}_{n-1}(x)$ are  Hermite polynomials normalized so that $\var(\check{H_n}) = \var(\check{H}_{n-1})=1$. 
    Moreover, $C_n>0$, so we also find that
    \[\rchi_n(p)<\rchi_{n-1}\lpr{\widetilde{p'}}.\]
\end{theorem}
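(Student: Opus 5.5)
We will derive the entropy inequality from \Cref{thm:fisher_monotonicity} by integrating along the finite free heat flow, mirroring the classical de Bruijn argument. For $t\ge 0$ let $H_{n,t}$ denote the Hermite polynomial of degree $n$ and variance $t$ (so $H_{n,t}=\sqrt{t}\,_*\check H_n$), and set $p_t:=p\boxplus_n H_{n,t}=e^{-\frac{t}{2n}\partial_x^2}p$, with the normalization chosen so that $\var(p_t)=\var(p)+t$. The key features we use are these. First, differentiation commutes with the flow: $(p_t)'=p'\boxplus_{n-1}H_{n-1,s(t)}$ for an explicit linear time change $s(t)$. Second, a finite de Bruijn identity holds. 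Along the flow each root moves by $\dot\ga_i=c_n\sum_{j\ne i}(\ga_i-\ga_j)^{-1}$; this follows from differentiating $p_t(\ga_i(t))=0$ and using $p''(\ga_i)/p'(\ga_i)=2\sum_{j\neq i}(\ga_i-\ga_j)^{-1}$. Hence $\frac{d}{dt}\rchi_n(p_t)=\kappa_n\Phi_n(p_t)$ for an explicit constant $\kappa_n>0$, since $\sum_{i<j}(\dot\ga_i-\dot\ga_j)/(\ga_i-\ga_j)$ is a multiple of $\norm{\score}^2$.

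Next we fix the constants by rescaling. Write $v=\var(p)$. By \eqref{eq:rescaling}, after the variance-matching rescaling both sides shift by the same $\log$ terms, so it suffices to compare $\rchi_n(p_t)$ with $\rchi_{n-1}(\widetilde{p_t'})$ along the flow. Set
\[
D(t):=\rchi_{n-1}\big(\widetilde{p_t'}\big)-\rchi_n(p_t).
\]
As $t\to\infty$ the rescaled polynomial $(v+t)^{-1/2}{}_*p_t$ converges to $\check H_n$, and its derivative, after variance normalization, converges to $\check H_{n-1}$. By continuity of $\rchi$ at simple-rooted limits, $D(t)\to C_n$.

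We then compute $D'(t)$ with the de Bruijn identity applied at degrees $n$ and $n-1$, keeping track of the time change $s(t)$ and of the rescaling factor. We expect $D'(t)$ to equal a positive multiple of $\Phi_{n-1}(\widetilde{p_t'})-\Phi_n(p_t)$, where both polynomials are normalized to the same variance $v+t$. The main obstacle is this bookkeeping. The constants $\kappa_n$, $\kappa_{n-1}$, the time change, and the $\log$ of the variance ratio of $p_t'$ to $p_t$ must combine exactly into that difference. This requires knowing $\var(p')$ in terms of $\var(p)$ (it is a fixed multiple $\frac{n-2}{n-1}\cdot$ const, which the flow preserves proportionally), so that the $\log$-rescaling term is constant in $t$ and drops out of $D'$. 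We would first check the computation on Hermite polynomials, where $D$ must be constant, in order to calibrate the constants. Once this is done, \Cref{thm:fisher_monotonicity} gives $D'(t)\le 0$, so $D(0)\ge \lim_{t\to\infty}D(t)=C_n$. This is exactly \eqref{eq:entropy_monotonicity}.

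Finally, to show $C_n>0$ we compute it explicitly. The discriminant of Hermite polynomials is given in closed form by Stieltjes' formula, $\Disc(He_n)=\prod_{k=1}^n k^k$, and the variance of the roots of $He_n$ is $n-1$. Substituting these gives a closed expression for $C_n$ in terms of $\sum_k k\log k$ and $\log(n-1)$, $\log(n-2)$. Positivity then follows from an elementary inequality; we would check it by comparing consecutive terms, or via convexity of $k\log k$. The strict inequality $\rchi_n(p)<\rchi_{n-1}(\widetilde{p'})$ follows immediately.
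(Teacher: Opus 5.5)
Your argument for \eqref{eq:entropy_monotonicity} is the paper's argument written in differential rather than integrated form. The paper writes $\rchi_n(p)=\rchi_n(\check H_n)+\frac12\int_0^\infty\big(\frac{1}{1+t}-\Phi_n(p_t)\big)\,dt$, where the $\frac{1}{1+t}$ term absorbs the $\frac12\log t$ growth. It then applies \Cref{thm:fisher_monotonicity} to $p_t$ inside the integral and recognizes the result as the same representation in degree $n-1$. Your $D(t)$ needs no such regularization, because the $\frac12\log(\var(p)+t)$ growth cancels in the difference.

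The bookkeeping you left as ``expected'' does close, via the same identity the paper uses. Note first that the variance-$t$ flow is $p_t=e^{-\frac{t}{2(n-1)}\partial_x^2}p$, not $e^{-\frac{t}{2n}\partial_x^2}p$. With this normalization $\frac{d}{dt}\rchi_n(p_t)=\frac12\Phi_n(p_t)$ for every $n$. Moreover $\widetilde{p_t'}=\widetilde{p'}\boxplus_{n-1}\sqrt{t}_*\check H_{n-1}$, so $D'(t)=\frac12\big(\Phi_{n-1}(\widetilde{p_t'})-\Phi_n(p_t)\big)\le 0$.

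The one step that is not yet a proof is $C_n>0$, and it is more delicate than ``an elementary inequality'' suggests. Your closed form is right: $\Disc(H_n)=\prod_{k\le n}k^k$ for the monic probabilists' Hermite polynomial. With $m=n-1\ge 2$ and $S_m=\sum_{k=1}^m k\log k$ (natural logarithms), it gives
\[
C_n=\frac{2S_m}{(m+1)m(m-1)}-\frac{\log(m+1)}{m}+\frac12\log\frac{m}{m-1}.
\]
The terms of order $\frac{\log n}{n}$ and $\frac1n$ cancel, and in fact $C_n=\frac{\log n-3/4+o(1)}{n^2}$. So $S_m$ must be estimated to within $o(m\log m)$.

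In particular, the comparison $S_m\ge\int_1^m x\log x\,dx$ is too weak: already for $n=4$ it gives a negative lower bound for $C_4$. This is the bound the paper's appendix uses, and its evaluation of that integral there also contains an algebra error, so do not follow that route.

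Your convexity suggestion is the right fix. The trapezoid inequality for the convex function $x\log x$ gives $S_m\ge\int_1^m x\log x\,dx+\frac m2\log m=\frac{m^2+m}{2}\log m-\frac{m^2-1}{4}$. Substituting,
\[
C_n\ge\frac{\log(m+1)-m\log\big(1+\frac1m\big)}{m(m-1)}+\frac12\Big(\log\frac{m}{m-1}-\frac1m\Big)>\frac{\log 3-1}{m(m-1)}>0.
\]
The middle inequality uses $m\log(1+\frac1m)<1$ and $-\log(1-\frac1m)>\frac1m$. This completes your plan.
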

Inequality \eqref{eq:entropy_monotonicity} is tight for Hermite polynomials. We prove \Cref{thm:monotonicity} in \Cref{sec:entropy_monotonicity}. 

Similarly, given \Cref{thm:stams_inequality}, differentiating along the heat flow yields the finite free entropy power inequality conjectured by Gribinski \cite{gribinski2019notion}. Given $p(x)$ a degree $n$ real-rooted polynomial, we define 
\[\sN_n(p):=\exp\lpr{2\rchi_n(p)}.\]
We  prove the following entropy power inequality, conjectured by Gribinski \cite[Conjecture 1]{gribinski2019notion}. 

\begin{theorem}
    [Finite free entropy power inequality]\label{thm:ffepi}
    Let $p(x)$ and $q(x)$ be any monic real-rooted polynomials of degree $n$. Then, 
    \[\sN_n\lpr{p\boxplus_nq}\ge\sN_n(p)+\sN_n(q).\]
\end{theorem}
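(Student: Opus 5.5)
We derive the entropy power inequality from the finite free Stam inequality (\Cref{thm:stams_inequality}) by the classical Blachman--Stam route: run a finite free heat flow on $p$ and $q$ at suitably coupled speeds and track the entropy along it.

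\textbf{Heat flow.} For $t\ge 0$ set $p_t:=p\boxplus_n \sqrt{t}\,_*H$, where $H$ is the Hermite polynomial of degree $n$ normalized as the finite free semicircle. Then $p_t=e^{-\frac{t}{2c}\partial_x^2}p$ for an appropriate constant $c=c_n$, and $\sqrt{s}\,_*H\boxplus_n\sqrt{t}\,_*H=\sqrt{s+t}\,_*H$.

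\textbf{De Bruijn identity.} Write the flow as $\partial_t p_t=-\frac{1}{2c}\partial_x^2 p_t$. Implicit differentiation of $p_t(\alpha_i(t))=0$ gives
\[
\dot\alpha_i=\frac{1}{2c}\,\frac{p_t''(\alpha_i)}{p_t'(\alpha_i)}=\frac{1}{c}\sum_{j\ne i}\frac{1}{\alpha_i-\alpha_j}.
\]
Consequently
\[
\frac{d}{dt}\sum_{i<j}\log|\alpha_i-\alpha_j|=\frac1c\sum_{i<j}\frac{\dot\alpha_i-\dot\alpha_j}{\alpha_i-\alpha_j}=\frac1c\,\norm{\score(\alpha)}^2,
\]
since cross terms symmetrize. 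After normalizing, this reads $\frac{d}{dt}\rchi_n(p_t)=\kappa_n\,\Phi_n(p_t)$ for an explicit constant $\kappa_n>0$. The constants are fixed so that the Hermite case is consistent: $\rchi_n(\sqrt{t}\,_*H)=\rchi_n(H)+\tfrac12\log t$ and $\Phi_n(\sqrt t\,_*H)=\Phi_n(H)/t$, which forces $\kappa_n\Phi_n(H)=\tfrac12$.

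\textbf{Stam in reciprocal form along the flow.} From the reciprocal form of Stam's inequality (the Remark after \Cref{thm:stams_inequality}),
\[
\frac{1}{\Phi_n(p_s\boxplus_n q_t)}\ge \frac{1}{\Phi_n(p_s)}+\frac{1}{\Phi_n(q_t)}.
\]
We use the associativity $p_s\boxplus_n q_t=(p\boxplus_n q)_{s+t}$.

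\textbf{Blachman's argument.} Choose speeds $s(\tau),t(\tau)$ with $\dot s=\sN_n(p_s)$ and $\dot t=\sN_n(q_t)$, and set $r_\tau:=(p\boxplus_n q)_{s+t}$. Consider
\[
F(\tau)=\frac{\sN_n(p_s)+\sN_n(q_t)}{\sN_n(r_\tau)}.
\]
By the de Bruijn identity,
\[
\frac{d}{d\tau}\sN_n(p_s)=2\kappa_n\,\Phi_n(p_s)\,\sN_n(p_s)^2,
\]
and similarly for $q_t$, while $\frac{d}{d\tau}\sN_n(r_\tau)=2\kappa_n\Phi_n(r_\tau)\sN_n(r_\tau)(\dot s+\dot t)$. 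Writing $a=\sN_n(p_s)$, $b=\sN_n(q_t)$, $\Phi_1=\Phi_n(p_s)$, $\Phi_2=\Phi_n(q_t)$, $\Phi_3=\Phi_n(r_\tau)$, the sign of $F'$ is the sign of
\[
a^2\Phi_1+b^2\Phi_2-(a+b)^2\Phi_3.
\]
Since $\Phi_3\le (\Phi_1^{-1}+\Phi_2^{-1})^{-1}$, Cauchy--Schwarz, $(a+b)^2\le(a^2\Phi_1+b^2\Phi_2)(\Phi_1^{-1}+\Phi_2^{-1})$, gives $F'\ge0$. (One could equally take $F$ with numerator and denominator swapped and show it is nonincreasing; the sign conventions should be checked at this stage.) Thus $F(0)\le \lim_{\tau\to\infty}F(\tau)$.

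\textbf{The limit, which is the main obstacle.} We must show $F(\tau)\to1$ as $\tau\to\infty$. As $t\to\infty$, $p_t$ is asymptotically $\sqrt{t}\,_*H$ shifted. Indeed, the rescaled polynomial $\tfrac{1}{\sqrt t}\,_*p_t=\tfrac{1}{\sqrt t}\,_*p\boxplus_n H$ converges to $H$ as $\tfrac{1}{\sqrt t}\,_*p\to x^n$, since root maps are continuous. By \eqref{eq:rescaling} this gives $\sN_n(p_t)=t\,(\sN_n(H)+o(1))$, and similarly for $q$ and $r$. Because $\sN_n(p_s)\sim s\,\sN_n(H)$, the ODE makes $s$ grow exponentially in $\tau$, and likewise $t$. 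The ratio is then
\[
F\approx \frac{s+t}{s+t}\to1,
\]
with the $o(1)$ errors controlled uniformly by continuity of roots. This yields $\sN_n(p)+\sN_n(q)\le\sN_n(p\boxplus_n q)$.

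One further point needs care: the flows have simple roots for $t>0$ (the heat flow separates roots), so $\Phi_n$ is finite and the computations are legitimate. Repeated roots in $p$ or $q$ are handled by continuity at $\tau=0$. If a root is repeated, $\sN_n(p)=0$ and the inequality still follows by the same limiting argument.
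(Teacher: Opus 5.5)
Your proof is correct, but it takes a genuinely different route from the paper's.

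\textbf{The paper's route.} The paper never uses the harmonic-mean form of Stam. It runs $p$ and $q$ along the \emph{same} heat-flow time and uses de Bruijn together with the linear form of Theorem~\ref{thm:stams_inequality}. This shows that $F(t)=\rchi_n(\sqrt{\lambda}\,_*p_t\boxplus_n\sqrt{1-\lambda}\,_*q_t)-\lambda\rchi_n(p_t)-(1-\lambda)\rchi_n(q_t)$ is non-increasing (Observation~\ref{obs:mono_from_stam}). The limit $\lim_{t\to\infty}F(t)=0$ follows immediately from the rescaling law \eqref{eq:rescaling}, which gives the finite free Lieb inequality (Theorem~\ref{thm:ff7}). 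The entropy power inequality then comes from choosing $\lambda=\sN_n(p)/(\sN_n(p)+\sN_n(q))$ and rescaling $p,q$.

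\textbf{Your route.} You transplant the original Stam--Blachman argument: reciprocal Stam, time changes $\dot s=\sN_n(p_s)$ and $\dot t=\sN_n(q_t)$, and monotonicity of the ratio via Cauchy--Schwarz. You read off the limit from $\sN_n(p_s)/s\to\sN_n(\check H_n)$. This reaches the EPI directly, without Lieb and without optimizing over $\lambda$. The price is constructing the time changes, which is cleanest by inverting $\tau(s)=\int_0^s\mathrm{d}u/\sN_n(p_u)$. Note that $s,t\to\infty$ because $\sN_n(p_s)\ge\sN_n(p)>0$ by de Bruijn. You also need a multiplicative rather than additive asymptotic in the limit.

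\textbf{Comparison.} The paper's route is somewhat more economical. It also yields Lieb's inequality as an intermediate statement, which it reuses for the equality cases in Theorem~\ref{thm:equality_cases_Lieb_EPI}.

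\textbf{Two small fixes.}
\begin{enumerate}
\item The middle expression in your de Bruijn computation carries a spurious factor $1/c$; the final answer is right.
\item The repeated-root case is best handled by approximating $p,q$ by simple-rooted polynomials and using continuity of $\sN_n(p)=\Disc(p)^{1/(n(n-1))}$ in the roots. Rerunning the flow does not work there, since the time-change ODE degenerates when $\sN_n(p)=0$.
\end{enumerate}
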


\begin{remark}[Brunn--Minkowski inequality]
    As noted by Gribinski \cite{gribinski2019notion}, one can rewrite the finite free entropy power inequality in terms of discriminants as
    $$\disc(p\boxplus_nq)^{\frac{1}{{n \choose 2}}} \ge \disc(p)^{\frac{1}{{n\choose 2}}} + \disc(q)^{\frac{1}{{n\choose 2}}},$$ 
    which is reminiscent of the Brunn--Minkowski theorem. 
\end{remark}

Theorem \ref{thm:ff7} will be derived, in the standard way, from the finite free version of Lieb's inequality (see Theorem \ref{thm:ff7} below), which in turn will be derived from the finite free Stam inequality given above.

\subsubsection{Equality cases}
\label{sec:intro_eq_cases}

In  \cref{sec:equality_cases_stam} and \cref{sec:equality_cases_epi} we will show that when $p(x)$ and $q(x)$ have simple roots up to shifts and rescaling,  Hermite polynomials are the only polynomials that achieve equality  for  the inequalities obtained in this paper. See Theorems \ref{thm:equality_cases_Fisher} and \ref{thm:equality_cases_Lieb_EPI} for precise statements. These same statements were proven for Stam's inequality and the entropy power inequality in the  independent and concurrent work of Hashemi and Hyun \cite{hashemi2026finite} (see the bibliographical note at the end of \cref{sec:introduction} for further details). 

The main geometric input used in our approach for characterizing the equality cases is the strict curvature  of the boundary of the hyperbolicity cone in question. Concretely, in the case of the derivative, we work with the polynomial 
$$g_{\lilder}(\zeta_1, \dots, \zeta_n)  :=  \sum_{i=1}^n \prod_{j : j \neq i} \zeta_j,$$
which is hyperbolic in the direction $\bone_n$, and has hyperbolicity cone $K(g_{\lilder}, \bone_n) = \{ \zeta\in \bR^n : \dermap^{n-1}(\zeta) > 0  \}.$ In this case,  a result of Renegar \cite{renegar2006hyperbolic} (see Theorem \ref{thm:renegar} below) implies that the boundary $\partial K (g_{\lilder}, \bone_n)$ is strictly curved at points with enough regularity. In other words,  the boundary of the hyperbolicity cone does not have any flat faces. Analogously, for the inequalities involving  the free convolution, we work with the polynomial
$$g_{\lilconv}(\xi_1, \dots, \xi_n \zeta_1,\dots, \zeta_n) :=  \sum_{\sigma\in S_n} \prod_{i=1}^n (\xi_i+\zeta_{\sigma(i)}),$$
which is hyperbolic in the direction $(\bone_n, 0_n)$, where $0_n$ denotes the all-zeroes vector in $\bR^n$. In this case, the hyperbolicity cone is given by $ K(g_{\lilconv}, (\bone_n, 0_n)) =\{ (\xi,\zeta)\in \bR^n\times \bR^n : \roots^{n}(\xi,\zeta) > 0  \}$. We then show that the boundary $\partial  K(g_{\lilconv}, (\bone_n, 0_n))$ is strictly curved at regular enough points (see Theorem \ref{thm:curved_cone_conv} below). To the best of our knowledge this is a new result that might be of independent interest.

\subsection{Related work}
The idea that differentiation evens out zero spacings goes back go the Gauss--Lucas theorem, whose proof uses the observation that for $p(x)$ with roots $\ga_1,\ldots,\ga_n$, one has
$$ \frac{p'(z)}{p(z)}=\sum_{i=1}^n\frac1{z-\ga_i},$$
which serves as an electrostatic model for the roots of $p'(z)$. 

The result of Riesz \cite{stoyanoff1925theoreme}, mentioned at the beginning, was improved by Walker \cite{walker1995bounds} who showed that the minimum zero spacing must increase by a factor of $1+O(1/n)$. Farmer and Rhoades \cite{farmer2005differentiation} extended Riesz's result to entire functions of order $1$.

Gribinski \cite{gribinski2019notion} showed that for all real-rooted monic $p$ and $q$ one has $\Disc(p\boxplus_n q)\ge \Disc(p)$, with equality only if $q(x)=(x-c)^n$ for some real $c$. In the special case of $T=1-c\partial_x$, this yields a particular instance of \Cref{thm:ffepi}. 

There has recently been a lot of interest in the behavior of the zeros of $p^{(cn)}(x)$, i.e., the case of repeated differentiation $T=(\partial_x)^{cn}$ \cite{steinerberger2023free, kiselev2022flow,hoskins2023dynamics,arizmendi2023finite}, which were shown to be related to the fractional free convolutions of a measure in free probability.
Given this connection, Theorems \ref{thm:fisher_monotonicity} and \ref{thm:monotonicity} are the finite free analogues of the monotonicity results of Shlyakhtenko and Tao for fractional free convolutions \cite[Theorem 1.6]{shlyakhtenko2022fractional}. Similarly, given the connections between the finite free convolution and the free convolution \cite{marcus2021polynomial, arizmendi2018cumulants}, \Cref{thm:stams_inequality} is the finite analog of Voiculescu's \cite{voiculescu1998analogues} free Stam inequality (see also  \cite{shlyakhtenko2007free}) and, \Cref{thm:ffepi}  is the finite analog of the free entropy power inequality of Szarek and Voiculescu \cite{szarek1996volumes}. 

Importantly, in \cref{sec:large_n_limit} we show that all of the aforementioned (infinite-dimensional) free probability inequalities can be  obtained by taking $n\to \infty$ in our results.  Thus, the results in this paper also provide fundamentally new proofs for the free probability theorems. See \Cref{sec:inspiration_free_probability} for more on the relation to free probability. 

In the related setting of finite difference operators (rather than differential operators), Br\"{a}nd\'{e}n, Krasikov, and Shapiro \cite{branden2016elements} proved an analogue of Riesz's theorem, which Leake and Ryder \cite{leake2020connecting} generalized to the setting of the $q-$multiplicative finite free convolution.

\subsection{Relation to free probability theory}
\label{sec:inspiration_free_probability}

In a sequence of influential papers \cite{voiculescu1993analogues, voiculescu1994analogues, voiculescu1996analogues, voiculescu1998analogues}, Voiculescu introduced the free probability analogues of Shannon entropy and Fisher information and  used them to solve problems in von Neumann algebras which, at the time, were considered to be out of reach. Since then, other notions of entropy have proliferated in the context of free probability and operator algebras giving rise to several interesting research directions \cite{dykema1997two, ge1998some, jung2007strongly, shlyakhtenko2022fractional, hayes2022random, jekel2024elementary, hayes2025consequences}. We refer the reader to Chapters 7 and 8 of \cite{mingo2017free} for an introduction to the subject. 

\paragraph{Free entropy.} In a nutshell, (microstates) free entropy is a quantitative measurement of how well infinite-dimensional operators can be approximated by finite matrices.  In the case of one operator, this amounts to looking at unitary orbits of the form
$\calO(D):=\{UDU^* : U\in \calU(n)\}\subset M_n(\bC),$
for real diagonal matrices $D=\diag(\alpha_1, \dots, \alpha_n)$, and measuring their intrinsic volume, which is given by  
$C_n \prod_{i\neq j} \lvert\alpha_i-\alpha_j\rvert^2,$
for some constant $C_n>0$ that depends only on $n$. After taking the logarithms of such volumes and normalizing, Voiculescu was led \cite{voiculescu1993analogues, voiculescu1994analogues} to define  the free entropy of a self-adjoint non-commutative random variable $x$ as 
\begin{equation}
\label{eq:free_entropy}
\rchi(x) := \iint \log\lvert s-t\rvert \d\mu_x(s)\d\mu_x(t),
\end{equation}
where $\mu_x$ is the spectral distribution of $x$. So, the quantity $\rchi_n(\cdot)$, introduced in \Cref{def:finite_free_entropies}, is the  analog of $\rchi(\cdot)$ obtained by replacing the notion of spectral distribution of an operator by that of root distribution of a polynomial, and where the infinite terms (corresponding to $s=t$ in the integral) have been omitted.\footnote{Interestingly, in his first paper on free entropy, Voiculescu stated that this (i.e.\ $\rchi_n$) is a natural quantity to consider (see \cite[Remark 7.3]{voiculescu1993analogues}).} 

\paragraph{Free Fisher information.} In \cite{voiculescu1993analogues} Voiculescu derived the notion of free Fisher information from (\ref{eq:free_entropy}), via the free analogue of the de Bruijn identity (see \Cref{sec:entropy_power} for the de Bruijn identity in the finite free setting). Later in \cite{voiculescu1998analogues} an alternative approach was given. To be precise, given a probability measure $\mu$ on $\bR$, define the Hilbert transform of $\mu$ as the Cauchy principal value
$$\calH_\mu(t):= \frac{1}{\pi}\, \text{p.v.}\int \frac{1}{t-s} d\mu(s). $$
Then, if $x$ is a self-adjoint non-commutative random variable in a $W^*$-probability space $(M, \tau)$, one can define its conjugate variable (which is the free analog of score function) as  $\xi:= 2\pi\,\calH_{\mu_x}(x)$. The free Fisher information of $x$ can then be defined as 
$$\Phi(x) := \|\xi\|_{\tau}^2,$$
where  $\|\xi\|_{\tau}^2:=\tau(\xi^2)$. So, the definition of $\Phi_n(\cdot)$ introduced in \Cref{def:finite_free_entropies} can be obtained by replacing the notion of conjugate variable $\xi$ by that of (normalized) score vector $\frac{2}{n-1} \score(\alpha)$, whose entries are the Hilbert transform (up to removing the singularities corresponding to $s=t$) of the root distribution of the polynomial in question.

\paragraph{Acknowledgments.} NS would like to thank Dima Shlyakhtenko for introducing this research direction and for many insightful conversations, and Aurelien Gribinski, Venkat Anantharam, and especially Adam Marcus for helpful discussions. JGV would like to thank Otte Hein\"avaara for stimulating discussions at the early stages of this project and Daniel Perales for comments on an early version of this paper.  We thank IPAM for hosting the workshop ``Free Entropy Theory and Random Matrices'' in February 2025, during which \Cref{thm:monotonicity} was posed as a question during a working group by JGV, and in particular we would like to thank all the members in the program who engaged with the question at the time. NS and ZS were supported by NSF Grant CCF-2420130. 

Finally, we would also like to thank the two anonymous referees for a number of helpful comments and observations. In particular, their reviews motivated us to determine  the equality cases for all of our inequalities, and to flesh out the arguments needed to derive the infinite-dimensional versions of these inequalities by taking $n\to \infty$. 

\paragraph{Bibliographical note.} 
The characterization of the equality cases was not present in the first version of the present paper, which was submitted for review in February, 2026. Theorems \ref{thm:equality_cases_Fisher} and \ref{thm:equality_cases_Lieb_EPI} below have been added in this revised version in July, 2026. These results have also been  obtained recently in independent work  of Hashemi and Hyun in \cite{hashemi2026finite}. Both our approach and the approach of Hashemi and Hyun use the Hermite differential equation (see Lemma \ref{lem:hermite_ode}) to reduce the problem of characterizing the equality cases to the problem of showing that the second singular value of the Jacobian $\jac$ is simple.  The latter statement, which is the core difficulty, is given in Proposition \ref{prop:simple_sigma_derivative} in our paper and Corollary 1.3 in theirs \cite{hashemi2026finite}. Both  proofs of this statement start by applying the Helton-Vinnikov theorem, but differ in later stages. In short, the approach of Hashemi and Hyun seems to use purely real algebraic geometry techniques to conclude the argument, whereas our approach relies on a geometric argument about the curvature of $\partial  K(g_{\lilconv}, (\bone_n, 0_n))$, described in \cref{sec:intro_eq_cases}. 

In addition to characterizing the equality cases, for the revised version of this paper,  we have made some minor corrections, additions, and notational changes. Furthermore, we have at the request of the referees added  \cref{sec:large_n_limit}, where we provide a detailed derivation of the original free probability inequalities proven in \cite{szarek1996volumes, voiculescu1998analogues, shlyakhtenko2022fractional} by taking $n\to \infty$ in our results. As an unexpected byproduct of fleshing out these arguments, we realized that Voiculescu's free Young's inequality for $p=\infty$ can be derived from the preservation of mesh for the finite free convolution (see \cref{sec:free_proba_inputs}), which may be of independent interest.    

\begin{remark}[Alternate proof of Theorem \ref{thm:stams_inequality}] The statement of 
    Theorem \ref{thm:stams_inequality} was included as Problem $\#4$ in the first batch of the \emph{First Proof} project \cite{abouzaid2026first} in early February, 2026. OpenAI's team produced a proof of this theorem that was significantly different from ours during the week of February $6-13$, via a process detailed here \cite{OAI2026}. The authors have read OpenAI's proof and believe it to be correct, though they did not check every line, and it has been formalized in Lean by \cite{archon26}.
\end{remark}

\paragraph{Statement on AI Use.} OpenAI Codex 5.5 was used to write a visualization app for the hyperbolicity cones defined in equation (\ref{eq:cone_boundary_convo}) for $n=3$, while working on the proof of Theorem \ref{thm:equality_cases_Fisher} in July, 2026. Apart from this instance, no AI tools were used in discovering, proving, or writing any of the content in this paper.

\section{Preliminaries}
\label{sec:preliminaries}

As discussed in the introduction, we will be interested on how differential operators transform the roots of the real-rooted polynomials on which they act. To enable this discussion, let us first introduce some notation and terminology. 
\begin{definition}[Vectors of roots]
    Given a real-rooted polynomial $p(x)$ of degree $n$, we will say that $\alpha\in \bR^n$ is a vector of roots for $p(x)$ if its entries, denoted by $\alpha_1, \dots, \alpha_n$,  enlist all the roots of $p(x)$. Moreover:
\begin{enumerate}[i)]
    \item (Ordered vectors)  We will say that $\alpha\in \bR^n$ is ordered if $\alpha_1\ge \cdots \ge \alpha_n$.
    \item (Simple vectors) We will say that $\alpha\in \bR^n$ is simple if all of its entries are distinct.
\end{enumerate}    
\end{definition}

 Conversely, given a vector $\alpha\in \bR^n$, one can build the real-rooted polynomial
$$\pmap{\alpha}(x) := \prod_{i=1}^n(x-\alpha_i).$$
Then, as discussed in \Cref{sec:introduction}, it will be natural to view collections of roots as discrete probability measures so, for $\alpha\in \bR^n$, we define 
$$m_1(\alpha):=\frac{1}{n} \sum_{i=1}^n \alpha_i, \qquad m_2(\alpha):= \frac{1}{n}\sum_{i=1}^n \alpha_i^2, \qquad \text{and} \qquad \var(\alpha):= m_2(\alpha)-m_1(\alpha)^2. $$
Everywhere below, we will use $\bone_n$ to denote the all-ones vector in $\bR^n$.

\subsection{Roots of derivatives} In order to study how differentiation transforms the roots of a polynomial, we  define the following map. 

\begin{definition}[Map for roots of the derivative]
\label{def:derivative_map}
    For every $\alpha\in \bR^n$ let $$\Omega_{\scriptscriptstyle\partial_x}^1(\alpha)\ge  \dots\ge  \Omega_{\scriptscriptstyle\partial_x}^{n-1}(\alpha)$$ be the roots of the polynomial $\partial_x\pmap{\alpha}(x)$, and let 
    $$\dermap(\alpha) := \adj{(\Omega_{\scriptscriptstyle\partial_x}^1(\alpha),  \dots,   \Omega_{\scriptscriptstyle\partial_x}^{n-1}(\alpha))}.$$  
\end{definition}

We remark that, by Rolle's theorem, if $p(x)$ is a real-rooted polynomial, then $p'(x)$ is also real-rooted. So $\dermap(\alpha)$ can be thought of as a map from $\bR^n$ to $\bR^{n-1}$. Moreover, as Rolle's theorem provides that the roots of $p'(x)$ interlace those of $p(x)$, then if the roots of $p(x)$ are simple then the roots of $p'(x)$ are also simple. This yields the following observation. 

\begin{observation}[Smoothness]
\label{obs:smoothness_of_dermap}
    The map $\dermap: \bR^n \to \bR^{n-1}$ is a smooth function in an open neighborhood of any simple vector $\alpha\in \bR^n$. 
\end{observation}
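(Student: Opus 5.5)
The plan is to deduce smoothness from the implicit function theorem applied to the coefficients of $\partial_x\pmap{\alpha}(x)$. Fix a simple vector $\alpha\in\bR^n$ and write $F(\beta,y):=\partial_x\pmap{\beta}(y)=\sum_{i=1}^n\prod_{j\neq i}(y-\beta_j)$. This function is a polynomial in $(\beta,y)\in\bR^n\times\bR$, so it is smooth.

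First, we use the simple-root property. Since the entries of $\alpha$ are distinct, Rolle's theorem places exactly one root of $\partial_x\pmap{\alpha}$ strictly between each pair of consecutive ordered entries of $\alpha$. This accounts for all $n-1$ roots, since the derivative has degree $n-1$ with leading coefficient $n$. Hence the roots $\dermapi(\alpha)$, $1\le i\le n-1$, are simple, and $\partial_yF(\alpha,\dermapi(\alpha))\neq 0$ for each $i$.

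Second, we apply the implicit function theorem at each point $(\alpha,\dermapi(\alpha))$. This gives an open neighborhood $U_i$ of $\alpha$, an interval $I_i$ around $\dermapi(\alpha)$, and a smooth function $r_i:U_i\to I_i$ such that, for $\beta\in U_i$, $r_i(\beta)$ is the unique zero of $F(\beta,\cdot)$ in $I_i$. We choose the intervals $I_i$ pairwise disjoint and ordered, with $I_1$ to the right of $I_2$, and so on. The condition of being simple is open, so we may also shrink to a neighborhood $U\subset\bigcap_i U_i$ of $\alpha$ consisting of simple vectors.

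Third, we identify $r_i$ with $\dermapi$ on $U$. This is the only slightly delicate step, since we must check that no root escapes its interval. For $\beta\in U$, the polynomial $F(\beta,\cdot)$ has degree exactly $n-1$. It also has the $n-1$ distinct zeros $r_1(\beta)>\dots>r_{n-1}(\beta)$, which are distinct because the $I_i$ are disjoint. Therefore these are all of its roots, listed in decreasing order, so $\dermapi(\beta)=r_i(\beta)$. Each $r_i$ is smooth, so $\dermap=(r_1,\dots,r_{n-1})^\top$ is smooth on $U$, which proves the observation.

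An alternative route avoids this bookkeeping. One can invoke the fact that simple roots of a monic polynomial depend analytically on its coefficients, which themselves are polynomial in $\beta$. We do not expect any step to be a genuine obstacle.
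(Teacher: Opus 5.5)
Your proof is correct and follows essentially the same route as the paper. The paper simply cites that simple roots of a polynomial depend smoothly on its coefficients and that $p'$ inherits simple roots from $p$ by Rolle's theorem, whereas you unpack the first fact via the implicit function theorem applied directly to $F(\beta,y)=\partial_x\pmap{\beta}(y)$ and also check the ordering bookkeeping explicitly; your shrinking of $U$ to simple vectors is harmless but not needed, since $F(\beta,\cdot)$ always has degree exactly $n-1$.
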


\begin{proof}
Recall that the roots of a real-rooted polynomial $p(x)$ are a smooth function of the coefficients of $p(x)$, provided that $p(x)$ has simple roots. Then, the result follows from the fact that the coefficients of $p'(x)$ are a smooth (in fact polynomial)  function of the coefficients of $p(x)$, and $p'(x)$ has simple roots whenever $p(x)$ has simple roots. 
\end{proof}

It will also be important to understand how $\dermap$ affects the variance of the root distribution. 

\begin{observation}[Comparing moments]
\label{obs:variancescaling}
   For any $\alpha\in \bR^n$ the following hold:
    $$ m_1( \dermap(\alpha)) = m_1(\alpha)\qquad \text{and}\qquad \var(\dermap(\alpha)) = \tfrac{n-2}{n-1}\cdot \var(\alpha). $$
    In particular, if $\alpha^\top \bone_n =0$ then $\|\dermap(\alpha)\|^2 = \frac{n-2}{n}\|\alpha\|^2$.
\end{observation}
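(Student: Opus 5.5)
The plan is to read both identities off the coefficients of $p'(x)$ via Vieta's formulas, using Newton's identities to turn elementary symmetric functions into power sums. Write $p(x)=\pmap{\alpha}(x)=x^n - e_1 x^{n-1} + e_2 x^{n-2} - \cdots$, where $e_k=e_k(\alpha)$ are the elementary symmetric polynomials of the entries of $\alpha$. Differentiating gives
\[
p'(x)=n x^{n-1} - (n-1)e_1 x^{n-2} + (n-2) e_2 x^{n-3} - \cdots .
\]
Hence the monic polynomial $\frac1n p'(x)$, whose roots are the entries of $\beta:=\dermap(\alpha)$, has
\[
e_1(\beta)=\tfrac{n-1}{n}e_1(\alpha), \qquad e_2(\beta)=\tfrac{n-2}{n}e_2(\alpha).
\]

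The first moment follows at once: $m_1(\beta)=\frac{1}{n-1}e_1(\beta)=\frac1n e_1(\alpha)=m_1(\alpha)$.

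For the variance, use $\sum_i\beta_i^2=e_1(\beta)^2-2e_2(\beta)$. This gives
\[
(n-1)\,m_2(\beta)=\tfrac{(n-1)^2}{n^2}e_1^2-\tfrac{2(n-2)}{n}e_2 .
\]
Subtract $(n-1)m_1(\beta)^2=\frac{(n-1)}{n^2}e_1^2$ to obtain
\[
(n-1)\var(\beta)=\tfrac{(n-1)(n-2)}{n^2}e_1^2-\tfrac{2(n-2)}{n}e_2=\tfrac{n-2}{n}\Big(\tfrac{n-1}{n}e_1^2-2e_2\Big).
\]
On the other side, $n\var(\alpha)=e_1^2-2e_2-\frac1n e_1^2=\frac{n-1}{n}e_1^2-2e_2$. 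Combining the two gives $(n-1)\var(\beta)=\frac{n-2}{n}\cdot n\var(\alpha)=(n-2)\var(\alpha)$, which is the claimed identity.

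For the final claim, suppose $\alpha^\top\bone_n=0$. Then $m_1(\alpha)=0$, so $m_1(\beta)=0$ as well, and therefore $\|\beta\|^2=(n-1)\var(\beta)=(n-2)\var(\alpha)=\frac{n-2}{n}\|\alpha\|^2$.

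No step is a real obstacle. The only care needed is bookkeeping: the monic normalization of $p'$ by $1/n$, and the differing normalizations by $n$ versus $n-1$ in the moments.
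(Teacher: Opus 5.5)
Your proof is correct and follows the paper's approach: both relate the first two elementary symmetric functions of the roots of $\frac1n p'$ to those of $p$ ($e_1(\beta)=\frac{n-1}{n}e_1(\alpha)$, $e_2(\beta)=\frac{n-2}{n}e_2(\alpha)$) and then read off the moments via Vieta. The only difference is that the paper first translates $\alpha$ to mean zero, so the $e_1$ terms vanish before the second moments are compared, whereas you carry the $e_1^2$ terms through the variance computation directly; both routes work equally well.
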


\begin{proof}
  Let $p(x) = \pmap{\alpha}(x)$ and  expand 
  $$p(x) = \sum_{k=0}^n (-1)^k a_k x^{n-k}\qquad \text{and}\qquad \frac{1}{n}\partial_x p(x) = \sum_{k=0}^{n-1} (-1)^k b_k x^{n-k-1},$$ where $a_0=1$. Then, by alternatively writing $\frac{1}{n}\partial_x p(x) = \frac{1}{n}\sum_{k=0}^n (n-k)a_k x^{n-k-1}$, and equating coefficients we get
\begin{equation}
\label{eq:coefrelations}
b_0=a_0=1, \qquad b_1= \tfrac{n-1}{n}a_1, \qquad \text{and} \qquad b_2 = \tfrac{n-2}{n}a_2.
\end{equation}
  On the other hand, Vieta's relations give $m_1(\alpha) = \frac{1}{n}a_1$ and $m_1(\dermap(\alpha))=\frac{1}{n-1}b_1$, which combined with (\ref{eq:coefrelations}) yields $m_1(\dermap(\alpha)) = m_1(\alpha)$. 
    
    Second, since $\partial_x (p(x-c)) = (\partial_xp)(x-c)$ for any $c\in \bR$, and shifts do not change the variance of a distribution, to prove the claim about variances we can  assume, without loss of generality, that $m_1(\alpha)=m_1(\dermap(\alpha))=0$ and therefore $a_1=b_1=0$. Now use the Vieta relations one more time and the assumption that $a_1=b_1=0$ to get that 
\begin{equation}
\label{eq:vietarelations}
m_2(\alpha) = \frac{1}{n}( a_1^2-2a_2) = -\frac{2a_2}{n} \qquad \text{and}\qquad m_2(\dermap(\alpha)) =\frac{1}{n-1}(b_1^2-2b_2) = -\frac{2b_2}{n-1}.
\end{equation}
Now, because $m_1(\alpha)=m_1(\dermap(\alpha))=0$ we have that $\var(\alpha)=m_2(\alpha)$ and $\var(\dermap(\alpha))=m_2(\dermap(\alpha))$. So, the claim about $m_1$ and $\var$ is obtained by putting (\ref{eq:coefrelations}) and (\ref{eq:vietarelations}) together. Finally, the last claim follows trivially from the same computations. 
\end{proof}

\subsection{The heat flow and Hermite polynomials} Other than the operator $\partial_x$, the (reverse) heat flow operator $\heat$ plays a distinguished role in this context. This operator drives the evolution, with respect to $t$ (time), of solutions to the standard heat equation. Precisely, the function $u(x, -t) := \heat p(x)$ solves 
$$\partial_t u(x, t) = \tfrac{1}{2} \partial_x^2 u(x, t),$$
with initial condition $u(x, 0) = p(x)$. Importantly, if $p(x)$ is a polynomial of degree $n$, then $p_t(x) := \heat p(x)$ will also be a polynomial of degree $n$ for every $t$. Moreover, it is a classical fact that when the initial condition is given by $p(x)=x^n$ then 
$$\heat x^n = \sqrt{t}_*H_n(x),$$
where $H_n(x)$ denotes the $n$-th  (probabilist) monic Hermite polynomial (e.g. see \cite[\S 2.1]{hall2025zeros}). It follows that one can implement the heat flow by taking finite free convolutions with Hermite polynomials of different scales. Namely, if $p(x)$ is a polynomial of degree $n$ and we write $p(x) = \hat{p}(\partial_x) x^n$ for $\hat{p}$ a polynomial, then 
\begin{equation}
    \heat p(x) = \heat \hat{p}(\partial_x) x^n = p(x)\boxplus_n \sqrt{t}_*H_n(x).\label{eq:heat flow of poly}
\end{equation}
Hence, since $\boxplus_n$ preserves real-rootedness, if $p(x)$ is a real-rooted polynomial then so is $\heat p(x)$ for all $t\ge0$.

In several instances throughout the paper we will crucially use the following description (see \cite[Lemma 2.4]{csordas1994lehmer} for a proof) of the root dynamics of a polynomial under the heat flow, which will allow us to express the score vector of a  vector of roots as a derivative.   

\begin{lemma}[Score vectors as derivatives]
\label{lem:score_and_derivatives}
    Let $p(x)$ be a real-rooted polynomial and $\alpha\in \bR^n$ be its ordered vector of roots. For $t\in \bR$ let $p_t(x):= \heat p(x)$ and $\alpha(t)\in \bR^n$ be the ordered vector of roots of $p_t(x)$. Then, $\alpha(t)$ is differentiable at every $t$ for which $p_t(x)$ has simple roots, in which case it holds that
     $$\alpha_i'(t) = \sum_{j : j\neq i} \frac{1}{\alpha_i(t)-\alpha_j(t)},\quad \text{for all $i=1, \dots, n$}.$$
 In particular, if $p(x)$ has simple roots, we have $\alpha'(0) = \score(\alpha)$. 
\end{lemma}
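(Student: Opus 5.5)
The plan is to differentiate the defining identity $p_t(\alpha_i(t)) = 0$ implicitly in $t$, and then to use the heat equation to rewrite $\partial_t p_t$ as a second space derivative.

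First, regularity. By definition $p_t(x) = \heat p(x) = \sum_{k\ge 0} \frac{(-t/2)^k}{k!}\partial_x^{2k}p(x)$, and the sum is finite because $p$ is a polynomial. So the coefficients of $p_t$ are polynomial in $t$, and the leading coefficient equals that of $p$, hence is nonzero. Fix $t_0$ at which $p_t$ has simple roots. Then $\partial_x p_{t_0}(\alpha_i(t_0)) \neq 0$ for each $i$. The implicit function theorem applied to $F(x,t) = p_t(x)$ near $(\alpha_i(t_0), t_0)$ gives a smooth local root branch. Since the roots stay distinct nearby, these branches keep their order, so they coincide with the ordered root vector $\alpha(t)$ near $t_0$. (If $p_t$ is real-rooted only for $t \le$ some value, real-rootedness near $t_0$ still holds: simple real roots of a real polynomial persist as real roots under small perturbation, since nonreal roots come in conjugate pairs.) Hence $\alpha$ is differentiable at $t_0$.

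Second, the computation. From $\partial_t p_t = -\tfrac12 \partial_x^2 p_t$, which follows directly from the series, differentiating $p_t(\alpha_i(t))=0$ gives
\[
\alpha_i'(t) = -\frac{\partial_t p_t(\alpha_i)}{p_t'(\alpha_i)} = \frac{1}{2}\,\frac{p_t''(\alpha_i)}{p_t'(\alpha_i)}.
\]
Write $p_t(x) = c\prod_j (x-\alpha_j)$. Differentiating $\log$, we get $p_t'/p_t = \sum_j \frac{1}{x-\alpha_j}$. Equivalently, $p_t'(x) = c\sum_j \prod_{k\neq j}(x-\alpha_k)$, and differentiating once more,
\[
p_t''(x) = c\sum_{j}\sum_{l\neq j}\prod_{k\neq j,l}(x-\alpha_k).
\]
Evaluate at $x=\alpha_i$. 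In $p_t'$ only the term $j=i$ survives, giving $p_t'(\alpha_i) = c\prod_{k\neq i}(\alpha_i-\alpha_k)$. In $p_t''$ only the terms with $i\in\{j,l\}$ survive, and these contribute $2c\sum_{l\neq i}\prod_{k\neq i,l}(\alpha_i-\alpha_k)$. Taking the ratio gives
\[
\frac{p_t''(\alpha_i)}{p_t'(\alpha_i)} = 2\sum_{l\neq i}\frac{1}{\alpha_i-\alpha_l},
\]
which yields the claimed formula. Setting $t=0$ gives $\alpha'(0)=\score(\alpha)$.

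The main obstacle, which is mild, is the regularity step. One must justify that the ordered labeling matches the implicit-function branches; this holds because simplicity prevents crossings in a neighborhood of $t_0$. The identity $\partial_t p_t = -\frac12\partial_x^2 p_t$ also has to be checked with the sign convention $u(x,-t)=p_t$, and the resulting sign is the one that produces $+\score$.
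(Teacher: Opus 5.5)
Your proof is correct and complete. The paper gives no argument of its own for this lemma and defers to \cite[Lemma 2.4]{csordas1994lehmer}; your route is the standard one used there: implicit differentiation of $p_t(\alpha_i(t))=0$, the relation $\partial_t p_t=-\tfrac12\partial_x^2 p_t$, and the evaluation $p_t''(\alpha_i)/p_t'(\alpha_i)=2\sum_{j\neq i}(\alpha_i-\alpha_j)^{-1}$. One harmless slip in your parenthetical: with this sign convention $p_t$ is guaranteed real-rooted for $t\ge 0$ but can fail to be for $t<0$ (e.g.\ $p=x^2$ gives $p_t=x^2-t$), so it should read ``$t\ge$ some value''; your local-persistence argument is unaffected.
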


We now record an observation that will be used repeatedly in our proofs. This is a direct consequence of the definition of $\score(\alpha)$. 

\begin{observation}[Orthogonality]
\label{obs:orthogonality_score}
    For every $\alpha\in \bR^n$,  $\score(\alpha)$ is orthogonal to $\bone_n$. 
\end{observation}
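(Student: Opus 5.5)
The claim is that $\sum_{i=1}^n \score(\alpha)_i = 0$. I will prove it directly from the definition of the score vector, by an antisymmetry argument. There is no real obstacle; the only care needed is the convention when $\alpha$ has repeated entries.

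The main case is $\alpha$ simple. Then every entry of $\score(\alpha)$ is finite. The inner product with $\bone_n$ is
\[
\bone_n^\top \score(\alpha)=\sum_{i=1}^n\sum_{j : j\neq i}\frac{1}{\alpha_i-\alpha_j}.
\]
This is a finite sum over ordered pairs $(i,j)$ with $i\neq j$, so I may reorganize it as a sum over unordered pairs $\{i,j\}$. Each unordered pair contributes
\[
\frac{1}{\alpha_i-\alpha_j}+\frac{1}{\alpha_j-\alpha_i}=0,
\]
so the total vanishes. Put differently, the summand $(i,j)\mapsto (\alpha_i-\alpha_j)^{-1}$ is antisymmetric, and an antisymmetric function summed over the symmetric index set $\{i\neq j\}$ gives zero.

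Alternatively, one can see this through \Cref{lem:score_and_derivatives}. There $\alpha'(0)=\score(\alpha)$, and $\bone_n^\top\alpha(t)=n\,m_1(p_t)$. The heat operator $\heat$ only changes coefficients of $x^{n-2}$ and lower, so it preserves $m_1$, and differentiating $\bone_n^\top\alpha(t)$ at $t=0$ gives zero. I would mention this as a remark, but the direct pairing argument above is the proof.

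If $\alpha$ is not simple, the score vector is only meaningful under whatever convention the paper adopts. The statement is intended for vectors at which $\score(\alpha)$ is defined, and for all such vectors the cancellation above applies verbatim.
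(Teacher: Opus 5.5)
Your proof is correct and takes the same approach as the paper, which simply records the statement as a direct consequence of the definition of $\score(\alpha)$: the double sum $\sum_{i}\sum_{j\neq i}(\alpha_i-\alpha_j)^{-1}$ vanishes because the terms $(i,j)$ and $(j,i)$ cancel in pairs. Your caveat that the statement only makes sense for simple $\alpha$, where the score vector is defined, is also appropriate.
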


Finally, to establish that Hermite polynomials are the only polynomials that attain equality in our inequalities, we will need the following well-known ODE description. 

\begin{lemma}[Hermite differential equation]
\label{lem:hermite_ode}
   If $f(x)$ is a degree $n$ polynomial satisfying the differential equation
    $$cf''(x)-xf'(x)+n f(x)=0,$$
    then $f(x) = a H_n(bx)$, where $H_n(x)$ is the degree $n$ Hermite polynomial and $a, b\in \bR$. 
\end{lemma}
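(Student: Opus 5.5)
The plan is to reduce the ODE to a recursion on the coefficients of $f$. That recursion shows the space of degree-$\le n$ polynomial solutions is one-dimensional. Then I will exhibit a rescaled Hermite polynomial lying in that space.

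First, write $f(x)=\sum_{k=0}^n a_k x^k$ with $a_n\neq 0$, and plug it into $cf''-xf'+nf=0$. Comparing the coefficients of $x^k$ gives, for every $0\le k\le n$ (with the convention $a_{n+1}=a_{n+2}=0$),
\[
c(k+2)(k+1)\,a_{k+2} + (n-k)\,a_k = 0 .
\]
For $k=n$ this is automatic. For $k<n$ we have $n-k\neq 0$, so
\[
a_k = -\frac{c(k+2)(k+1)}{n-k}\,a_{k+2}.
\]
Running this recursion downward from $a_{n+1}=0$ and $a_n$ shows that all coefficients are determined by $a_n$. In particular $a_{n-1}=a_{n-3}=\cdots=0$. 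Hence the solution space is the line spanned by a single monic polynomial $F_c$, and $f=a_nF_c$.

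Second, I identify $F_c$. The monic probabilist Hermite polynomial satisfies the classical equation $H_n''(y)-yH_n'(y)+nH_n(y)=0$; this is the case $c=1$, and it also follows from $H_n=e^{-\frac12\partial_x^2}x^n$. For $c>0$ put $g(x)=H_n(x/\sqrt c)$ and $y=x/\sqrt c$. Then $cg''(x)=H_n''(y)$ and $xg'(x)=yH_n'(y)$. So $cg''-xg'+ng$ equals the Hermite equation evaluated at $y$, which is $0$. By uniqueness, $f=a\,H_n(bx)$ with $b=c^{-1/2}$ and $a=a_n c^{n/2}$.

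Third, I handle the degenerate signs of $c$, which is the only delicate point, since the statement implicitly concerns the relevant case $c>0$.
\begin{itemize}
\item If $c=0$, the recursion forces $a_k=0$ for all $k<n$, so $f=a_nx^n$. This matches the paper's convention $0_*H_n=x^n$, i.e.\ the limiting rescaling of $H_n$.
\item If $c<0$, the same computation works with $b=c^{-1/2}$ purely imaginary. Because $H_n$ has only monomials of parity $n$, the polynomial $b^{-n}H_n(bx)$ is still a real polynomial, and $f=a_n b^{-n}H_n(bx)$.
\end{itemize}
In the applications $c$ will be positive, since it comes from a variance-type quantity. So the main content is the uniqueness argument in the first step together with the scaling check in the second.
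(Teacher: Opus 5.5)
Your argument is correct. The paper gives no proof of this lemma and simply cites it as well known. What you give is the standard argument:
\begin{itemize}
\item the coefficient recursion $c(k+2)(k+1)a_{k+2}+(n-k)a_k=0$ shows that the polynomial solutions of degree at most $n$ form a space of dimension at most one;
\item the scaling check shows that $H_n(x/\sqrt{c})$ lies in that space.
\end{itemize}

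Your third step can be made sharper. Read literally with $a,b\in\bR$, the lemma is false when $c\le 0$ and $n\ge 2$. Taking $k=n-2$ in your recursion gives $a_{n-2}/a_n=-c\binom{n}{2}\ge 0$. For $aH_n(bx)$ with real $b\neq 0$, the same ratio is $-\binom{n}{2}b^{-2}<0$, and $b=0$ gives only a constant. So for $c=0$ the solution $a_nx^n$ is not of the stated form, despite the convention $0_*H_n=x^n$. The lemma should therefore be read with $c>0$, which is exactly the situation in the paper's application.

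You can check $c>0$ there directly instead of appealing to a ``variance-type quantity.'' Pairing $(i,j)$ with $(j,i)$ gives $\alpha^\top\score(\alpha)=\sum_{i\neq j}\frac{\alpha_i}{\alpha_i-\alpha_j}=\binom{n}{2}$. Taking the inner product of $\alpha=c\,\score(\alpha)$ with $\alpha$ then gives $\|\alpha\|^2=c\binom{n}{2}$. Since $\alpha\neq 0$, this forces $c>0$.
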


\subsection{Hyperbolic polynomials}
\label{sec:geometry_of_polynomials}

 In what follows, we will use $\bR[x_1, \dots, x_m]$ to denote the set of multivariate polynomials with real coefficients in  $m$ variables. And, we will say that $f\in \bR[x_1, \dots, x_m]$ is a homogeneous polynomial if all of its monomials have the same total degree. In particular, we will work with the class of hyperbolic polynomials, which is a natural multivariate generalization of the class of real-rooted univariate polynomials.

\begin{definition}[Hyperbolic polynomial]
    Let $f\in \bR[x_1, \dots, x_m]$ be homogeneous and let $u\in \bR^m$. We say that $f$ is hyperbolic in the direction $u$ if $f(u)>0$ and the univariate polynomial $g_\zeta(t):=f(\zeta+tu)$ is real-rooted for every $\zeta\in \bR^m$. 
\end{definition}

In \cite[Corollary 3.3]{bauschke2001hyperbolic} Bauschke, G\"uler, Lewis, and Sendov obtained the following generalization of a classical result of G$\overset{\circ}{\text{a}}$rding \cite{gaarding1951linear}.

\begin{theorem}[Bauschke et al.]
\label{thm:baushke}
    Let $f\in \bR[x_1, \dots, x_m]$ be a hyperbolic polynomial in the direction $u\in \bR^m$ and for every $\zeta\in \bR^m$ let $\lambda_1(\zeta)\ge  \dots\ge  \lambda_m(\zeta)$ be the roots of $g_\zeta(t):=f(\zeta+tu)$.  Then, for every $k=1, \dots, m$, the function $$\sigma_k(\zeta) := \sum_{i=1}^k \lambda_i(\zeta)$$ 
    is convex in $\zeta$. 
\end{theorem}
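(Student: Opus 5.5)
We prove this by reduction to the case $k=1$, or rather its mirror image, the concavity of the smallest root. That case is G{\aa}rding's classical theorem. The reduction passes to an auxiliary hyperbolic polynomial whose roots are the $k$-fold partial sums of the roots of $f$. Throughout, let $d=\deg f$. Then $g_\zeta(t)=f(\zeta+tu)$ has leading coefficient $f(u)>0$ and real roots $\lambda_1(\zeta)\ge\cdots\ge\lambda_d(\zeta)$. The statement is only meaningful for $k\le d$, so we interpret $m$ in the index range as $d$.

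\emph{Step 1 (the case of the smallest root).} We first show that $\zeta\mapsto\lambda_{d}(\zeta)$ is concave for any hyperbolic $f$. This is G{\aa}rding's theorem. By homogeneity we have $\lambda_i(s\zeta)=s\lambda_i(\zeta)$ for $s>0$, and the shift rule $\lambda_i(\zeta+su)=\lambda_i(\zeta)+s$ holds for all real $s$. Consequently
\[
\{\zeta:\lambda_d(\zeta)\ge c\}=cu+\overline{K},\qquad K=\{\zeta:\lambda_d(\zeta)>0\},
\]
and concavity of the positively homogeneous function $\lambda_d$ is equivalent to convexity of the hyperbolicity cone $K$. Convexity of $K$ is the content of G{\aa}rding's theorem \cite{gaarding1951linear}. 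One standard proof shows that $f$ is hyperbolic in every direction $v\in K$ with the same cone, using continuity of roots and the fact that $f$ does not vanish on $K$. It then follows that for $v,w\in K$ the segment $[v,w]$ avoids the zero set of $f$, hence stays in $K$. We would either cite this or reproduce the short argument.

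\emph{Step 2 (auxiliary polynomial).} Let $N=\binom{d}{k}$ and define
\[
F(\zeta):=\prod_{S\subseteq[d],\,|S|=k}\ \sum_{i\in S}\lambda_i(\zeta).
\]
We check that $F$ is a hyperbolic polynomial in direction $u$.
\begin{itemize}
\item \emph{Polynomial.} The right-hand side is a symmetric polynomial in $\lambda_1(\zeta),\dots,\lambda_d(\zeta)$. It is therefore a polynomial in the elementary symmetric functions of these roots. Each of these equals, up to sign, a coefficient of $g_\zeta(t)$ divided by $f(u)$, and is thus a polynomial in $\zeta$. Hence $F$ is a polynomial in $\zeta$.
\item \emph{Homogeneity.} Homogeneity of the roots makes $F$ homogeneous of degree $N$.
\item \emph{Positivity in direction $u$.} Since $\lambda_i(u)=1$ for all $i$, we get $F(u)=k^N>0$.
\item \emph{Real-rootedness.} By the shift rule,
\[
F(\zeta+tu)=\prod_S\Bigl(\sum_{i\in S}\lambda_i(\zeta)+kt\Bigr),
\]
which has real roots for every $\zeta\in\bR^m$.
\end{itemize}
The roots of $t\mapsto F(\zeta+tu)$ are the numbers $\frac1k\sum_{i\in S}\lambda_i(\zeta)$. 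In particular its smallest root is $\frac1k\sum_{i=d-k+1}^{d}\lambda_i(\zeta)$.

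\emph{Step 3 (conclusion).} Applying Step 1 to $F$ shows that $\zeta\mapsto\sum_{i=d-k+1}^d\lambda_i(\zeta)$ is concave. Homogeneity of $f$ gives $g_{-\zeta}(t)=\pm g_\zeta(-t)$, so $\lambda_i(-\zeta)=-\lambda_{d+1-i}(\zeta)$. Therefore
\[
\sigma_k(\zeta)=-\sum_{i=d-k+1}^{d}\lambda_i(-\zeta),
\]
which is the negative of a concave function composed with a linear map, and hence convex.

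The main obstacle is Step 1, G{\aa}rding's convexity of the hyperbolicity cone; the remaining steps are algebraic bookkeeping. Within Step 2, the only point needing care is that the symmetric function of the roots is genuinely a polynomial in $\zeta$, which uses that the leading coefficient $f(u)$ of $g_\zeta$ is a nonzero constant.
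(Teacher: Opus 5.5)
The paper gives no proof of this statement and simply cites \cite[Corollary 3.3]{bauschke2001hyperbolic}. Your argument is essentially the one in that reference: you compose the root map with the symmetric polynomial $\prod_{|S|=k}\sum_{i\in S}y_i$ to get a hyperbolic polynomial whose roots are the averaged $k$-fold partial sums, and then invoke G{\aa}rding's theorem. Using the smallest root together with the reflection $\zeta\mapsto-\zeta$, as you do, is equivalent to using convexity of the largest root directly.

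There is one sign-convention slip to fix. Your identities $\lambda_i(\zeta+su)=\lambda_i(\zeta)+s$, $\lambda_i(u)=1$ and $F(u)=k^N$ hold for the roots of $f(\zeta-tu)$, not for the roots of $f(\zeta+tu)$ as in the statement. Also, with your displayed factorization, the roots of $t\mapsto F(\zeta+tu)$ would be $-\frac1k\sum_{i\in S}\lambda_i(\zeta)$, not $+\frac1k\sum_{i\in S}\lambda_i(\zeta)$. This does not affect the conclusion: the statement's $\lambda_i(\zeta)$ equals your $\lambda_i(-\zeta)$, and $\zeta\mapsto-\zeta$ preserves convexity.
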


For our proofs we will need the following corollary of the above result. 

\begin{corollary}[Bauschke et al.]
\label{cor:bauschke}
Using the same notation as in \Cref{thm:baushke}, if $c_1\ge \cdots \ge c_m\ge 0$, then
    the function 
    $$\sum_{i=1}^m c_i \lambda_i(\zeta)$$
    is convex in $\zeta$. 
\end{corollary}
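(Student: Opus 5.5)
The approach is summation by parts, which reduces the weighted sum to a nonnegative combination of the partial sums $\sigma_k$ from \Cref{thm:baushke}. Set $c_{m+1}:=0$ and write each coefficient as a telescoping sum, $c_i=\sum_{k=i}^{m}(c_k-c_{k+1})$. Exchanging the order of summation then gives
\[
\sum_{i=1}^m c_i\lambda_i(\zeta)=\sum_{k=1}^m (c_k-c_{k+1})\sum_{i=1}^k\lambda_i(\zeta)=\sum_{k=1}^m (c_k-c_{k+1})\,\sigma_k(\zeta).
\]

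By hypothesis $c_1\ge\cdots\ge c_m\ge 0$. Hence every weight $c_k-c_{k+1}$ with $k<m$ is nonnegative, and the last weight $c_m-c_{m+1}=c_m$ is also nonnegative.

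\Cref{thm:baushke} says that each $\sigma_k$ is convex in $\zeta$. A nonnegative linear combination of convex functions is convex, so $\sum_i c_i\lambda_i(\zeta)$ is convex in $\zeta$.

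There is no real obstacle. The only point needing care is the boundary term $k=m$, and that is exactly where the assumption $c_m\ge 0$ is used.
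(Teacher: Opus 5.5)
Your proof is correct and matches the paper's: it writes $\sum_{i=1}^m c_i\lambda_i(\zeta) = c_m\sigma_m(\zeta)+\sum_{i=1}^{m-1}(c_i-c_{i+1})\sigma_i(\zeta)$, which is your summation-by-parts identity with $c_{m+1}=0$. It then concludes, as you do, that a nonnegative combination of the convex functions $\sigma_k$ is convex.
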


\begin{proof}
    Write $\sum_{i=1}^m c_i \lambda_i(\zeta) = c_m \sigma_m(\zeta)+ \sum_{i=1}^{m-1} (c_i-c_{i+1}) \sigma_i(\zeta)$. Then, by \Cref{thm:baushke},  the right-hand side is a positive linear combination of convex functions and therefore it is convex. 
\end{proof}

To determine the equality cases for our inequalities we will make use of a celebrated result of Helton and Vinnikov \cite{helton2007linear}, which states that hyperpbolic polynomials in three variables admit a determinantal representation. 

\begin{theorem}[Helton--Vinnikov]
\label{thm:Helton-Vinnikov}
    Let $f\in \bR[x, s, t]$ be a degree $n$ hyperbolic polynomial  in the direction $(1, 0, 0)$. Then, there exist $n\times n$ Hermitian matrices $A$ and $B$ such that
    $$f(x, s, t) = \det(x-sA-tB). $$
\end{theorem}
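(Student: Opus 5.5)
The plan is to follow the route of Vinnikov and Helton--Vinnikov: pass to the projective plane curve cut out by $f$ and build the matrices $A,B$ from a suitable line bundle on that curve. First I would make two normalizations. Scale $f$ so that $f(1,0,0)=1$; the statement implicitly requires this, since the right-hand side equals $1$ at $(1,0,0)$. Then perturb $f$ within the set of hyperbolic polynomials (in the direction $(1,0,0)$) so that it becomes \emph{strictly} hyperbolic: for each $(s,t)\neq 0$, the roots in $x$ of $f(x,s,t)$ are simple. Strictly hyperbolic polynomials are dense among hyperbolic ones, and for them the curve $C=\{f=0\}\subset\mathbb{P}^2$ is smooth. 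The set of polynomials of the form $\det(x-sA-tB)$ is closed under limits, because the Hermitian $A,B$ stay bounded (their eigenvalues are roots of $f(x,s,0)$ and $f(x,0,t)$). So it suffices to treat the smooth case.

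In the smooth case, hyperbolicity translates into real topology. Each real line through the point $[1:0:0]$ meets $C(\mathbb{R})$ in exactly $n$ real points, so $C(\mathbb{R})$ consists of $\lfloor n/2\rfloor$ nested ovals, plus a pseudo-line if $n$ is odd. In particular $C$ is of dividing type: $C(\mathbb{C})\setminus C(\mathbb{R})$ has two components. Next I would look for a line bundle $\mathcal{L}$ on $C$ with the following properties. It should have degree $g-1$, where $g=\binom{n-1}{2}$, and satisfy $h^0(\mathcal{L})=0$. It should be real, and it should satisfy $\mathcal{L}\otimes\overline{\mathcal{L}}\cong K_C$, or more precisely a Hermitian version of this condition. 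Theta characteristics are the classical choice for symmetric representations. For Hermitian representations one uses instead the bundles coming from the real torus of the Jacobian attached to the dividing structure.

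Given such an $\mathcal{L}$, the determinantal representation comes from Beauville/Dixon-type resolutions. The sheaf $\mathcal{L}(1)$ pushed forward to $\mathbb{P}^2$ has a resolution $0\to\mathcal{O}(-1)^n\xrightarrow{M}\mathcal{O}^n\to\mathcal{L}(1)\to 0$ with $M$ a linear matrix and $\det M=c f$. The condition $h^0(\mathcal{L})=0$ is what guarantees that the resolution has this shape. Concretely, following Vinnikov, one writes the entries of the adjugate using theta functions or Cauchy kernels on $C$. Then $M=xM_0-sM_1-tM_2$ can be made Hermitian using a pairing $\mathcal{L}\otimes\overline{\mathcal{L}}\to K_C$ together with residues.

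The key remaining step, and the one I expect to be the main obstacle, is showing that $M_0$ is \emph{definite}. Once $M_0\succ 0$, we can conjugate by $M_0^{-1/2}$ to obtain $\det(x-sA-tB)$, and the normalization fixes the constant. Definiteness is exactly where hyperbolicity must be used again. I would try to compute the Hermitian form $v^*M_0v$ as an integral of a $1$-form over $C(\mathbb{R})$ against a kernel. The nesting of the ovals around $[1:0:0]$ forces this $1$-form to have constant sign on the real locus. The difficulty is choosing $\mathcal{L}$ from the correct connected component of the real part of the Jacobian so that this sign is uniform. Vinnikov's characterization of the "definite" components via theta functions is the tool I would aim to reproduce, or simply import.
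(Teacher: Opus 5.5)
The paper does not prove this statement; it imports it from \cite{helton2007linear}. Your outline follows the route of that reference: reduce to a smooth curve using approximation plus closedness of the set of forms $\det(x-sA-tB)$, then build a definite Hermitian representation from a non-effective line bundle of degree $g-1$ via Vinnikov's theory. As in the paper, the substantive inputs are imported rather than proved. These are the existence of an $\mathcal{L}$ with $h^0(\mathcal{L})=0$ in the part of the real torus where the construction yields a definite $M_0$, and that definiteness itself. So your sketch is complete only in the sense that the paper's citation is. You are right that the statement tacitly needs $f(1,0,0)=1$, and your compactness argument for closedness is correct.

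One step of your reduction is false as written: strict hyperbolicity does not make $C$ smooth. It only excludes \emph{real} singular points, since a real singular point $p$ would give a multiple root of $x\mapsto f(p+x(1,0,0))$. For example, $f=x(x^2-s^2-t^2)$ is strictly hyperbolic in the direction $(1,0,0)$, with roots in $x$ equal to $0,\pm\sqrt{s^2+t^2}$. Yet the line $x=0$ meets the conic at $[0:1:\pm i]$, and these are singular points of the cubic; indeed no reducible plane curve is smooth. The line-bundle and theta-function machinery needs a smooth $C$, so you need one more perturbation. Strict hyperbolicity is an open condition on the coefficients, because simple real roots persist under small perturbations, uniformly over the compact circle $s^2+t^2=1$. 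Forms with singular zero locus lie in a proper Zariski-closed subset. Hence smooth strictly hyperbolic forms are dense among strictly hyperbolic ones, and by Nuij's density theorem among all hyperbolic ones. With this repair your reduction to the smooth case goes through.
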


Finally, we will also use the well-known notion of hyperbolicity cone. 

\begin{definition}
\label{def:hyperbolicity_cone}
    If $f\in \bR[x_1, \dots, x_m]$ is hyperbolic in the direction $u\in \bR^m$, then the hyperbolicity cone of $f$ with respect to $u$ is the set 
    $$K(f, u) = \{\zeta\in \bR^n : g_\zeta(t)=f(\zeta-tu) \text{ has only positive roots} \}.$$
\end{definition}

\subsection{The finite free convolution} In the sequel, and in connection to hyperbolicity, it will prove useful to have the following alternative description of the finite free convolution (see \cite[Theorem 2.11]{marcus2022finite}). If $p(x)$ and $q(x)$ are polynomials of degree $n$, their finite free convolution is given by
\begin{equation}
\label{eq:finite_free_convo}
p(x)\boxplus_n q(x) = \frac{1}{n!} \sum_{\pi\in S_n} \prod_{i=1}^n(x-\alpha_i-\beta_{\pi(i)}),
\end{equation}
where $\alpha$ and $\beta$ are vectors of roots for $p(x)$ and $q(x)$, respectively, and $S_n$ denotes the symmetric group on $n$ elements. 

As in the derivative case, we will define a map that takes the roots of $p(x)$ and $q(x)$ and outputs the roots of the convolution $p(x)\boxplus_nq(x)$. 

\begin{definition}[Map for roots of the convolution] For every $\alpha, \beta \in \bR^n$ we will use
$$\Omega_{\scriptscriptstyle\boxplus_n}^{1}(\alpha, \beta)\ge \cdots \ge \Omega_{\scriptscriptstyle\boxplus_n}^{n}(\alpha, \beta) $$
    to denote the roots of the polynomial $\pmap{\alpha}(x)\boxplus_n \pmap{\beta}(x)$, and define $$\roots(\alpha, \beta):=\adj{(\Omega_{\scriptscriptstyle\boxplus_n}^{1}(\alpha, \beta), \dots, \Omega_{\scriptscriptstyle\boxplus_n}^{n}(\alpha, \beta))}.$$ 
\end{definition}

As mentioned before,  Walsh \cite{walsh1922location} proved that if $p(x)$ and $q(x)$ are real-rooted polynomials, then so it $p(x)\boxplus_n q(x)$. Hence, we can view $\roots$ as map from $\bR^n\times \bR^n$ to $\bR^n$. Moreover, as with the differential operator $\partial_x$, in this general setting it also holds that if either $p(x)$ or $q(x)$ have simple roots, then $p(x)\boxplus_nq(x)$ is guaranteed to have simple roots. This is a corollary of the following stronger (well-known) result mentioned in the introduction. 

\begin{theorem}[Preservation of mesh]
\label{thm:preservation_of_mesh}
    Let $\alpha, \beta\in \bR^n$.  Then, the mesh of $\pmap{\alpha}\boxplus_n \pmap{\beta}$ the mesh is greater or equal than that of $\pmap{\alpha}$ and $\pmap{\beta}$. In other words
    $$\min_{i\neq j} |\Omega_{\scriptscriptstyle\boxplus_n}^{i}(\alpha, \beta)-\Omega_{\scriptscriptstyle\boxplus_n}^{j}(\alpha, \beta)| \geq \max\{\min_{i\neq j} |\alpha_i-\alpha_j|, \min_{i\neq j} |\beta_i-\beta_j|\}.  $$
\end{theorem}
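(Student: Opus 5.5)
The plan is to deduce the theorem from a general principle: \emph{a linear operator that preserves real-rootedness and commutes with translations cannot decrease the mesh}. Since $p\boxplus_n q=\hat q(\partial_x)p=\hat p(\partial_x)q$, it suffices to show that $\mathrm{mesh}(T f)\ge \mathrm{mesh}(f)$ for $T=\hat q(\partial_x)$ and $f=p$. By the symmetry of $\boxplus_n$, the same argument with the roles of $p$ and $q$ exchanged gives the bound by the mesh of $q$, and hence by the maximum of the two.

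\textbf{Setup.} Here $T$ preserves real-rootedness by Walsh's theorem, as recalled above. It commutes with translations $\tau_h f(x):=f(x+h)$ because it is a constant-coefficient differential operator. Moreover $Tf=p\boxplus_n q$ is monic of degree $n$ when $p,q$ are monic, so no degree drop occurs. We may assume $\delta:=\mathrm{mesh}(p)>0$, since otherwise there is nothing to prove.

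\textbf{Step 1 (mesh via interlacing with translates).} Let $\alpha_1\ge\cdots\ge\alpha_n$ be the roots of $f$. For $0\le h\le \delta$, the roots $\alpha_i-h$ of $\tau_h f$ satisfy $\alpha_{i+1}\le \alpha_i-h\le \alpha_i$. So $f$ and $\tau_h f$ have (weakly) interlacing roots for every $h\in[0,\delta]$.

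\textbf{Step 2 (interlacing is preserved).} By the Hermite--Kakeya--Obreschkoff theorem, two real-rooted polynomials $f,g$ of degree $n$ interlace if and only if $af+bg$ is real-rooted for all real $a,b$. Since $T$ is linear and preserves real-rootedness, the polynomial $a\,Tf+b\,T\tau_h f=T(af+b\tau_h f)$ is real-rooted whenever $af+b\tau_h f$ is (for real $a,b$; this combination may have degree below $n$, which is why $T$ should be used as a preserver on all of $\bR_{\le n}[x]$). Hence $Tf$ and $T\tau_h f=\tau_h Tf$ interlace for all $h\in[0,\delta]$.

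\textbf{Step 3 (back to the mesh).} Let $\gamma_1\ge\cdots\ge\gamma_n$ be the roots of $Tf$. The sorted roots of $\tau_h Tf$ are $\gamma_i-h\le\gamma_i$, so interlacing forces the pattern $\gamma_{i+1}\le\gamma_i-h\le\gamma_i$. In particular $\gamma_i-\gamma_{i+1}\ge h$ for every $h\le\delta$, which gives $\mathrm{mesh}(Tf)\ge\delta$.

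\textbf{Main obstacle.} The delicate point is Step 3: we must make sure the interlacing is of the ``shifted-left'' type, so that the $i$-th root of $\tau_h Tf$ sits between $\gamma_{i+1}$ and $\gamma_i$. This must hold rather than the alternative pairing, and it must hold in the presence of possible coincidences when $h=\delta$ or when $Tf$ has repeated roots. I would handle this by first treating $0<h<\delta$ with $p$ simple, in which case interlacing is strict and the sorted comparison $\gamma_i-h<\gamma_i$ pins down the pattern. The endpoint $h=\delta$ then follows by letting $h\uparrow\delta$, and the non-simple case follows by continuity of the roots in $p$.
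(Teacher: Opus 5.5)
Your proof is correct, but it follows a different route from the paper. The paper proves this theorem only by citation: it combines Riesz's theorem ($\mathrm{mesh}(p')\ge\mathrm{mesh}(p)$) with the Hermite--Poulain theorem, and refers to Katkova--Shapiro--Vishnyakova for details. In other words, it reduces $\hat q(\partial_x)$ to elementary real-rootedness preservers and controls the mesh of each one by Riesz's theorem.

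You give a structural argument instead. For $h>0$, the condition $\mathrm{mesh}(f)\ge h$ is equivalent to $f$ interlacing $\tau_h f$. The Hermite--Kakeya--Obreschkoff theorem carries interlacing through any linear real-rootedness preserver, and commutation with translations closes the loop.

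What your route buys:
\begin{itemize}
\item It is self-contained and needs no factorization of $\hat q$. This matters because $\hat q$ need not be a real-rooted polynomial: for $q=(x-1)^n$ it is $e^{-s}$ modulo $s^{n+1}$, so a factorization-based argument has to pass through limits.
\item It works verbatim for any translation-invariant linear preserver. With $T=\partial_x$ it reproves Riesz's theorem itself.
\item It matches the interlacing-preservation arguments used elsewhere in the paper.
\end{itemize}
The paper's route buys brevity by deferring to classical results.

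Two small points.

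First, the fact you flag in Step 2 needs a line of proof, since Walsh's theorem as quoted covers only degree-$n$ inputs. The only lower-degree combination is a multiple of $f-\tau_h f$. Then $T(f-\tau_h f)=\lim_{b\to-1}T(f+b\,\tau_h f)$ is a limit of real-rooted polynomials. It is nonzero, since its leading coefficient is $-nh$ (because $\hat q(0)=1$). Hence it is real-rooted by Hurwitz's theorem.

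Second, your ``main obstacle'' is not really an obstacle. The Hermite--Kakeya--Obreschkoff theorem holds with weak interlacing, common zeros allowed. For $h>0$ the other interlacing pattern would force $\gamma_1-h\ge\gamma_1$, which is impossible. So you may take $h=\delta$ directly. Moreover, $\delta>0$ already makes $p$ simple, so no continuity argument is needed.
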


\begin{proof}
    This is easily proved by combining Riesz's theorem \cite{stoyanoff1925theoreme} with the Hermite-Poulain theorem. We refer the reader to \cite{katkova2011multiplier} for details. 
\end{proof}

We can then establish the following. 

\begin{observation}[Smoothness]
\label{obs:smoothness_convolution}
    Let $\alpha, \beta\in \bR^n$. If $\alpha$ is a simple vector, then $\roots$ is differentiable in an open neighborhood of $(\alpha, \beta)$. The same conclusion holds if $\beta$ is a simple vector.
\end{observation}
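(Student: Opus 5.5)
The argument mirrors the proof of \Cref{obs:smoothness_of_dermap}: show that the convolution polynomial has simple roots near $(\alpha,\beta)$, then use that simple roots depend smoothly on the coefficients. By the symmetry $\pmap{\alpha}\boxplus_n\pmap{\beta}=\pmap{\beta}\boxplus_n\pmap{\alpha}$, which follows from \eqref{eq:finite_free_convo} by reindexing $\pi\mapsto\pi^{-1}$, it is enough to treat the case where $\alpha$ is simple.

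First, the coefficients of $\pmap{\alpha}\boxplus_n\pmap{\beta}$ are polynomial functions of $(\alpha,\beta)$. This is immediate from \eqref{eq:finite_free_convo}, since each product $\prod_{i}(x-\alpha_i-\beta_{\pi(i)})$ has coefficients polynomial in the entries. Equivalently, the coefficients of $\pmap{\alpha}$ and $\pmap{\beta}$ are elementary symmetric polynomials, and $\boxplus_n$ is bilinear in those coefficients. The polynomial is also monic of degree $n$.

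Second, take a neighborhood $U$ of $(\alpha,\beta)$ small enough that every $\alpha'$ with $(\alpha',\beta')\in U$ is still simple. This is possible because $\delta:=\min_{i\ne j}|\alpha_i-\alpha_j|>0$ and the minimum gap is continuous, so we may keep $\min_{i\neq j}|\alpha'_i-\alpha'_j|>\delta/2$. By \Cref{thm:preservation_of_mesh}, for all $(\alpha',\beta')\in U$ the roots of $\pmap{\alpha'}\boxplus_n\pmap{\beta'}$ are then real (by Walsh's theorem) and pairwise separated by at least $\delta/2$. In particular they are simple.

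Third, at a monic real-rooted polynomial with simple roots, each root $r$ satisfies $f'(r)\neq0$. The implicit function theorem applied to $F(c,x)=x^n+\sum_k c_kx^{n-k}$ therefore shows that each root is locally a smooth function of the coefficient vector $c$. Composing with the polynomial map $(\alpha',\beta')\mapsto c$ shows that each root is smooth on a neighborhood.

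One point needs care: $\roots$ lists the roots in decreasing order, so the smooth local root branches must coincide with the ordered labelling. Since the roots stay real and uniformly separated on $U$, and are continuous there, no two branches can cross. Hence the ordering is locally constant, and each $\rootsi$ equals one fixed smooth branch on a (connected, shrunk) neighborhood. This gives differentiability, indeed smoothness, of $\roots$. This separation argument is the only non-routine step, and \Cref{thm:preservation_of_mesh} handles it directly.
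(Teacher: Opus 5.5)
Your proposal is correct and follows essentially the same route as the paper. The coefficients of $\pmap{\alpha}\boxplus_n\pmap{\beta}$ are polynomial in $(\alpha,\beta)$, simplicity of $\alpha$ forces simple roots of the convolution via preservation of mesh (Theorem \ref{thm:preservation_of_mesh}), and simple roots depend smoothly on the coefficients. You additionally make explicit two points the paper leaves implicit, namely that simplicity persists on a whole neighborhood (via the $\delta/2$ mesh bound) and that the ordered labelling agrees with the implicit-function branches; this tightens the paper's argument rather than changing it.
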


\begin{proof}
As in the proof of \Cref{obs:smoothness_of_dermap} we use that, for polynomials with simple roots, the roots are a differentiable function of the coefficients. Then, since the coefficients of $\pmap{\alpha}(x)\boxplus_n \pmap{\beta}(x)$ are a polynomial function of $\alpha$ and $\beta$, and  $\pmap{\alpha}(x)\boxplus_n \pmap{\beta}(x)$ has simple roots if $\alpha$ is simple, the result follows.    
\end{proof}

    %It is worth noting that in the proof of \Cref{obs:smoothness_convolution}, the root simplicity result implied by Fujie's theorem is only being used to locate an explicit  region of differentiability for $\Omega_{\scriptscriptstyle\boxplus_n}$. However,  for the use cases in this paper, it will be enough to know that the set of points of differentiability of $\roots$ is dense in $\bR^n\times \bR^n$, and this can  alternatively be obtained by an elementary perturbative argument. 

Finally, we state some  well-known properties of the finite free convolution that will be used in the sequel. 

\begin{proposition}[Properties of $\boxplus_n$]
\label{prop:properties_of_ff_convolutions}
    For all $\alpha, \beta\in \bR^n$ the following hold:
    \begin{enumerate}[(i)]
        \item \label{item:translations} (Commutation with translation)   For all $t\in \bR$ it holds that 
        $$\roots(\alpha+t\bone_n ,\beta) = \roots(\alpha, \beta) +t\bone_n \qquad \text{and}\qquad  \roots(\alpha , \beta + t \bone_n) = \roots(\alpha, \beta)+t\bone_n.$$  
                \item \label{item:additivity} (Additivity) The first two moments satisfy 
        $$m_1(\roots(\alpha, \beta))=m_1(\alpha)+m_1(\beta)\qquad \text{and}\qquad \var(\roots(\alpha, \beta)) = \var(\alpha)+\var(\beta).$$ 
        \item \label{item:interlacing} (Interlacing preservation) If $\alpha, \widetilde{\alpha}\in \bR^n$ and $\alpha$ interlaces $\widetilde{\alpha}$, by which we mean $\alpha_1\ge \widetilde{\alpha}_1\ge \alpha_2 \cdots\ge \alpha_n \ge \widetilde{\alpha}_n$,  then $\roots(\alpha, \beta)$ interlaces $\roots(\widetilde{\alpha},\beta)$. 
    \end{enumerate}
\end{proposition}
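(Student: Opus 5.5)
Each item will be derived from the permutation formula \eqref{eq:finite_free_convo} together with the differential-operator description $p\boxplus_n q=\hat q(\partial_x)p$. I will take the items in order.

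For (i), I will use \eqref{eq:finite_free_convo} with roots $\alpha+t\bone_n$ and $\beta$. Each factor becomes $x-t-\alpha_i-\beta_{\pi(i)}$, so
$$\pmap{\alpha+t\bone_n}\boxplus_n\pmap{\beta}(x)=\bigl(\pmap{\alpha}\boxplus_n\pmap{\beta}\bigr)(x-t).$$
Hence the roots shift by $t$. The ordering is preserved, so $\roots$ shifts by $t\bone_n$. The statement for $\beta$ follows in the same way, or from symmetry of \eqref{eq:finite_free_convo} under swapping $\alpha$ and $\beta$.

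For (ii), I will compute the first two coefficients. Write $\pmap{\alpha}=\sum(-1)^k a_kx^{n-k}$ and similarly with coefficients $b_k$ for $\beta$ and $c_k$ for the convolution. Expanding \eqref{eq:finite_free_convo} gives:
\begin{itemize}
\item $c_1=\sum_i(\alpha_i+\beta_i)=a_1+b_1$, which is the additivity of $m_1$;
\item $c_2=\frac1{n!}\sum_\pi e_2(\alpha_i+\beta_{\pi(i)})$.
\end{itemize}
By (i) it suffices to treat the centered case $a_1=b_1=0$. Then $c_2=a_2+b_2+\frac{1}{n!}\sum_\pi\sum_{i\ne j}\alpha_i\beta_{\pi(j)}$. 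Averaging over $\pi$, the cross term becomes $\frac{1}{n(n-1)}\cdot\sum_{i\neq j}\alpha_i\cdot(\text{something proportional to }\sum_k\beta_k)$. Since $\sum_k\beta_k=0$, the cross term vanishes. Alternatively, I can argue directly: in $\sum_{i\ne j}\alpha_i\beta_{\pi(j)}$, averaging over $\pi$ replaces $\beta_{\pi(j)}$ for fixed $i$ by averages that involve $\sum_k\beta_k$ and $\beta_{\pi(i)}$. This gives $-\frac{1}{n}\sum_i\alpha_i\cdot$ (average of $\beta$) terms, which again vanish when centered. So $c_2=a_2+b_2$. Using $m_2=-2c_2/n$ in the centered case, exactly as in \eqref{eq:vietarelations}, gives additivity of the variance. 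This coefficient bookkeeping is the only place needing care, and it is routine.

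For (iii), I will use $\pmap{\alpha}\boxplus_n\pmap{\beta}=\hat q(\partial_x)\pmap{\alpha}$ with $q=\pmap\beta$. Here $T=\hat q(\partial_x)$ is a linear operator preserving real-rootedness by Walsh's theorem. By Hermite--Biehler / Obreschkoff, $\alpha$ interlaces $\widetilde\alpha$ if and only if every real combination $\mu\pmap{\alpha}+\nu\pmap{\widetilde\alpha}$ is real-rooted. By linearity, $\mu T\pmap{\alpha}+\nu T\pmap{\widetilde\alpha}=T(\mu\pmap{\alpha}+\nu\pmap{\widetilde\alpha})$, which is therefore real-rooted. Applying Obreschkoff's theorem in reverse shows that the images interlace.

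The main obstacle is fixing the direction of interlacing, since Obreschkoff's criterion is symmetric. I would handle this by a continuity argument. Consider the path $\alpha_s=(1-s)\alpha+s\widetilde\alpha$ for $s\in[0,1]$, along which each $\alpha_s$ is interlaced by $\alpha$ in the same orientation. The images $\roots(\alpha_s,\beta)$ vary continuously and interlace $\roots(\alpha,\beta)$ for every $s$. Their relative order cannot flip without passing through equality, so the order seen for small $s$ persists up to $s=1$. For small $s$, the image roots move in the direction of increasing $\alpha$-roots' decrease, by monotonicity of the roots of $T\pmap{\cdot}$ in the input roots. This gives $\roots(\alpha,\beta)$ interlacing $\roots(\widetilde\alpha,\beta)$ with the stated orientation.
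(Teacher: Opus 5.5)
Your arguments for (i) and (ii) match the paper's: both are read off from \eqref{eq:finite_free_convo} via Vieta. In (ii) you can replace the hedged description of the cross term by the exact identity $\frac{1}{n!}\sum_{\pi}\sum_{i\neq j}\alpha_i\beta_{\pi(j)}=\frac{n-1}{n}a_1b_1$, since for fixed $j$ the average of $\beta_{\pi(j)}$ over $\pi$ is $m_1(\beta)$.

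For (iii), linearity of $T=\hat q(\partial_x)$ plus the Hermite--Kakeya--Obreschkoff theorem is the standard argument behind the Fisk result the paper cites. It correctly shows that $\gamma:=\roots(\alpha,\beta)$ and $\widetilde\gamma:=\roots(\widetilde\alpha,\beta)$ interlace in \emph{some} orientation. There is one minor point: when $\mu=-\nu$ the combination has degree $<n$, which Walsh's theorem as stated does not cover. You can handle it by a Hurwitz limit such as $T(p-\widetilde p)=\lim_{\varepsilon\to 0}T(p-(1-\varepsilon)\widetilde p)$.

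The genuine gap is the orientation step. You conclude by appealing to ``monotonicity of the roots of $T\pmap{\cdot}$ in the input roots,'' but this is never established. Given the unoriented interlacing you already have, it is essentially equivalent to what you are trying to prove. In this paper, the sign of the Jacobian entries (Lemma~\ref{lem:double_stochasticity_convolution}(iii)) is derived from item (iii) itself, so the argument as written is circular.

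Two further remarks on that step. If you had monotonicity, the path $\alpha_s$ would be superfluous, since you could apply it directly at $s=1$. Also, the claim that the orientation cannot flip ``without passing through equality'' needs $\roots(\alpha_s,\beta)\neq\roots(\alpha,\beta)$ for $s>0$. This holds because $\hat q(0)=1$ makes $\hat q(\partial_x)$ invertible, but you did not say so.

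The repair is one line using (ii):
\begin{itemize}
\item Since $\widetilde\alpha_i\le\alpha_i$ for all $i$, mean additivity gives $m_1(\widetilde\gamma)-m_1(\gamma)=m_1(\widetilde\alpha)-m_1(\alpha)\le 0$.
\item Suppose the interlacing had the reverse orientation $\widetilde\gamma_1\ge\gamma_1\ge\widetilde\gamma_2\ge\cdots$. Then $\widetilde\gamma_i\ge\gamma_i$ for every $i$.
\item Together with the inequality of means, this forces $\widetilde\gamma=\gamma$, which also satisfies the stated orientation.
\end{itemize}
Hence $\gamma_1\ge\widetilde\gamma_1\ge\gamma_2\ge\cdots\ge\gamma_n\ge\widetilde\gamma_n$ always holds, and no continuity argument is needed.
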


\begin{proof}
    First note that (\ref{item:translations}) follows directly from \eqref{eq:finite_free_convo}. The claims in (\ref{item:additivity}) can also be proven directly from \eqref{eq:finite_free_convo} via Vieta's relations, but also follow from the much more general theory of finite free cumulants \cite{arizmendi2018cumulants}. Finally, (\ref{item:interlacing}) follows from the fact that  $\boxplus_n$ preserves real-rootedness and Theorem 1.43 in \cite{fisk2006polynomials} (also see \cite[Proposition 2.11]{martinez2024real} for an exposition in the context of finite free probability). 
\end{proof}

\section{Relating score vectors}
\label{sec:blachman}

Our proofs of Theorems \ref{thm:fisher_monotonicity} and \ref{thm:stams_inequality} are inspired by Blachman's  proof of Stam's inequality \cite{blachman2003convolution}, which we review below to provide some motivation. A crucial notion in Blachman's argument, as well as in the existing proofs of the free Stam inequality \cite{shlyakhtenko2007free, shlyakhtenko2022fractional}, is that  of conditional expectation and therefore that of joint distribution.  However, as  convolutions in finite free probability are not produced by random variables interacting with each other, in this context there is no notion of joint distribution and therefore no clear notion of conditional expectation. In what follows we posit that, in the context of entropy inequalities, the Jacobians of the root maps $\dermap$ and $\roots$ can serve as a replacement for conditional expectations, and we successfully exploit this perspective in the proofs of Theorems \ref{thm:fisher_monotonicity} and \ref{thm:stams_inequality}.   

\subsection{Blachman's proof} 

To discuss the proof of Blachman \cite{blachman2003convolution} we begin by recalling the definitions from the classical setting, where we will only consider absolutely continuous random variables and, for $X$  a  random variable, we will use $\rox(x)$ to denote its probability density function. 

The (classical) score function of a random variable $X$ is defined as $\frac{\rho_X'(x)}{\rox(x)}$ and its (classical) Fisher information as $$\calI(X) := \E\lbr{\lpr{\tfrac{\rox'(X)}{\rox(X)} }^2}.$$

Blachman's key observation is that the score function of a sum of independent random variables can be related, via conditional expectations, to the score functions of the original random variables. 

\begin{lemma}[Blachman]
\label{lem:blachman}
     Let $X$ and $Y$ be independent random variables and $Z:= X+Y$.  Then, for any $z\in \bR$ one has
 \begin{equation}
 \label{eq:blachman_score_functions}
    \frac{\roz'(z)}{\roz(z)} = \E\left[ \left. \frac{\rox'(X)}{\rox(X)} \right| Z=z \right] = \E\left[\left. \frac{\roy'(Y)}{\roy(Y)}\right| Z=z\right]. 
 \end{equation}
\end{lemma}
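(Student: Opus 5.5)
The plan is to prove Lemma~\ref{lem:blachman} by a direct density computation: write $\roz$ as a convolution, differentiate under the integral sign, and recognize the resulting ratio as a conditional expectation. Throughout, $\rox$ and $\roy$ are assumed smooth enough, e.g.\ $\rox$ is $C^1$ with $\rox'$ integrable, so that we may differentiate under the integral. We also take $z$ with $\roz(z)>0$, so the conditional law of $X$ given $Z=z$ is well defined.

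First, by independence,
\[
\roz(z)=\int_{\bR}\rox(x)\roy(z-x)\,\d x=\int_{\bR}\rox(z-y)\roy(y)\,\d y .
\]
Moreover, the joint density of $(X,Z)$ is $\rox(x)\roy(z-x)$. Hence the conditional density of $X$ given $Z=z$ is
\[
\frac{\rox(x)\roy(z-x)}{\roz(z)},
\]
and symmetrically the conditional density of $Y$ given $Z=z$ is $\rox(z-y)\roy(y)/\roz(z)$.

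Next, differentiate the second expression for $\roz$ in $z$, passing the derivative onto $\rox$:
\[
\roz'(z)=\int_{\bR}\rox'(z-y)\roy(y)\,\d y .
\]
Substituting $x=z-y$ gives $\int \rox'(x)\roy(z-x)\,\d x$. Dividing by $\roz(z)$ and writing $\rox'(x)=\frac{\rox'(x)}{\rox(x)}\rox(x)$ on the set $\{\rox>0\}$, we obtain
\[
\frac{\roz'(z)}{\roz(z)}=\int \frac{\rox'(x)}{\rox(x)}\cdot\frac{\rox(x)\roy(z-x)}{\roz(z)}\,\d x=\E\left[\left.\frac{\rox'(X)}{\rox(X)}\right|Z=z\right].
\]
The set $\{\rox=0\}$ does not contribute: there $\rox$ attains its minimum, so $\rox'=0$ at points where $\rox$ is differentiable. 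The identity involving $Y$ follows by the same argument with the roles of $X$ and $Y$ exchanged, this time differentiating the first convolution expression.

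The only delicate step is justifying differentiation under the integral sign and the handling of zeros of the densities. I would impose regularity hypotheses that make this automatic, for instance $\rox$ is $C^1$ with $\rox'$ bounded and integrable, and apply dominated convergence. These are the standing assumptions in Blachman's setting, and the lemma is used here only as motivation.
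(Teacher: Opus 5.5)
Your proposal is correct and follows essentially the same route as the paper's proof: write $\roz$ as a convolution, differentiate under the integral sign, and recognize $\rox(x)\roy(z-x)/\roz(z)$ as the conditional density of $X$ given $Z=z$, with the $Y$ identity obtained symmetrically. Your added remarks on regularity hypotheses and on the zero set of $\rox$ only make explicit what the paper leaves implicit.
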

\iffalse
\begin{proof}
     By definition  $\roz(z) = \int \rox(z-x) \roy(x) dx$.  Differentiating under the integral yields
    $$\roz'(z) = \int \rox'(z-x) \roy(x) dx = \int \rox'(x) \roy(z-x) \d x.$$
   Therefore, we have  
    \begin{equation}
    \label{eq:manipulating_densities}
    \frac{\roz'(z)}{\roz(z)} = \int \frac{\rox(x)\roy(z-x)}{\roz(z)} \frac{\rox'(x)}{\rox(x)} \d x.
      \end{equation}
    Now, since  $\frac{\rox(x)\roy(z-x)}{\roz(z)}$ is precisely the density of $X$ conditioned on $Z=z$, it follows that the right-hand side of (\ref{eq:manipulating_densities}) equals $ \E\left[ \left. \frac{\rox'(X)}{\rox(X)} \right| Z=z \right] $. The proof that  $\frac{\roz'(z)}{\roz(z)}  = \E\left[\left. \frac{\roy'(Y)}{\roy(Y)}\right| Z=z\right]$ is completely analogous. 
\end{proof}
\fi
With \Cref{lem:blachman} at hand the classical Stam inequality can be derived readily. Indeed, using the notation from \Cref{lem:blachman}, Blachman's identity implies that for any $a, b\in \bR$
 \begin{align*}
        (a+b)^2 \left( \tfrac{\roz'(z)}{\roz(z)} \right)^2 &= \E \left[ \left.a\,\tfrac{\rox'(X)}{\rox(X)} + b\,\tfrac{\roy'(Y)}{\roy(Y)}\right| Z=z \right]^2
        \\ & \le \E \left[ \left(\left.a\, \tfrac{\rox'(X)}{\rox(X)} + b\, \tfrac{\roy'(Y)}{\roy(Y)}\right)^2\right| Z=z \right],
    \end{align*}
    where the last inequality follows from Jensen's inequality.  Then, integrating over $z$ and expanding the square we get
        \begin{equation}
        \label{eq:expanding_squares}
        (a+b)^2 \E\Big[ \big( \tfrac{\roz'(Z)}{\roy(Z)} \big)^2 \Big] \le a^2 \E\Big[ \big(\tfrac{\rox'(X)}{\rox(X)} \big)^2 \Big] + b^2 \E\Big[ \big(\tfrac{\roy'(Y)}{\roy(Y)}\big)^2 \Big]+2ab \E\Big[ \tfrac{\rox'(X)}{\rox(X)}\Big] \E\Big[\tfrac{\roy'(Y)}{\roy(Y)}\Big].
            \end{equation}
        Finally, observe that $\E\Big[ \tfrac{\rox'(X)}{\rox(X)}\Big]  = \int \rox'(x) dx =0,$
        since for $\rox$ to be a continuous probability density, it must  decay to 0 at $\pm \infty$. This yields 
        $$(a+b)^2 \calI(Z)  \le a^2 \calI(X) + b^2 \calI(Y),$$
        and one can get the classical Stam inequality by substituting $a= \frac{1}{\calI(X)}$ and $b=\frac{1}{\calI(Y)}$. 
 
\subsection{Jacobians of root maps}
\label{sec:finite_scores_to_finite_scores}

In this section, to derive the analogue of \Cref{lem:blachman} we will exploit the root dynamics under the heat flow described in \Cref{lem:score_and_derivatives}. Importantly, this approach bypasses the need for the notion of joint distribution  crucially used in \Cref{lem:blachman}. 

In what follows we will fix $\alpha, \beta\in \bR^n$ and assume that $\alpha$ and $\beta$ are ordered and simple. We will then use $\jac$ to denote the Jacobian of the map $\roots$ at $(\alpha, \beta)$, which we will view as a linear transformation
$$\jac: \bR^n \oplus \bR^n \to \bR^n.$$

\begin{lemma}[Relating score vectors for $\roots$]
\label{lem:score_to_score_convolution}
For all $a, b \in \bR$ it holds that 
$$  \jac \big(a\score(\alpha)\oplus b \score(\beta)\big)=(a+b)\score(\roots(\alpha, \beta))  . $$
\end{lemma}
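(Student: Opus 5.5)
By linearity of $\jac$ it suffices to prove the two identities $\jac\big(\score(\alpha)\oplus 0\big)=\score(\roots(\alpha,\beta))$ and $\jac\big(0\oplus\score(\beta)\big)=\score(\roots(\alpha,\beta))$. Then $\jac(a\score(\alpha)\oplus b\score(\beta))=a\jac(\score(\alpha)\oplus 0)+b\jac(0\oplus\score(\beta))$ gives the statement. By the commutativity of $\boxplus_n$ (which is symmetric in $\alpha,\beta$ by \eqref{eq:finite_free_convo}), the second identity follows from the first with the roles of $\alpha$ and $\beta$ swapped. So the plan reduces to a single identity, which I will obtain by differentiating along the heat flow and using \Cref{lem:score_and_derivatives} twice.

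Let $p=\pmap{\alpha}$, $q=\pmap{\beta}$, and set $p_t=\heat p$. The key algebraic fact is that the heat operator commutes with the convolution:
\[
\heat\big(p\boxplus_n q\big)=\heat\hat q(\partial_x)p=\hat q(\partial_x)\heat p=p_t\boxplus_n q,
\]
because all these operators are polynomials or power series in $\partial_x$ and therefore commute. Let $\alpha(t)$ be the ordered roots of $p_t$ and $\gamma(t)$ the ordered roots of $\heat(p\boxplus_n q)$. For $t$ near $0$, the roots $\alpha(t)$ remain simple, so $\roots$ is differentiable near $(\alpha(t),\beta)$ by \Cref{obs:smoothness_convolution}. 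We also have $\gamma(t)=\roots(\alpha(t),\beta)$.

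Next I check simplicity. By \Cref{thm:preservation_of_mesh}, $\gamma(0)=\roots(\alpha,\beta)$ is simple. Hence \Cref{lem:score_and_derivatives} applies to both flows and gives $\alpha'(0)=\score(\alpha)$ and $\gamma'(0)=\score(\roots(\alpha,\beta))$. Differentiating $\gamma(t)=\roots(\alpha(t),\beta)$ at $t=0$ with the chain rule gives $\gamma'(0)=\jac(\alpha'(0)\oplus 0)$, and therefore $\jac(\score(\alpha)\oplus 0)=\score(\roots(\alpha,\beta))$, as required.

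The only delicate point is the commutation step together with the well-definedness of the flows. The operator $\heat$ is the finite sum $\sum_k (-t/2)^k\partial_x^{2k}/k!$ on $\R_{\le n}[x]$, so commutation with $\hat q(\partial_x)$ is immediate, and $p_t\boxplus_n q=\hat q(\partial_x)p_t$ is exactly the definition of $\boxplus_n$. A minor subtlety is that $p_t$ may fail to be real-rooted for $t<0$, since the reverse flow does not preserve real-rootedness in general. This is harmless: simple real roots persist for $|t|$ small by continuity, and one-sided derivatives at $t=0^+$ would in any case suffice for the chain rule.
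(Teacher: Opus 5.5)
Your proof is correct and uses the same ingredients as the paper: the commutation of $\heat$ with $\boxplus_n$, the chain rule at a simple point, and \Cref{lem:score_and_derivatives}. The only difference is presentational. The paper runs both heat flows at once, at rates $a$ and $b$, and uses the semigroup property to get $\roots(\alpha(at),\beta(bt))=\gamma((a+b)t)$. You instead split by linearity of $\jac$, flow one argument at a time, and invoke the symmetry of $\boxplus_n$ for the second identity.
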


\begin{proof} Let $\gamma:=\roots(\alpha, \beta)$ and set $p(x) := \pmap{\alpha}(x), q(x) := \pmap{\beta}(x),$ and $r(x) = \pmap{\gamma}(x)$. Then, for every $t\in \bR$ define $p_t(x) := \heat p(x)$, $q_t(x):= \heat q(x)$, and $r_t(x):=\heat r(x)$, and let $\alpha(t), \beta(t),$ and $\gamma(t)$ be the ordered vectors of roots for $p_t(x), q_t(x)$ and $r_t(x)$, respectively. Since the finite free convolution commutes with  differential operators,  it follows that
      \begin{align*}
     p_{at}(x) \boxplus_n q_{bt}(x) &= \left(e^{-\frac{at}{2} \partial_x^2}p(x)\right) \boxplus_n \left(e^{-\frac{bt}{2} \partial_x^2}q(x)\right)
      \\ & = e^{-\frac{(a+b)t}{2} \partial_x^2} \left(p(x) \boxplus_n q(x)\right)
      \\ & = r_{(a+b)t}(x). 
            \end{align*}
    Hence  $$  \roots(\alpha(at), \beta(bt))= \gamma((a+b)t), \quad \text{for all $t\ge 0.$}$$ 
    Differentiating the above  relation with respect to $t$ and using the chain rule for the left-hand side,  we get that
      \begin{equation}
      \label{eq:intermediate_relation}
      \jac \left(
           a \, \alpha'(0)\oplus    b\, \beta'(0) \right)= (a+b)\gamma'(0) .
                 \end{equation}
   Finally, note that \Cref{lem:score_and_derivatives} yields $\alpha'(0) =\score(\alpha), \beta'(0) = \score(\beta)$, and $\gamma'(0)= \score(\gamma)$. Then, the proof is concluded by plugging this information back into (\ref{eq:intermediate_relation}). 
\end{proof}

Similarly, we will use $\jacder$ to denote the Jacobian of $\dermap$ at $\alpha$ (which as mentioned above has distinct entries by assumption) and   view it as a linear transformation
$$\jacder : \bR^n \to \bR^{n-1}.$$
\begin{lemma}[Relating score vectors for $\dermap$]
\label{lem:score_to_score_der}
    The following holds
    $$\scoreder(\dermap(\alpha))=\jacder \big(\score(\alpha)\big) .$$
\end{lemma}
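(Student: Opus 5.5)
Following the proof of \Cref{lem:score_to_score_convolution}, I would use the heat flow and the fact that it commutes with differentiation. Let $p(x):=\pmap{\alpha}(x)$ and $p_t(x):=\heat p(x)$, and let $\alpha(t)$ be the ordered vector of roots of $p_t$. Since $\partial_x$ commutes with $e^{-\frac{t}{2}\partial_x^2}$,
$$\partial_x p_t(x)=\heat\,\partial_x p(x).$$
So the roots of $\partial_x p_t$, namely $\dermap(\alpha(t))$, are exactly the roots of the heat flow applied to $p'$. The key point is that $p'$ has degree $n-1$, while the operator $\heat$ is the same operator whatever the degree. Hence \Cref{lem:score_and_derivatives}, applied to the real-rooted polynomial $p'$ of degree $n-1$, describes the evolution of its roots. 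Let $\delta(t)$ be the ordered vector of roots of $\heat p'(x)$. Then
$$\dermap(\alpha(t))=\delta(t)\quad\text{for all } t.$$

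Next, $\alpha$ is simple by the standing assumption. By Rolle's theorem the roots of $p'$ are then simple and interlace those of $p$, so $\delta(0)=\dermap(\alpha)$ is simple. \Cref{lem:score_and_derivatives} gives that $\alpha(t)$ is differentiable at $t=0$ with $\alpha'(0)=\score(\alpha)$. Likewise $\delta(t)$ is differentiable at $t=0$ with $\delta'(0)=\scoreder(\delta(0))=\scoreder(\dermap(\alpha))$. By \Cref{obs:smoothness_of_dermap}, $\dermap$ is smooth near $\alpha$. Differentiating the identity $\dermap(\alpha(t))=\delta(t)$ at $t=0$ and applying the chain rule gives
$$\jacder\,\alpha'(0)=\delta'(0),$$
which is the claim.

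The only step that needs care is the ordering convention. \Cref{def:derivative_map} orders the output of $\dermap$ decreasingly, and \Cref{lem:score_and_derivatives} uses ordered root vectors, so the two orderings agree and no permutation appears. I also need that $\alpha(t)$ stays near $\alpha$ for small $t$, so that $\dermap$ is smooth along the curve. This follows from continuity of the roots in the coefficients.

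I expect no substantive obstacle. The one point to state explicitly is that the normalization of the heat flow does not depend on the degree, since it is always $e^{-\frac{t}{2}\partial_x^2}$. This is exactly why the scores of $\alpha$ and of $\dermap(\alpha)$ appear with the same coefficient, namely no extra factor, in contrast to the factor $(a+b)$ in \Cref{lem:score_to_score_convolution}.
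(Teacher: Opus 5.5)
Your proposal is correct and follows essentially the same route as the paper: the heat flow commutes with $\partial_x$, so $\dermap(\alpha(t))=\delta(t)$, and then the chain rule together with \Cref{lem:score_and_derivatives} applied to both $p$ and $p'$ gives the identity. Your added remarks on the ordering convention and on the degree-independent normalization of $\heat$ are accurate and make explicit points the paper leaves implicit.
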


\begin{proof}
    Let $p(x) := \pmap{\alpha}(x)$  and for $t\in \bR$ define the heat flow evolutions $p_t(x) := \heat p(x)$ and $(p')_t(x) := \heat p'(x) $. Let $\delta(t)$ be the ordered vector of roots for $(p')_t$.
    
    Because $\heat$ commutes with $\partial_x$, we have that $(p_t)'= (p')_t$ and therefore 
    $$ \delta(t)= \dermap(\alpha(t))  .$$
    So, differentiating both sides, and applying the chain rule to the right-hand side, we get $ \delta'(0)= \jacder (\alpha'(0)).$ On the other hand, \Cref{lem:score_and_derivatives} implies that $\alpha'(0) = \score(\alpha)$ and $\delta'(0) = \scoreder(\dermap(\alpha))$. 
\end{proof}

\subsection{Double stochasticity}
\label{sec:double_stochasticity}
In \Cref{lem:score_to_score_convolution} the Jacobian $\jac$ maps the score vectors of $\alpha$ and $\beta$ to the score vector of $\roots(\alpha, \beta)$, in analogy to \Cref{lem:blachman},  where the conditional expectation $\E[\,\cdot\, | Z=z]$ maps the score functions of $X$ and $Y$ to the score function of $Z$. Here we show that this analogy is valid in a deeper sense. Concretely, we show that the Jacobian $\jac(\cdot, \cdot)$ can be decomposed into two doubly stochastic matrices (which may be interpreted as couplings of probability distributions), one that relates $\alpha$ and $\gamma$, and another one that relates $\beta$ and $\gamma$. This result is included here, as we think it is interesting in its own right, but it is worth noting that the hyperbolicity approach,  presented in \Cref{sec:stam_geometry_proof},  proves Theorems \ref{thm:fisher_monotonicity} and \ref{thm:stams_inequality} without using \Cref{lem:double_stochasticity_convolution}, \Cref{lem:jacder doub stoch}, or \Cref{obs:norm_bound_jacobian}.

Interestingly, the double stochasticity is a consequence of the properties of $\boxplus_n$ enlisted in \Cref{prop:properties_of_ff_convolutions}. 

\begin{lemma}[Double stochasticity for $\boxplus_n$]
\label{lem:double_stochasticity_convolution} 
    Let $\palpha$  be the $n\times n$ matrix associated to the Jacobian of the map $\roots(\cdot, \beta) : \bR^n \to \bR^n$ at $\alpha$, and let $\pbeta$ be the $n\times n$ matrix associated to the Jacobian of the map $\roots(\alpha,\cdot ): \bR^n \to \bR^n$ at $\beta$. Then:
    \begin{enumerate}[(i)] 
    
    \item \label{item:row_sums} (Row sums) $\palpha \bone_n = \bone_n = \pbeta \bone_n$. 
    \item \label{item:column_sums} (Column sums) $\adj{\bone}_n \palpha = \adj{\bone}_n = \adj{\bone}_n \pbeta$. 
        \item \label{item:positive_entries} (Nonnegative entries) $\palpha$ and $\pbeta$ have nonnegative entries. 

    \end{enumerate}
\end{lemma}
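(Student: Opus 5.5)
The plan is to derive each of the three properties from the corresponding item of \Cref{prop:properties_of_ff_convolutions}, working at the fixed ordered simple pair $(\alpha,\beta)$. By \Cref{obs:smoothness_convolution}, $\roots$ is differentiable near $(\alpha,\beta)$, so all partial derivatives below exist. By commutativity of $\boxplus_n$ (i.e.\ $\roots(\alpha,\beta)=\roots(\beta,\alpha)$), it suffices to treat $\palpha$; the statements for $\pbeta$ then follow by symmetry.

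\emph{Row sums.} I differentiate the translation identity in \Cref{prop:properties_of_ff_convolutions}(\ref{item:translations}), namely $\roots(\alpha+t\bone_n,\beta)=\roots(\alpha,\beta)+t\bone_n$, at $t=0$. This gives $\palpha\bone_n=\bone_n$ directly.

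\emph{Column sums.} I use additivity of the first moment, $m_1(\roots(\alpha,\beta))=m_1(\alpha)+m_1(\beta)$, which says $\bone_n^\top\roots(\alpha,\beta)=\bone_n^\top\alpha+\bone_n^\top\beta$ for all $\alpha$. Differentiating in $\alpha$ gives $\bone_n^\top\palpha=\bone_n^\top$.

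\emph{Nonnegativity.} This is the only step that needs care. The entry $(\palpha)_{ij}$ is $\partial_{\alpha_j}\roots^i(\alpha,\beta)$, so it suffices to show that each $\roots^i(\cdot,\beta)$ is nondecreasing in each coordinate $\alpha_j$ near $\alpha$. Let $\widetilde\alpha=\alpha+\varepsilon e_j$ with $\varepsilon>0$ small. Since $\alpha$ is simple and ordered, for small $\varepsilon$ the vector $\widetilde\alpha$ is still ordered and satisfies $\widetilde\alpha_1\ge\alpha_1\ge\widetilde\alpha_2\ge\alpha_2\ge\cdots$. Indeed, the two vectors agree except in slot $j$, and there $\alpha_{j-1}>\alpha_j+\varepsilon>\alpha_j$. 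So $\widetilde\alpha$ interlaces $\alpha$ in the sense of \Cref{prop:properties_of_ff_convolutions}(\ref{item:interlacing}). Interlacing preservation then gives that $\roots(\widetilde\alpha,\beta)$ interlaces $\roots(\alpha,\beta)$, which in particular yields $\roots^i(\widetilde\alpha,\beta)\ge\roots^i(\alpha,\beta)$ for every $i$. Dividing by $\varepsilon$ and letting $\varepsilon\to0^+$ gives $(\palpha)_{ij}\ge0$.

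The main (mild) obstacle is checking that the interlacing hypothesis is stated in the right direction and applies to the perturbed vector, with ties allowed. The definition in \Cref{prop:properties_of_ff_convolutions} uses non-strict inequalities, so this works. If one prefers to avoid worrying about the direction, one can apply the interlacing statement with the roles of $\alpha$ and $\widetilde\alpha$ swapped. Either convention yields a coordinatewise comparison between $\roots(\alpha,\beta)$ and $\roots(\widetilde\alpha,\beta)$ whose direction is forced by the first-moment identity: $m_1$ increases by $\varepsilon/n>0$, so the roots of the convolution cannot all decrease.
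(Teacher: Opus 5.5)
Your proposal is correct and follows the paper's proof. Parts (i) and (ii) come from differentiating the translation and first-moment identities, and part (iii) from interlacing preservation applied to $\alpha+\varepsilon e_j$ versus $\alpha$. The one difference is cosmetic: you read the sign of each entry directly off the direction of the interlacing conclusion, whereas the paper shows that all entries in a column share a sign and then uses the column sums to make them nonnegative, which is essentially your fallback remark.
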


\begin{proof}
    We will just prove the claims for $\palpha$, as the proofs for  $\pbeta$ go in the same way. To prove (\ref{item:row_sums}) we invoke \Cref{prop:properties_of_ff_convolutions} (\ref{item:translations}), which states that
    $$\roots(\alpha+t\bone_n, \beta) = \roots(\alpha, \beta) + t\bone_n.$$
Then, in the above identity, view $\beta$ as fixed and differentiate both sides with respect to $t$. For the left-hand side, the chain rule yields $\palpha \bone_n$, while for the right-hand side we get $\bone_n$, proving the claim. 

To prove (\ref{item:column_sums}) we will show that $\adj{\bone}_n \palpha u = \adj{\bone}_n u $ for all $u\in \bR^n$. To this end fix $u\in \bR^n$ and invoke the first moment identity from \Cref{prop:properties_of_ff_convolutions} (\ref{item:additivity}), which states that 
$$\adj{\bone}_n \roots(\alpha+tu, \beta) =  \adj{\bone}_n \alpha + \adj{\bone}_n\beta + t\adj{\bone}_nu.$$
    Again, we differentiate both sides of the above equation. For the left-hand side the chain rule yields $\adj{\bone}_n \palpha u$ whereas the right-hand side yields $\adj{\bone}_n u$.  

    Finally, to prove (\ref{item:positive_entries}) we will show that any two entries in the same column of $\palpha$ have the same sign, which, since we know that $\adj{\bone}_n \palpha  = \adj{\bone}_n$, will imply that all the entries of $\palpha$ are nonnegative. Proceeding by contradiction, assume that there are $1\le  j, k, \ell \le n$ such that $\palpha_{k, j}$ and $\palpha_{\ell, j}$ have different  signs. Then, use that
    $$\palpha_{k, j} = \frac{\partial \Omega_{\scriptscriptstyle\boxplus_n}^{k}(\alpha, \beta)}{\partial \alpha_j} \qquad \text{and} \qquad \palpha_{\ell, j} = \frac{\partial \Omega_{\scriptscriptstyle \boxplus_n}^{\ell}(\alpha, \beta)}{\partial \alpha_j},$$
where we are viewing $\beta$ as fixed. Now, the assumption that the above have different signs implies that, if we set $\gamma(t):= \roots(\alpha + t e_j, \beta)$, then for all  small enough times $t>0$ we will either have $\gamma_k(t) < \gamma_k(0)$ and $\gamma_l(t) > \gamma_l(0)$, or $\gamma_k(t) > \gamma_k(0)$ and $\gamma_l(t) < \gamma_l(0)$. In either case it will be impossible for $\gamma(t)$ to interlace $\gamma(0)$. This is a contradiction, since for  small enough $t$, $\alpha+te_j$ interlaces $\alpha$ and by \Cref{prop:properties_of_ff_convolutions} (\ref{item:interlacing}) we know that  $\boxplus_n$ is an interlacing preserver. 
\end{proof}

The analogous statement for $\dermap$ can be proven in the same way. However, in this case the matrix in question is  $(n-1)\times n$  and the column sums equal $\frac{n-1}{n}$ instead 1. 
 
\begin{lemma}[Double stochasticity for $\partial_x$]\label{lem:jacder doub stoch} Let $\pdelta$ be the $(n-1)\times n$ matrix associated to the Jacobian $\jacder$. Then:
\begin{enumerate}[(i)]
    \item \label{item:rowsums_der} (Row sums) $\pdelta\bone_n=\bone_{n-1}$.
    \item \label{item:colsums_der} (Column sums) $\adj{\bone}_{n-1}\pdelta=\frac{n-1}{n}\adj{\bone}_{n}$. 
    \item \label{item:non-neg_entries_der} (Nonnegative entries) The entries of $\pdelta$ are nonnegative.
\end{enumerate}
\end{lemma}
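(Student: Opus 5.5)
We mirror the proof of \Cref{lem:double_stochasticity_convolution}, replacing the three properties of $\boxplus_n$ from \Cref{prop:properties_of_ff_convolutions} by their analogues for differentiation. Throughout, $\alpha$ is ordered and simple, so by \Cref{obs:smoothness_of_dermap} the map $\dermap$ is smooth near $\alpha$ and $\pdelta$ is well defined.

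\emph{Row sums.} Since $\partial_x \pmap{\alpha+t\bone_n}(x) = (\partial_x\pmap{\alpha})(x-t)$, we get $\dermap(\alpha+t\bone_n)=\dermap(\alpha)+t\bone_{n-1}$. Differentiating in $t$ at $t=0$ and applying the chain rule gives $\pdelta\bone_n=\bone_{n-1}$.

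\emph{Column sums.} Fix $u\in\bR^n$. By the first-moment identity in \Cref{obs:variancescaling},
$$\adj{\bone}_{n-1}\dermap(\alpha+tu)=(n-1)\,m_1(\alpha+tu)=\tfrac{n-1}{n}\,\adj{\bone}_n(\alpha+tu).$$
Differentiating at $t=0$ yields $\adj{\bone}_{n-1}\pdelta u=\tfrac{n-1}{n}\adj{\bone}_n u$ for every $u$, which is the claimed column-sum identity.

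\emph{Nonnegative entries.} We need the fact that differentiation preserves interlacing. If $\alpha$ interlaces $\widetilde\alpha$, then $\dermap(\alpha)$ interlaces $\dermap(\widetilde\alpha)$. One route is to note that $\partial_x$ is a real-rootedness preserving linear operator and apply the same result of Fisk (Theorem 1.43 in \cite{fisk2006polynomials}) used for \Cref{prop:properties_of_ff_convolutions}. A more direct route: $\alpha$ interlacing $\widetilde\alpha$ means that every real combination $a\pmap{\alpha}+b\pmap{\widetilde\alpha}$ is real-rooted (Hermite--Kakeya--Obreschkoff). Differentiating gives $a\pmap{\alpha}'+b\pmap{\widetilde\alpha}'$, which is again real-rooted by Rolle, so the derivatives interlace. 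This step requires care with the orientation of the interlacing: for small $t>0$, $\alpha+te_j$ interlaces $\alpha$ in the sense $\alpha_1\ge\widetilde\alpha_1\ge\cdots$ after relabeling the order. The orientation is recovered because both derivative polynomials are monic of the same degree and their roots move continuously from those of $\pmap{\alpha}'$.

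With interlacing preservation in hand, the sign argument from \Cref{lem:double_stochasticity_convolution} carries over verbatim. Suppose two entries $\pdelta_{k,j}$ and $\pdelta_{\ell,j}$ of column $j$ have strictly opposite signs. Set $\delta(t)=\dermap(\alpha+te_j)$. Then for small $t>0$ one coordinate of $\delta(t)$ moves strictly up and another strictly down relative to $\delta(0)$, so $\delta(t)$ cannot interlace $\delta(0)$ in either direction. This contradicts the interlacing of $\alpha+te_j$ with $\alpha$. Hence each column of $\pdelta$ has entries of a single sign. Since the column sums are $\tfrac{n-1}{n}>0$, all entries are nonnegative.

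The main obstacle is this interlacing-preservation step, specifically getting the orientation of the interlacing right; the row- and column-sum identities are routine.
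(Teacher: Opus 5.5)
Your proof is correct and follows the paper's own argument: shift-equivariance gives the row sums, the first-moment identity of \Cref{obs:variancescaling} gives the column sums, and nonnegativity comes from the sign argument combined with the fact that $\partial_x$ preserves interlacing. The paper cites Fisk's Theorem 1.47 for that fact; your Hermite--Kakeya--Obreschkoff argument works just as well. The orientation issue you flag is not actually an obstacle: having one coordinate move strictly up and another strictly down already rules out interlacing in both directions, as your final paragraph observes.
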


\begin{proof}
    For (\ref{item:rowsums_der}), again we note that $\dermap(\ga+t\bone_n)=\dermap(\ga)+t\bone_{n-1}$ and differentiate both sides with respect to $t$. For (\ref{item:colsums_der}), we use the first moment identity from \Cref{obs:variancescaling} which, for every $u\in \bR^n$,  yields  $m_1\lpr{\dermap(\ga+tu)}=m_1(\ga+tu)$. We then rewrite this as
    \[\tfrac{1}{n-1}\adj{\bone}_{n-1}\dermap(\ga+tu)=\tfrac{1}{n}\adj{\bone}_n(\ga+tu).\]
   Then, differentiating with respect to $t$ at $t=0$ and rearranging factors of $n$ yields $$\adj{\bone}_{n-1}\pdelta u=\tfrac{n-1}{n}\adj{\bone}_n u.$$ 
   The claim then follows from the fact that the above holds for every $u\in \bR^n$. 

    For (\ref{item:non-neg_entries_der}), it suffices that in each column, every entry has the same sign. As by (\ref{item:colsums_der}) the column sum is positive, the claim will follow. We proceed by contradiction with fixed column $i$: suppose there are indices $j$ and $k$ such that, without loss of generality, $\pdelta_{j,i}>0>\pdelta_{k,i}$. Thus, at sufficiently small $t$, we will have $\Omega_{\scriptscriptstyle\dd_x}^{j}(\ga+te_i)>\Omega_{\scriptscriptstyle\dd_x}^{j}(\ga)$ and $\Omega_{\scriptscriptstyle\dd_x}^{k}(\ga+te_i)<\Omega_{\scriptscriptstyle\dd_x}^{k}(\ga)$; thus $\dermap(\ga+te_i)$ will not interlace $\dermap(\ga)$, even though $\ga+te_i$ interlaces $\ga$ and $\dd_x$ preserves interlacing (e.g. see \cite[Theorem 1.47]{fisk2006polynomials}). This is the desired contradiction. 
\end{proof}

\subsection{Further remarks}

In view of Lemmas \ref{lem:score_to_score_convolution} and \ref{lem:double_stochasticity_convolution}, for any $\alpha, \beta\in \bR^n$ with distinct entries and every $a, b\in \bR$, we can write 
\begin{equation*}
(a+b)\score(\roots(\alpha, \beta)) = a \palpha \score(\alpha)+ b\pbeta \score(\beta),
\end{equation*}
where $\palpha$ and $\pbeta$ are doubly stochastic matrices. Now, since doubly stochastic matrices satisfy $\|\palpha\| = \|\pbeta\|=1$,  the triangle inequality yields
\begin{equation}
\label{eq:norms_inequality}
\lvert a+b\rvert\|\score(\roots(\alpha, \beta)) \|\le \lvert a\rvert\| \score(\alpha)\|+\lvert b\rvert \| \score(\beta) \|.
\end{equation}
Note that the above already provides the non-trivial inequality $\Phi_n(p\boxplus_n q) \le \Phi_n(p)$ for any two real-rooted polynomials $p(x)$ and $q(x)$. However, to get the full strength of \Cref{thm:stams_inequality} one must exploit the interactions between $\palpha \score(\alpha)$ and $\pbeta \score(\beta)$, in the same way in which the independence between the random variables $X$ and $Y$ was exploited in Blachman's argument when obtaining (\ref{eq:expanding_squares}). 

As far as we can see, to capture the aforementioned interactions, it is not possible to treat $\palpha$ and $\pbeta$ separately, and one must instead understand the Jacobian $\jac$ as a whole. This can be articulated as follows. 
\begin{observation}[Norm of Jacobian]
\label{obs:norm_bound_jacobian}
    $\|\jac\|=\sqrt{2}$ and $\bone_n$ and $\bone_n\oplus \bone_n$ are, respectively, the left and right top singular vectors of $\jac$.  
\end{observation}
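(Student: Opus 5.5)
The plan is to write $\jac = [\palpha \;\; \pbeta]$ as an $n\times 2n$ block matrix and use the double stochasticity from \Cref{lem:double_stochasticity_convolution}. The claim has three parts: the top singular value is attained, the upper bound $\|\jac\|\le\sqrt 2$ holds, and the singular vectors are as stated.

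\emph{Step 1: the value $\sqrt2$ is attained with the claimed vectors.} By the row-sum identity (\ref{item:row_sums}),
\[
\jac(\bone_n\oplus\bone_n)=\palpha\bone_n+\pbeta\bone_n=2\bone_n .
\]
Since $\|\bone_n\oplus\bone_n\|=\sqrt{2n}$ and $\|2\bone_n\|=2\sqrt n$, this gives $\|\jac\|\ge\sqrt2$. By the column-sum identity (\ref{item:column_sums}),
\[
\adj{\bone}_n\jac=\adj{\bone}_n\oplus\adj{\bone}_n .
\]
So $\bone_n\oplus\bone_n$ and $\bone_n$ are mapped to each other by $\jac$ and $\adj{\jac}$, with ratio $\sqrt2$ after normalization. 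Hence they form a singular pair with singular value $\sqrt2$.

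\emph{Step 2: the upper bound.} Compute
\[
\jac\adj{\jac}=\palpha\adj{\palpha}+\pbeta\adj{\pbeta}.
\]
Each of $\palpha$ and $\pbeta$ is doubly stochastic by \Cref{lem:double_stochasticity_convolution}: rows and columns sum to $1$ and entries are nonnegative. By Birkhoff's theorem, or directly from the Schur test (the $\ell^1$ and $\ell^\infty$ operator norms are both $1$, so the $\ell^2$ norm is at most $1$), each has operator norm at most $1$. Then
\[
\|\jac\adj{\jac}\|\le\|\palpha\|^2+\|\pbeta\|^2\le2,
\]
so $\|\jac\|\le\sqrt2$. Combined with Step 1, this gives $\|\jac\|=\sqrt2$.

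\emph{Step 3: ``top'' singular vectors.} The only subtle point is interpreting the claim that these are \emph{the} top singular vectors, since uniqueness of the top singular space could fail. I would read the statement as saying that $\bone_n$ and $\bone_n\oplus\bone_n$ are a top singular pair, which Steps 1 and 2 establish; I do not expect this observation to claim simplicity. Simplicity of the relevant spectrum is what the paper later treats as the core difficulty, in \Cref{prop:simple_sigma_derivative}.

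If a cleaner statement is wanted, the stronger inequality $\|\jac u\|\le\sqrt2\|u\|$ follows the same way. By the triangle inequality and Cauchy--Schwarz, for $u=u_1\oplus u_2$,
\[
\|\palpha u_1+\pbeta u_2\|\le\|u_1\|+\|u_2\|\le\sqrt2\,\|u\| .
\]
The main technical input throughout is the nonnegativity of the entries, item (\ref{item:positive_entries}), which gives the norm-one bound for the doubly stochastic blocks. That is already proven, so no real obstacle remains.
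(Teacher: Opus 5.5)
Your proof is correct and is essentially the paper's argument. The paper identifies the top singular pair from $\jac(\bone_n\oplus\bone_n)=2\bone_n$ and $\adj{\jac}\bone_n=\bone_n\oplus\bone_n$, and gets $\|\jac\|\le\sqrt2$ from $\|\palpha\|=\|\pbeta\|=1$ via the triangle inequality and AM--QM; this is your fallback version, and your bound through $\jac\adj{\jac}=\palpha\adj{\palpha}+\pbeta\adj{\pbeta}$ is an equivalent variant. Your Schur-test/Birkhoff justification that doubly stochastic matrices have norm one fills in a step the paper only asserts, and, like the paper, you rightly do not claim simplicity of $\sigma_1$ here.
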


\begin{proof}
    As mentioned before, for any $u, v\in \bR^n$ and $a, b \in \bR$ we have $\|\jac(au\oplus b v)\| \le  \lvert a\rvert\|u\|+\lvert b\rvert\|v\|$. So, taking $a=b=1$, and applying the  AM-QM inequality we get that 
    \begin{equation}
    \label{eq:norm_bound}
    \|\jac(u\oplus v) \| \le \sqrt{2(\|u\|^2+\|v\|^2)} = \sqrt{2}\|u\oplus v\|,
        \end{equation}
    and therefore $\|\jac  \| \le \sqrt{2}$. 

    The claim about the top singular vectors follows from \Cref{lem:double_stochasticity_convolution} which implies that $\jac(\bone_n\oplus \bone_n) = 2\bone_n$ and $\jac^*(\bone_n) = \bone_n\oplus \bone_n$. In turn, we see that $\bone_n$ and $\bone_n\oplus \bone_n$ achieve equality in the norm bound give in (\ref{eq:norm_bound}). 
\end{proof}

However, \Cref{obs:norm_bound_jacobian} does not suffice to prove \Cref{thm:stams_inequality}. For this, we will need a bound on the second largest singular value of $\jac$. We do this in \Cref{sec:stam_geometry_proof}. 

Similarly, in the case of $\partial_x$, \Cref{lem:jacder doub stoch} implies that 
$$\|\jacder\| = \sqrt{\tfrac{n-1}{n}},$$
and that $\bone_n$ and $\bone_{n-1}$ are, respectively, the right and left singular vectors of $\jacder$. However, to prove \Cref{thm:fisher_monotonicity}, we will need a bound on the second largest singular value of $\jacder$, which we obtain below in \Cref{sec:matrix_analysis_fisher_monotonicity}.

\section{Matrix-analytic proof of Fisher information monotonicity}
\label{sec:matrix_analysis_fisher_monotonicity}

Here we provide our first proof of \Cref{thm:fisher_monotonicity}. In \Cref{sec:stam_geometry_proof} we will provide an alternative proof which proceeds by ideas from the geometry of polynomials. 

Throughout this section we will fix a simple ordered vector $\alpha\in \bR^n$, use $\jacder$ to denote the Jacobian of $\dermap$ at $\alpha$ and, as in \Cref{lem:jacder doub stoch}, use $\pdelta$ to denote the associated $(n-1)\times n$ matrix. Our proof starts by observing that the entries of $\pdelta$ admit an explicit formula in terms of the $\alpha_i$ and the $\dermapi(\alpha)$. Moreover, these entries are, in fact, the weights from the Gauss--Lucas theorem. Then, we utilize  elementary facts from matrix analysis to upper bound  the second singular value of $\jacder$. 

\subsection{The Gauss--Lucas matrix}

Let $p(z):= \prod_{i=1}^n (z-\omega_i)$ where the $\omega_i$ are allowed to be complex and let $\zeta_1, \dots, \zeta_{n-1}$ be the roots of $p'(z)$. Then, the classic Gauss--Lucas theorem states that each $\zeta_i$ can be written as a convex combination of $\omega_1, \dots, \omega_n$. Moreover, the standard proof of Gauss--Lucas yields that for every $i$ one has
\begin{equation}
\label{eq:Gauss--Lucas_theorem}
\zeta_i :=  \sum_{j=1}^n \frac{1}{Z_i\lvert \zeta_i-\omega_j\rvert^2}\cdot \omega_j
\end{equation}
where $Z_i := \sum_{j=1}^n \frac{1}{\lvert \zeta_i-\omega_j\rvert^2}$. It turns out that, for our setting,  these convex weights are precisely the entries of the  matrix $\pdelta$.

\begin{lemma}[$\pdelta$ is the Gauss--Lucas matrix]
\label{lem:gauss_lucas_entries}
    Assume $\alpha\in \bR^n$ is a simple vector. Then, for any fixed $i=1, \dots, n-1$ and $j=1, \dots, n$ one has  
    $$\pdelta_{ij} = \frac{1}{Z_i}\cdot \frac{1}{(\dermapi(\alpha)-\alpha_j)^2}, $$
    where $Z_i:= \sum_{k=1}^n \frac{1}{(\dermapi(\alpha)-\alpha_k)^2}$. 
\end{lemma}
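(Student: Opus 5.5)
We plan to compute the partial derivatives $\partial \dermapi(\alpha)/\partial\alpha_j$ by implicit differentiation of the defining relation $p'(\delta_i)=0$, where $p=\pmap{\alpha}$ and $\delta_i:=\dermapi(\alpha)$. By \Cref{obs:smoothness_of_dermap}, $\delta_i$ is a smooth function of $\alpha$ near a simple $\alpha$. Moreover, since the roots of $p'$ strictly interlace those of $p$, we have $p(\delta_i)\neq 0$ and $p''(\delta_i)\neq 0$, because $\delta_i$ is a simple root of $p'$. It is convenient to work with the logarithmic derivative. Set
\[
F(x,\alpha):=\frac{p'(x)}{p(x)}=\sum_{k=1}^n\frac{1}{x-\alpha_k},
\]
which is smooth near $(\delta_i,\alpha)$, since $\delta_i\neq\alpha_k$ for all $k$. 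Then $\delta_i$ is characterized by $F(\delta_i(\alpha),\alpha)=0$.

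We differentiate this identity with respect to $\alpha_j$, which gives
\[
\partial_x F(\delta_i,\alpha)\,\frac{\partial\delta_i}{\partial\alpha_j}+\partial_{\alpha_j}F(\delta_i,\alpha)=0 .
\]
The two partial derivatives are explicit. First, $\partial_x F(\delta_i,\alpha)=-\sum_k (\delta_i-\alpha_k)^{-2}=-Z_i$. Second, $\partial_{\alpha_j}F(\delta_i,\alpha)=(\delta_i-\alpha_j)^{-2}$. Solving for the derivative gives
\[
\pdelta_{ij}=\frac{\partial\delta_i}{\partial\alpha_j}=\frac{1}{Z_i}\cdot\frac{1}{(\delta_i-\alpha_j)^2},
\]
which is exactly the claimed formula. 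The division is legitimate because $Z_i>0$.

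The only point requiring care is justifying the implicit function theorem here, namely that $\partial_x F(\delta_i,\alpha)\neq 0$ and that the branch obtained is the $i$-th ordered root. Nonvanishing is immediate since $Z_i>0$. The implicit function theorem then yields a unique smooth local solution through $\delta_i$. By continuity and simplicity of the roots of $p'$, this solution coincides with $\dermapi$ on a neighborhood. So the main obstacle is routine.

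As a sanity check, the formula agrees with \Cref{lem:jacder doub stoch}. Each row of $\pdelta$ sums to $1$ by the definition of $Z_i$, and all entries are nonnegative. The rows also recover the Gauss--Lucas weights in \eqref{eq:Gauss--Lucas_theorem}.
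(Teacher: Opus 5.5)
Your proof is correct and takes essentially the same approach as the paper. The paper differentiates $\sum_{k}(\delta_i(t)-\alpha_k(t))^{-1}=0$ along $\alpha(t)=\alpha+te_j$ at $t=0$, which is exactly your implicit differentiation of $F(\delta_i(\alpha),\alpha)=0$ with respect to $\alpha_j$. The paper takes differentiability of $\delta_i$ from the smoothness observation, whereas you get it from the implicit function theorem via $Z_i>0$; either is fine.
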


\begin{proof}
For every $t\in\bR$ define $\alpha(t) := \alpha+ t e_j$.  Let $p_t(x):= \pmap{\alpha(t)}(x)$ and, to simplify notation, let $\delta_i(t) := \dermapi(\alpha(t))$. By definition $p_t'(\delta_i(t)) =0$, and therefore
\begin{equation}
\label{eq:cauchy_transform_zero}
0 = \frac{p_t'(\delta_i(t))}{p_t(\delta_i(t))} = \sum_{k=1}^n \frac{1}{\delta_i(t)-\alpha_k(t)}.
\end{equation}
On the other hand, differentiating the identity $\delta(t) = \dermap(\alpha(t))$ with respect to $t$, while applying the chain rule on the right-hand side together with the fact that $\alpha'(0)=e_j$, yields $\delta'(0) = \jacder(e_j)$ and therefore $$\delta_i'(0) = \adj{e}_i\jacder(e_j) = \pdelta_{ij}.$$ 
So, differentiating both sides of (\ref{eq:cauchy_transform_zero}) with respect to $t$ and evaluating at $t=0$ yields
$$\frac{1}{(\delta_i-\alpha_j)^2} - \sum_{k=1}^n \frac{\pdelta_{ij}}{(\delta_i-\alpha_k)^2}  =0$$
where we are using $\delta_i$ as a short-hand notation for $\delta_i(0)$. The proof then follows from rearranging the above equality and using that $\delta_i = \dermapi(\alpha)$. 
\end{proof}

From \Cref{lem:gauss_lucas_entries} we see that $\pdelta$ not only maps score vectors $\score(\alpha)$ to $\scoreder(\dermap(\alpha))$ (as  implied by \Cref{lem:score_to_score_der}), but in view of (\ref{eq:Gauss--Lucas_theorem}) one also has that
$$\dermap(\alpha)= \pdelta\cdot\alpha.$$

\subsection{Differentiators} 

The matrix defined below, obtained by taking entry-wise square roots of the Gauss--Lucas matrix, will be of use in the proof of \Cref{thm:fisher_monotonicity}. 

\begin{definition}
    Assume $\alpha\in \bR^n$ has distinct roots. Then, define $P_\alpha$ to be the  $(n-1)\times n$ matrix with entries
    $$(P_\alpha)_{ij} = \frac{1}{\sqrt{Z_i}}\cdot \frac{1}{\dermapi(\alpha)-\alpha_j}$$
    where $Z_i:= \sum_{k=1}^n \frac{1}{(\Omega_{\scriptscriptstyle\partial_x}^i(\alpha)-\alpha_k)^2}$.
\end{definition}

Another concept that will be useful in our proof is that of differentiators \cite{davis1959eigenvalues}. 

\begin{definition}[Differentiator]
    Let $P\in M_{(n-1)\times n}(\bR)$  with orthonormal rows, $A\in M_{n\times n}(\bR)$, and define $c_A(x):= \det(x-A)$ and $c_{PA\adj{P}}(x):= \det(x-PA\adj{P})$. We say that $P$ is a {\em differentiator} of $A$ if $c_{PA\adj{P}}(x)=\frac{1}{n}c_A'(x)$. 
\end{definition}

We now observe that $P_\alpha$ is a differentiator for diagonal matrices.  

\begin{lemma}
\label{lem:differentiator_matrix}
    For every $\alpha\in \bR^n$ with distinct entries, the matrix $P_\alpha$ has orthonormal rows and is the differentiator of any diagonal matrix. 
\end{lemma}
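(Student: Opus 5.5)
We prove both claims by direct computation, writing $\delta_i:=\dermapi(\alpha)$ for the roots of $p'$, where $p:=\pmap{\alpha}$. The single identity behind everything is that each $\delta_i$ is a critical point of $p$:
\[
\sum_{k=1}^n \frac{1}{\delta_i-\alpha_k}=0,\qquad i=1,\dots,n-1,
\]
exactly as in \eqref{eq:cauchy_transform_zero}.

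\emph{Orthonormal rows.} Row $i$ of $P_\alpha$ has squared norm $\frac{1}{Z_i}\sum_k (\delta_i-\alpha_k)^{-2}=1$ by the definition of $Z_i$. For $i\neq l$, the inner product of rows $i$ and $l$ is, up to the factor $(Z_iZ_l)^{-1/2}$,
\[
\sum_k \frac{1}{(\delta_i-\alpha_k)(\delta_l-\alpha_k)}.
\]
Since $\delta_i\neq\delta_l$, we use the partial fraction identity
\[
\frac{1}{(\delta_i-\alpha_k)(\delta_l-\alpha_k)}=\frac{1}{\delta_l-\delta_i}\lpr{\frac{1}{\delta_i-\alpha_k}-\frac{1}{\delta_l-\alpha_k}}.
\]
Summing over $k$ and applying the critical-point identity to both $\delta_i$ and $\delta_l$ gives $0$. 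Distinctness of the $\delta_i$ holds because simple $\alpha$ gives simple $p'$ by interlacing.

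\emph{Differentiator property.} Take $A=\diag(d_1,\dots,d_n)$, so that $c_A(x)=\prod_k(x-d_k)$ and $\frac1n c_A'(x)=\frac1n c_A(x)\sum_k (x-d_k)^{-1}$. We need $\det(x-P_\alpha A\adj{P_\alpha})=\frac1n c_A'(x)$. Fix $x$ off the spectrum of $A$. Complete $P_\alpha$ to an orthogonal matrix $Q$ by adjoining the unit row $u$ orthogonal to all rows of $P_\alpha$. For $M:=Q(x-A)\adj{Q}$, the Schur complement (or Jacobi's complementary minor formula) gives
\[
\det\big(P_\alpha(x-A)\adj{P_\alpha}\big)=\det(x-A)\cdot\big(u(x-A)^{-1}\adj{u}\big).
\]
It therefore suffices to show that $u(x-A)^{-1}\adj{u}=\frac1n\sum_k (x-d_k)^{-1}$ for every $x$, which holds iff $u_k^2=1/n$ for all $k$.

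The natural candidate is $u=\frac{1}{\sqrt n}\bone_n$. It is orthogonal to row $i$ of $P_\alpha$ because $\sum_k(\delta_i-\alpha_k)^{-1}=0$, again the critical-point identity. Since the rows of $P_\alpha$ are orthonormal and span a hyperplane, $u$ is unique up to sign, so the formula holds for all $x$ off the spectrum of $A$, and by polynomial continuity for every $x$. This completes the argument.

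The computation is routine. The main points of care are the identity $\det(PB\adj{P})=\det B\cdot u B^{-1}\adj{u}$ for a row-orthonormal completion, and the observation that the orthogonal complement of the rows of $P_\alpha$ is spanned by $\bone_n$.
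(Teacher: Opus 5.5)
Your proof is correct and follows essentially the same route as the paper's. Both use partial fractions together with the critical-point identity $\sum_k(\delta_i-\alpha_k)^{-1}=0$ to show that distinct rows are orthogonal and that every row is orthogonal to $\bone_n$, and then apply the Schur complement with $u=\frac{1}{\sqrt n}\bone_n$ to get $\det(x-P_\alpha A P_\alpha^\top)=\det(x-A)\,u^\top(x-A)^{-1}u=\frac1n c_A'(x)$. Your explicit check of unit row norms and the orthogonal-completion framing of the Schur step only spell out what the paper leaves implicit.
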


\begin{proof}
    That $P_\alpha$ has orthonormal rows all of which are orthogonal to $\bone_n$ is  stated without proof inside the proof of Lemma 3.3 in \cite{khavinson2011borcea}, so it might be folklore. Let us provide a proof here. First, to simplify notation let $\delta_i:= \dermapi(\alpha)$ and  $p(x):=\pmap{\alpha}(x)$.
    
    Now take $i, j$ any two distinct indices. To show that the $i$-th row of $P_\alpha$ is orthogonal to the $j$-th one, expand their inner product as follows
    \begin{align*}
    \frac{1}{\sqrt{Z_iZ_j}} \sum_{k=1}^n \frac{1}{\delta_i -\alpha_k} \cdot \frac{1}{\delta_j - \alpha_k} & = \frac{1}{(\delta_j - \delta_i)\sqrt{Z_iZ_j}} \sum_{k=1}^n \left( \frac{1}{\delta_i-\alpha_k}-\frac{1}{\delta_j - \alpha_k} \right) 
    \\ & = \frac{1}{(\delta_j - \delta_i)\sqrt{Z_iZ_j}} \left(\frac{p'(\delta_i)}{p(\delta_i)}- \frac{p'(\delta_j)}{p(\delta_j)}\right)
    \\ & =0,
    \end{align*}
   where in the last equality we used that $\frac{p'(\delta_i)}{p(\delta_i)} = \frac{p'(\delta_j)}{p(\delta_j)}=0$.

   Now, to show that $\bone_n$ is orthogonal to all the rows of $P_\alpha$ again use that, for every $i$, $p'(\delta_i)=0$ to get that
   $$0 = \frac{p'(\delta_i)}{\sqrt{Z_i}p(\delta_i)} = \sum_{j=1}^n \frac{1}{\sqrt{Z_i}(\delta_i-\alpha_j)} = \adj{e}_i P_\alpha \bone_n .$$

Now that we have established the above, we can conclude that $P_\alpha$ is a differentiator of any diagonal matrix via a standard argument (e.g.\ see \cite[Theorem 2.5]{pereira2003differentiators}). Indeed, let $D\in M_n(\bR)$ be any diagonal matrix and set $u:= \frac{1}{\sqrt{n}} \bone_n$. Let $c_D(x):=\det(x-D)$ and $c_{P_\alpha D \adj{P}_\alpha}(x) := \det(x-P_\alpha D\adj{P}_\alpha)$. Then, since $P_\alpha \bone_n =0$ and $P_\alpha$ has orthonormal rows, by the Schur complement formula we have that $\frac{c_{P_\alpha D\adj{P}_\alpha}(x)}{c_D(x)} = \adj{u}(x-D)^{-1}u $. On the other hand, since $D$ is diagonal, $\adj{u}(x-D)^{-1}u = \frac{1}{n}\trace((x-D)^{-1}) = \frac{1}{n}\frac{c_D'(x)}{c_D(x)}$.  Therefore
$$c_{P_\alpha D\adj{P}_\alpha}(x) = \frac{1}{n}c_D'(x),$$
as we wanted to show. 
\end{proof}

\begin{remark}[Finite free position]
\label{rem:finite_free_position}
   In analogy to the notion  of  finite free position introduced by Marcus  \cite[Definition 5.1]{marcus2021polynomial}, one can interpret \Cref{lem:differentiator_matrix} as saying that $P_\alpha$ is in free position from diagonal matrices. 
\end{remark}

\subsection{Bounding the second singular value of $\jacder$} 

We can now bound  the second largest singular value of $\jacder$.

\begin{proposition}\label{prop:jacder 2sv}
  For any $u\in \bR^n$ perpendicular to $\bone_n$ one has
  $$\|\jacder(u)\| \le \sqrt{\tfrac{n-2}{n}} \|u\|.$$
   %Therefore $\gs_2(\jacder)=\sqrt{\tfrac{n-2}{n}}.$
\end{proposition}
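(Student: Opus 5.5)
The plan is to turn the Jacobian into the diagonal of a compressed diagonal matrix, and then use the differentiator property together with the variance scaling of \Cref{obs:variancescaling}.

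\textbf{Step 1: the Jacobian as a Hadamard square.} By \Cref{lem:gauss_lucas_entries}, the entries of $\pdelta$ are
$$\pdelta_{ij} = \frac{1}{Z_i(\dermapi(\alpha)-\alpha_j)^2} = (P_\alpha)_{ij}^2 .$$
Hence, for any $u\in\bR^n$ with $D_u:=\diag(u_1,\dots,u_n)$,
$$(\jacder u)_i=\sum_{j=1}^n (P_\alpha)_{ij}^2\, u_j = \big(P_\alpha D_u \adj{P}_\alpha\big)_{ii}.$$
So $\jacder u$ is exactly the vector of diagonal entries of the symmetric $(n-1)\times(n-1)$ matrix $M_u:=P_\alpha D_u\adj{P}_\alpha$.

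\textbf{Step 2: the spectrum of $M_u$.} By \Cref{lem:differentiator_matrix}, $P_\alpha$ is a differentiator of every diagonal matrix, in particular of $D_u$. Therefore
$$\det(x-M_u)=\tfrac1n\,\partial_x \det(x-D_u)=\tfrac1n\,\partial_x\pmap{u}(x).$$
It follows that the eigenvalues of $M_u$ are precisely the entries of $\dermap(u)$. Note that $u$ here is an arbitrary real vector, not necessarily simple; this is harmless, since we only use $\dermap(u)$ as the root vector of $\partial_x\pmap{u}$.

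\textbf{Step 3: compare diagonal and eigenvalues.} For a real symmetric matrix, the sum of squares of the diagonal entries is at most $\|M_u\|_F^2$, which equals the sum of squared eigenvalues. One can see this either because the omitted off-diagonal terms are nonnegative, or via Schur--Horn majorization. This gives
$$\|\jacder u\|^2 \le \|M_u\|_F^2=\|\dermap(u)\|^2 .$$

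\textbf{Step 4: use $u\perp\bone_n$.} If $\adj{u}\bone_n=0$, the last claim of \Cref{obs:variancescaling} gives $\|\dermap(u)\|^2=\frac{n-2}{n}\|u\|^2$. Combining this with Step 3 yields $\|\jacder(u)\|\le\sqrt{\frac{n-2}{n}}\,\|u\|$, as claimed.

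The main obstacle is Step 1, namely seeing that applying the Gauss--Lucas matrix to $u$ reads off the diagonal of $P_\alpha D_u\adj{P}_\alpha$. Once that identification is made, the differentiator lemma and the moment computation from \Cref{obs:variancescaling} finish the argument with essentially no further calculation.
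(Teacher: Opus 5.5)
Your proposal is correct and follows essentially the same route as the paper: both identify $\jacder u$ as the diagonal of $P_\alpha D_u \adj{P}_\alpha$ via the Gauss--Lucas entries, use the differentiator property to read off its eigenvalues as the roots of $\partial_x\pmap{u}$, and conclude with the Frobenius-norm comparison and the variance scaling of \Cref{obs:variancescaling}. The only difference is ordering: you make the Hadamard-square identification first, while the paper computes $\trace(X^2)$ first and identifies the diagonal afterward.
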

\begin{proof}
    From \Cref{lem:jacder doub stoch}, the top singular value of $\jacder$ is $\sqrt{\frac{n-1}{n}}$ and has associated  right singular vector $\bone_n$ and left singular vector $\bone_{n-1}$. Thus, it is  enough to show the first claim. 
   
    Take $u\in \bR^n$ with $\adj{u} \bone_n =0$ and set $g_u:=\pmap{u}$. Then, define the diagonal matrix $D_u:=\diag(u_1, \dots, u_n)$ and let $X:=P_\ga D_uP_\ga^\top$. Since $g_u(x)=\det(x-D_u)$, \Cref{lem:differentiator_matrix} gives that the roots of $g_u'(x)$ are the eigenvalues of $X$. So, if $w\in \bR^{n-1}$ is a vector of roots for $g_u'(x)$,   \Cref{obs:variancescaling}, on the one hand,  implies $\adj{w}\bone_{n-1} = \adj{u} \bone_n =0$,  and on the other hand
    $$\tfrac{n-2}{n}\norm{u}^2=\norm{w}^2 =\trace(X^2),$$ 
where the last equality follows from the fact that the $w_i$ are the eigenvalues of $X$. Now, we  compute $\trace(X^2)$ directly: 
    \[X(i,i)=\sum\limits_{j=1}^nP_\ga(i,j)D_u(j,j)P_\ga^\top(j,i)=\sum\limits_{j=1}^n\pdelta_{i, j} u_j=\adj{e}_i \jacder(u).\]
    It follows then that 
    \[\norm{\jacder (u)}^2=\sum\limits_{i=1}^nX(i,i)^2\le\norm{X}_{\mathrm{F}}^2=\tr(X^2)=\tfrac{n-2}{n}\norm{u}^2.\qedhere\]
\end{proof}

We are now equipped to prove \Cref{thm:fisher_monotonicity}. 

\begin{proof}
    [Proof of \Cref{thm:fisher_monotonicity}]
    First, \Cref{obs:orthogonality_score} tells us that  $\score(\ga)$ is perpendicular to $\bone_n$. So \Cref{lem:score_to_score_der} and \Cref{prop:jacder 2sv} combined yield
    \[\norm{\scoreder(\dermap(\ga))}^2=\norm{\jacder(\score(\ga))}^2\le\tfrac{n-2}{n}\norm{\score(\ga)}^2.\]
    In terms of finite free Fisher information, the above is equivalent to
    \[(n-1)(n-2)^2\Phi_{n-1}(p')\le(n-2)(n-1)^2\Phi_n(p)\]
    Let us briefly work out what $\widetilde{p'}$ has to be. If $\widetilde{p'}=c_*p'$ for some $c$, and $\var(p)=\var\lpr{\widetilde{p'}}=c^2\var(p')=\frac{n-2}{n-1}c^2\var(p)$, then $c=\sqrt{\frac{n-1}{n-2}}$. It then follows that $\Phi_{n-1}\lpr{\widetilde{p'}}=\frac{n-2}{n-1}\Phi_{n-1}(p')$, and the result is proved. 
\end{proof}

We remark that a direct calculation shows that the inequality is saturated at any Hermite polynomial $a_*H_n(x-b)$ for $a,b\in\R$.

\section{Stam's inequality via hyperbolicity}
\label{sec:stam_geometry_proof}

This section is devoted to proving the finite free Stam inequality stated in \Cref{thm:stams_inequality}. When dealing with $\boxplus_n$, as in the case of $\partial_x$, it is still possible to obtain formulas for $\jac$. However, these formulas are not as explicit as the one obtained in \Cref{lem:gauss_lucas_entries} and we did not see how to successfully apply them to get a proof.\footnote{That said, it is reasonable to expect that there exists a matrix-analytic proof in the same spirit as the one provided in \Cref{sec:matrix_analysis_fisher_monotonicity}, which would potentially uncover interesting phenomena related to matrices in finite free position.} So, here we adopt an entirely different approach using a result about hyperbolic polynomials (see \Cref{thm:baushke}) as an input. Then, in \Cref{sec:geometry_of_polynomials_fisher_monotonicity} we show that this  approach also provides an alternative proof of \Cref{thm:fisher_monotonicity}. We highlight that neither of these proofs requires the double stochasticity results proven in Lemmas \ref{lem:double_stochasticity_convolution} and \ref{lem:jacder doub stoch}.  

\subsection{Proof of Stam's inequality}
\label{sec:proof_of_stam}

Throughout this section we will fix simple ordered vectors $\alpha, \beta\in \bR^n$ and  use $$\jac: \bR^n \oplus \bR^n\to \bR^n$$ to denote the Jacobian of $\roots$ at $(\alpha, \beta)$. For every $i$ we will use $$\hessconv: (\bR^n \oplus \bR^n)\times (\bR^n \oplus \bR^n) \to \bR$$ 
to denote the Hessian of the map $\rootsi$ at $(\alpha, \beta)$.  It will also prove useful to define the subspace
$$\calV:=\{u\oplus v\in  \bR^n \oplus \bR^n : \adj{u} \bone_n = \adj{v} \bone_n =0\}.  $$

Our proof consists of three pieces:
\begin{enumerate}[(i)]
    \item (Relating score vectors)  First, we invoke \Cref{lem:score_to_score_convolution} which states that
    $$(a+b) \score(\roots(\alpha, \beta))=\jac(a\score(\alpha)\oplus b \score(\beta))$$
    for all $a, b\in \bR$. 
    \item (Relating the Jacobian and the Hessian) Then, below, in \Cref{lem:jacobian_vs_hessian_conv} we show that
    \begin{equation}
\label{eq:jacobian_vs_hessian}
   \|\jac(w)\|^2 = \|w\|^2- \sum_{i=1}^n \rootsi(\alpha, \beta) \hessconv (w, w)
     \end{equation}
    for all $w\in \calV$.   
    \item (Positivity) Finally, in \Cref{lem:psdness_conv}, by invoking the result of Bauschke et al.\ \cite{bauschke2001hyperbolic} discussed in \Cref{sec:geometry_of_polynomials}, we show that 
    \begin{equation}
    \label{eq:positivity}
    \sum_{i=1}^n \rootsi(\alpha, \beta) \hessconv (w, w)\ge 0.
        \end{equation}
    
\end{enumerate}
The proof is then easily concluded by putting all of the above together (see the last part of \Cref{sec:proof_of_stam}). 

We first  prove (\ref{eq:jacobian_vs_hessian}) by differentiating the identities in \Cref{prop:properties_of_ff_convolutions}. 

\begin{lemma}[Relating $\jac$ and $\hessconv$]
\label{lem:jacobian_vs_hessian_conv}
    For all $w\in \calV$ it holds that
    $$\|\jac(w)\|^2 = \|w\|^2- \sum_{i=1}^n \rootsi(\alpha, \beta) \hessconv (w, w).$$
\end{lemma}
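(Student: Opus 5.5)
Differentiate the variance identity from \Cref{prop:properties_of_ff_convolutions} (\ref{item:additivity}) twice along a line. Fix $w=u\oplus v\in\calV$ and set $\gamma(t):=\roots(\alpha+tu,\beta+tv)$. By \Cref{obs:smoothness_convolution} this is smooth for small $|t|$, since $\alpha$ is simple. Additivity of the first moment gives
\[
\adj{\bone}_n\gamma(t)=\adj{\bone}_n\alpha+\adj{\bone}_n\beta+t(\adj{\bone}_nu+\adj{\bone}_nv)=\adj{\bone}_n(\alpha+\beta),
\]
which is constant because $w\in\calV$. Additivity of the variance then says
\[
\|\gamma(t)\|^2=n\,\var(\alpha+tu)+n\,\var(\beta+tv)+C,
\]
where $C$ is a constant depending only on the (fixed) means. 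Because $u,v\perp\bone_n$, the means of $\alpha+tu$ and $\beta+tv$ do not depend on $t$. Hence $n\var(\alpha+tu)=\|\alpha+tu\|^2-\mathrm{const}$, and similarly for $\beta$. The conclusion is that
\[
\|\gamma(t)\|^2=\|\alpha+tu\|^2+\|\beta+tv\|^2+\mathrm{const}.
\]

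Next, differentiate this identity twice in $t$ and evaluate at $t=0$. The right-hand side gives $2\|u\|^2+2\|v\|^2=2\|w\|^2$. On the left, $\frac{d^2}{dt^2}\|\gamma(t)\|^2=2\|\gamma'(t)\|^2+2\langle\gamma(t),\gamma''(t)\rangle$. By the chain rule, $\gamma'(0)=\jac(w)$ and $\gamma_i''(0)=\hessconv(w,w)$, since the path $(\alpha,\beta)+tw$ is affine and so contributes no second-derivative term. Substituting gives
\[
2\|\jac(w)\|^2+2\sum_{i=1}^n\rootsi(\alpha,\beta)\hessconv(w,w)=2\|w\|^2,
\]
and rearranging yields the claim.

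The only delicate points are bookkeeping. First, the constants coming from the means must really be $t$-independent; this is exactly where $w\in\calV$ is used, for both $u$ and $v$ separately and not merely for their sum. Second, the ordered root map must be twice differentiable at $(\alpha,\beta)$. This follows from the proof of \Cref{obs:smoothness_convolution}: the roots are simple there, so the implicit function theorem gives smoothness, not just differentiability, and the order of the roots is locally preserved.
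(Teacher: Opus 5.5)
Your proposal is correct and follows essentially the same route as the paper: you fix the means using $w\in\calV$ (separately for $u$ and $v$), differentiate the variance-additivity identity twice along the affine path $(\alpha,\beta)+tw$, and read off $\|\jac(w)\|^2+\sum_i\rootsi(\alpha,\beta)\hessconv(w,w)$ from $\tfrac{d^2}{dt^2}\|\gamma(t)\|^2$. Your explicit check that the root map is twice differentiable, by the implicit function theorem at simple roots, is a point the paper leaves implicit, and it is good to have it stated.
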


\begin{proof}
Fix $w = u \oplus v\in \calV$ and define 
$$\alpha(t):= \alpha+ tu, \qquad \beta(t):=\beta+tv,  \qquad \text{and}\qquad  \gamma(t):=\roots(\alpha(t), \beta(t)),$$ and note that the variance additivity  from \Cref{prop:properties_of_ff_convolutions} (\ref{item:additivity}) implies that
$$m_2(\gamma(t))-m_1(\gamma(t))^2 = m_2(\alpha(t))+ m_2(\beta(t))-(m_1(\alpha(t))^2+m_1(\beta(t))^2),$$
for all $t\in \bR$. Now, the fact that $u \oplus v\in \calV$  implies that the means $m_1(\alpha(t))$ and $m_1(\beta(t))$ are a constant function of $t$ and therefore,  by the mean additivity in \Cref{prop:properties_of_ff_convolutions} (\ref{item:additivity}), the mean $m_1(\gamma(t))$ is also a constant function of $t$. So, differentiating the above equation twice with respect to $t$ we get
\begin{equation}
\label{eq:secderivatives}
 \left.\partial_t^2 m_2(\gamma(t))\right|_{t=0} = \left.\partial_t^2 \big(m_2(\alpha(t))+m_2(\beta(t))\big)\right|_{t=0} .
 \end{equation}
We will now explicitly compute both sides of the above equation and derive a new identity from equating both results. To this end, in what follows, for a function $f$ and a vector $a$, we use $D_af(x)$ to denote $\left.\partial_t f(x+ta)\right|_{t=0}$, the directional derivative of $f$ in the direction $a$. 

We begin by computing ($n$ times) the left-hand side of (\ref{eq:secderivatives}):
\begin{align}
\nonumber \left. n\, \partial_t^2m_2(\gamma(t))\right|_{t=0} & = \sum_{i=1}^n D_w^2(\rootsi(\alpha, \beta)^2) 
\\ \nonumber & = 2\sum_{i=1}^n \left( (D_w\rootsi(\alpha, \beta))^2+ \rootsi(\alpha, \beta)D_w^2\rootsi(\alpha, \beta)\right)
\\ \label{eq:secondderofgamma} & = 2\left(\|\jac(w)\|^2+ \sum_{i=1}^n\rootsi(w)  \hessconv(w, w) \right). 
\end{align}
Now compute ($n$ times) the right-hand side of (\ref{eq:secderivatives}):
\begin{align}
\nonumber n\,\partial_t^2(m_2(\alpha(t))+m_2(\beta(t))) & = \partial_t^2(\adj{(\alpha+tu)}(\alpha+tu)+\adj{(\beta+tv)}(\beta+tv)) 
 \\ \nonumber & = 2(\adj{u}u +\adj{v}v) 
 \\ \label{eq:secondderofalphanadbeta} & = 2\|w\|^2.
 \end{align}
The proof is then concluded by plugging (\ref{eq:secondderofgamma}) and (\ref{eq:secondderofalphanadbeta}) back into (\ref{eq:secderivatives}) and rearranging the terms. 
\end{proof}

Now we invoke \Cref{cor:bauschke} to prove a generalization of (\ref{eq:positivity}).

\begin{lemma}[Positivity]
\label{lem:psdness_conv}
 For any real numbers $c_1\ge \cdots \ge c_n$ and any $w\in \bR^n\oplus \bR^n$ we have 
  $$\sum_{i=1}^{n} c_i \hessconv (w, w)\ge 0.$$ 
\end{lemma}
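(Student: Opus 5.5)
The plan is to realize $\sum_i c_i \rootsi$ as a convex function of $(\alpha,\beta)$ via \Cref{cor:bauschke}. Once that is done, the claim follows because the Hessian of a convex function is positive semidefinite at any point where it is twice differentiable. By \Cref{obs:smoothness_convolution}, $\roots$ is smooth near $(\alpha,\beta)$ since $\alpha$ is simple, so $\hessconv$ is well defined there.

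\textbf{Step 1: hyperbolicity of $g_{\lilconv}$.} I will take the polynomial $g_{\lilconv}(\xi,\zeta)=\sum_{\sigma\in S_n}\prod_{i=1}^n(\xi_i+\zeta_{\sigma(i)})$ from \Cref{sec:intro_eq_cases} and check that it is hyperbolic in the direction $u=(\bone_n,0_n)$. It is homogeneous of degree $n$, and $g_{\lilconv}(\bone_n,0_n)=n!>0$. For any $(\xi,\zeta)$, \eqref{eq:finite_free_convo} gives
\[
g_{\lilconv}((\xi,\zeta)+t u)=\sum_{\sigma}\prod_i\bigl(t-(-\xi_i)-(-\zeta_{\sigma(i)})\bigr)=n!\,\bigl(\pmap{-\xi}\boxplus_n\pmap{-\zeta}\bigr)(t).
\]
By Walsh's theorem this polynomial is real-rooted in $t$. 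Moreover, its ordered roots are $\lambda_i(\xi,\zeta)=\rootsi(-\xi,-\zeta)$.

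\textbf{Step 2: convexity of $F$.} By \Cref{cor:bauschke}, for $d_1\ge\cdots\ge d_n\ge0$ the function $(\xi,\zeta)\mapsto\sum_i d_i\rootsi(-\xi,-\zeta)$ is convex. Composing with the linear map $(\alpha,\beta)\mapsto(-\alpha,-\beta)$ preserves convexity, so $(\alpha,\beta)\mapsto\sum_i d_i\rootsi(\alpha,\beta)$ is convex.

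The $c_i$ need not be nonnegative, so I will write $c_i=d_i+c_n$ with $d_i:=c_i-c_n$. These $d_i$ are nonincreasing and nonnegative. This gives
\[
\sum_i c_i\rootsi=\sum_i d_i\rootsi+c_n\sum_i\rootsi .
\]
By the mean additivity in \Cref{prop:properties_of_ff_convolutions}(\ref{item:additivity}), $\sum_i\rootsi(\alpha,\beta)=\adj{\bone}_n\alpha+\adj{\bone}_n\beta$. This term is linear and contributes zero to the Hessian. Hence $F:=\sum_i c_i\rootsi$ is convex on all of $\bR^n\oplus\bR^n$.

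\textbf{Step 3: conclusion.} $F$ is $C^2$ near $(\alpha,\beta)$. For any $w$, the function $t\mapsto F((\alpha,\beta)+tw)$ is convex and $C^2$ near $0$, so its second derivative at $0$ is nonnegative. That second derivative equals $\sum_i c_i\hessconv(w,w)$.

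I expect the only delicate point to be Step 1: matching the ordering and signs between the roots $\lambda_i$ in \Cref{thm:baushke} and the roots $\rootsi$. The substitution $(\xi,\zeta)=(-\alpha,-\beta)$ handles the signs. Both families are listed in decreasing order, so the indices match.
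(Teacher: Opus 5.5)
Your proof is correct and follows the paper's argument: you use linearity of $\sum_i\rootsi$ to reduce to nonnegative nonincreasing weights, then apply \Cref{cor:bauschke} to a hyperbolic polynomial whose roots along the hyperbolicity direction are the roots of the finite free convolution. The only difference is cosmetic. The paper uses the $(2n+1)$-variable polynomial $f_{\boxplus_n}(x,\zeta,\eta)$ with direction $e_1$ and restricts to the slice $x=0$, so no sign flip is needed, whereas you use $g_{\lilconv}$ with direction $(\bone_n,0_n)$ and compose with the linear map $(\alpha,\beta)\mapsto(-\alpha,-\beta)$.
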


\begin{proof}
First, by \Cref{prop:properties_of_ff_convolutions}, for any $\zeta, \eta\in \bR^n$ we have  
$$\sum_{i=1}^{n} \rootsi(\zeta, \eta) =  \sum_{i=1}^n (\zeta_i+\eta_i)$$
 and therefore the above is a linear function of $\zeta$ and $\eta$. So, viewing the sums in the above equation as functions of $\zeta$ and $\eta$, and taking the Hessian on each side at $(\zeta, \eta)=(\alpha, \beta)$, we get 
$$\sum_{i=1}^{n} \hessconv =0.$$
Hence $\sum_{i=1}^{n} c_i \hessconv = \sum_{i=1}^{n} (c_i+c) \hessconv$ for all $c\in \bR$ and therefore we can assume, without loss of generality, that $c_n\ge 0$. This will allow us to apply \Cref{cor:bauschke}.  

Now, define the  polynomial $f$ in $2n+1$ variables as
$$f_{\boxplus_n}(x, \zeta, \eta) := \frac{1}{n!} \sum_{\pi\in S_n} \prod_{i=1}^n(x-\zeta_i - \eta_{\pi(i)}),$$
and note that $\fconv$ is hyperbolic in the direction $e_1=(1, 0, \dots, 0)$. Indeed, let $x_0\in \bR$, $\zeta, \eta\in \bR^n$, and set $p(x):=\pmap{\zeta}(x)$ and $q(x):= \pmap{\eta}(x)$. Then 
\begin{equation}
\label{eq:g_and_conv}
g_{(x_0, \zeta, \eta)}(t):=\fconv(x_0+t, \zeta, \eta)= (p\boxplus_n q)(x_0+t).
\end{equation}
Then, since $(p\boxplus_nq)(x_0+t)$ is real-rooted so is $g_{(x_0, a, b)}(t)$, and we can conclude that $\fconv$ is hyperbolic in the direction $e_1$. 

Finally, if for every $\zeta, \eta\in \bR^n$ we use $\lambda_1(\zeta, \eta)\ge \cdots \ge \lambda_{n}(\zeta, \eta)$ to denote the roots of $g_{(0,\zeta, \eta)}(t)$, \Cref{cor:bauschke} implies that the function $\sum_{i=1}^n c_i \lambda_i(\zeta, \eta)$ is convex in $(\zeta, \eta)$. On the other hand, by (\ref{eq:g_and_conv}) we have that $\lambda_i(\zeta, \eta) = \rootsi(\zeta, \eta)$. So,  $\sum_{i=1}^n c_i \rootsi(\zeta, \eta)$ is convex as a function of $(\zeta, \eta)$. Differentiating this function twice at $(\zeta, \eta)=(\alpha, \beta)$ we get that $\sum_{i=1}^{n} c_i \hessconv$ is a positive semidefinite matrix, and the result follows.  
\end{proof}

We can now prove the following. 

\begin{proposition}
\label{prop:second_sing_val_bound_conv}
    For any $w\in \calV$ it holds that
    \begin{equation}
    \label{eq:norm_bound_on_V}
    \|\jac(w)\| \le \|w\|.
        \end{equation}
    In particular, for any $a, b \in \bR$ it holds that
    \begin{equation}
    \label{eq:main_score_ineq_conv}
    (a+b)^2\|\score(\roots(\alpha, \beta))\|^2 \le a^2 \|\score(\alpha)\|^2+b^2\|\score(\beta)\|^2.
        \end{equation}
\end{proposition}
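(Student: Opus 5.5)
The first claim, \eqref{eq:norm_bound_on_V}, follows by combining \Cref{lem:jacobian_vs_hessian_conv} with \Cref{lem:psdness_conv}. Fix $w\in\calV$. By \Cref{lem:jacobian_vs_hessian_conv},
$$\|\jac(w)\|^2=\|w\|^2-\sum_{i=1}^n \rootsi(\alpha,\beta)\hessconv(w,w).$$
We apply \Cref{lem:psdness_conv} with $c_i:=\rootsi(\alpha,\beta)$. This is legitimate because, by the definition of $\roots$, its coordinates satisfy $\Omega^1_{\lilconv}(\alpha,\beta)\ge\cdots\ge\Omega^n_{\lilconv}(\alpha,\beta)$. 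The sum subtracted above is therefore nonnegative, which gives $\|\jac(w)\|^2\le\|w\|^2$. The differentiability and Hessians used here are justified by \Cref{obs:smoothness_convolution}, since $\alpha$ is simple.

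For \eqref{eq:main_score_ineq_conv}, set $w:=a\score(\alpha)\oplus b\score(\beta)$. By \Cref{obs:orthogonality_score}, both $\score(\alpha)$ and $\score(\beta)$ are orthogonal to $\bone_n$, so $w\in\calV$. \Cref{lem:score_to_score_convolution} gives $\jac(w)=(a+b)\score(\roots(\alpha,\beta))$. Applying \eqref{eq:norm_bound_on_V} to this $w$, we get
$$(a+b)^2\|\score(\roots(\alpha,\beta))\|^2=\|\jac(w)\|^2\le\|w\|^2=a^2\|\score(\alpha)\|^2+b^2\|\score(\beta)\|^2,$$
where the last equality holds because the two summands of $w$ lie in orthogonal direct summands.

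There is no real obstacle in this argument. The only point to check is that the coefficients $c_i=\rootsi(\alpha,\beta)$ are ordered decreasingly, so that \Cref{lem:psdness_conv} applies. That lemma already removes any sign requirement on the $c_i$ by using $\sum_i\hessconv=0$, so we do not need the $c_i$ to be nonnegative.
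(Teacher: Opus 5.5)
Your proof is correct and matches the paper's argument: combine \Cref{lem:jacobian_vs_hessian_conv} with \Cref{lem:psdness_conv} (with $c_i=\rootsi(\alpha,\beta)$), then apply the bound to $w=a\score(\alpha)\oplus b\score(\beta)\in\calV$ via \Cref{obs:orthogonality_score} and \Cref{lem:score_to_score_convolution}. Your remarks that the $c_i$ are decreasing and that $\|w\|^2=a^2\|\score(\alpha)\|^2+b^2\|\score(\beta)\|^2$ spell out steps the paper leaves implicit.
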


\begin{proof}
Let $w\in \calV$.  Then
\begin{align*}
\|\jac(w)\|^2 & = \|w\|^2- \sum_{i=1}^n \rootsi(\alpha, \beta) \hessconv (w, w) && \text{\Cref{lem:jacobian_vs_hessian_conv} }
\\ & \le \|w\|^2 && \text{\Cref{lem:psdness_conv}}.
\end{align*}
Finally, by \Cref{obs:orthogonality_score} we have that $a\score(\alpha)\oplus b\score(\beta)\in \calV$. So (\ref{eq:main_score_ineq_conv}) follows from (\ref{eq:norm_bound_on_V}) together with \Cref{lem:score_to_score_convolution}. 
\end{proof}

The proof of \Cref{thm:stams_inequality} follows readily. 

\begin{proof}[Proof of \Cref{thm:stams_inequality}]
    Let $\alpha$ be a vector of roots for $p(x)$ and $\beta$ a vector of roots for $q(x)$. Let us first assume that  $\alpha$ and $\beta$ are simple and that $\lambda\neq 0$ and $\lambda\neq 1$.\footnote{Note when $\lambda\in \{0, 1\}$ the advertised inequality  trivially becomes an equality.} Then, (\ref{eq:main_score_ineq_conv}) applied to $\sqrt{\lambda}\alpha$ and $\sqrt{1-\lambda} \beta$ (which are again simple vectors) yields 
    $$(a+b)^2 \|\score(\roots(\sqrt{\lambda} \alpha, \sqrt{1-\lambda} \beta))\|^2 \leq \frac{a^2}{\lambda}\|\score(\alpha)\|^2 + \frac{b^2}{1-\lambda} \|\score(\beta)\|^2.$$
    So,  \Cref{thm:stams_inequality} follows  from taking   $a= \lambda$ and $b = 1-\lambda$ in the above inequality. 
    
    Now, if $\alpha$ or $\beta$ have repeated entries, we can take sequences of simple vectors $(\alpha^m)_{m=1}^\infty$ and $(\beta^m)_{m=1}^\infty$ in $\bR^n$ such that $\alpha = \lim_m \alpha^m$ and $\beta=\lim_m \beta^m$. Then, put $p_m := \pmap{\alpha^m}$ and $q_m := \pmap{\beta^m}$, and apply the derived inequality to get 
 $$    \Phi_n(\sqrt{\lambda}\,_* p_m \boxplus_n \sqrt{1-\lambda }\,_* q_m)\le \lambda \Phi_n(p_m) + (1-\lambda)\Phi_n(q_m).$$
 Then  take $m\to \infty$ to derive the desired result. 
\end{proof}

\subsection{Alternative proof of Fisher information monotonicity}
\label{sec:geometry_of_polynomials_fisher_monotonicity}

We now show that the same approach can be used to prove \Cref{thm:fisher_monotonicity}. Since all the steps are completely analogous to the proof given in \Cref{sec:proof_of_stam}, to minimize redundancy, here we give only a terse explanation. 

First, fix a simple ordered vector $\alpha\in \bR^n$ and let $\jacder$ be the Jacobian of $\dermap$ at $\alpha$, which we will think of as a linear transformation 
$$\jacder: \bR^n \to \bR^{n-1}.$$
Similarly, for every $i$, we will use $\hessder$ to denote the Hessian of $\dermapi$ at $\alpha$ and view it as a bilinear map
$$\hessder: \R^n \times \bR^n \to \bR.$$
We start by proving the analogue of \Cref{lem:jacobian_vs_hessian_conv}. 

\begin{lemma}[Relating the Jacobian and the Hessian]
\label{lem:jacobian_vs_Hessian}
   For any $u\in \bR^{n}$  orthogonal to  $\bone_n$, we have
    $$\|\jacder (u)\|^2 =\frac{n-2}{n}  \|u\|^2 - \sum_{i=1}^{n-1} \Omega_{\scriptscriptstyle\partial_x}^{i}(\alpha) \hessder(u, u)$$ 
\end{lemma}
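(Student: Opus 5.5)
The plan mirrors the proof of \Cref{lem:jacobian_vs_hessian_conv}, with variance additivity replaced by the variance scaling law of \Cref{obs:variancescaling}. Fix $u\in\bR^n$ with $\adj{u}\bone_n=0$ and set $\alpha(t):=\alpha+tu$ and $\delta(t):=\dermap(\alpha(t))$. By \Cref{obs:smoothness_of_dermap}, $\delta$ is smooth near $t=0$, since $\alpha$ is simple.

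First I control the means. Because $\adj{u}\bone_n=0$, the mean $m_1(\alpha(t))$ is constant in $t$. By \Cref{obs:variancescaling}, $m_1(\delta(t))=m_1(\alpha(t))$ is then also constant. Applying the variance identity of \Cref{obs:variancescaling} to $\alpha(t)$ gives
$$m_2(\delta(t))-m_1(\delta(t))^2=\tfrac{n-2}{n-1}\big(m_2(\alpha(t))-m_1(\alpha(t))^2\big).$$
Differentiating twice at $t=0$ kills the constant mean terms and leaves
$$\partial_t^2 m_2(\delta(t))\big|_{t=0}=\tfrac{n-2}{n-1}\,\partial_t^2 m_2(\alpha(t))\big|_{t=0}.$$

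Next I compute both sides. On the left, $(n-1)m_2(\delta(t))=\sum_{i=1}^{n-1}\dermapi(\alpha+tu)^2$. Its second derivative at $0$ is
$$2\sum_i\big((D_u\dermapi(\alpha))^2+\dermapi(\alpha)D_u^2\dermapi(\alpha)\big)=2\Big(\|\jacder(u)\|^2+\sum_i\dermapi(\alpha)\hessder(u,u)\Big).$$
On the right, $n\,m_2(\alpha(t))=\|\alpha+tu\|^2$ has second derivative $2\|u\|^2$.

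Finally I match the normalizations. Multiplying the reduced identity by $n-1$ gives
$$2\Big(\|\jacder(u)\|^2+\sum_i\dermapi(\alpha)\hessder(u,u)\Big)=(n-2)\cdot\tfrac{2}{n}\|u\|^2.$$
Rearranging yields the claim.

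There is no real obstacle here. The only points needing care are the bookkeeping of the factors $\frac{1}{n}$ versus $\frac{1}{n-1}$ in the moments, and the fact that smoothness of $\dermap$ at simple $\alpha$ justifies the chain-rule expansion.
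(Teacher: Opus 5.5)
Your proposal is correct and follows the paper's own proof: differentiate the variance-scaling identity of \Cref{obs:variancescaling} twice along $\alpha+tu$, using that both means are constant, exactly as in \Cref{lem:jacobian_vs_hessian_conv}. Your bookkeeping of the $\frac{1}{n}$ versus $\frac{1}{n-1}$ normalizations is correct, and it spells out the explicit calculation that the paper omits.
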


\begin{proof}
    Define the trajectories $\alpha(t):= \alpha+t u$ and $\delta(t):= \dermap(\alpha(t))$. By \Cref{obs:variancescaling} we know that for every $t$ the following equation holds:
    \begin{equation}
    \label{eq:variance_for_all_t}
    m_2(\delta(t))-m_1(\delta(t))^2 = \tfrac{n-2}{n-1} \left( m_2(\alpha(t))-m_1(\alpha(t))^2 \right). 
        \end{equation}
        Then, as in the proof of \Cref{lem:jacobian_vs_hessian_conv}, we differentiate both sides of the above identity twice with respect to $t$, to obtain the desired result. We omit the explicit calculations since they are the same as in the proof of \Cref{lem:jacobian_vs_hessian_conv}.
\end{proof}

Now we prove the analogue of \Cref{lem:psdness_conv}. 

\begin{lemma}[Positivity]
\label{lem:psdness_derivative}
For any real numbers $c_1\ge \cdots \ge c_n$ and any $u\in \bR^n$ 
  $$\sum_{i=1}^{n-1} c_i \hessder (u, u)\ge 0,$$
  and moreover $\sum_{i=1}^{n-1} \hessder(u, u) =0$. 
\end{lemma}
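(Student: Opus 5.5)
The plan mirrors the proof of \Cref{lem:psdness_conv}. I will first establish the trace identity $\sum_{i=1}^{n-1}\hessder(u,u)=0$. By \Cref{obs:variancescaling}, $\sum_{i=1}^{n-1}\dermapi(\zeta)=(n-1)m_1(\dermap(\zeta))=\frac{n-1}{n}\bone_n^\top\zeta$ for every $\zeta\in\bR^n$, which is a linear function of $\zeta$. By \Cref{obs:smoothness_of_dermap} each $\dermapi$ is smooth near the simple vector $\alpha$, so taking Hessians at $\alpha$ gives $\sum_i\hessder=0$ as a bilinear form. As in the convolution case, this lets us replace $c_i$ by $c_i+c$ for any constant $c$. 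We may therefore assume $c_{n-1}\ge 0$; only $c_1,\dots,c_{n-1}$ actually enter the sum.

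Next I will exhibit a hyperbolic polynomial whose roots in the hyperbolicity direction are the $\dermapi$. The natural candidate is
$$f_{\lilder}(x,\zeta):=\partial_x\prod_{i=1}^n(x-\zeta_i)=\sum_{i=1}^n\prod_{j\ne i}(x-\zeta_j),$$
a homogeneous polynomial of degree $n-1$ in the $n+1$ variables $(x,\zeta)$. I claim it is hyperbolic in the direction $e_1=(1,0,\dots,0)$. First, $f_{\lilder}(e_1)=n>0$. Second, for any $(x_0,\zeta)$, the univariate polynomial $t\mapsto f_{\lilder}(x_0+t,\zeta)=(\pmap{\zeta})'(x_0+t)$ is real-rooted by Rolle's theorem. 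Its roots in $t$, in decreasing order, are $\lambda_i(x_0,\zeta)=\dermapi(\zeta)-x_0$. In particular, restricted to $x_0=0$ we get $\lambda_i(0,\zeta)=\dermapi(\zeta)$.

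Finally, \Cref{cor:bauschke}, applied with weights $c_1\ge\cdots\ge c_{n-1}\ge 0$ and the indexing adapted to degree $n-1$, shows that $(x_0,\zeta)\mapsto\sum_i c_i\lambda_i(x_0,\zeta)$ is convex. Restricting a convex function to the affine slice $x_0=0$ preserves convexity, so $\zeta\mapsto\sum_i c_i\dermapi(\zeta)$ is convex on $\bR^n$. It is $C^2$ near $\alpha$, so its Hessian there, namely $\sum_i c_i\hessder$, is positive semidefinite. This gives $\sum_{i=1}^{n-1}c_i\hessder(u,u)\ge0$.

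The only point needing care is the bookkeeping in applying \Cref{cor:bauschke}. Its statement indexes roots up to $m$, the number of variables, whereas here $g$ has degree $n-1$. I would read the corollary, via \Cref{thm:baushke}, as concerning the $\deg f$ roots $\lambda_1\ge\dots\ge\lambda_{\deg f}$ of $g_\zeta$, which is how it was already used in \Cref{lem:psdness_conv}. With that reading the argument above is essentially routine, and I expect no real obstacle.
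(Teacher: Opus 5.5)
Your proposal is correct and follows essentially the same route as the paper's proof. It uses the trace identity from \Cref{obs:variancescaling} to shift the weights, the hyperbolicity of $f_{\lilder}(x,\zeta)=\sum_{i}\prod_{j\neq i}(x-\zeta_j)$ in the direction $e_1$, and \Cref{cor:bauschke} restricted to the slice $x_0=0$. Your bookkeeping remarks are accurate and tidy up small imprecisions in the paper's write-up: normalizing so that $c_{n-1}\ge 0$ rather than $c_n\ge 0$, and reading \Cref{cor:bauschke} as indexed by the $\deg f=n-1$ roots.
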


\begin{proof}
By \Cref{obs:variancescaling}, for any $\zeta\in \bR^n$ 
$$\sum_{i=1}^{n-1} \dermapi(\zeta) = \frac{n-1}{n} \sum_{i=1}^n \zeta_i,$$
 and therefore the above is a linear function of $\zeta$. Differentiating twice yields 
$$\sum_{i=1}^{n-1} \hessder =0,$$
proving the second claim. On the other hand, it follows that $\sum_{i=1}^{n-1} c_i \hessder = \sum_{i=1}^{n-1} (c_i+c) \hessder$ for all $c\in \bR$ and therefore we can assume, without loss of generality, that $c_n\ge 0$,  which will allow us to   apply \Cref{cor:bauschke}.  

Now, define the multivariate polynomial
$$f_{\partial_x}(x, \zeta) := \sum_{i=1}^n \prod_{j : j\neq i}(x-\zeta_j),$$
and note that $\fder$ is hyperbolic in the direction $e_1=(1, 0, \dots, 0)$. Indeed, let $x_0\in \bR$, $\zeta\in \bR^n$, and $p(x):=\pmap{\zeta}(x)$, and note that 
\begin{equation}
\label{eq:g_and_der}
g_{(x_0, \zeta)}(t):=\fder(x_0+t, \zeta)= p'(x_0+t).
\end{equation}
Then, since $p(t)$ is real-rooted so is $g_{(x, a)}(t)$. Finally, apply \Cref{cor:bauschke} in the same way it was applied in the proof of \Cref{lem:jacobian_vs_hessian_conv}. 
\end{proof}

We now put the above together to provide an alternative proof of \Cref{prop:jacder 2sv}, which as discussed at the end of \Cref{sec:matrix_analysis_fisher_monotonicity}, suffices to readily prove \Cref{thm:fisher_monotonicity}. 

\begin{proof}[Alternative proof of \Cref{prop:jacder 2sv}]
    Take $u\in \bR^n$  perpendicular to $\bone_n$ and note that
    \begin{align*}
       \|\jacder (u)\|^2 &= \frac{n-2}{n}  \|u\|^2 - \sum_{i=1}^{n-1} \Omega_{\scriptscriptstyle\partial_x}^{i}(\alpha) \hessder(u, u) && \text{\Cref{lem:jacobian_vs_Hessian}}
       \\ & \le \frac{n-2}{n} \|u\|^2 && \text{\Cref{lem:psdness_derivative}.} \qedhere
    \end{align*}
\end{proof}

\subsection{Equality cases}
\label{sec:equality_cases_stam}

Now that Theorems \ref{thm:fisher_monotonicity} and \ref{thm:stams_inequality} have been established, the purpose of this section is to determine the equality cases for the corresponding inequalities.

\begin{theorem}[Equality cases]
\label{thm:equality_cases_Fisher}
  Let $p(x)$ and $q(x)$ be  degree $n$ monic real-rooted polynomials with simple roots.
   \begin{enumerate}[i)]
       \item \label{item:monotonicity} (Monotonicity) Let $\tilde{{p'}}$ be as in Theorem \ref{thm:fisher_monotonicity}. If $\Phi_{n-1}(\tilde{p'})=\Phi_n(p)$ then $p(x)$ is an affine transformation of $H_n(x)$. 
       \item \label{item:stam_ineq} (Stam's inequality) Let $\lambda\in (0, 1)$. If $ \Phi_n\big(\sqrt{\lambda}\,_* p \boxplus_n \sqrt{1-\lambda }\,_* q\big)= \lambda \Phi_n(p) + (1-\lambda)\Phi_n(q)$ then both $p(x)$ and $q(x)$ are affine transformations of $H_n(x)$. 
   \end{enumerate}
\end{theorem}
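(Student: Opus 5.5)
The plan is to reduce each equality case to a statement about the root vector alone, and then to recognize Hermite polynomials through \Cref{lem:hermite_ode}. Write $s:=\score(\alpha)$ and let $m=m_1(\alpha)$.

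\textbf{Step 1: Hermite characterization.} I claim that $p=\pmap{\alpha}$ is an affine image of $H_n$ if and only if $s=\kappa(\alpha-m\bone_n)$ for some $\kappa>0$.

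Suppose $s=\kappa(\alpha-m\bone_n)$. At a simple root one has $p''(\alpha_i)=2p'(\alpha_i)s_i$. Hence $f(x):=\tfrac{1}{2\kappa}p''(x)-(x-m)p'(x)+np(x)$ vanishes at every $\alpha_i$. The leading terms cancel, so $\deg f\le n-1$, and therefore $f\equiv 0$. Applying \Cref{lem:hermite_ode} after the shift $x\mapsto x+m$ gives the claim.

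Consequently, it suffices to show that equality forces $s$ to be parallel to the centered root vector $\bar\alpha:=\alpha-m\bone_n$.

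\textbf{Step 2: equality kills the Hessian term.}

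\emph{Monotonicity.} Tracing the proof of \Cref{thm:fisher_monotonicity} through \Cref{lem:jacobian_vs_Hessian}, equality gives $Q(s,s)=0$, where
$$Q:=\sum_{i=1}^{n-1}\dermapi(\alpha)\,\hessder.$$
By \Cref{lem:psdness_derivative}, $Q$ is positive semidefinite. Since $\sum_i\hessder=0$, adding a constant to the weights does not change $Q$, which is why the ordering of the $\dermapi(\alpha)$ is all that matters. Positive semidefiniteness then upgrades $Q(s,s)=0$ to $s\in\ker Q$.

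\emph{Stam.} Equality forces equality in \eqref{eq:norm_bound_on_V} at
$$w=\sqrt\lambda\,\score(\sqrt\lambda\alpha)\oplus\sqrt{1-\lambda}\,\score(\sqrt{1-\lambda}\beta)\in\calV.$$
This comes from the scalings used in the proof of \Cref{thm:stams_inequality}, with the harmless rescaling $\alpha\to\sqrt\lambda\alpha$ and $\beta\to\sqrt{1-\lambda}\beta$. By \Cref{lem:psdness_conv}, $w$ then lies in the kernel of the positive semidefinite form $\sum_i\rootsi\,\hessconv$.

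\textbf{Step 3: known kernel directions.} Since $\dermap$ and $\roots$ are $1$-homogeneous and commute with translations, Euler's relation gives $\hessder(\alpha,\cdot)=0$ and $\hessder(\bone_n,\cdot)=0$. The analogous statements hold for $\hessconv$ in the directions $\alpha\oplus\beta$, $\bone_n\oplus0$ and $0\oplus\bone_n$.

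Restricted to the mean-zero subspace, these produce the kernel direction $\bar\alpha$ in the derivative case. In the convolution case they produce $\bar\alpha\oplus\bar\beta$ on $\calV$.

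\textbf{Step 4: the kernel is exactly one-dimensional.} This is the main obstacle, since the rest is bookkeeping. Equivalently, it says that the second singular value of the Jacobian is simple: the kernel on the mean-zero subspace should be exactly $\Span\{\bar\alpha\}$, and respectively $\Span\{\bar\alpha\oplus\bar\beta\}$ on $\calV$.

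For $\partial_x$ I would use Renegar's strict-curvature theorem for the hyperbolicity cone of $g_{\lilder}$. For $\boxplus_n$ I would prove strict curvature of $\partial K(g_{\lilconv},(\bone_n,0_n))$ at points with simple root vectors.

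The argument I would try first runs by contradiction. Suppose $z$ is a further kernel direction, independent of the homogeneity and translation directions. Restrict $f_{\boxplus_n}$ to the three-dimensional slice spanned by $e_1$, $(0,\alpha,\beta)$ and $(0,z)$. By \Cref{thm:Helton-Vinnikov} the restriction is $\det(x-sA-tB)$ with $A$, $B$ Hermitian. The roots are then eigenvalues of $sA+tB$. In this matrix setting, vanishing of the weighted sum of eigenvalue Hessians, taken with strictly decreasing simple weights, should force $B$ to commute with $A$ and in fact to be a combination of $A$ and $I$. Here one uses the second-order eigenvalue perturbation formula, whose terms are all nonnegative with strictly positive coefficients when the eigenvalues are simple. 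That conclusion would make $z$ lie in the span of the trivial directions, which is the desired contradiction, i.e.\ the flat-face exclusion.

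\textbf{Step 5: conclusion.} Granting Step 4, the derivative case gives $s=\kappa\bar\alpha$, and Step 1 finishes. In the Stam case, comparing components of $w$ gives $\score(\alpha)\propto\bar\alpha$ and $\score(\beta)\propto\bar\beta$. The proportionality constants are positive, since $\langle\score(\alpha),\alpha\rangle=\binom n2>0$. Step 1 then shows that both $p$ and $q$ are affine images of $H_n$.
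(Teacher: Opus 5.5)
Your Steps 1--3 and 5 reproduce the paper's reduction: equality forces the score into the second singular space, and the Hermite ODE then identifies $p$. Your versions have small improvements: the correct factor $2$ in $p''(\alpha_i)=2p'(\alpha_i)\score(\alpha)_i$, and the sign of the proportionality constant.

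\textbf{The gap is in Step 4.} Applied with strictly decreasing weights, the second-order perturbation formula only forces the off-diagonal entries of $B$ (in the eigenbasis of $A$) to vanish. That is exactly the paper's Lemma~\ref{lem:linearity_conv}, and its conclusion is only that the roots move \emph{linearly}: $\rootsi(\rho+tz)=\rootsi(\rho)+t\,\roots^{\tau(i)}(z)$ for small $|t|$. Nothing in the matrix setting upgrades ``diagonal'' to ``a combination of $A$ and $I$''. Every diagonal $B$ kills all eigenvalue Hessians, and for a hyperbolic polynomial such as $\prod_i(x-\zeta_i)$ the Hessians vanish in every direction. Linear root motion in a non-trivial direction is precisely a flat piece of the cone boundary, so it can only be excluded by a property of the specific cone, not by Hadamard's formula. (Your later implication, from $B\in\Span\{A,I\}$ to $z$ trivial, would follow from variance additivity; the failing step is the one before it.)

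Two ingredients are missing.
\begin{itemize}
\item \emph{From linearity to a boundary segment.} Use translation equivariance: shift $\rho$ and $z$ by multiples of $\bone_n\oplus 0$ so that the smallest root of $\rho'+tz'$ vanishes identically for small $|t|$. With $\rho',z'$ non-parallel, this gives a non-trivial segment of $\bcone$ through a point of $\partial'\cone$; the same works for $K_{\lilder}$.
\item \emph{Extremality of such points.} For $g_{\lilder}$ this is Renegar's theorem, which you cite but never connect to the linearity conclusion. For $g_{\lilconv}$ it is the paper's new Theorem~\ref{thm:curved_cone_conv}, for which you give no argument. Its proof needs genuinely new input:
  \begin{itemize}
  \item Along a maximal boundary segment $[\upsilon^1,\upsilon^2]$, the restriction of $g_{\lilconv}$ vanishes identically. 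So at each endpoint the two smallest roots coalesce at $0$.
  \item Fujie's characterization of repeated roots of $p\boxplus_n q$ then gives $\alpha_{\min}+\beta_{\min}=0$ at the endpoints, hence $\ge 0$ at $\rho$ by concavity of the minimum.
  \item The condition $\mult_{\min}(\alpha)+\mult_{\min}(\beta)\le n$ makes $(-1)^n(p\boxplus_n q)(x)>0$ for all $x\le 0$. This contradicts $\roots^n(\rho)=0$.
  \end{itemize}
\end{itemize}
Without these, Step 4 is unproved in both cases, and for $\boxplus_n$ the main new ingredient of the argument is absent.

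A minor slip: equality in Stam means equality at $w=\lambda\score(\sqrt\lambda\alpha)\oplus(1-\lambda)\score(\sqrt{1-\lambda}\beta)=\sqrt\lambda\score(\alpha)\oplus\sqrt{1-\lambda}\score(\beta)$. Your $w$ corresponds to a different inequality. The componentwise conclusion you draw is unaffected.
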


The key technical input in the proofs of parts \ref{item:monotonicity}) and \ref{item:stam_ineq}) in the above theorem are the facts that the second largest singular values of the Jacobians of the maps $\dermap$ and $\roots$, respectively, are simple. Concretely, as above, we will fix $\alpha, \beta\in \bR^n$ simple vectors and use $\jacder,  \jac$ to denote the Jacobnians of $\dermap$ and $\roots$, respectively, at $\alpha$ and $(\alpha, \beta)$, and we will use $\hessder$ and $\hessconv$ to denote the Hessians of $\dermapi$ and $\rootsi$, at the same corresponding points. We  prove the following. 

\begin{proposition}[Simple second singular values]
\label{prop:simple_sigma_derivative}
Let $\sigma_i(\jacder)$ and $\sigma_i(\jac)$ denote the $i$-th largest singular values of $\jacder$ and $\jac$, respectively. Then:
\begin{enumerate}[i)]
    \item \label{item:simple_sing_der}  $\sigma_2(\jacder) = \sqrt{\frac{n-2}{n}}$ is a simple singular value and   $\calpha:=(I_n-\frac{1}{n} \bone_n \bone_n^\top)\alpha$ is the associated right singular vector. 
    \item \label{item:simple_sing_conv} $\sigma_2(\jac)=1$ is a simple singular value and, if $\ceta:= (I_n-\frac{1}{n} \bone_n \bone_n^\top)\beta$, then $\calpha\oplus \ceta$ is the associated right singular vector. 
\end{enumerate}
\end{proposition}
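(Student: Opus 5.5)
We split the proof into three steps: verify that the claimed vectors attain the bounds, reduce simplicity to the vanishing of the Hessian quadratic form, and show that this form vanishes only along the claimed direction.

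\textbf{Step 1: the claimed vectors attain the bound.} The map $\dermap$ is positively homogeneous of degree one and commutes with translations, so $\dermap(\alpha+t\alpha)=(1+t)\dermap(\alpha)$. Differentiating at $t=0$ gives $\jacder(\alpha)=\dermap(\alpha)$. Write $\cdelta$ for the centering of $\dermap(\alpha)$. Since $\jacder\bone_n=\bone_{n-1}$, this yields $\jacder(\calpha)=\cdelta$. By \Cref{obs:variancescaling}, $\|\cdelta\|^2=\frac{n-2}{n}\|\calpha\|^2$, so $\calpha$ attains the bound of \Cref{prop:jacder 2sv}.

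The same argument applies to $\roots$, which is homogeneous jointly in $(\alpha,\beta)$. It gives $\jac(\calpha\oplus\ceta)=\camma$, where $\camma$ is the centered version of $\roots(\alpha,\beta)$. Variance additivity then gives $\|\camma\|^2=\|\calpha\|^2+\|\ceta\|^2$.

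Together with the top singular pairs recorded in \Cref{sec:double_stochasticity}, this shows that $\sigma_2$ equals the stated value. It also shows that $\calpha$ (resp. $\calpha\oplus\ceta$) is a right singular vector. Here $\calpha\neq 0$ because $\alpha$ is simple.

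\textbf{Step 2: reduction to the Hessian form.} It remains to prove simplicity. For this it suffices to show that every unit $u\perp\bone_n$ (resp. $w\in\calV$) attaining equality is parallel to $\calpha$ (resp. $\calpha\oplus\ceta$). By Lemmas \ref{lem:jacobian_vs_Hessian} and \ref{lem:jacobian_vs_hessian_conv}, equality holds exactly when the quadratic form $Q(w):=\sum_i \rootsi(\alpha,\beta)\hessconv(w,w)$ vanishes, and analogously for the derivative.

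Set $c_i=\rootsi(\alpha,\beta)$, which is strictly decreasing because the output roots are simple. Then $Q$ is the Hessian of the convex function $F(\zeta,\eta)=\sum_i c_i\rootsi(\zeta,\eta)$. This $F$ is convex by \Cref{cor:bauschke} after a shift making the $c_i\ge0$; the shift is harmless because $\sum_i\hessconv=0$.

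We decompose $F=\sum_{k=1}^{n-1}(c_k-c_{k+1})\sigma_k + c_n\sigma_n$, where $\sigma_k$ are the partial sums of the top eigenvalues. Each $\sigma_k$ is convex and smooth near the point, and $\sigma_n$ is linear. Every coefficient $c_k-c_{k+1}$ is strictly positive. Hence $Q(w)=0$ forces $D_w^2\sigma_k=0$ for every $k\le n-1$. In other words, $w$ lies in the null space of the Hessian of every partial sum $\sigma_k$.

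\textbf{Step 3: identifying the null space (main obstacle).} We must show that the common null space of these Hessians, intersected with $\calV$ (and with the zero-sum directions), is spanned by the homogeneity direction $\calpha\oplus\ceta$, together with the translation directions, which are excluded by $\calV$.

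The plan is to restrict to the $3$-variable pencil $(x,s,t)\mapsto \fconv(x,\, s(\alpha,\beta)+t w)$. This pencil is hyperbolic in the direction $e_1$, so Helton--Vinnikov (\Cref{thm:Helton-Vinnikov}) writes it as $\det(x-sA-tB)$ with $A,B$ Hermitian. The eigenvalues of $A$ are the $c_i$, which are distinct.

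Vanishing of the second derivatives of all $\sigma_k(A+tB)$ at $t=0$ is governed by second-order perturbation theory: $\sum_{j\neq k} |B_{kj}|^2/(\lambda_k-\lambda_j)$ summed over $k$ up to each cutoff. Taking $k=1,2,\dots$ successively forces every off-diagonal entry of $B$ to vanish. Thus $A$ and $B$ commute, and the roots of the pencil are linear in $(s,t)$: the restriction of $\roots$ to the plane spanned by $(\alpha,\beta)$ and $w$ is linear. Equivalently, the hyperbolicity cone boundary contains a flat piece, as described in \Cref{sec:intro_eq_cases}.

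To rule this out except along the homogeneity and translation directions, we would invoke strict curvature. For the derivative we use Renegar's theorem for $g_{\lilder}$, an elementary symmetric polynomial, whose cone boundary has no flat faces at regular points. For the convolution we would need the analogous statement for $g_{\lilconv}$. I expect this curvature statement for $\boxplus_n$ to be the hardest point. I would try to prove it by showing that linearity of $\roots$ along $(\alpha,\beta)+tw$ forces $\pmap{\alpha+tu}\boxplus_n\pmap{\beta+tv}$ to have roots affine in $t$. Comparing the $t^2$ coefficients in \eqref{eq:finite_free_convo}, this should force $u,v$ proportional to $\calpha,\ceta$ with the same constant.
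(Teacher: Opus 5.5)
Your Steps 1 and 2, and the Helton--Vinnikov/second-variation part of Step 3, match the paper's proof. Homogeneity gives $\jacder(\calpha)=\cdelta$ and $\jac(\calpha\oplus\ceta)=\camma$. Your partial-sum decomposition of $Q$ is the paper's telescoping identity. The diagonalization of $B$ is \Cref{lem:linearity} and \Cref{lem:linearity_conv}.

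The gap is the final step, which carries the real content of simplicity and which you only outline. For (i), invoking \Cref{thm:renegar} is right, but you still have to turn local linearity of the roots into a flat piece of $\partial K_{\lilder}$. Since $\dermap^{n-1}(\alpha+tu)$ is affine in $t$ near $0$ and $\dermap$ commutes with translations, choose $s_1,s_2$ so that $\dermap^{n-1}(\alpha'+tu')\equiv 0$, where $\alpha'=\alpha+s_1\bone_n$, $u'=u+s_2\bone_n$ and $|t|<\eps$. Simplicity of $\alpha'+tu'$ for small $|t|$ puts these points in $\partial^1K_{\lilder}$. Moreover $u'\not\parallel\alpha'$ precisely because $u\perp\bone_n$ and $u\not\parallel\calpha$. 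Only then does extremality give a contradiction.

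For (ii), the rigidity you need is exactly the strict-curvature statement \Cref{thm:curved_cone_conv}. This is a new result, and your proposal does not prove it. Your suggested substitute, comparing $t^2$-coefficients in \eqref{eq:finite_free_convo}, supplies no mechanism. The identities it produces are partly automatic for every $(u,v)$: for example, the $t^2$-part of the variance identity along the line is just $\var(\roots(u,v))=\var(u)+\var(v)$. The remaining identities involve the unknown permutation $\tau$ and the unknown vector $\roots(u,v)$. Nothing in the sketch forces $u\parallel\calpha$ and $v\parallel\ceta$ with a common constant.

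The paper's argument is of a different nature:
\begin{enumerate}
\item Take a maximal flat segment $[\upsilon^1,\upsilon^2]\subset\bcone$ through $\rho=(\alpha,\beta)$.
\item Since $g_{\lilconv}$ then vanishes on the whole line, the bottom root is double at both endpoints: $\roots^n=\roots^{n-1}=0$.
\item Fujie's characterization of repeated roots of $\boxplus_n$ turns this into $\alpha_{\min}+\beta_{\min}=0$ at the endpoints (\Cref{lem:sum_of_minimals}). By concavity of the minimum, $\alpha_{\min}+\beta_{\min}\ge 0$ at $\rho$.
\item The permutation expansion of $\boxplus_n$, together with $\mult_{\min}(\alpha)+\mult_{\min}(\beta)\le n$, shows that $\pmap{\alpha}\boxplus_n\pmap{\beta}$ has no zero on $(-\infty,0]$. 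This contradicts $\roots^n(\rho)=0$.
\end{enumerate}

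If you carry this out, note that $\cone$ contains the line spanned by $\bone_n\oplus(-\bone_n)$, because $\roots(\xi+s\bone_n,\zeta-s\bone_n)=\roots(\xi,\zeta)$. Extremality can therefore only hold modulo this line; work, for instance, in the slice $\{\zeta^\top\bone_n=0\}$. This is harmless for the application. For $w\in\calV$ not parallel to $\calpha\oplus\ceta$, the direction $u'\oplus v$ of the shifted segment through $\alpha'\oplus\beta$ does not lie in $\Span(\alpha'\oplus\beta,\bone_n\oplus(-\bone_n))$, so the contradiction survives.
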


\begin{remark}[Right singular vectors] Although it will not be of use in determining the equality cases, it may be worth noting that $\cdelta :=(I_{n-1}-\frac{1}{n-1} \bone_{n-1}\bone_{n-1}^\top) \dermap(\alpha)$ is the left singular vector of $\jacder$ associated to $\sigma_2(\jacder)$, and $\camma :=(I_n - \frac{1}{n} \bone_n \bone_n^\top ) \roots(\alpha, \beta)$ is the left singular vector of $\jac$ associated to $\sigma_2(\jac)$. 
\end{remark}

To derive Theorem \ref{thm:equality_cases_Fisher} from  Proposition \ref{prop:simple_sigma_derivative} we will combine the Hermite differential equation from Lemma \ref{lem:hermite_ode} with the following observation, which is  proven by a direct computation. 

\begin{observation}
\label{obs:score_as_double_derivative}
    If $p(x) = \pmap{\alpha}(x)$ then
    $$\frac{p''(\alpha_i)}{p'(\alpha_i)} = \sum_{j : j\neq i} \frac{1}{\alpha_i-\alpha_j}, \quad \forall i=1, \dots, n. $$
\end{observation}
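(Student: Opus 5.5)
The identity follows by differentiating $p'(x)=p(x)\sum_{j}\frac{1}{x-\alpha_j}$ once more and evaluating at a root. I would use the factored form rather than the logarithmic derivative, since $p$ vanishes at $\alpha_i$. Write $p(x)=(x-\alpha_i)q_i(x)$ with $q_i(x):=\prod_{j\neq i}(x-\alpha_j)$. The Leibniz rule gives
\begin{equation*}
p'(x)=q_i(x)+(x-\alpha_i)q_i'(x)
\end{equation*}
and
\begin{equation*}
p''(x)=2q_i'(x)+(x-\alpha_i)q_i''(x).
\end{equation*}
Evaluating both at $x=\alpha_i$ yields $p'(\alpha_i)=q_i(\alpha_i)$ and $p''(\alpha_i)=2q_i'(\alpha_i)$.

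Since $\alpha$ is simple, $q_i(\alpha_i)=\prod_{j\neq i}(\alpha_i-\alpha_j)\neq 0$, so $p'(\alpha_i)\neq 0$ and the quotient is well defined. The logarithmic derivative of $q_i$ is
\begin{equation*}
\frac{q_i'(x)}{q_i(x)}=\sum_{j\neq i}\frac{1}{x-\alpha_j},
\end{equation*}
which is valid at $x=\alpha_i$ because $q_i$ does not vanish there. Combining the pieces,
\begin{equation*}
\frac{p''(\alpha_i)}{p'(\alpha_i)}=\frac{2q_i'(\alpha_i)}{q_i(\alpha_i)}=2\sum_{j\neq i}\frac{1}{\alpha_i-\alpha_j}.
\end{equation*}

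This computation produces a factor of $2$ that does not appear in the displayed statement. A check with $n=2$ confirms it: take $p(x)=x^2-1$, so $\alpha=(1,-1)$. Then $p''(1)/p'(1)=2/2=1$, while $\sum_{j\neq 1}\frac{1}{\alpha_1-\alpha_j}=\frac{1}{2}$. So the correct identity is
\begin{equation*}
\frac{p''(\alpha_i)}{p'(\alpha_i)}=2\,\score(\alpha)_i ,
\end{equation*}
and the statement as displayed is off by this factor. The form with the $2$ is consistent with the Hermite equation $cf''-xf'+nf=0$ of \Cref{lem:hermite_ode}, where the constant $c$ can absorb it, so the downstream application should be unaffected. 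The only hypothesis used is simplicity of $\alpha$, which guarantees $p'(\alpha_i)\neq0$.
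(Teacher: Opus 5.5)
Your computation is correct and is precisely the ``direct computation'' the paper invokes without writing out (it gives no proof of this observation); the factorization $p=(x-\alpha_i)q_i$, with simplicity of $\alpha$ guaranteeing $p'(\alpha_i)=q_i(\alpha_i)\neq 0$, is exactly what is needed. You are also right that the displayed identity is off by a factor of $2$: the correct statement is $p''(\alpha_i)/p'(\alpha_i)=2\,\score(\alpha)_i$, as your $x^2-1$ example confirms, and the only downstream use, in the proof of \Cref{thm:equality_cases_Fisher}, is unaffected since $\alpha=c\,\score(\alpha)$ then gives $\tfrac{c}{2}p''(\alpha_i)-\alpha_i p'(\alpha_i)=0$, so \Cref{lem:hermite_ode} applies with $c/2$ in place of $c$.
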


For now, we will provide the proof that Proposition \ref{prop:simple_sigma_derivative} implies Theorem \ref{thm:equality_cases_Fisher}. The proof of Proposition \ref{prop:simple_sigma_derivative} is non-trivial and is deferred to  \cref{sec:simple_der} and \cref{sec:simple_conv}.

\begin{proof}[Proof of Theorem \ref{thm:equality_cases_Fisher}  using Proposition \ref{prop:simple_sigma_derivative}] We begin by proving \ref{item:monotonicity}). 
  Let $p(x):= \pmap{\alpha}(x)$. Since the inequality 
 in statement in Theorem \ref{thm:fisher_monotonicity} is invariant under translations by $\bone_n$, we can assume without loss of generality that $\alpha^\top \bone_n=0$, in which case $\calpha = \alpha$.    Now  note that the only inequality used in the proof of Theorem \ref{thm:fisher_monotonicity} is the one provided by Proposition \ref{prop:jacder 2sv} where $u = \score(\alpha)$. So, if  $\Phi_{n-1}(\tilde{p'}) = \Phi_n(p)$, we must have that
  \begin{equation}
  \label{eq:score_is_sing_val}
  \|\jacder(\score(\alpha))\| = \sqrt{\tfrac{n-2}{n}} \|\score(\alpha)\|.
   \end{equation}
  On the other hand, since $\score(\alpha)$ is orthogonal to $\bone_n$ and $\bone_n$ is the top singular vector of $\jacder$ and $\sigma_2(\jacder)= \sqrt{\frac{n-2}{n}}$, (\ref{eq:score_is_sing_val}) in conjunction with Proposition \ref{prop:simple_sigma_derivative}  implies that $ \alpha = c \score(\alpha)$ for some $c\in \bR$ with $c\neq 0$. 
  
  Now, Observation \ref{obs:score_as_double_derivative} tells us that $\frac{p''(\alpha_i)}{p'(\alpha_i)} = \score(\alpha)_i$, for every $i=1, \dots, n$. So, $\alpha = c \score(\alpha)$ yields $\alpha_i = c \frac{p''(\alpha_i)}{p'(\alpha_i)}$, or equivalently that $cp''(\alpha_i)-\alpha_ip'(\alpha_i)=0$, which, since $p(\alpha_i)=0$ can be rewritten as 
  $$cp''(\alpha_i)-\alpha_i p'(\alpha_i)+p(\alpha_i) =0. $$
  Now, the above holds for every $i$, which means that $\alpha_1, \dots, \alpha_n$ are roots of the equation $$cp''(x)-xp'(x)+np(x)=0.$$ 
  But, the left-hand side of the above equation is a polynomial of degree at most $n-1$, so it can only have $n$ roots if it is identically 0. So, by Lemma \ref{lem:hermite_ode} we conclude that $p(x) = aH_n(bx)$ for some $a, b \in \bR$. 

 The proof of part \ref{item:stam_ineq}) is  analogous to part \ref{item:monotonicity}).  First,  note that the only inequality used in the proof of Theorem \ref{thm:stams_inequality} is that from Proposition \ref{prop:second_sing_val_bound_conv} applied to the vector 
  $$w= \lambda\score( \sqrt{\lambda}\alpha)\oplus (1-\lambda) \score(\sqrt{1-\lambda}\beta)) = \sqrt{\lambda} \score(\alpha)\oplus \sqrt{1-\lambda}\score(\beta). $$
  So, if equality in Theorem \ref{thm:stams_inequality} holds, then Proposition \ref{prop:simple_sigma_derivative} implies that  $\sqrt{\lambda}\score( \alpha)\oplus \sqrt{1-\lambda} \score(\beta)$ is parallel to $\calpha \oplus \ceta$, and in particular we get that $\score(\alpha)$ is parallel to $\calpha$ and $\score(\beta)$ is parallel to $\ceta$, at which point we can proceed as in the proof of part \ref{item:monotonicity}). 
\end{proof}

\subsubsection{Second simple singular value: The derivative case }
\label{sec:simple_der}

\begin{definition}
    Let $f\in \bR[x_1, \dots, x_m]$ be a hyperbolic polynomial in the direction $u\in \bR^m$, and let $K(f, u)$ be its hyperbolicity cone. For $x\in \partial K(f, u)$ we will use $\mult(x)$ to denote the multiplicity of 0 as a root of the polynomial $h(t)=f(x+tu)$. Then,  we will define the set
    $$\partial^1 K(f, u) := \{ x \in \partial K(f, u) : \mult(x)=1\}.$$
\end{definition}

Now,  define the following polynomial in $\zeta\in \bR^n$,
$$g_{\lilder}(\zeta) :=  \sum_{i=1}^n \prod_{j : j \neq i} \zeta_j,$$
and note that $g_{\lilder}(\zeta)$ is hyperbolic in the direction $\bone_n$. Moreover, its hyperbolicity cone is given by $K(g_{\lilder}, \bone_n) = \{ \zeta\in \bR^n : \dermap^{n-1}(\zeta) > 0  \},$
and therefore its boundary admits the following description 
\begin{equation}
\label{eq:cone_boundary}
\partial K (g_{\lilder}, \bone_n) = \{\zeta\in \bR^n : \dermap^{n-1}(\zeta)=0\},
\end{equation}
and to lighten notation set  $K_{\lilder}:= K(g_{\lilder},\bone_n)$. In the sequel we will need the following particular case of a theorem of Renegar %\cite{renegar2006hyperbolic},
which articulates that the cone $K_{\lilder}$ does not have flat faces.

\begin{theorem}[Renegar]
\label{thm:renegar}
    If $\alpha\in \partial^1 K_{\lilder}$ then $\alpha$ is an extreme direction of $K_{\lilder}$.  That is, $\alpha$ cannot be expressed as a convex combination of non-parallel vectors in $\partial K_{\lilder}$. 
\end{theorem}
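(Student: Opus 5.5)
The plan is to argue by contradiction and reduce to a three-variable hyperbolic polynomial, where \Cref{thm:Helton-Vinnikov} turns the geometry of $\partial K_{\lilder}$ into linear algebra. Throughout I use that $\dermap^{n-1}$, the smallest root of $t\mapsto g_{\lilder}(\zeta-t\bone_n)$, is concave and positively homogeneous of degree one on $\bR^n$. This is the case $k=n-1$ of \Cref{thm:baushke} applied to $-\zeta$, since $\sum_i\lambda_i$ is linear.

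\textbf{Step 1: reduce to a line inside the zero set.} Suppose $\alpha\in\partial^1 K_{\lilder}$ and $\alpha=\lambda x+(1-\lambda)y$ with $x,y\in\partial K_{\lilder}$ non-parallel and $\lambda\in(0,1)$. Along the segment $[x,y]$, the concave function $s\mapsto \dermap^{n-1}(x+s(y-x))$ vanishes at both endpoints and at the interior point corresponding to $\alpha$. A concave function that is zero at both ends of an interval is $\ge 0$ on it, and if it also vanishes at an interior point it must vanish identically. So $g_{\lilder}$ vanishes on $[x,y]$, and since it is a polynomial it vanishes on the whole line $\alpha+s d$, $s\in\bR$, where $d:=y-x\notin\bR\alpha$. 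Moreover, because $\mult(\alpha)=1$ and simple roots vary continuously, $0$ remains the simple smallest root of $t\mapsto g_{\lilder}(\alpha+sd-t\bone_n)$ for all small $|s|$.

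\textbf{Step 2: Helton--Vinnikov.} Restrict $g_{\lilder}$ to the $3$-space spanned by $\bone_n,\alpha,d$. The polynomial $(t,a,s)\mapsto g_{\lilder}(t\bone_n+a\alpha+sd)$ has degree $n-1$ and is hyperbolic in the direction $(1,0,0)$. By \Cref{thm:Helton-Vinnikov}, after normalizing the constant $g_{\lilder}(\bone_n)=n$, it equals $n\det(tI+aA+sB)$ for some Hermitian $(n-1)\times(n-1)$ matrices $A,B$.

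In these coordinates the smallest root $\dermap^{n-1}(\alpha+sd)$ is the smallest eigenvalue of $A+sB$. By Step 1 this eigenvalue is identically $0$ and simple for small $|s|$. Let $v(s)$ be a smooth unit kernel vector. First-order perturbation theory gives $v^*Bv=0$. The second-order formula gives
$$0=\sum_{k}\frac{|v_k^*Bv|^2}{0-\mu_k},$$
where $\mu_k>0$ are the other eigenvalues of $A$ and $v_k$ the corresponding eigenvectors. Every term has the same sign, so $v_k^*Bv=0$ for all $k$. Together with $v^*Bv=0$ this forces $Bv=0$, and also $Av=0$.

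It follows that $\det(tI+aA+sB)$ is divisible by $t$ for all $a,s$. In other words, $0$ is a root of $t\mapsto g_{\lilder}(a\alpha+sd-t\bone_n)$ for every point of the plane $\Pi:=\Span\{\alpha,d\}$, so $g_{\lilder}\equiv 0$ on $\Pi$. In particular $g_{\lilder}(\pm d)=0$, and every point of $\Pi$ near the ray through $\alpha$ lies in $\partial K_{\lilder}$.

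\textbf{Step 3 (main obstacle): rule out such a plane.} This is the step I expect to be hardest, because it must use the specific polynomial $g_{\lilder}=e_{n-1}$ rather than general hyperbolicity. Near $\alpha$ the hypersurface $\{g_{\lilder}=0\}$ is smooth, since $\nabla g_{\lilder}(\alpha)\neq 0$ by the simplicity of the root, and it would contain a flat $2$-dimensional piece of $\Pi$.

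My first attempt uses the identity $g_{\lilder}(\zeta)=e_n(\zeta)\sum_j \zeta_j^{-1}$ on the open orthant region, together with the fact that $\alpha\in\partial K_{\lilder}$ with $\mult(\alpha)=1$ lies outside the coordinate hyperplanes. From there I would show that the second fundamental form of $\{g_{\lilder}=0\}$ at $\alpha$ is definite on the tangent directions transverse to the ray $\bR\alpha$. This should follow from computing the Hessian of $\zeta\mapsto\sum_j\zeta_j^{-1}$ restricted to its zero level set. A definite second fundamental form cannot be compatible with containing the direction $d\notin\bR\alpha$, which gives the contradiction.

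If this curvature computation proves messy, the fallback is to exploit that the common kernel vector $v$ produces a factorization
$$g_{\lilder}|_{\Pi\oplus\bR\bone_n}=t\cdot h,$$
and to show, by expanding $e_{n-1}$ along a direction in $\Pi$ transverse to $\alpha$, that $e_{n-1}$ has no linear factor on any such $3$-space through a point of $\partial^1K_{\lilder}$.
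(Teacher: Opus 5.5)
Your Steps 1 and 2 are sound; Step 2 is the same kind of Helton--Vinnikov plus second-variation argument as in \Cref{lem:linearity}. The gap is Step 3, which carries all the content. It is only announced, and the premise it starts from is false: points of $\partial^1K_{\lilder}$ can lie on coordinate hyperplanes. For $\zeta=(0,0,1,\dots,1)$ one has $\partial_x\pmap{\zeta}(x)=x(x-1)^{n-3}(nx-2)$, so $0$ is the simple smallest root and $\zeta\in\partial^1K_{\lilder}$.

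At such points no curvature or factorization argument can work, because for $n\ge4$ the statement itself fails there. For any $a_3,\dots,a_n>0$, the polynomial $\pmap{(0,0,a_3,\dots,a_n)}$ has a double root at $0$ and its other roots positive. By interlacing, $(0,0,a_3,\dots,a_n)\in\partial K_{\lilder}$. Hence $(0,0,1,1,\dots,1)$ is the midpoint of the non-parallel boundary vectors $(0,0,1\pm\eps,1\mp\eps,1,\dots,1)$, $0<\eps<1$. Your plane $\Pi$ really occurs there: for $n=4$, $\Pi=\{(0,0,a,b)\}$ and $g_{\lilder}\equiv0$ on it. Your fallback fails too, since $g_{\lilder}(t\bone_4+(0,0,a,b))=t\,[\,t(2t+a+b)+2(t+a)(t+b)\,]$ has the linear factor $t$.

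So the statement as written, and with it the paper's one-line appeal to Renegar, needs an extra hypothesis. What is true is the version for $\alpha\in\partial K_{\lilder}$ with no zero coordinate. This is also all the paper uses in the proof of \Cref{prop:simple_sigma_derivative}: there $\alpha'$ is a translate of a simple vector with $\dermap^{n-1}(\alpha')=0$, so $\pmap{\alpha'}(0)\neq0$.

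Under that hypothesis your first attempt does go through, and Step 2 becomes unnecessary. The missing ingredient is a sign count. Since $\bR^n_{>0}\subset K_{\lilder}$, such an $\alpha$ has a negative coordinate, and only one. Indeed, if its two smallest coordinates $\lambda_1\le\lambda_2$ had $\lambda_2<0$, Rolle's theorem would give a root of $\partial_x\pmap{\alpha}$ in $[\lambda_1,\lambda_2]\subset(-\infty,0)$, contradicting $\dermap^{n-1}(\alpha)=0$.

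Near $\alpha$ write $g_{\lilder}(\zeta)=\big(\prod_j\zeta_j\big)F(\zeta)$ with $F(\zeta)=\sum_j\zeta_j^{-1}$. Step 1 gives $F(\alpha+sd)=0$ for all small $|s|$. Differentiating once and twice at $s=0$ yields $Q(\alpha,d)=Q(d,d)=0$ for the bilinear form $Q(w,w')=\sum_j w_jw_j'\alpha_j^{-3}$, while $Q(\alpha,\alpha)=F(\alpha)=0$. So $Q$ vanishes identically on $\Span\{\alpha,d\}$. But $Q$ is nondegenerate of signature $(n-1,1)$, so its totally isotropic subspaces are at most one-dimensional. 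This forces $d\in\bR\alpha$, contradicting Step 1.

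The paper instead imports Renegar's general theorem on derivative cones of regular hyperbolic polynomials. The argument above is a short self-contained proof specific to $g_{\lilder}$. Your proposal as written, however, contains neither this computation nor the hypothesis under which it, and the theorem, is valid.
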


\begin{proof}
   Let $f(\zeta) := \zeta_1 \cdots \zeta_n$ so that its directional derivative $D_{\bone_n} p$ is $g_{\lilder}$. Then observe that $K_{\lilder}$ does not contain any 1-dimensional subspace, which is Renegar's  definition of being regular. We can then  apply \cite[Corollary 15]{renegar2006hyperbolic} to obtain the advertised claim. 
\end{proof}

Renegar's theorem will be combined with the following statement.

\begin{lemma}[Linearity]
\label{lem:linearity}
    Assume $u\in \bR^n$ is such that $\hessder(u, u)=0$ for all $i=1, \dots, n-1$. Then, there exists a  permutation $\tau\in S_{n-1}$ and an $\eps>0$ such that
    $$\dermapi(\alpha+t u) = \dermapi(\alpha) + t \dermap^{\tau(i)}(u), \quad \forall t \in (-\eps, \eps).$$
\end{lemma}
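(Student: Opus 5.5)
The plan is to reduce the statement to a perturbation problem for a pencil of Hermitian matrices, using the Helton--Vinnikov theorem (\Cref{thm:Helton-Vinnikov}), and then to read off linearity from the second-order perturbation formula for eigenvalues.

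\textbf{Step 1 (pencil representation).} Recall the polynomial $\fder(x,\zeta)=\sum_{i=1}^n\prod_{j\neq i}(x-\zeta_j)$ from the proof of \Cref{lem:psdness_derivative}. It is homogeneous of degree $n-1$ in $(x,\zeta)$ and hyperbolic in the direction $e_1$. Define the three-variable polynomial
$$F(x,s,t):=\tfrac{1}{n}\fder(x,\, s\alpha+tu).$$
It is homogeneous of degree $n-1$, satisfies $F(1,0,0)=1$, and is hyperbolic in the direction $(1,0,0)$, since it is the restriction of a hyperbolic polynomial to a subspace containing the hyperbolicity direction. By \Cref{thm:Helton-Vinnikov} there are $(n-1)\times(n-1)$ Hermitian matrices $A,B$ with $F(x,s,t)=\det(x-sA-tB)$. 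Setting $(s,t)=(1,0)$ shows that the eigenvalues of $A$ are $\dermap(\alpha)$. Setting $(s,t)=(0,1)$ shows that the eigenvalues of $B$ are $\dermap(u)$. Setting $s=1$ shows that $\dermapi(\alpha+tu)$ is the $i$-th largest eigenvalue of $A+tB$.

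\textbf{Step 2 (second-order perturbation).} Since $\alpha$ is simple, $\dermap(\alpha)$ is simple by Rolle's theorem, so $A$ has simple eigenvalues $a_1>\cdots>a_{n-1}$. Work in an eigenbasis of $A$. For small $|t|$ the ordered eigenvalues $\lambda_i(t)$ of $A+tB$ are analytic, and the standard Rayleigh--Schr\"odinger formula gives
$$\lambda_i''(0)=2\sum_{j\neq i}\frac{|B_{ij}|^2}{a_i-a_j}.$$
By hypothesis $\hessder(u,u)=\lambda_i''(0)=0$ for every $i$. Now induct on $i$.
\begin{itemize}
\item For $i=1$, every term in the sum is nonnegative, because $a_1-a_j>0$ for all $j\neq 1$. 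Hence $B_{1j}=0$ for all $j\neq 1$.
\item Suppose $B_{kj}=0$ for all $k<i$ and all $j\neq k$. By Hermitian symmetry the terms with $j<i$ in the formula for $\lambda_i''(0)$ vanish. The remaining terms have $a_i-a_j>0$, so they are nonnegative and must all be zero.
\end{itemize}
Thus $B$ is diagonal in the eigenbasis of $A$, and the eigenvalues of $A+tB$ are exactly $a_i+tB_{ii}$. The diagonal entries $B_{ii}$ are the eigenvalues of $B$, namely $\dermap(u)$ in some order $\tau$, so $B_{ii}=\dermap^{\tau(i)}(u)$.

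\textbf{Step 3 (ordering).} Because the $a_i$ are distinct, there is $\eps>0$ such that the order of $a_i+t\dermap^{\tau(i)}(u)$ agrees with the order of the $a_i$ for $|t|<\eps$. For such $t$ this gives $\dermapi(\alpha+tu)=\dermapi(\alpha)+t\dermap^{\tau(i)}(u)$, which is the claim.

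\textbf{Main obstacle.} I expect the main care to be needed in Step 1. One must check that the hypotheses of \Cref{thm:Helton-Vinnikov} hold in the normalized form needed: homogeneity, the normalization $F(1,0,0)=1$, and hyperbolicity of the restriction. One must also check that the Hermitian pencil really identifies the ordered roots with the ordered eigenvalues. The perturbative part is routine once the pencil is available, and the induction in Step 2 relies only on the sign structure of the formula.
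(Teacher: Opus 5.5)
Your proposal is correct and follows essentially the same route as the paper: a Helton--Vinnikov pencil $\det(x-sA-tB)=\frac{1}{n}\partial_x\pmap{s\alpha+tu}(x)$, then Hadamard's second variation formula with induction on $i$ to force $B$ to be diagonal in the eigenbasis of $A$, and finally simplicity of $\dermap(\alpha)$ to fix the labeling for small $|t|$. Your explicit check that $F(1,0,0)=1$ is a detail the paper leaves implicit, but it is exactly the normalization Helton--Vinnikov requires.
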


\begin{proof}
    Consider the polynomial
    $$f_{\lilder}(x, s, t):= \frac{1}{n} \sum_{j=1}^n \prod_{i: i \neq j} (x-s\alpha_i - t u_i).$$
    Note that $f_{\lilder}$ is homogeneous of degree $n-1$ and hyperbloic in the direction $(1, 0, 0)$. So, by the Helton--Vinnikov theorem (see Theorem \ref{thm:Helton-Vinnikov} above) we know that there are real symmetric matrices $A$ and  $B$ of dimension $n-1$ such that
    $$f_{\lilder}(x, s, t) = \det(x-sA-tB). $$
    Moreover, note that without loss of generality we can assume that $A$ is diagonal with its diagonal entries appearing in descending order. Now let $\lambda_1(t)\geq \cdots \geq \lambda_{n-1}(t)$ be the eigenvalues of $A+tB$, so that 
    $$f_{\lilder}(x, 1, t) = \prod_{i=1}^{n-1}(x-\lambda_i(t)).$$
   On the other hand, from the definition of $f_{\lilder}(x, s, t)$, we have that 
    $$f_{\lilder}(x, 1, t) = \frac{1}{n} \partial_x \pmap{\alpha+t u} = \prod_{i=1}^{n-1}(x-\dermapi(\alpha+tu)).$$
    So,  we have that $\lambda_i(t) = \dermapi(\alpha + t u)$ for all $i$ and therefore the hypothesis  $\hessder(u, u) =0$ yields $\lambda_i''(0)=0$. On the other hand, by Hadamard's second variation formula applied to the matrix $A$ (which has eigenvectors $e_1, \dots, e_{n-1}$) we get that 
    $$\lambda_i''(0) = 2 \sum_{j : j\neq i} \frac{|e_j^* B e_i|^2}{\lambda_i(0)-\lambda_j(0)}. $$
    Now, since $\lambda_1(0)>\lambda_j(0)$ for all $j$, when $i=1$ all the terms in the right-hand side of the above equation are non-negative, and therefore $\lambda_1''(0)=0$ implies that $e_j^* B e_1 =0$ for all $j\neq 1$. Proceeding by induction over $i$ (while using that $B$ is symmetric), in the same way we get that $e_j^*B e_i=0$ for all $i\neq j$. Hence, $B$ is  diagonal. 
    
    Now let $\mu_1\geq \cdots \geq \mu_{n-1}$ be the eigenvalues of $B$. Then 
    $$\prod_{i=1}^{n-1} (x-\dermapi(u)) = f_{\lilder}(x, 0, 1) = \det(x-B) = \prod_{i=1}^{n-1}(x-\mu_i),$$
    and it follows that $\mu_i= \dermapi(u)$. So, since $A$ and $B$ are diagonal, and the $\dermapi(u)$ appear in some order on the diagonal $B$, we get that for some fixed permutation $\tau\in S_{n-1}$ one has that
$$\prod_{i=1}^{n-1}(x-\dermapi(\alpha + t u))=f_{\lilder}(x, 1, t) = \det(x-A-tB)=\prod_{i=1}^{n-1}(x-\dermapi(\alpha)- t \dermap^{\tau(i)}(u)).$$   
   Then, since $\dermap(\alpha)$ is a simple vector, for $t$ in a neighborhood of 0 it will hold that $\dermapi(\alpha+tu) = \dermapi(\alpha)+ t\dermap^{\tau(i)}(u)$. 
\end{proof}

With the above at hand Proposition \ref{prop:simple_sigma_derivative} \ref{item:simple_sing_der}) follows readily. 

\begin{proof}[Proof of Proposition \ref{prop:simple_sigma_derivative} \ref{item:simple_sing_der})]
First, Lemma \ref{lem:jacder doub stoch} implies that $\bone_n$ is the top singular vector of $\jacder$ and $\sigma_1(\jacder) = \sqrt{\frac{n-1}{n}}$. Second, Proposition \ref{prop:jacder 2sv} implies that $\sigma_2(\jacder)\leq \sqrt{\frac{n-2}{n}}$. Now, differentiating the scaling identity $\dermap(t\alpha) = t \dermap(\alpha)$ with respect to $t$ we get that $\jacder(\alpha) =  \dermap(\alpha) := \delta$. And, combining this with the fact that $\jacder \bone_n$ is parallel to $\bone_{n-1}$ we can conclude that
$$\jacder(\calpha) = \cdelta,$$
where $\cdelta := (I_{n-1}- \frac{1}{n-1}\bone_{n-1}\bone_{n-1}^\top) \delta$. On the other hand, since $\dermap(\calpha) = \cdelta$, Observation \ref{obs:variancescaling} implies that $\|\cdelta\|^2 = \frac{n-2}{n} \|\calpha\|^2$, and therefore $\|J(\calpha)\| = \sqrt{\frac{n-2}{n}} \|\calpha\|$. And, since $\calpha$ is orthogonal to $\bone_n$, it follows that $\calpha$ is the right singular vector of $\jacder$ associated to $\sigma_2(\jacder) = \sqrt{\frac{n-2}{n}}$.  

      Now we will show that $\sigma_2(\jacder)$ is simple. Proceeding by contradiction assume that there exists a $u\in \bR^n$  orthogonal to $\bone_n$ (the top singular vector of $\jacder$), non-parallel to $\calpha$, and such that $\|\jacder u\| = \frac{n-2}{n} \|u\|$. Now, the latter combined with  Lemma \ref{lem:jacobian_vs_Hessian} implies that
    \begin{equation}
    \label{eq:adding_to_zero}
    \sum_{i=1}^{n-1} \Omega_{\scriptscriptstyle\partial_x}^{i}(\alpha) \hessder(u, u) =0.
        \end{equation}
    On the other hand, we can rewrite the left-hand side in the above equation as the following telescoping sum
    \begin{equation}
    \label{eq:eq_to_zero}
    \sum_{i=1}^{n-1} \Omega_{\scriptscriptstyle\partial_x}^{i}(\alpha) \hessder(u, u) = \dermap^{n-1}(\alpha)\sum_{i=1}^{n-1} \hessder(u, u) + \sum_{j=1}^{n-1} (\dermap^j(\alpha) - \dermap^{j+1}(\alpha)) \left(\sum_{i=1}^j \hessder(u, u) \right).
     \end{equation}
    The point being that, by Lemma \ref{lem:psdness_derivative}, the first sum in the right-hand side of (\ref{eq:eq_to_zero}) is zero, and each of the terms in the second sum are non-negative and therefore zero by (\ref{eq:adding_to_zero}). So, since $\dermap(\alpha)$ is simple, inductig on $j$ gives us that $\hessd^j(u, u)=0$ for all $j=1, \dots, n-1$. At this point, we can apply Lemma \ref{lem:linearity} to obtain 
\begin{equation}
\label{eq:loc_linearity}
\dermap^{n-1}(\alpha + tu) = \dermap^{n-1}(\alpha) + t \dermap^j(u)
\end{equation}
for some fixed $j\in [n-1]$ and for all $t\in (-\eps, \eps)$ for some $\eps>0$. On the other hand, since 
\begin{equation}
\label{eq:shift}
\dermap(\zeta+s\bone_n) = \dermap(\zeta) + s \bone_n, 
\end{equation}
for all $s\in \bR$ and all $\zeta\in \bR^n$, we can find $s_1, s_2\in \bR$ such that $\dermap^{n-1}(\alpha + s_1 \bone_n) =0$ and $\dermap^{n-1}(u+ s_2 \bone_n)=0$. Then, set $\alpha' = \alpha + s_1 \bone_n$ and $u'= u + s_2 \bone_n$. The point here is that the  shifts have been chosen to ensure  that $\dermap^{n-1}(\alpha'+tu')=0$ for all $t$ in a neighborhood of 0. Indeed
\begin{align*}
    0 & = \dermap^{n-1}(\alpha')+ t \dermap^{j}(u')
    \\ &= \dermap^{n-1}(\alpha) + t \dermap^{j}(u) + s_1+ts_2 && \text{by (\ref{eq:shift})}
    \\ & = \dermap^{n-1}(\alpha+ t u) + s_1+ts_2 && \text{by (\ref{eq:loc_linearity})}
    \\ & = \dermap^{n-1}(\alpha'+t u')  && \text{by (\ref{eq:shift}).}
\end{align*}
Now, since $\alpha$ is simple, so is  $\dermap(\alpha'+tu')$ for all $t$ in a neighborhood of 0. This, combined with $\dermap^{n-1}(\alpha'+tu')=0$, yields that $\alpha'+tu' \in \partial^1 K_{\lilder}$ for all $t$ in a neighborhood of 0. Now, since $\alpha'$ and $u'$ are non-parallel (as $u$ is perpendicular to $\bone_n$ and non-parallel to $\calpha$), we get that $\alpha'$ is not an extreme direction of $\partial^1 K_{\lilder}$, contradicting the statement from Theorem \ref{thm:renegar}. 
\end{proof}

\subsubsection{Simple second singular value:  the finite free convolution case}
\label{sec:simple_conv}

The same approach as in  \cref{sec:simple_der} will allow us to establish Proposition \ref{prop:simple_sigma_derivative} \ref{item:simple_sing_der}). Importantly,  we will need the analog of Renegar's  theorem for finite free convolutions, which we could not find in the literature. Establishing this result  will be the emphasis of this section. 

Define the polynomial $g_{\lilconv}\in \bR[\xi_1, \dots, \xi_n, \zeta_1, \dots, \zeta_n]$ by
$$g_{\lilconv}(\xi, \zeta) :=  \sum_{\sigma\in S_n} \prod_{i=1}^n (\xi_i+\zeta_{\sigma(i)}).$$
Note that $g_{\lilconv}(\xi, \zeta)$ is hyperbolic in the direction $(\bone_n, 0_n)$, where $0_n$ denotes the all-zeroes vector in $\bR^n$, and  its hyperbolicity cone is given by $\cone:= K(g_{\lilconv}, (\bone_n, 0_n)) =\{ (\xi,\zeta)\in \bR^n\times \bR^n : \roots^{n}(\xi,\zeta) > 0  \},$ so its boundary is
\begin{equation}
\label{eq:cone_boundary_convo}
\bcone = \{(\xi, \zeta)\in \bR^n\times \bR^n : \roots^{n}(\xi, \zeta)=0\}.
\end{equation}
Now, given $\xi \in \bR^n$ we will use  $\xi_{\min} := \min_{i\in [n]} \xi_i$ to denote the value of the minimal entries of $\xi$ and use $\mult_{\min}(\xi)$ to denote the number of entries of $\xi$ that are equal to $\xi_{\min}$. The analog of $\partial^1K_{\lilder}$ in this setting will be 
$$\partial'K_{\lilconv}:= \{(\xi, \zeta)\in \bR^n \times \bR^n : \roots^n(\xi, \zeta)=0 \quad \text{and} \quad \mult_{\min}(\xi)+ \mult_{\min}(\eta) \leq n\}. $$

We will prove the following. 

\begin{theorem}[Strictly curved boundary]
\label{thm:curved_cone_conv}
    If $\rho\in \partial'\cone$ then $\rho$ is an extreme direction of $\cone$. That is, $\rho$ cannot be expressed as a convex combination of non-parallel vectors in $\partial \cone$.
\end{theorem}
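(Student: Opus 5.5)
We argue by contradiction and reduce to a statement about a one-parameter family of boundary points, as in the proof of Proposition \ref{prop:simple_sigma_derivative} \ref{item:simple_sing_der}). Suppose $\rho=(\xi,\zeta)\in\partial'\cone$ is written as $\rho=\rho_1+\rho_2$ with $\rho_1,\rho_2\in\partial\cone$ non-parallel. Since $\roots^n$ is the smallest hyperbolic eigenvalue, it is concave: $-\roots^n$ is the convex function $\sigma_1$ of $-(\xi,\zeta)$ from Theorem \ref{thm:baushke}. It is also positively homogeneous and satisfies $\roots^n(\rho_1)=\roots^n(\rho_2)=0=\roots^n(\rho)$. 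Concavity and homogeneity then force $\roots^n$ to vanish on the whole segment, indeed on the two-dimensional cone spanned by $\rho_1,\rho_2$. So there is a direction $w$, not parallel to $\rho$, with
\[
\roots^n(\rho+tw)=0 \quad \text{for all } t \text{ near } 0 .
\]
The goal is to show that this is impossible at points of $\partial'\cone$.

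\textbf{Step 1: simplicity of the bottom root.} Using the condition $\mult_{\min}(\xi)+\mult_{\min}(\zeta)\le n$, I would show that $0$ is a simple root of $t\mapsto g_{\lilconv}(\rho+t(\bone_n,0_n))$. Write the minimal root via \eqref{eq:finite_free_convo}: it is $\le \xi_{\min}+\zeta_{\min}$, with equality only if every permutation term has a factor vanishing there, which forces $\mult_{\min}(\xi)+\mult_{\min}(\zeta)\ge n+1$. A multiplicity count using the derivative/interlacing structure should show that the multiplicity of the bottom root of $\pmap{\xi}\boxplus_n\pmap{\zeta}$ is $\max(0,\mult_{\min}(\xi)+\mult_{\min}(\zeta)-n)$. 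So under our hypothesis the bottom root is simple and $\roots^n$ is real-analytic near $\rho$.

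\textbf{Step 2: flatness forces linearity.} Restrict to the three-variable polynomial $F(x,s,t)=\fconv(x,s\xi+t\omega,s\zeta+t\nu)$ with $w=(\omega,\nu)$. It is hyperbolic in $(1,0,0)$, so by Theorem \ref{thm:Helton-Vinnikov} it equals $\det(x-sA-tB)$. Take $A$ diagonal. By Step 1 the smallest eigenvalue $\lambda_n(0)$ of $A$ is simple, and $\lambda_n(t)\equiv 0$. Hadamard's second variation formula for the bottom eigenvalue has all terms of one sign, so $\lambda_n''(0)=0$ gives $e_j^*Be_n=0$ for all $j\ne n$. Hence $e_n$ is a common eigenvector of $A$ and $B$, and $\det(x-sA-tB)$ has the linear factor $x-s\,a_{nn}-t\,b_{nn}=x-t\,b_{nn}$. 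Since $\lambda_n(t)\equiv 0$, we get $b_{nn}=0$, so $F(x,s,t)$ is divisible by $x$. In particular $F(0,s,t)\equiv0$, i.e. $\roots(s\rho+tw)$ has a zero root for all $(s,t)$, including $(0,1)$ and along the whole plane.

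\textbf{Step 3: contradiction, the main obstacle.} Translating back, the polynomial $(\pmap{\xi+t\omega}\boxplus_n\pmap{\zeta+t\nu})(0)$ vanishes identically in $t$. Expanding via \eqref{eq:finite_free_convo}, this is the permanent-type expression $\sum_\pi\prod_i(\xi_i+t\omega_i+\zeta_{\pi(i)}+t\nu_{\pi(i)})$. My plan is to exploit that all terms have a common sign on the closed cone: every factor $\xi_i+\zeta_{\pi(i)}\ge \xi_{\min}+\zeta_{\min}\ge 0$ there. The shift invariance $(\xi+c\bone,\zeta-c\bone)$ means that $w$ could, a priori, be the trivial flat direction $(\bone_n,-\bone_n)$. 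I expect this is exactly where the hypothesis must enter, possibly with the shift direction treated as part of ``parallel''. I would first analyze the leading-order terms in $t$ for the permutations achieving the minimum pairing, using $\mult_{\min}(\xi)+\mult_{\min}(\zeta)\le n$ to isolate a strictly positive coefficient, and then conclude that $w$ must be a multiple of $\rho$ modulo the lineality direction. This combinatorial step is where I expect the real difficulty.
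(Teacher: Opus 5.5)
Your opening reduction and Step 2 are sound, granting Step 1. The proof is still not complete: Step 3, where the theorem is actually decided, is only a plan, and the plan rests on a false premise.

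The inequality in your Step 1 is reversed. For every $(\xi,\zeta)$ one has $\roots^n(\xi,\zeta)\ge\xi_{\min}+\zeta_{\min}$, because for $x<\xi_{\min}+\zeta_{\min}$ every factor of every term in \eqref{eq:finite_free_convo} is negative. So membership in the closed cone gives no lower bound on $\xi_{\min}+\zeta_{\min}$. In fact every point of $\partial'\cone$ has $\xi_{\min}+\zeta_{\min}<0$. If this sum were $\ge 0$, every term of $(-1)^n(\pmap{\xi}\boxplus_n\pmap{\zeta})(x)$ would be $\ge 0$ for $x\le 0$. A permutation pairing no minimal entry of $\xi$ with a minimal entry of $\zeta$ exists because $\mult_{\min}(\xi)+\mult_{\min}(\zeta)\le n$, and it contributes a strictly positive term, giving $\roots^n(\rho)>0$. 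Hence at $\rho$ the permanent-type sum has terms of both signs, and no leading-order sign analysis at $\rho$ can produce the contradiction.

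The missing idea, which is the heart of the paper's proof, is to obtain $\xi_{\min}+\zeta_{\min}\ge 0$ at $\rho$ from the ends of the flat piece.
\begin{itemize}
\item Take a maximal segment $[\upsilon^1,\upsilon^2]\subset\bcone$ through $\rho$, with $\upsilon^k=(\alpha^k,\beta^k)$. The polynomial $g_{\lilconv}$ vanishes on its whole line, while points beyond the endpoints leave $\bcone$, so by continuity $\roots^n=\roots^{n-1}=0$ at $\upsilon^1$ and $\upsilon^2$.
\item Fujie's theorem, via Lemma \ref{lem:sum_of_minimals}, then gives $\alpha^k_{\min}+\beta^k_{\min}=0$ at both endpoints.
\item Concavity of $\xi\mapsto\xi_{\min}$ transfers $\xi_{\min}+\zeta_{\min}\ge 0$ to $\rho$, contradicting the previous paragraph.
\end{itemize}
Your Step 2 can feed into this. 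Once $x$ splits off as $x\det(x-sA'-tB')$, the flat part of $\Span(\rho,w)$ is $\{(s,t):sA'+tB'\succeq 0\}$, and on its edges $0$ is a double bottom root. But the decisive input, Fujie's characterization of multiple roots of $\boxplus_n$, is absent from your proposal. Your Step 1 claim, that the bottom root is simple under the multiplicity hypothesis, is itself essentially that theorem restricted to the bottom root, not a routine interlacing count. The paper never needs simplicity at $\rho$.

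Your remark on $(\bone_n,-\bone_n)$ is correct and worth keeping. $\Span(\bone_n\oplus-\bone_n)\subset\bcone$ is the lineality space of $\cone$, so $\rho=\tfrac12(\rho+s(\bone_n,-\bone_n))+\tfrac12(\rho-s(\bone_n,-\bone_n))$ always holds, and ``non-parallel'' must be read modulo this line. The paper's remark that $\bcone$ contains no line overlooks this. Its argument does go through under that reading, which is all that Proposition \ref{prop:simple_sigma_derivative} uses.
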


To establish this theorem we will first need a preliminary lemma whose proof relies on a result of Fujie \cite{fujie2026regularity} that characterizes the cases when the output of the map $\roots$ is non-simple.

\begin{lemma}
\label{lem:sum_of_minimals}
Let $\alpha, \beta \in \bR^n$. If $\roots^n(\alpha, \beta) = \roots^{n-1}(\alpha, \beta)=0$ then 
$$\alpha_{\min}+\beta_{\min}=0. $$
\end{lemma}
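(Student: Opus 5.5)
The plan is to combine a crude lower bound on the smallest root of $\pmap{\alpha}\boxplus_n\pmap{\beta}$ with Fujie's characterization of when this polynomial has a repeated root. Write $\gamma:=\roots(\alpha,\beta)$, $p:=\pmap{\alpha}$, $q:=\pmap{\beta}$.

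\emph{Lower bound.} From the permutation formula \eqref{eq:finite_free_convo}, for $x<\alpha_{\min}+\beta_{\min}$ every factor $x-\alpha_i-\beta_{\pi(i)}$ is negative. So every summand has sign $(-1)^n$, and $p\boxplus_n q$ does not vanish there. Hence $\gamma_n=\roots^n(\alpha,\beta)\ge\alpha_{\min}+\beta_{\min}$. With the hypothesis $\gamma_n=\gamma_{n-1}=0$ this gives $\alpha_{\min}+\beta_{\min}\le 0$, and it remains to prove the reverse inequality.

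\emph{Invoking Fujie.} Since $0$ is a root of multiplicity at least two, Fujie's result \cite{fujie2026regularity} should give roots $a$ of $p$ and $b$ of $q$ with multiplicities $m_a,m_b$ satisfying $m_a+m_b\ge n+2$ and $a+b=0$. The reason is that any permutation must match at least $m_a+m_b-n$ copies of $a$ with copies of $b$, so $(x-a-b)^{m_a+m_b-n}$ divides $p\boxplus_n q$, and Fujie shows these are the only repeated roots. Because $a\ge\alpha_{\min}$ and $b\ge\beta_{\min}$, it now suffices to show that $a=\alpha_{\min}$ and $b=\beta_{\min}$.

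\emph{Showing $a,b$ are the minima.} I would argue by contradiction using the Weyl-type upper inequalities for the finite free convolution (known from Marcus' work, and also obtainable from interlacing preservation, Proposition~\ref{prop:properties_of_ff_convolutions}\,(\ref{item:interlacing})):
\[
\gamma_{i+j-1}\le \alpha_i+\beta_j \qquad \text{for } i+j-1\le n .
\]
Take $i$ to be the smallest index with $\alpha_i=a$ and $j$ the smallest index with $\beta_j=b$. The index bound follows from the multiplicities: $i\le n-m_a+1$ and $j\le n-m_b+1$, so $i+j-1\le 2n-(m_a+m_b)+1\le n-1$.
\begin{itemize}
\item The inequality then gives $\gamma_{i+j-1}\le 0$.
\item If moreover $a>\alpha_{\min}$, then some entry of $\alpha$ lies strictly below the block of $a$'s, so $i\le n-m_a$ and $i+j-1\le n-2$. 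This yields a root $\gamma_{n-2}\le 0=\gamma_{n-1}=\gamma_n$, so $0$ would have multiplicity at least three.
\end{itemize}
Iterating the same counting should force the multiplicity of $0$ past $m_a+m_b-n$, contradicting the exact multiplicity in Fujie's characterization. Hence $a=\alpha_{\min}$, and symmetrically $b=\beta_{\min}$, so $\alpha_{\min}+\beta_{\min}=a+b=0$.

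\emph{Main obstacle.} This last step is where I expect the difficulty. It needs the precise form of Fujie's theorem: that the multiplicity of the repeated root equals exactly $m_a+m_b-n$, and that no other repeated roots arise. The counting then has to be carried out carefully so that the upper Weyl bound really contradicts that exact multiplicity. If that route fails, I would try a perturbation argument instead. Lower $\alpha_{\min}$ slightly using the nonnegative Jacobian entries of Lemma~\ref{lem:double_stochasticity_convolution}, and use preservation of mesh (Theorem~\ref{thm:preservation_of_mesh}) to show that a double root cannot sit strictly above $\alpha_{\min}+\beta_{\min}$.
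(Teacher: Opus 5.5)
Your proof is correct, but it finishes differently from the paper's.

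\textbf{Shared starting point.} Both arguments rest on the multiplicity part of Fujie's theorem: $0$ has multiplicity exactly $m_a+m_b-n$, where $a=s$ and $b=-s$. This already settles your stated worry about needing the exact multiplicity.

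\textbf{The paper's finish.} It also uses the second identity in Fujie's Theorem 1.1, namely $F_\gamma(0)=F_\alpha(s)+F_\beta(-s)-1$. Since $0$ is the smallest root, $F_\gamma(0)=\mu_\gamma\{0\}$. Comparing with $\mu_\gamma\{0\}=\mu_\alpha\{s\}+\mu_\beta\{-s\}-1$ and using $F\ge\mu\{\cdot\}$ forces $s=\alpha_{\min}$ and $-s=\beta_{\min}$ in one line.

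\textbf{Your finish.} You replace that CDF identity by the Weyl-type bound $\gamma_{i+j-1}\le\alpha_i+\beta_j$. This bound is exactly the inequality $F_\gamma(a+b)\ge F_\alpha(a)+F_\beta(b)-1$, which is the only half of Fujie's identity actually needed. It is also elementary:
\begin{itemize}
\item Raise every entry of $\alpha$ with index $k>i$ to $\alpha_i$, and every entry of $\beta$ with index $k>j$ to $\beta_j$. By interlacing preservation, Proposition~\ref{prop:properties_of_ff_convolutions}\,(\ref{item:interlacing}), applied one coordinate at a time, each root of the convolution can only increase.
\item Every permutation term in \eqref{eq:finite_free_convo} now contains the factor $(x-\alpha_i-\beta_j)^{n-i-j+2}$.
\item Hence at most $i+j-2$ roots of the modified convolution exceed $\alpha_i+\beta_j$.
\end{itemize}
So your route uses less of Fujie's theorem, at the cost of this extra but easy lemma.

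\textbf{Two tidy-ups.}
\begin{itemize}
\item No iteration is needed at the end. If $a>\alpha_{\min}$, then $i\le n-m_a$ and $j\le n-m_b+1$, so $i+j-1\le 2n-m_a-m_b\le n-2$. Then $0=\gamma_n\le\gamma_k\le\gamma_{i+j-1}\le 0$ for every $k\ge i+j-1$. This gives multiplicity of $0$ at least $m_a+m_b-n+1$, an immediate contradiction.
\item Your opening bound $\gamma_n\ge\alpha_{\min}+\beta_{\min}$ is correct but superfluous. Once $a=\alpha_{\min}$ and $b=\beta_{\min}$, the relation $a+b=0$ already gives the claim.
\end{itemize}
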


\begin{proof}
 Let $\gamma:= \roots(\alpha, \beta)$ and set 
   $$\mu_\alpha := \frac{1}{n} \sum_{i=1}^n \delta_{\alpha_i}, \quad \mu_\beta := \frac{1}{n} \sum_{i=1}^n \delta_{\beta_i}, \quad \text{and} \quad \mu_\gamma := \frac{1}{n} \sum_{i=1}^n \delta_{\gamma_i}. $$
   Since $0$ is a non-simple value of $\roots(\alpha, \beta)$, by \cite[Theorem 1.1]{fujie2026regularity} we get that there exists an $s\in \bR$ such that
   $$\mu_\gamma\{0\} = \mu_{\alpha}\{s\}+ \mu_\beta\{-s\}-1$$
   and also 
   $$F_\gamma(0) = F_\alpha(s)+F_\beta(-s)-1,$$
   where $F_\alpha, F_\beta,$ and $F_\gamma$ are the CDF's of $\mu_\alpha, \mu_\beta$ and $\mu_\gamma$.  Now, since $\roots^n(\alpha, \beta) = 0$ we have that $\mu_\gamma\{0\}=F_\gamma(0)$, which, in view of the above equations yields 
   $$\mu_{\alpha}\{s\}+ \mu_\beta\{-s\}= F_\alpha(s)+F_\beta(-s).$$
   But, since by definition we must have $F_\alpha(s)\geq \mu_\alpha\{s\}$ and $F_\beta(-s) \geq \mu_\beta\{-s\}$, it follows that $\mu_\alpha\{s\} = F_\alpha(s)$ and $\mu_\beta\{-s\} = F_\beta(-s)$, or equivalently $\alpha_{\min} =s$ and $\beta_{\min} =-s$. 
\end{proof}

We can now prove the theorem. Our argument is inspired by the proof of Theorem 14 in \cite{renegar2006hyperbolic}. 

\begin{proof}[Proof of Theorem \ref{thm:curved_cone_conv}]
Proceed by contradiction and assume that there is an interval of positive length $I\subset \overline{\cone}$ that contains $\rho$ and such that the unique line containing $I$ does not go through the origin. Since $\cone$ is convex and $\rho \in \partial'\cone \subset \partial \cone$, we must in fact have that $I \subset \partial \cone$. Now, since $\partial \cone$ is closed under multiplication by a positive scalar and $I \subset \partial \cone$, it follows that $\rho$ is contained in a non-trivial 2-dimensional sector which itself is contained in $\partial \cone$. So, let $\calS$ be a maximal 2-dimensional sector satisfying that $\rho \in \calS$ and $\calS \subset \partial \cone$. Since $\partial \cone$ does not contain any 1-dimensional subspace, we must have that $\calS$ is delimited by two rays $\ell_1$ and $\ell_2$ stemming from the origin such that the angle between them is less than $\pi$. Let $\ell$ be a third line, coplanar with $\calS$, going through $\rho$ and intersecting $\ell_1$ and $\ell_2$ in $\upsilon^1$ and $\upsilon^2$, respectively. It follows that $[\upsilon^1, \upsilon^2]$ is a maximal interval contained in $\partial \cone$. 

Now, by (\ref{eq:cone_boundary_convo}), $[\upsilon^1, \upsilon^2]\subset \partial\cone$ implies that $\roots^n(t\upsilon^1+(1-t)\upsilon^2)=0$ for all $t\in [0, 1]$, and therefore 
$$h(t):= g_{\lilconv}(t\upsilon^1+(1-t)\upsilon^2)=0, \quad \forall t \in [0,1].$$
Now, since $h(t)$ is a polynomial,  in fact, one must have $h \equiv 0$, and therefore
$$g_{\lilconv}(t\upsilon^1+(1-t)\upsilon^2)=0, \quad \forall t\in \bR.$$
Hence, for every $t\in \bR$ there must be an $i\in [n]$ (depending on $t$) such that $\rootsi(t\upsilon^1+(1-t)\upsilon^2)=0$. Moreover, since for $t\in \bR \setminus [0, 1]$ one has $t\upsilon^1+(1-t)\upsilon^2 \notin \partial \cone$, we must have $i\neq n$. So, taking $t \uparrow 0$ and $t\downarrow 1$, a continuity argument then implies that 
$$\roots^n(\upsilon^1)= \roots^{n-1}(\upsilon^1)=0=\roots^n(\upsilon^2)= \roots^{n-1}(\upsilon^2).$$

Therefore, if $\upsilon^1=(\alpha^1, \beta^1)$ and $\upsilon^2=(\alpha^2, \beta^2)$,   Lemma \ref{lem:sum_of_minimals} yields that 
\begin{equation}
\label{eq:zero_bottom}
\alpha^1_{\min}+\beta^1_{\min}=0= \alpha^2_{\min}+\beta^2_{\min}.
\end{equation}
So, if $\rho=(\alpha, \beta)$, since $\rho = t\upsilon^1+(1-t)\upsilon^2$ for some $t\in (0, 1)$, (\ref{eq:zero_bottom}) implies that
\begin{equation}
\label{eq:lower_bound}
\alpha_{\min}+\beta_{\min}\geq 0.
\end{equation}
Now let $p(x) := \pmap{\alpha}$ and $q(x):= \pmap{\beta}(x)$, we will show that $p\boxplus_n q$ has no roots on $(-\infty, 0]$. Let $x\leq 0$ and note that by (\ref{eq:lower_bound})
$$\prod_{i=1}^n (\alpha_i+\beta_{\sigma(i)}-x)\geq 0, \quad \forall \sigma\in S_n.$$
 Moreover, since we are assuming $\rho\in \partial' \cone$ we have that $\mult_{\min}(\alpha)+\mult_{\min}(\beta)\leq n$ and therefore, there exists a $\tau\in S_n$ such that for all $i\in [n]$ either $\alpha_i > \alpha_{\min}$ or $\beta_{\tau(i)} > \beta_i$, and therefore $\prod_{i=1}^n (\alpha_i+\beta_{\tau(i)}-x)>0$. Hence 
 $$(-1)^n p(x)\boxplus_n q(x) = \frac{1}{n!} \sum_{\sigma\in S_n} \prod_{i=1}^n (\alpha_i+\beta_{\sigma(i)}-x)>0,$$
 for all $x\leq 0$. In particular $\roots^n(\alpha, \beta) >0$, contradicting the assumption that $(\alpha, \beta)=\rho \in \partial\cone$. 
\end{proof}

We can now proceed as in the derivative case.

\begin{lemma}[Linearity]
\label{lem:linearity_conv}
    If $w = u\oplus v\in \bR^n \oplus \bR^n$ is such that $\hessconv(w, w)=0$ for all $i=1, \dots, n$ then, there exists a  permutation $\tau\in S_{n}$ and an $\eps>0$ such that
    $$\rootsi((\alpha, \beta)+t (u, v)) = \rootsi(\alpha, \beta) + t \roots^{\tau(i)}(u, v), \quad \forall t \in (-\eps, \eps).$$
\end{lemma}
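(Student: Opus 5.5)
We follow the proof of Lemma \ref{lem:linearity} verbatim, replacing $f_{\lilder}$ by the three-variable restriction of $\fconv$. Define
$$f(x, s, t) := \frac{1}{n!} \sum_{\pi\in S_n} \prod_{i=1}^n\big(x - s(\alpha_i + \beta_{\pi(i)}) - t(u_i + v_{\pi(i)})\big),$$
which is homogeneous of degree $n$ in $(x,s,t)$. It is hyperbolic in the direction $(1,0,0)$: it is monic in $x$, and for every fixed $(s,t)$ it equals $\pmap{s\alpha+tu}\boxplus_n\pmap{s\beta+tv}$ evaluated at $x$. That convolution is real-rooted by Walsh's theorem, so restricting to any line in direction $(1,0,0)$ gives a real-rooted polynomial. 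Theorem \ref{thm:Helton-Vinnikov} then provides $n\times n$ Hermitian matrices $A,B$ with $f(x,s,t)=\det(x-sA-tB)$.

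Setting $t=0$ gives $\det(x-A)=\pmap{\alpha}\boxplus_n\pmap{\beta}$. Since $\alpha$ is simple, this polynomial has simple roots by Theorem \ref{thm:preservation_of_mesh}. After a unitary change of basis we may therefore take $A=\diag(\roots^1(\alpha,\beta),\dots,\roots^n(\alpha,\beta))$ with distinct, decreasing diagonal entries. Let $\lambda_1(t)\ge\dots\ge\lambda_n(t)$ be the eigenvalues of $A+tB$. Since $f(x,1,t)=\pmap{\alpha+tu}\boxplus_n\pmap{\beta+tv}$, we get $\lambda_i(t)=\rootsi((\alpha,\beta)+t(u,v))$ for all small $|t|$. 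Eigenvalues stay simple near $t=0$, so these are smooth branches, and the hypothesis $\hessconv(w,w)=0$ says exactly that $\lambda_i''(0)=0$ for every $i$.

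The key step is Hadamard's second variation formula:
$$\lambda_i''(0)=2\sum_{j\ne i}\frac{|e_j^*Be_i|^2}{\lambda_i(0)-\lambda_j(0)}.$$
For $i=1$ every term is nonnegative, so $\lambda_1''(0)=0$ forces $e_j^*Be_1=0$ for all $j\ne1$. We then induct on $i$, using Hermitian symmetry $e_j^*Be_i=\overline{e_i^*Be_j}$. Once the off-diagonal entries in rows and columns $1,\dots,i-1$ are known to vanish, the only surviving terms in $\lambda_i''(0)$ are those with $j>i$, which are all nonnegative. Hence they vanish as well. This shows $B$ is diagonal. This inductive sign argument is the only substantive point, and it is identical to the derivative case; the real input is Helton--Vinnikov.

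Finally, set $s=0$, $t=1$: $\det(x-B)=f(x,0,1)=\pmap{u}\boxplus_n\pmap{v}$. So the diagonal entries of $B$ are the values $\roots^{1}(u,v),\dots,\roots^n(u,v)$ in some order, say $B_{ii}=\roots^{\tau(i)}(u,v)$ for a permutation $\tau\in S_n$. Because $A$ and $B$ are both diagonal, the eigenvalues of $A+tB$ are $\rootsi(\alpha,\beta)+t\roots^{\tau(i)}(u,v)$. These are distinct for small $|t|$ since $\roots(\alpha,\beta)$ is simple, so they remain in decreasing order for $t\in(-\eps,\eps)$ and coincide with the ordered roots $\rootsi((\alpha,\beta)+t(u,v))$. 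This is the claim.
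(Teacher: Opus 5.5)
Your proof is correct and follows the paper's route: the paper proves this lemma by running the argument of Lemma \ref{lem:linearity} (Helton--Vinnikov, diagonalizing $A$, then Hadamard's second variation formula with the inductive sign argument to force $B$ to be diagonal) on exactly the polynomial $f_{\lilconv}(x,s,t)$ you define. You have written out the details the paper leaves implicit, notably using Theorem \ref{thm:preservation_of_mesh} to guarantee that $A$ has simple eigenvalues, and using Hermitian symmetry in the induction.
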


\begin{proof} 
   The proof proceeds exactly as that of Lemma \ref{lem:linearity} but working with the polynomial 
    $$f_{\lilconv}(x, s, t):= \frac{1}{n!} \sum_{\sigma\in S_n} \prod_{i=1}^n (x-s(\alpha_i+\beta_{\sigma(i)})-t(u_i+v_{\sigma(i)})),$$
    instead, which is hyperbolic in the direction $(1, 0, 0)$. 
\end{proof}

The proof of part \ref{item:simple_sing_conv}) of Proposition \ref{prop:simple_sigma_derivative} is very similar to that of part \ref{item:simple_sing_der}). 

\begin{proof}[Proof of Proposition \ref{prop:simple_sigma_derivative} \ref{item:simple_sing_conv})]
  Recall the definition of the set $\calV:=\{u\oplus v\in  \bR^n \oplus \bR^n : \adj{u} \bone_n = \adj{v} \bone_n =0\}$.  Now, by \Cref{obs:norm_bound_jacobian} we know that $\bone_n \oplus \bone_n$ is the top right singular vector of $\jac$. On the other hand, $(\bone_n \oplus \bone_n)^\perp = \calV \oplus \Span(\bone_n\oplus -\bone_n )$ and $\jac(\bone_n, -\bone_n) =0$. So, Proposition \ref{prop:second_sing_val_bound_conv} implies that $\sigma_2(\jac)\le 1$. Now, differentiating the identity $\roots((1+t)(\alpha, \beta)) = (1+t)\roots(\alpha, \beta)$ with respect to $t$, we get that $\jac(\alpha\oplus \beta) = \roots(\alpha, \beta)=: \gamma.$ Then, putting $\camma:=\lpr{I_n-\tfrac{1}{n}\bone_n\bone_n^\top}\gamma$ and using that $\jac(a\bone_n\oplus b\bone_n)= (a+b)\bone_n$, we get that 
  $$\jac(\calpha \oplus \ceta  ) = \camma.$$
  So, combining this  with $\sigma_2(\jac)\le 1$, $\jac(\calpha\oplus \ceta) = \camma$, and (by \Cref{prop:properties_of_ff_convolutions} \ref{item:additivity})) $\|\calpha\|^2+ \|\ceta\|^2=\|\camma\|^2$, we get that $\sigma_2(\jac) =1,$ and that $\calpha\oplus \ceta$ is an associated right singular vector.
  
    To prove that $\sigma_2(\jac)$ is simple we proceed by contradiction and assume that there is a $w = u\oplus v \in \calV$ such that $w$ is linearly independent from $\calpha\oplus \ceta$ and $\|\jac(w)\| = \|w\|$. By Lemma \ref{lem:jacobian_vs_hessian_conv} this implies that
    $$\sum_{i=1}^n \rootsi(\alpha, \beta) \hessconv(w, w)=0.$$
    Then, replicating the argument from the proof of Proposition \ref{prop:simple_sigma_derivative} \ref{item:simple_sing_der}) with the corresponding telescoping sum we get that $\hessconv(w, w)=0$ for all $i=1, \dots, n$. Now, exactly as in the proof of Proposition \ref{prop:simple_sigma_derivative} \ref{item:simple_sing_der}) we can use Lemma \ref{lem:linearity_conv} to produce a non-trivial segment contained in $\partial' \cone$, contradicting the fact proven in Theorem \ref{thm:curved_cone_conv} that all points in $\partial' \cone$ are extreme points of the cone. 
\end{proof}

\section{Entropy inequalities}
\label{sec:entropy_ineq}

In this section we prove  monotonicity  under differentiation for the finite free entropy, which will follow from the corresponding statement for the finite free Fisher information (\Cref{thm:fisher_monotonicity}). We also prove the finite free entropy power inequality, which arises from the finite free Stam inequality (\Cref{thm:stams_inequality}). Both results will be derived via  the  finite free de Bruijn identity, which is the tool that enables  translating  results about  Fisher into results about entropy. 

As above, $H_n(x)$ will denote the $n$-th monic (probabilist) Hermite polynomial, which satisfies that $\var(H_n) =n-1$. For the discussion below it will be convenient to deal with their normalized versions (of variance 1)
$$\check{H}_n := (n-1)^{-1/2}_* H_n,$$
which are the analogue to the standard Gaussian in classical probability (see \cite[\S 6.2.2]{marcus2021polynomial}).

The finite free de Bruijn identity connects the finite free entropy and the Fisher information via differentiation and follows trivially from the description of the root dynamics under the heat flow given in \Cref{lem:score_and_derivatives}. 

\begin{lemma}[Finite free de Bruijn identity]
\label{thm:ffdb}
    Let $p(x)$ be a real-rooted polynomial of degree $n$ and  $s\ge0$. Then, 
    $$\frac{1}{2} \Phi_n(p\boxplus_n \sqrt{s}_* \check{H}_n) =  \partial_t\left. \rchi_n\lpr{p\boxplus_n \sqrt{t}_*\check{H}_n}\right|_{t=s}.$$
\end{lemma}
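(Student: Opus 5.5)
The identity is a direct computation along the heat flow, using \Cref{lem:score_and_derivatives}. The plan is to identify $p\boxplus_n\sqrt{t}_*\check{H}_n$ with a reparametrized heat flow, reduce to $s=0$ by the semigroup property, differentiate $\rchi_n$ along the root trajectory, and symmetrize.

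\textbf{Identifying the heat flow.} Since $\check{H}_n=(n-1)^{-1/2}_*H_n$, we have $\sqrt{t}_*\check{H}_n=\sqrt{t/(n-1)}_*H_n$. By \eqref{eq:heat flow of poly},
$$p\boxplus_n\sqrt{t}_*\check{H}_n=e^{-\frac{\tau}{2}\partial_x^2}p, \qquad \tau:=\tfrac{t}{n-1}.$$
The operators $e^{-\frac{\tau}{2}\partial_x^2}$ commute and compose additively in $\tau$. Hence
$$p\boxplus_n\sqrt{t}_*\check{H}_n=\lpr{p\boxplus_n\sqrt{s}_*\check{H}_n}\boxplus_n\sqrt{t-s}_*\check{H}_n \quad \text{for } t\ge s,$$
so it suffices to treat $s=0$ with $p$ replaced by $p_s:=p\boxplus_n\sqrt{s}_*\check{H}_n$.

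\textbf{Simplicity of roots.} For $s>0$, $p_s$ has simple roots by \Cref{thm:preservation_of_mesh}, because Hermite polynomials have simple roots. When $s=0$ I will assume $p$ has simple roots; otherwise $\rchi_n(p)=-\infty$ and the identity has to be read as a one-sided limit. This is the only real subtlety, and I would flag it rather than fight it.

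\textbf{Derivative of the entropy.} Let $\alpha(t)$ be the ordered roots of $p\boxplus_n\sqrt{t}_*\check{H}_n$. By \Cref{lem:score_and_derivatives} and the chain rule in $\tau=t/(n-1)$,
$$\alpha_i'(t)=\frac{1}{n-1}\score(\alpha(t))_i.$$
Then I differentiate and symmetrize:
$$\partial_t\rchi_n=\frac{2}{n(n-1)}\sum_{i<j}\frac{\alpha_i'-\alpha_j'}{\alpha_i-\alpha_j}=\frac{1}{n(n-1)}\sum_{i\neq j}\frac{\alpha_i'-\alpha_j'}{\alpha_i-\alpha_j}.$$
Splitting the last sum into its $\alpha_i'$ and $\alpha_j'$ parts and relabeling, it becomes
$$\frac{2}{n(n-1)}\sum_i\alpha_i'\sum_{j\neq i}\frac{1}{\alpha_i-\alpha_j}=\frac{2}{n(n-1)^2}\norm{\score(\alpha)}^2.$$

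\textbf{Matching constants.} By \eqref{eqn:ffdef},
$$\tfrac12\Phi_n=\tfrac12\cdot\tfrac1n\cdot\tfrac{4}{(n-1)^2}\norm{\score}^2=\tfrac{2}{n(n-1)^2}\norm{\score}^2,$$
which agrees with the expression for $\partial_t\rchi_n$ obtained above. This completes the identity.
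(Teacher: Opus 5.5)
Your proposal is correct and follows essentially the same route as the paper. Both identify $p\boxplus_n\sqrt{t}_*\check{H}_n$ with $e^{-\frac{t}{2(n-1)}\partial_x^2}p$, get $\alpha_i'(t)=\frac{1}{n-1}\score(\alpha(t))_i$ from Lemma~\ref{lem:score_and_derivatives}, symmetrize the derivative of $\rchi_n$, and match constants.

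Two side remarks. Your semigroup reduction to $s=0$ is unnecessary, and on its own it would only give a one-sided derivative at $s>0$. Lemma~\ref{lem:score_and_derivatives} already applies at any $t$ where the roots are simple, and that is in fact how you carry out the computation. Your remark on simplicity of roots makes explicit a point the paper leaves implicit.
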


This result is implicitly present in a blog post of Tao \cite{tao2017blogpost}, explicitly present in an unpublished note by Marcus \cite{marcus2018note}, and was originally suggested to us by D. Shlyakhtenko. Below, for the reader's convenience, we provide the explicit computations involved in the proof.

\begin{proof}
First, by definition of $\check{H}_n$ and by \eqref{eq:heat flow of poly} we have that  
\begin{align*}
p(x)\boxplus_n \sqrt{t}_*\check{H}_n(x) & = p(x) \boxplus_n \left(\tfrac{t}{n-1}\right)^{n/2}H_n\left(\sqrt{\tfrac{n-1}{t}} \, x \right)
\\ & = \exp \left( - \tfrac{t}{2(n-1)} \partial_x^2 \right) p(x)
\\ & =: p_t(x).
\end{align*}
So now, if $\alpha_1(t), \dots, \alpha_n(t)$ are the roots of $p_t(x)$ then, a change of variables and \Cref{lem:score_and_derivatives} yield for every $t$ the identity
\begin{equation}
\label{eq:dervandhilb}
\alpha_i'(t) = \frac{1}{n-1}\sum_{j : j \neq i} \frac{1}{\alpha_i(t)-\alpha_j(t)}.
\end{equation}
On the other hand, by definition:
\begin{align*}
\partial_t \rchi_n(p_t)\big|_{t=s} & = \frac{1}{n(n-1)} \sum_{i \neq j} \frac{\alpha_i'(s)-\alpha_j'(s)}{\alpha_i(s)-\alpha_j(s)} 
\\ & = \frac{2}{n(n-1)} \sum_{i=1}^n \sum_{j: j\neq i} \frac{\alpha_i'(s)}{\alpha_i(s)-\alpha_j(s)} 
\\ & = \frac{2}{n} \sum_{i=1}^n \lpr{\ga_i'(s)}^2 && \text{by \eqref{eq:dervandhilb},}
\end{align*}
which combined with \eqref{eq:dervandhilb} yields the advertised result. 
\end{proof}

\subsection{Monotonicity of finite free entropy under differentiation}\label{sec:entropy_monotonicity}

The entropy monotonicity theorem (\Cref{thm:monotonicity})  will be derived via a combination of \Cref{thm:fisher_monotonicity} and \Cref{thm:ffdb}; the use of the latter will be to integrate over $\R_+$, as in the following result. 

\begin{lemma}\label{prop:ffdb int}
    Let $p(x)$ be a real-rooted polynomial of degree $n$. Then, 
    \[\rchi_n(p)=\rchi_n\lpr{\check{H}_n} + \frac{1}{2} \int_0^\infty \left(\frac{1}{1+t}- \Phi_n\big(p\boxplus_n \sqrt{t}_* \check{H}_n\big)\right) \d t . \]
\end{lemma}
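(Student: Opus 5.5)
Set $p_t:=p\boxplus_n \sqrt{t}_*\check{H}_n$ and integrate the finite free de Bruijn identity (\Cref{thm:ffdb}) on $[0,T]$:
$$\rchi_n(p_T)-\rchi_n(p)=\frac12\int_0^T \Phi_n(p_t)\,\d t .$$
Both sides blow up as $T\to\infty$, so the plan is to subtract the same identity for the Hermite polynomial and pass to the limit. First, as in the proof of \Cref{thm:ffdb}, $p_t=\exp(-\tfrac{t}{2(n-1)}\partial_x^2)p$, so the Hermite trajectory is $\check{H}_n\boxplus_n\sqrt{t}_*\check{H}_n=\sqrt{1+t}_*\check{H}_n$. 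This follows because heat flows compose additively and $\check{H}_n$ is itself the time-$1$ heat flow of $x^n$. By the rescaling laws \eqref{eq:rescaling},
$$\Phi_n(\sqrt{1+t}_*\check{H}_n)=\frac{\Phi_n(\check{H}_n)}{1+t},\qquad \rchi_n(\sqrt{1+t}_*\check{H}_n)=\rchi_n(\check{H}_n)+\tfrac12\log(1+t).$$
We need $\Phi_n(\check{H}_n)=1$. Applying de Bruijn to $\check H_n$ gives $\tfrac12\Phi_n(\sqrt{1+s}_*\check H_n)=\partial_s\tfrac12\log(1+s)=\tfrac{1}{2(1+s)}$, and setting $s=0$ yields $\Phi_n(\check{H}_n)=1$.

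Subtracting $\tfrac12\int_0^T\frac{\d t}{1+t}=\tfrac12\log(1+T)$ from the integrated identity gives
$$\rchi_n(p)=\rchi_n(p_T)-\tfrac12\log(1+T)+\frac12\int_0^T\Big(\frac1{1+t}-\Phi_n(p_t)\Big)\d t .$$
It therefore suffices to show $\rchi_n(p_T)-\tfrac12\log(1+T)\to\rchi_n(\check{H}_n)$ and that the integral converges as $T\to\infty$. The second claim follows from the first, since all other terms in the identity then have limits. The integral is understood as an improper integral.

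For the limit, rescale: $(1+T)^{-1/2}_*p_T=(1+T)^{-1/2}_*p\boxplus_n \sqrt{T/(1+T)}_*\check{H}_n$. This holds because $c_*$ distributes over $\boxplus_n$, which is visible from \eqref{eq:finite_free_convo}. As $T\to\infty$, the roots of $(1+T)^{-1/2}_*p$ tend to $0$, so the polynomial tends to $x^n$, while $\sqrt{T/(1+T)}_*\check{H}_n\to\check{H}_n$. The coefficients of $\boxplus_n$ are continuous in the roots by \eqref{eq:finite_free_convo}, so the rescaled $p_T$ converges coefficientwise to $x^n\boxplus_n\check{H}_n=\check{H}_n$. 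The roots of $\check{H}_n$ are simple, so root-continuity gives $\rchi_n((1+T)^{-1/2}_*p_T)\to\rchi_n(\check{H}_n)$, and by \eqref{eq:rescaling} this is exactly $\rchi_n(p_T)-\tfrac12\log(1+T)\to\rchi_n(\check H_n)$.

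The main obstacle is the behaviour at $t=0$. If $p$ has repeated roots, then $\rchi_n(p)=-\infty$ and $\Phi_n(p_t)$ blows up near $0$, so the de Bruijn integration needs care there. Three facts handle this. First, for $t>0$ the polynomial $p_t$ has simple roots, by \Cref{thm:preservation_of_mesh} since $\check{H}_n$ is simple. Second, $\Phi_n\ge 0$. Third, $\rchi_n(p_\epsilon)\to\rchi_n(p)$ as $\epsilon\downarrow0$ in $[-\infty,\infty)$, by continuity of the roots. So we integrate de Bruijn on $[\epsilon,T]$ and let $\epsilon\to0$ using monotone convergence of $\int_\epsilon^T\Phi_n(p_t)\,\d t$. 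This gives the identity in the extended sense, with both sides equal to $-\infty$ when $p$ has repeated roots.
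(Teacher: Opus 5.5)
Your proof is correct and follows the same route as the paper. Like the paper, you integrate the finite free de Bruijn identity on $[0,T]$, use the rescaling law to peel off the $\frac12\log$ growth of $\rchi_n(p_T)$, and note that the rescaled polynomial converges to $x^n\boxplus_n\check{H}_n=\check{H}_n$. The differences are minor and in your favour: normalizing by $(1+T)^{-1/2}$ instead of $T^{-1/2}$ removes the paper's $\int_{T-1}^{T}$ correction term; you justify the final limit through simplicity of the roots of $\check{H}_n$ and handle repeated roots at $t=0$ explicitly, which the paper glosses over; and your computation $\Phi_n(\check{H}_n)=1$ is a correct sanity check that the argument does not actually need.
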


\begin{proof}
    To simplify notation set $p_t:= p\boxplus_n \sqrt{t}_* \check{H}_n$. Now, by applying \Cref{thm:ffdb}, for every $T>0$ we have
    \begin{align}
  \nonumber  \rchi_n(p) & = \rchi_n(p_T) - \int_0^T \partial_t \rchi_n(p_t) \d t 
    \\ & \label{eq:diff} = \rchi_n(p_T)- \frac{1}{2} \int_0^T \Phi_n(p_t) \d t. 
       \end{align}
       We would now like to understand the asymptotic behavior of this difference as $T\to \infty$. 
       \begin{align*}
      \rchi_n(p_T) & = \rchi_n\lpr{\sqrt{T}_*\lpr{\lpr{T^{-1/2}}_* p \boxplus_n \check{H}_n }}
           \\ & = \frac{1}{2}\log(T) + \rchi_n\lpr{\lpr{T^{-1/2}}_* p \boxplus_n \check{H}_n} && \text{by \eqref{eq:rescaling}}
           \\ & = \frac{1}{2} \int_0^{T-1}\frac{1}{1+t} \d t + \rchi_n\lpr{\lpr{T^{-1/2}}_* p \boxplus_n \check{H}_n}.
       \end{align*}
       So, we have 
       \begin{align*}
           \rchi_n(p_T)- \frac{1}{2} \int_0^T \Phi_n(p_t) \d t = \frac{1}{2} \int_0^T\left(\frac{1}{1+t}-\Phi_n(p_t)\right) \d t + \rchi_n\lpr{\lpr{T^{-1/2}}_*p \boxplus_n \check{H}_n} + \frac{1}{2}\int_{T-1}^T \frac{1}{1+t} \d t. 
       \end{align*}
       Then, to conclude combine the above with \eqref{eq:diff} and take the limit as $T\to \infty$. 
\end{proof}

We can now prove the theorem.

\begin{proof}[Proof of \Cref{thm:monotonicity}]
    Once again we shall use the notation $p_t:= p\boxplus_n \sqrt{t}_* \check{H}_n= e^{-t\frac{\partial_x^2}{2(n-1)}} p$.     \Cref{thm:fisher_monotonicity} and \Cref{prop:ffdb int} imply that
    \begin{equation}
    \label{eq:upper_bound_with_qt}
        \rchi_n(p)\le\rchi_n\lpr{\check{H}_n}+\frac{1}{2}\int_0^\infty \left(\frac{1}{1+t}-\Phi_{n-1}(q_t)\right)\d t,
    \end{equation}
where $q_t$ is the rescaling of $(p_t)'$ so that $\var(q_t) = \var(p_t)$. Explicitly, Observation \ref{obs:variancescaling} tells us that 
$$q_t = \sqrt{\tfrac{n-1}{n-2}}_* (p_t)'.$$ 
Now, note that 
\begin{equation}
\label{eq:diff_p_t}
(p_t)' = \partial_x e^{-t\frac{\partial_x^2}{2(n-1)}} p = e^{-t\frac{\partial_x^2}{2(n-1)}} p' = e^{-\frac{(n-2)t}{(n-1)}\frac{\partial_x^2}{2(n-2)}} p' = p' \boxplus_{n-1} \sqrt{\tfrac{n-2}{n-1}t}_* \check{H}_{n-1}.
\end{equation}
And, therefore 
\begin{equation}
\label{eq:formula_for_qt}
q_t = \sqrt{\tfrac{n-1}{n-2}}_* \left(p' \boxplus_{n-1} \sqrt{\tfrac{n-2}{n-1}t}_* \check{H}_{n-1}\right) = \tilde{p'} \boxplus_{n-1} \sqrt{t}_* \check{H}_{n-1},
\end{equation}
where, again by Observation \ref{obs:variancescaling}, $\tilde{p'}:= \sqrt{\tfrac{n-1}{n-2}}_* p'$ is such that $\var(\tilde{p'}) = \var(p)$. 

To conclude the proof, substitute \eqref{eq:formula_for_qt} back into the right-hand side of \eqref{eq:upper_bound_with_qt} to get
$$\rchi_n(p) \le \rchi_{n}[\check{H}_n] + \frac{1}{2}\int_0^\infty \left(\frac{1}{1+t}- \Phi_{n-1}\big(\tilde{p'}\boxplus_{n-1} \sqrt{t}_* \check{H}_{n-1}\big) \right) dt,$$
at which point an application of \Cref{prop:ffdb int} yields the advertised inequality. 

All that remains to show is that $C_n >0$. We defer this routine calculation to \Cref{sec:cd neg}. 
\end{proof}

We remark that this inequality is saturated by Hermite polynomials, as the only inequality is in the application of \Cref{thm:fisher_monotonicity}, which itself is saturated by Hermite polynomials. 

\subsection{Lieb's inequality}

In this section, given any polynomial $r(x)$ of degree $n$ and $t\geq 0$,  we will use the notation 
$$r_t(x) := r(x) \boxplus_n \sqrt{t}_*\check{H}_n(x).$$ 

We will   establish the finite free version of Lieb's inequality following the arguments  in \cite{rioul2010information} without any change. The first step is to observe that, by the de Bruijn identity, the finite free Stam's inequality is equivalent to the monotonicity of the following function.

\begin{observation}
\label{obs:mono_from_stam}
      Let $p(x)$ and $q(x)$ be any monic real-rooted polynomials of degree $n$ and  $\gl\in[0,1]$. Then, the function 
      $$F(t):= \chi_n(\slambda p_t \boxplus_n \sone q_t ) - (\lambda \chi_n(p_t)+(1-\lambda)\chi_n(q_t))$$
      is non-increasing for all $t\geq 0$. 
\end{observation}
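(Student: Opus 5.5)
The plan is to differentiate $F$ and use the de Bruijn identity (\Cref{thm:ffdb}) to turn each derivative of an entropy term into a Fisher information, at which point the sign condition $F'(t)\le 0$ becomes exactly the finite free Stam inequality (\Cref{thm:stams_inequality}).

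\textbf{Step 1: the convolved term is itself a heat-flowed polynomial.} The key identity is
$$\slambda p_t \boxplus_n \sone q_t = \bigl(\slambda p \boxplus_n \sone q\bigr)_t .$$
To see it, write $r_t = e^{-\frac{t}{2(n-1)}\partial_x^2} r$ as in the proof of \Cref{thm:ffdb}. Rescaling by $c$ conjugates $\partial_x$ into $c^{-1}\partial_x$, so
$$c_*\bigl(e^{-\frac{t}{2(n-1)}\partial_x^2}r\bigr) = e^{-\frac{c^2t}{2(n-1)}\partial_x^2}(c_*r).$$
Hence $\slambda p_t$ is the heat flow of $\slambda p$ run for time $\lambda t$, and $\sone q_t$ is the heat flow of $\sone q$ run for time $(1-\lambda)t$. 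Since $\boxplus_n$ commutes with differential operators in $\partial_x$ (as used in \Cref{lem:score_to_score_convolution}), the two heat operators can be pulled out and multiplied. Their exponents add to $\lambda t+(1-\lambda)t = t$, which gives the identity.

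\textbf{Step 2: differentiate $F$.} Put $r:=\slambda p\boxplus_n \sone q$. Applying \Cref{thm:ffdb} at each $s\ge 0$ to $r$, $p$ and $q$ gives
$$F'(s)=\tfrac12\Bigl(\Phi_n(r_s)-\lambda\Phi_n(p_s)-(1-\lambda)\Phi_n(q_s)\Bigr).$$
By Step 1, $r_s=\slambda p_s\boxplus_n\sone q_s$. Now apply \Cref{thm:stams_inequality} to the real-rooted polynomials $p_s$ and $q_s$; this gives exactly $F'(s)\le 0$.

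\textbf{Step 3: regularity.} To pass from $F'\le 0$ to monotonicity we need $F$ to be differentiable, and this is the only delicate point. For $s>0$, the polynomials $p_s$, $q_s$ and $r_s$ have simple roots. This follows from \Cref{thm:preservation_of_mesh}, since each is a convolution with a Hermite polynomial, which has simple roots. The entropies are therefore smooth on $(0,\infty)$ by \Cref{lem:score_and_derivatives}, and $F$ is non-increasing there. At $s=0$ the roots may be repeated, and $\chi_n$ may equal $-\infty$. If all values are finite, continuity of the roots in $t$ together with continuity of $\log|\cdot|$ at nonzero gaps gives continuity of $F$ at $0$, and monotonicity extends to $[0,\infty)$. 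Otherwise we interpret the statement on $(0,\infty)$, or note that $F(0)$ is only affected by $-\infty$ terms in the expected direction.
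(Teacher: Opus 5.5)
Your proposal is correct and follows the paper's argument: differentiate $F$ using the de Bruijn identity (\Cref{thm:ffdb}), then apply the finite free Stam inequality to $p_t$ and $q_t$ to get $F'(t)\le 0$. Your Step 1 (the semigroup identity $\slambda p_t \boxplus_n \sone q_t = (\slambda p \boxplus_n \sone q)_t$, which is needed to apply de Bruijn to the first term) and your Step 3 (simple roots for $t>0$ via mesh preservation, plus the behavior at $t=0$) spell out points that the paper's one-line proof leaves implicit.
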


\begin{proof}
    Lemma \ref{thm:ffdb} and Theorem \ref{thm:stams_inequality} together imply that
    $$F'(t) = \frac{1}{2} \left( \Phi_n(\slambda p_t \boxplus_n \sone q_t)  - (\lambda\Phi_n(p_t)+(1-\lambda)\Phi_n(q_t)) \right)\leq 0,$$
    so the result follows. 
\end{proof}

Lieb's inequality follows readily. 

\begin{theorem}[Finite free Lieb inequality]\label{thm:ff7}
    Let $p(x)$ and $q(x)$ be any monic real-rooted polynomials of degree $n$ and  $\gl\in[0,1]$. Then
    \[\rchi_n\lpr{\sqrt{\gl}\,_*p\boxplus_n\sqrt{1-\gl}\,_*q}\ge\gl\rchi_n(p)+(1-\gl)\rchi_n(q).\]
\end{theorem}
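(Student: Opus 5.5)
By Observation \ref{obs:mono_from_stam}, the function
$$F(t) = \rchi_n(\slambda p_t \boxplus_n \sone q_t) - \big(\lambda \rchi_n(p_t)+(1-\lambda)\rchi_n(q_t)\big)$$
is non-increasing on $[0,\infty)$. The plan is therefore to show that $\lim_{t\to\infty} F(t) = 0$, which gives $F(0)\ge 0$. Note that $F(0)\ge 0$ is exactly the claimed inequality, since $p_0=p$ and $q_0=q$. The cases $\lambda\in\{0,1\}$ are trivial, since both sides then coincide. We may also assume that $p$ and $q$ have simple roots. For degenerate $p$ or $q$, each of $p_t, q_t$ has simple roots for $t>0$, because the heat flow of a real-rooted polynomial has simple roots. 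So we can run the argument from a small $t_0>0$ and let $t_0\downarrow 0$, using continuity of $\rchi_n$ where it is finite. If $\rchi_n(p)=-\infty$ or $\rchi_n(q)=-\infty$, the right-hand side is $-\infty$ and there is nothing to prove.

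The key algebraic step is to identify the convolution term with a heat-flowed convolution. Since dilation satisfies $c_*(r\boxplus_n s) = c_*r\boxplus_n c_*s$, and Hermite convolutions add in variance ($\sqrt{a}_*\check H_n\boxplus_n \sqrt{b}_*\check H_n = \sqrt{a+b}_*\check H_n$, which follows from \eqref{eq:heat flow of poly} and the semigroup property of $\heat$), we get
$$\slambda p_t\boxplus_n \sone q_t = \big(\slambda p\boxplus_n\sone q\big)\boxplus_n \sqrt{\lambda t+(1-\lambda)t}_*\check H_n = r_t,$$
where $r:=\slambda p\boxplus_n \sone q$.

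For the limit, I would use the rescaling law \eqref{eq:rescaling} as in the proof of Lemma \ref{prop:ffdb int}:
$$\rchi_n(s_t) = \tfrac12\log t + \rchi_n\big((t^{-1/2})_* s \boxplus_n \check H_n\big)$$
for $s\in\{p,q,r\}$. As $t\to\infty$, the roots of $(t^{-1/2})_*s$ tend to $0$, so $(t^{-1/2})_*s\to x^n$ coefficientwise. Hence $(t^{-1/2})_*s\boxplus_n\check H_n\to \check H_n$, because $\boxplus_n$ is bilinear and continuous in coefficients. The limit $\check H_n$ has simple roots, so $\rchi_n$ is continuous there, and
$$\rchi_n(s_t) - \tfrac12 \log t \to \rchi_n(\check H_n).$$
Substituting into $F$, the $\tfrac12\log t$ terms cancel, since the coefficients $1$, $\lambda$ and $1-\lambda$ satisfy $1=\lambda+(1-\lambda)$. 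The constant terms cancel for the same reason, giving $F(t)\to 0$.

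The main obstacle is only bookkeeping. One must verify the identity $\slambda p_t\boxplus_n\sone q_t=r_t$ carefully, using commutation of dilation with $\boxplus_n$ and the way $\sqrt{c}_*$ acts on $\check H_n$. One must also justify monotonicity on the whole of $[0,\infty)$. Observation \ref{obs:mono_from_stam} needs differentiability of $F$, which holds for $t>0$ by simplicity of roots, while $F$ is continuous at $0$ in the simple-root case.
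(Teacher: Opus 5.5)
Your proof is correct and follows the paper's argument. Both use the monotonicity of $F$ from Observation~\ref{obs:mono_from_stam}, rewrite the convolution term via $\slambda p_t\boxplus_n\sone q_t=(\slambda p\boxplus_n\sone q)\boxplus_n\sqrt{t}_*\check H_n$, and apply the rescaling law \eqref{eq:rescaling} to get $F(t)\to 0$ as $t\to\infty$, hence $F(0)\ge 0$. Your extra bookkeeping (the trivial cases $\lambda\in\{0,1\}$ or $\rchi_n=-\infty$, and continuity of $\rchi_n$ at $\check H_n$) only makes explicit what the paper leaves implicit.
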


\begin{proof}
Let $F(t)$ be as in Observation \ref{obs:mono_from_stam}.  Now note that, for any $t>0$ one can rewrite $F(t)$ as  
\begin{align*}
  &  \chi_n\big(\slambda p \boxplus_n \sone q  \boxplus_n \sqrt{t}_*\check{H}_n \big) - \big(\lambda \chi_n(p \boxplus_n \sqrt{t}_* \check{H}_n)+(1-\lambda)\chi_n(q \boxplus_n \sqrt{t}_*\check{H}_n)\big) 
\\ = &  \chi_n\left(\sqrt{t}_* \big( \sqrt{\lambda/t}_* p \boxplus_n (\sqrt{(1-\lambda)/t}_* q \boxplus_n \check{H}_n\big) \right) 
\\ & - \left( \lambda \chi_n\big(\sqrt{t}_* (\sqrt{1/t}_* p \boxplus_n \check{H}_n)\big) +(1-\lambda) \chi_n\big(\sqrt{t}_* (\sqrt{1/t}_* q \boxplus_n \check{H}_n) \big) \right) 
\\  = & \chi_n\left( \sqrt{\lambda/t}_* p \boxplus_n  (\sqrt{(1-\lambda)/t}_* q \boxplus_n \check{H}_n \right)  -  \left( \lambda \chi_n\big( \sqrt{1/t}_* p \boxplus_n \check{H}_n\big) +(1-\lambda) \chi_n\big( \sqrt{1/t}_* q\boxplus_n \hat{H}_n \big) \right) 
\end{align*}
where in the last equation we used the rescaling identity (\ref{eq:rescaling}). Now, from the last expression it is evident that $\lim_{t \to \infty}F(t)= 0$. Since $F(t)$ is non-increasing it follows that $F(0)\geq 0$, as we wanted to show.  
\end{proof}

\subsection{The finite free entropy power inequality}\label{sec:entropy_power}

We can now use the finite free Lieb inequality to derive the finite free entropy power inequality. This is standard and we follow the arguments in \cite{rioul2010information}.

\begin{proof}[Proof of Theorem \ref{thm:ffepi}]
  Take $\lambda\in [0, 1]$ such that 
  $$\lambda = \frac{\sN_n(p)}{\sN_n(p) + \sN_n(q)} \quad \text{and} \quad 1-\lambda =  \frac{\sN_n(q)}{\sN_n(p) + \sN_n(q)},$$
  and define $\tilde{p}:= \lambda^{-1/2}_* p$ and $\tilde{q} := (1-\lambda)^{-1/2}_* q$. Now, by the rescaling rules in (\ref{eq:rescaling}) we have that 
  $$\chi_n(\tilde{p}) = \chi_n(p)-\frac{1}{2}\log\left(  \frac{\sN_n(p)}{\sN_n(p) + \sN_n(q)} \right) = \frac{1}{2} \log(\sN_n(p)+\sN_n(q)),$$ 
  and similarly for $\chi_n(\tilde{q})$. On the other hand, Theorem \ref{thm:ff7} yields
  \[\rchi_n\lpr{\sqrt{\gl}\,_*\tilde{p}\boxplus_n\sqrt{1-\gl}\,_*\tilde{q}}\ge\gl\rchi_n(\tilde{p})+(1-\gl)\rchi_n(\tilde{q}).\]
  So, combining the above two expressions and using that $p = \slambda \tilde{p}$ and $q = \sone q$, we get 
  $$\chi_n(p\boxplus_n q) \geq \frac{1}{2} \log(\sN_n(p)+\sN_n(q)),$$
  and the proof is concluded by multiplying both sides by 2 and exponentiating. 
\end{proof}

\subsection{Equality cases}
\label{sec:equality_cases_epi}

We now determine the equality cases in Theorems \ref{thm:monotonicity},  \ref{thm:ffepi}, and \ref{thm:ff7}. 

\begin{theorem}
\label{thm:equality_cases_Lieb_EPI}
    Let $p(x)$ and $q(x)$ be monic degree $n$ real-rooted polynomials with simple roots. 
    \begin{enumerate}[i)]
     \item \label{item:equality_monotonicity} Let $C_n$ and $\widetilde{p'}$ be as in Theorem \ref{thm:monotonicity}. If $\rchi_n\lpr{p}+C_n=\rchi_{n-1}\lpr{\widetilde{p'}}$ then $p(x)$ is an affine transformation of $H_n(x)$. 
        \item \label{item:equality_Lieb} Let $\lambda\in (0, 1)$. If $\rchi_n\lpr{\sqrt{\gl}\,_*p\boxplus_n\sqrt{1-\gl}\,_*q}=\gl\rchi_n(p)+(1-\gl)\rchi_n(q)$ then $p(x)$ and $q(x)$ are affine transformations of $H_n(x)$. 
        \item \label{item:equality_EPI} If $\sN_n\lpr{p\boxplus_nq}=\sN_n(p)+\sN_n(q)$ then $p(x)$ and $q(x)$ are affine transformations of $H_n(x)$. 
    \end{enumerate}
\end{theorem}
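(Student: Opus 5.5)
The plan is to trace equality back through the integral (de Bruijn) representations to pointwise equality in the Fisher information inequalities along the heat flow. Then Theorem \ref{thm:equality_cases_Fisher} applies at every time $t>0$, and we pass to $t\to 0$.

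For part \ref{item:equality_monotonicity}), the proof of Theorem \ref{thm:monotonicity} has a single inequality, namely \eqref{eq:upper_bound_with_qt}. It states
\[
\int_0^\infty \Big(\Phi_n(p_t)-\Phi_{n-1}(\tilde{p'}\boxplus_{n-1}\sqrt{t}_*\check H_{n-1})\Big)\,dt\ \ge 0 ,
\]
and by Theorem \ref{thm:fisher_monotonicity} together with \eqref{eq:formula_for_qt} the integrand is pointwise nonnegative. The first step is to check that the integrand is continuous in $t$. This holds because $p_t$ has simple roots for all $t\ge 0$ (preservation of mesh, Theorem \ref{thm:preservation_of_mesh}), so the roots, and hence $\Phi_n(p_t)$ and $\Phi_{n-1}(q_t)$, depend smoothly on $t$. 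Equality in the theorem then forces the integrand to vanish for all $t\ge0$, in particular at $t=0$. This gives $\Phi_{n-1}(\widetilde{p'})=\Phi_n(p)$, and Theorem \ref{thm:equality_cases_Fisher}\,\ref{item:monotonicity}) concludes. The same continuity is also needed for convergence of the improper integral. That is harmless, since both integrals are finite, as shown in Lemma \ref{prop:ffdb int}.

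For part \ref{item:equality_Lieb}), let $F$ be as in Observation \ref{obs:mono_from_stam}. The proof of Theorem \ref{thm:ff7} shows that $F$ is non-increasing with $F(t)\to 0$, so equality means $F(0)=0$. Since $F$ is non-increasing from $0$ down to limit $0$, $F$ is identically $0$ on $[0,\infty)$, and so $F'(t)=0$ for all $t$. By the formula for $F'$, Stam's inequality then holds with equality for $p_t,q_t$ at every $t$, in particular at $t=0$. Here we use that $p,q$ have simple roots, so $F$ is differentiable at $0$ (one-sidedly) by smoothness of simple roots in $t$. Theorem \ref{thm:equality_cases_Fisher}\,\ref{item:stam_ineq}) then shows that $p,q$ are affine images of $H_n$.

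For part \ref{item:equality_EPI}), we retrace the proof of Theorem \ref{thm:ffepi}. With $\lambda$, $\tilde p$, $\tilde q$ as there, the inequality $\sN_n(p\boxplus_n q)\ge \sN_n(p)+\sN_n(q)$ was derived solely from Lieb's inequality applied to $\tilde p,\tilde q$. Equality therefore gives equality in Lieb for $\tilde p,\tilde q$. The endpoint cases $\lambda\in\{0,1\}$ cannot occur, since $\sN_n>0$ for simple roots. The rescalings $\tilde p,\tilde q$ still have simple roots, so part \ref{item:equality_Lieb}) shows that $\tilde p,\tilde q$, and hence $p,q$, are affine images of $H_n$.

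The main (mild) obstacle is justifying regularity. This means continuity of the integrand in part \ref{item:equality_monotonicity}) and differentiability of $F$ at $t=0$ in part \ref{item:equality_Lieb}). Both follow from simplicity of roots being preserved under the heat flow and $\boxplus_n$ (Observation \ref{obs:smoothness_convolution}) together with Lemma \ref{lem:score_and_derivatives}.
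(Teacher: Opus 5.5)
Your proposal is correct and follows the same route as the paper. Part (i) reduces, via the de Bruijn integral representation, to equality in Theorem \ref{thm:fisher_monotonicity} at $t=0$. Part (ii) reduces to $F'(0)=0$, hence equality in Stam's inequality, using that $F$ is non-increasing with $F(t)\to 0$. Part (iii) reduces to (ii) by retracing the proof of Theorem \ref{thm:ffepi}. Your additional care fills in steps the paper leaves implicit: continuity of the integrand, $F\equiv 0$ on $[0,\infty)$, and $\lambda\in(0,1)$ because $\sN_n>0$ for simple roots.
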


\begin{proof}
   To get \ref{item:equality_monotonicity}) inspect the proof of Theorem \ref{thm:monotonicity} and note that the only inequality used was derived from the inequality in Theorem \ref{thm:fisher_monotonicity}. Moreover, if $\rchi_n\lpr{p}+C_n=\rchi_{n-1}\lpr{\widetilde{p'}}$ then equality most hold in Theorem \ref{thm:fisher_monotonicity}, so by Theorem \ref{thm:equality_cases_Fisher} we get that $p(x)$ must be the affine transformation of a Hermite polynomial. 
   
  Now we prove \ref{item:equality_Lieb}). Fix $\lambda\in (0, 1)$ and let $F(t)$ be as in Observation \ref{obs:mono_from_stam}. Then, the assumed equality translates into $F(0)=0$. On the other hand, we know that $F(t)$ is non-increasing and $\lim_{t\to \infty} F(t)=0$. So,  $F(0)=0$ implies $F'(0)=0$, but this  precisely translates into equality for Stam's inequality. So, by Theorem \ref{thm:equality_cases_Fisher} we can conclude that $p(x)$ and $q(x)$ are affine transformations of $H_n(x)$. 

   To prove \ref{item:equality_EPI}) simply note that, from the proof of Theorem \ref{thm:ffepi},  equality in the entropy power inequality implies equality for Lieb's inequality for a specific choice of $\lambda$ and some rescaling of the original polynomials, so the result follows from what we have proven in \ref{item:equality_Lieb}). 
\end{proof}

\section{Inequalities in the large-$n$ limit}
\label{sec:large_n_limit}

Here we re-derive some of the infinite-dimensional entropy inequalities already present in the literature \cite{szarek1996volumes, voiculescu1998analogues, shlyakhtenko2022fractional} by taking $n\to\infty$ in our inequalities. This provides fundamentally new proofs of these results. 

First, let us introduce some notation.  Given $\alpha\in \bR^n$, it will prove useful to define the quantity
$$\mesh(\alpha) := \min_{i\neq j} |\alpha_i - \alpha_j|, $$
and the probability measure 
$$\mu_\alpha := \frac{1}{n} \sum_{i=1}^n \delta_{\alpha_i}.$$
Conversely, given a probability measure $\mu$, its vector of $n$-quantiles $\alpha^n$ will defined as $\alpha^n =(\alpha^n_{1}, \dots, \alpha^n_{n})$ where 
$$\alpha^n_{i} := \inf \left\{x\in \bR : \mu(-\infty, x] \ge \frac{i}{n}\right\}. $$ 
\subsection{Free probability inputs}
\label{sec:free_proba_inputs}

When dealing with free entropy, we start by first deriving the  infinite-dimensional versions of our inequalities  for measures with bounded density,  and then extending these results to arbitrary measures. In order to do this we will need some knowledge of the regularizing effect of free convolutions. Specifically, we will use the following two results of Voiculescu. 

\begin{proposition}[Proposition 4.7 in \cite{voiculescu1993analogues}]
\label{prop:free_young_ineq}
    Let $\mu $ and $\nu$ be compactly supported probability measures on $\bR$. If $\mu$ is Lebesgue absolutely continuous with density $\rho_\mu$, then $\mu \boxplus \nu$ is also Lebesgue absolutely continuous and, its density $\rho_{\mu\boxplus \nu}$, satisfies 
    $$\|\rho_{\mu \boxplus \nu}\|_\infty \leq \|\rho_\mu\|_\infty. $$
\end{proposition}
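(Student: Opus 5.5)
The plan is to derive the statement from the finite free convolution by discretization, with preservation of mesh (Theorem~\ref{thm:preservation_of_mesh}) as the only inequality used. Let $M:=\|\rho_\mu\|_\infty$. For each $n$, let $\alpha^n$ and $\beta^n$ be the vectors of $n$-quantiles of $\mu$ and $\nu$, and put $\gamma^n:=\roots(\alpha^n,\beta^n)$.

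\textbf{Step 1 (mesh of the quantiles).} Consecutive quantiles $\alpha^n_i<\alpha^n_{i+1}$ satisfy $\mu\big((\alpha^n_i,\alpha^n_{i+1}]\big)=\tfrac1n$. This uses that $\mu$ has no atoms, which holds because it has a bounded density. Since $\mu(I)\le M|I|$ for every interval $I$, we get $\alpha^n_{i+1}-\alpha^n_i\ge \tfrac{1}{nM}$. Hence
$$\mesh(\alpha^n)\ge \tfrac{1}{nM},$$
and in particular $\alpha^n$ is simple. By Theorem~\ref{thm:preservation_of_mesh}, $\mesh(\gamma^n)\ge\tfrac1{nM}$ as well.

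\textbf{Step 2 (counting bound).} Any interval $I$ contains at most $nM|I|+1$ entries of $\gamma^n$. Therefore
$$\mu_{\gamma^n}(I)\le M|I|+\tfrac1n.$$

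\textbf{Step 3 (convergence to the free convolution).} Show that $\mu_{\gamma^n}\to\mu\boxplus\nu$ weakly. Because $\mu$ and $\nu$ are compactly supported, all $\alpha^n,\beta^n$ lie in a fixed compact interval $[-R,R]$. Weak convergence $\mu_{\alpha^n}\to\mu$ and $\mu_{\beta^n}\to\nu$ then gives convergence of all moments. Hence the finite free cumulants of $\mu_{\alpha^n}$ and $\mu_{\beta^n}$ converge to the free cumulants of $\mu$ and $\nu$ (Arizmendi--Perales; Marcus). Finite free cumulants are additive under $\boxplus_n$, so the moments of $\mu_{\gamma^n}$ converge to those of $\mu\boxplus\nu$. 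By the mesh/interlacing bounds, or simply because the roots of $\gamma^n$ lie in $[-2R,2R]$, the measures $\mu_{\gamma^n}$ are uniformly compactly supported. Moment convergence therefore upgrades to weak convergence.

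\textbf{Step 4 (passing to the limit).} For an open interval $I$, the Portmanteau theorem gives
$$(\mu\boxplus\nu)(I)\le\liminf_n\mu_{\gamma^n}(I)\le M|I|.$$
Intervals are enough to get this for all Borel sets, so $\mu\boxplus\nu\le M\cdot\mathrm{Leb}$. Consequently $\mu\boxplus\nu$ is absolutely continuous, and by the Lebesgue differentiation theorem its density satisfies $\|\rho_{\mu\boxplus\nu}\|_\infty\le M$.

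\textbf{Main obstacle.} I expect Step 3 to be the hardest, namely justifying that the finite free convolutions of the quantile vectors converge to $\mu\boxplus\nu$. I would first try the cumulant route under the compact-support hypothesis: the $n$-dependent corrections in the moment–cumulant formulas are polynomial in $1/n$ with bounded coefficients, so convergence of each fixed cumulant should follow directly. The remaining steps are elementary.
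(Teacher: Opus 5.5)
Your proposal is correct and follows the paper's argument essentially step for step. The paper uses the same quantile discretization with $\mesh(\alpha^n)\ge\tfrac{1}{nM}$, the same appeal to preservation of mesh (Theorem~\ref{thm:preservation_of_mesh}), and the same weak convergence $\mu_{\gamma^n}\to\mu\boxplus\nu$, which it simply cites from Arizmendi--Perales [Corollary 5.5] rather than re-deriving through cumulants; it then passes the interval bound to the limit via Observation~\ref{obs:leb_abs_cont}. Your counting bound $\mu_{\gamma^n}(I)\le M|I|+\tfrac1n$ is slightly more careful than the paper's version, which omits the $\tfrac1n$ term.
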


\begin{proposition}[Proposition 6.4 a) in \cite{voiculescu1993analogues}]
\label{prop:continuity}
    Let $\mu$ be a compactly supported measure on $\bR$ and for $\eps>0$ let $\sigma_\eps$ denote the semicircle distribution of variance $\eps$. Then, for $\eps \in [0, \infty)$, $\eps \mapsto \chi(\mu \boxplus \sigma_\eps)$ is a continuous function of $\eps$. 
\end{proposition}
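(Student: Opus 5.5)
The plan is to split the claim into continuity on the open half-line $(0,\infty)$, which is soft, and continuity at $\eps=0$, which is the real content. Recall $\chi(\nu)=\iint\log|s-t|\,\d\nu(s)\d\nu(t)$.

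\textbf{Continuity on $(0,\infty)$.} All measures $\mu\boxplus\sigma_\eps$ with $\eps$ in a compact subset of $(0,\infty)$ have uniformly bounded support. The map $\eps\mapsto\mu\boxplus\sigma_\eps$ is weakly continuous, since free convolution is weakly continuous and $\sigma_\eps\to\sigma_{\eps_0}$ weakly. By \Cref{prop:free_young_ineq} applied with the absolutely continuous factor $\sigma_\eps$, the densities are bounded by $\|\rho_{\sigma_\eps}\|_\infty=O(\eps^{-1/2})$, so they are uniformly bounded on such a compact set of $\eps$. Uniformly bounded densities with uniformly bounded support make $\log|s-t|$ uniformly integrable against the product measures: the contribution from $|s-t|<\delta$ is $O(\delta\log(1/\delta))$ uniformly. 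Hence $\chi$ is continuous along weakly convergent families of this kind, which gives continuity on $(0,\infty)$.

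\textbf{Upper bound at $\eps=0$.} For $\eps\downarrow 0$ I first prove $\limsup_{\eps\to0}\chi(\mu\boxplus\sigma_\eps)\le\chi(\mu)$. This is upper semicontinuity of $\chi$ under weak convergence with uniformly bounded supports. Truncate the kernel, replacing $\log|s-t|$ by $\max(\log|s-t|,-M)$. This truncation is bounded and continuous on the common compact support, so it passes to the limit; it dominates $\log|s-t|$ from above; and letting $M\to\infty$ gives the bound by monotone convergence. In particular, if $\chi(\mu)=-\infty$ we are already done.

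\textbf{Lower bound at $\eps=0$.} The matching inequality is $\chi(\mu\boxplus\sigma_\eps)\ge\chi(\mu)$ for every $\eps>0$, and this is the main obstacle. I would obtain it from the finite theory via quantiles.
\begin{itemize}
\item Let $\alpha^n$ be the $n$-quantile vector of $\mu$. Then $\mathrm{Poly}[\alpha^n]\boxplus_n\sqrt{\eps}_*\check H_n$ has root measure converging weakly to $\mu\boxplus\sigma_\eps$, by the known convergence of $\boxplus_n$ to $\boxplus$.
\item The finite Lieb/EPI machinery, or directly the de Bruijn identity \Cref{thm:ffdb} with $\Phi_n\ge0$, gives $\rchi_n(p\boxplus_n\sqrt{\eps}_*\check H_n)\ge\rchi_n(p)$.
\item For quantile vectors of a measure with finite entropy one has $\liminf_n\rchi_n(\mathrm{Poly}[\alpha^n])\ge\chi(\mu)$, since the discrete log-energy of quantiles dominates a Riemann-type sum for the continuous one.
\item It then remains to show $\limsup_n\rchi_n(\text{convolved})\le\chi(\mu\boxplus\sigma_\eps)$.
\end{itemize}
The last step is where \Cref{thm:preservation_of_mesh} enters. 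The mesh of the convolved polynomial is at least the mesh of the Hermite roots, which is of order $\sqrt{\eps}/n$. This spacing controls the near-diagonal part of the discrete log sum uniformly in $n$ and allows passage to the continuum limit.

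Combining the two bounds at $\eps=0$ gives $\lim_{\eps\downarrow0}\chi(\mu\boxplus\sigma_\eps)=\chi(\mu)$. The technical heart is the uniform near-diagonal control of the discrete log-energy of the quantile vectors. If that step proves delicate, a fallback is to first treat $\mu$ with bounded density, and then reduce the general case using monotonicity in $\eps$ together with the semigroup identity $\sigma_{\eps}\boxplus\sigma_{\eps'}=\sigma_{\eps+\eps'}$.
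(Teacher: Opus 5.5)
Your proof is correct, but it takes a different route from the one the paper points to. The paper gives no argument of its own; it defers to Voiculescu's short proof, which works directly with measures in the free setting.

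You handle $(0,\infty)$ by weak continuity of $\boxplus$ together with the free Young bound $\|\rho_{\mu\boxplus\sigma_\varepsilon}\|_\infty\le(\pi\sqrt{\varepsilon})^{-1}$, which gives uniform integrability of $\log|s-t|$. At $\varepsilon=0$ you pair upper semicontinuity of the logarithmic energy with the lower bound $\chi(\mu\boxplus\sigma_\varepsilon)\ge\chi(\mu)$. You import that lower bound from the finite theory, using three ingredients:
\begin{itemize}
\item monotonicity of $t\mapsto\chi_n(p\boxplus_n\sqrt{t}_*\check{H}_n)$, from Lemma~\ref{thm:ffdb};
\item quantile discretization of $\mu$;
\item convergence of $\boxplus_n$ to $\boxplus$.
\end{itemize}
This is in the same spirit as the paper's finite free proof of Proposition~\ref{prop:free_young_ineq}. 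It uses heavier external inputs than an intrinsic argument would. In exchange, it shows that the only non-soft ingredient (entropy increases under semicircular noise) is itself a limit of the finite de Bruijn identity.

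Two comments on where the work actually lies.

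\textbf{The mesh step is unnecessary.} The step you assign to Theorem~\ref{thm:preservation_of_mesh} needs no mesh control. The inequality $\limsup_n\chi_n(\gamma^n)\le\chi(\mu\boxplus\sigma_\varepsilon)$ is just your own truncation argument: replacing $\log$ by $\max(\log,-M)$ only increases $\chi_n$, and reinstating the diagonal and renormalizing costs $O(M/n)$.

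\textbf{The quantile step is where care is needed.} The delicate inequality is $\liminf_n\chi_n(\mathrm{Poly}[\alpha^n])\ge\chi(\mu)$, because a general $\mu$ gives no gap bound on its quantiles. It holds via an index shift. Let $F^{-1}$ be the quantile function, $\alpha_i=F^{-1}(i/n)$, and $i<j$. For $u\in(\tfrac{i}{n},\tfrac{i+1}{n}]$ and $v\in(\tfrac{j-1}{n},\tfrac{j}{n}]$, both $F^{-1}(u)$ and $F^{-1}(v)$ lie in $[\alpha_i,\alpha_j]$. Hence $\log(\alpha_j-\alpha_i)$ dominates $n^2$ times the cell average of $\log|F^{-1}(u)-F^{-1}(v)|$. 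Summing over $i<j$ and dividing by $n^2$ recovers $\tfrac12\chi(\mu)$, up to the strip $\{u\le 1/n\}$ and the diagonal cells. These have area $O(1/n)$ and contribute $o(1)$, because the integrand is in $L^1$ whenever $\chi(\mu)>-\infty$.

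So your main line covers every compactly supported $\mu$, and the fallback is unnecessary. That is just as well: monotonicity on $(0,\infty)$ plus the semigroup law only compares positive times, so it would presuppose the continuity at $0$ you are proving.

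A side benefit of your two one-sided limits (upper semicontinuity for the output, the quantile $\liminf$ for the inputs) is that they pass Theorem~\ref{thm:ffepi} to the limit directly for arbitrary compactly supported $\mu,\nu$. That would bypass both Proposition~\ref{prop:free_young_ineq} and the present proposition.
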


The proof of Proposition \ref{prop:continuity} is short and elementary (see the original source \cite{voiculescu1993analogues}). To prove Proposition \ref{prop:free_young_ineq} Voiculescu used subordination techniques. Interestingly, one can also obtain this result purely from finite free probability techniques. First, we establish the following. 

\begin{observation}
\label{obs:leb_abs_cont}
    Let $\mu$ be a compactly supported probability measure on $\bR$. Then, the following are equivalent: 
    \begin{enumerate}[(i)]
        \item \label{item:leb_abs_cont} $\mu$ is Lebesgue absolutely continuous and its density $\rho_\mu$ satisfies $\|\rho_\mu\|_\infty \leq B$. 
        \item \label{item:discrete_conv} There exist a sequence of vectors $\alpha^n \in \bR^n$ such that $\mu_{\alpha^n}$ converges weakly to $\mu$ and $\mesh(\alpha^n)\geq \frac{1}{nB}$ for every $n$. 
    \end{enumerate}
\end{observation}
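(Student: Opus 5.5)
We prove the two implications separately. For (i)$\Rightarrow$(ii) we use the quantile construction. For (ii)$\Rightarrow$(i) we use a counting bound on the atoms of $\mu_{\alpha^n}$ in an interval, and pass it to the limit via the portmanteau theorem.

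\emph{(i)$\Rightarrow$(ii).} Let $F$ be the CDF of $\mu$. Since $\mu$ has a density, $F$ is continuous and $\mu$ has no atoms. Take $\alpha^n$ to be the vector of $n$-quantiles defined above, except that the last quantile is $+\infty$ when $i=n$; to avoid this we use the midpoint quantiles
$$\alpha^n_i := \inf\Big\{x : F(x)\ge \tfrac{i-1/2}{n}\Big\}.$$
By continuity of $F$ we have $F(\alpha^n_i)=\frac{i-1/2}{n}$. Hence for $i<j$,
$$\frac{j-i}{n} = \mu\big((\alpha^n_i,\alpha^n_j]\big)=\int_{\alpha^n_i}^{\alpha^n_j}\rho_\mu \le B\,(\alpha^n_j-\alpha^n_i),$$
which gives $\mesh(\alpha^n)\ge \frac{1}{nB}$. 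Weak convergence $\mu_{\alpha^n}\to\mu$ is standard for quantile discretizations. Concretely, $\mu_{\alpha^n}$ is the pushforward of the uniform measure on $\{\frac{i-1/2}{n}\}$ under the quantile function $F^{-1}$. The CDF of $\mu_{\alpha^n}$ differs from $F$ by at most $\frac1n$ at every point, and uniform convergence of CDFs implies weak convergence.

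\emph{(ii)$\Rightarrow$(i).} The mesh bound says that any interval $[a,b]$ contains at most $nB(b-a)+1$ entries of $\alpha^n$. Therefore
$$\mu_{\alpha^n}([a,b])\le B(b-a)+\tfrac1n.$$
For an open interval $(a,b)$, the portmanteau theorem gives
$$\mu((a,b))\le\liminf_n \mu_{\alpha^n}((a,b))\le B(b-a).$$
Since every open set is a countable disjoint union of open intervals, we get $\mu(U)\le B\,\mathrm{Leb}(U)$ for all open $U$. By outer regularity this extends to $\mu(E)\le B\,\mathrm{Leb}(E)$ for all Borel $E$. Thus $\mu\ll\mathrm{Leb}$, and by Radon--Nikodym (or Lebesgue differentiation) the density satisfies $\rho_\mu\le B$ a.e.

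Neither step is deep. The only care needed is in (i)$\Rightarrow$(ii): choosing quantiles so that all entries are finite, and using continuity of $F$ so that the equalities $F(\alpha^n_i)=\frac{i-1/2}{n}$ hold exactly. Compact support guarantees the quantiles are bounded, but this is not essential to the argument.
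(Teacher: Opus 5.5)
Your proof is correct and follows the paper's route: quantile vectors for (i)$\Rightarrow$(ii), where the density bound turns quantile spacing into the mesh bound, and for (ii)$\Rightarrow$(i) a counting bound on points per interval passed to the weak limit. Your version is a bit more careful than the paper's, which writes $\mu_{\alpha^n}((a,b))\le B(b-a)$ without the $+\tfrac1n$ error term you correctly include. However, your worry that the $i=n$ quantile is $+\infty$ is unfounded for compactly supported $\mu$, so the paper's $\tfrac in$-quantiles work just as well as your midpoint ones.
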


\begin{proof}
    First assume (\ref{item:leb_abs_cont}). For every $n$, let $\alpha^n$ be the vector of $n$-quantiles of $\mu$. By definition, for every $i\in [n]$, we have that $\mu([\alpha_i^n, \alpha_{i+1}^n))=\frac{1}{n}$, which combined with the assumption in (\ref{item:leb_abs_cont}) yields $\alpha_i^{n}-\alpha_{i+1}^n\geq \frac{1}{n B}$, proving the claim in (\ref{item:discrete_conv}). 

    Now assume (\ref{item:discrete_conv}) and note that the hypothesis implies that $\mu_{\alpha^n}(a, b)\leq (b-a)B$ for all $a<b$.  This, in combination of the weak convergence of $\mu_{\alpha^n}$ y to $\mu$, yields $\mu (a, b )\leq B(b-a)$ for all $a< b$, which in turn implies (\ref{item:leb_abs_cont}). 
\end{proof}

We now prove Voiculescu's result. 

\begin{proof}[Finite free proof of Proposition \ref{prop:free_young_ineq}]
Let $(\alpha^n)_{n=1}^\infty$ and $(\beta^n)_{n=1}^\infty$ be sequences of vectors in $\bR^n$ such that $\mu_{\alpha^n}$ converges weakly to $\mu$ and $\mu_{\beta^n}$ converges weakly to $\nu$. Moreover, by Observation \ref{obs:leb_abs_cont} we may choose the $\alpha^n$ such that $\mesh(\alpha)\geq \frac{1}{n\|\rho_\mu\|}$. 

 Let $p_n(x):= \pmap{\alpha^n}(x)$ and $q_n(x) := \pmap{\beta_n}(x)$, and let $\gamma^n$ be the ordered vector of roots of $p_n \boxplus_n q_n$. Since $\mesh(\alpha)\geq \frac{1}{n\|\rho_\mu\|}$, from Theorem \ref{thm:preservation_of_mesh}  we get that $\mesh(\gamma)\geq \frac{1}{n\|\rho_\mu\|_\infty}$. On the other hand, since $\mu_{\alpha^n}$ and $\mu_{\beta^n}$ converge  weakly to   $\mu$ and $\nu$ respectively, by \cite[Corollary 5.5]{arizmendi2018cumulants} we have that $\mu_{\gamma_n}$ converges weakly to $\mu\boxplus \nu$. This, combined with $\mesh(\gamma)\geq \frac{1}{n\|\rho_\mu\|_\infty}$, by Observation \ref{obs:leb_abs_cont} yields the advertised result. 
\end{proof}

\begin{remark}[Future directions]
\label{rem:finite_free_young}
In view of the above proof one can argue that Theorem \ref{thm:preservation_of_mesh} is the finite version of Proposition \ref{prop:free_young_ineq}. On the other hand, in \cite[Proposition 4.7]{voiculescu1993analogues} Voiculescu proved that $\|\rho_{\mu \boxplus \nu}\|_p \leq \|\rho_\mu\|_p$ for all $1\leq p\leq \infty$, which is the free version of Young's inequality. So, although only $p=\infty$ is needed for our purposes, it is natural to ask if there are finite versions of Voiculescu's   free Young's inequality for all $1\leq p< \infty$. 
\end{remark}

\subsection{Free entropy inequalities}
\label{sec:limiting_free_entropy}

We now explain how to obtain the free entropy monotonicity of Shlyakhtenko and Tao \cite[Theorem 1.6]{shlyakhtenko2022fractional} and the free entropy power inequality of Szarek and Voiculescu \cite{szarek1996volumes}, by taking $n\to \infty$ in Theorems \ref{thm:monotonicity} and \ref{thm:ffepi}. We will need the following technical lemma. 

\begin{lemma}
\label{lem:convergence_lemma}
    Let $\mu$ be a compactly supported probability measure on $\bR$. Assume there exists a $B>0$ and a sequence of vectors  $(\alpha^n)_{n=1}^\infty$  with $\alpha^n \in \bR^n$ and such that $\mu_{\alpha^n}$ converges weakly to $\mu$ and $\mesh(\alpha^n)\geq \frac{1}{n B}$. If $p_n(x):= \pmap{\alpha^n}(x)$, then the following hold:
\begin{enumerate}[(i)]
    \item \label{item:entropy_convergence}  $\lim_{n\to \infty}\rchi_n\big(p_n\big) = \rchi(\mu)$. 
    \item \label{item:derivative_convergence} Let $t\in [0, 1)$ and let  $1\le k_n\le n$ is a sequence of positive integers with $\lim_{n\to \infty}\frac{k_n}{n} = t$, then $$\rchi_n\lpr{\left(\tfrac{1}{1-t}\right)_* \partial_x^{k_n} p_n(x) } = \chi(\mu^{\boxplus \frac{1}{1-t}}).$$
    \item \label{item:convolution_convergence} Let $\nu$ is another compactly probability measure on $\bR$ and  $(\beta_n)_{n=1}^\infty$ be a sequence of vectors such that $\beta^n \in \bR^n$ and $\mu_{\beta^n}$ converges weakly to $\nu$. Then, if $q_n(x) := \pmap{\beta_n}(x)$, we will have $$\lim_{n\to \infty}\rchi_n(p_n \boxplus_n q_n) = \rchi(\mu \boxplus \nu).$$ 
\end{enumerate}
\end{lemma}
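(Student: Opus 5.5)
The plan is to reduce all three items to one convergence principle. Let $\gamma^n$ be a vector of $n$ points with $\mu_{\gamma^n}\to\mu_\infty$ weakly, supports uniformly bounded, and $\mesh(\gamma^n)\ge \frac{1}{nB'}$. Then
\[
\rchi_n(\pmap{\gamma^n}) = \frac{1}{n(n-1)}\sum_{i\neq j}\log|\gamma^n_i-\gamma^n_j| \longrightarrow \iint \log|s-t|\,\d\mu_\infty(s)\d\mu_\infty(t)=\rchi(\mu_\infty).
\]
To prove this, I would split $\log|s-t| = \max(\log|s-t|,-M) + \min(\log|s-t|+M,0)$. The first piece is bounded and continuous on a compact set containing all the supports, so weak convergence of $\mu_{\gamma^n}\otimes\mu_{\gamma^n}$ handles it. The diagonal contributes only $O(M/n)$, and the normalisation $n(n-1)$ versus $n^2$ is harmless.

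The second piece is the main obstacle: we need uniform integrability of the logarithmic singularity. Here the mesh bound does the work. For fixed $i$, the points $\gamma^n_j$ with $|\gamma^n_i-\gamma^n_j|<e^{-M}$ are separated by at least $\frac{1}{nB'}$. The $k$-th closest of them on each side is therefore at distance at least $\frac{k}{nB'}$. Hence
\[
\frac{1}{n}\sum_{j:\,0<|\gamma_i-\gamma_j|<e^{-M}}\bigl|\log|\gamma_i-\gamma_j|\bigr| \le \frac{2}{n}\sum_{k\le nB'e^{-M}}\log\frac{nB'}{k}.
\]
By a Riemann-sum comparison with $\int_0^{B'e^{-M}}\log(B'/x)\,\d x$, this is $O(B'e^{-M}(M+1))$, uniformly in $n$ and $i$. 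Letting $n\to\infty$ and then $M\to\infty$ gives the claim. The limit measure has bounded density (Observation \ref{obs:leb_abs_cont}), so $\rchi(\mu_\infty)$ is finite and the truncation also converges on the limiting side by monotone convergence.

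Item (i) is the principle applied directly with $\gamma^n=\alpha^n$. For item (iii), let $\gamma^n$ be the roots of $p_n\boxplus_n q_n$. Theorem \ref{thm:preservation_of_mesh} gives $\mesh(\gamma^n)\ge\mesh(\alpha^n)\ge\frac{1}{nB}$. Weak convergence $\mu_{\gamma^n}\to\mu\boxplus\nu$ follows from \cite[Corollary 5.5]{arizmendi2018cumulants}. Uniform boundedness of the supports follows since the roots of $p_n\boxplus_n q_n$ lie in $[\min\alpha+\min\beta,\max\alpha+\max\beta]$. For this I would add the harmless assumption that the $\beta^n$ and $\alpha^n$ are uniformly bounded, which can be arranged by the quantile choice.

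For item (ii), let $\delta^n$ be the roots of $\partial_x^{k_n}p_n$, which has $n-k_n$ roots, rescaled by $\frac{1}{1-t}$. Iterating Riesz's theorem gives $\mesh(\delta^n)\ge\mesh(\alpha^n)\ge \frac{1}{nB}$. In terms of the new degree $m=n-k_n\approx(1-t)n$, this is $\gtrsim\frac{1}{mB/(1-t)}$, and the rescaling only improves the constant. The supports stay bounded by Gauss--Lucas. Weak convergence of the rescaled root measure to $\mu^{\boxplus\frac{1}{1-t}}$ is the known result on repeated differentiation and fractional free convolution \cite{steinerberger2023free, hoskins2023dynamics, arizmendi2023finite}; note that the dilation by $\frac1{1-t}$ matches the normalisation there. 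The principle then yields the limit.

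One caveat is the case $k_n=n-1$ or $m$ small, but $t<1$ guarantees $m\to\infty$, so the principle applies.
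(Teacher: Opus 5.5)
Your proposal is correct and follows essentially the same route as the paper. You split $\log|s-t|$ into a far-range part handled by weak convergence and a near-diagonal part controlled uniformly through the mesh bound $|\gamma_i-\gamma_j|\ge |i-j|/(nB)$, and you pass to items (ii) and (iii) using Riesz's theorem or preservation of mesh together with the weak-convergence results of \cite{hoskins2023dynamics} and \cite{arizmendi2018cumulants}. Your continuous truncation $\max(\log|s-t|,-M)$ and your explicit uniform-boundedness assumption are small gains in rigor, and the second is genuinely necessary: placing one root at $e^{n^2}$ preserves both the mesh bound and weak convergence but sends $\rchi_n(p_n)\to\infty$. The paper uses this boundedness tacitly, and it can be arranged in all of its applications by taking quantile vectors.
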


\begin{proof} 
    We begin by proving \eqref{item:entropy_convergence}. For every $\varepsilon>0$, let 
     $$S_-(n, \varepsilon) := \sum_{i \neq j} \log\lvert\alpha_i^n-\alpha^n_j\rvert \cdot 1_{\{\lvert\alpha_i - \alpha_j\rvert\le \varepsilon\}}\quad \text{and} \quad S_+(n, \varepsilon) := \sum_{i\neq j} \log \lvert\alpha^n_i -\alpha^n_j\rvert \cdot 1_{\{\lvert\alpha^n_i-\alpha^n_j\rvert\ge \varepsilon\}},$$
     so that $\rchi_n(p_n) = \frac{2}{n(n-1)} (S_{-}(n, \varepsilon) + S_+(n, \varepsilon))$ for any $\varepsilon>0$. 

     Now, since by assumption $\mu_{\alpha^n}$ converges weakly to $\mu$,  for any fixed $\varepsilon>0$ we have
     $$\lim_{n\to \infty} \frac{2}{n(n-1)}S_+ (n, \varepsilon) = \iint \log\lvert s-t\rvert \cdot 1_{\{\lvert s-t\rvert\ge \varepsilon\}} \d\mu(s) \d\mu(t). $$
     Now, since the right-hand side of the above equation converges to $\rchi(\mu)$ as $\varepsilon\to 0$, and for every $\varepsilon$ we have 
     $\lim_{n\to \infty} \chi_n(p_n) = \lim_{n\to \infty} \frac{2}{n(n-1)} (S_{-}(n, \varepsilon) + S_+(n, \varepsilon)) $, to prove the claim it suffices to show that $\lim_{\varepsilon \to 0} \lim_{n\to \infty} \frac{2}{n(n-1)}S_-(n, \varepsilon) =0$. 

     Fix $\varepsilon>0$ and, to avoid trivial complications when dealing with logarithms, we will assume without loss of generality that $B\ge 1$.  Now, the assumption $\mesh(\alpha^n)\geq \frac{1}{n B}$ trivially implies
   \begin{equation}
   \label{eq:inverse_gap_estimate}
   \frac{1}{|\alpha^n_i - \alpha^n_j|} \le \frac{B n}{|i-j|}, \quad \forall i\neq j . 
      \end{equation}
      Now, for each $i\in [n]$ let $m_i(\varepsilon) := \max \{\lvert i-j\rvert : \lvert\alpha^n_i - \alpha^n_j\rvert \le \varepsilon  \} $ and note that 
   \begin{align}
 \nonumber   -\sum_{j : j \neq i} \log \lvert\alpha^n_i -\alpha^n_j\rvert \cdot 1_{\{\lvert\alpha^n_i - \alpha^n_j\rvert\le \varepsilon\}}   & \le \sum_{\lvert i-j\rvert \le m_i(\varepsilon)} \log \left( \frac{B n}{|i-j|}\right)  && \text{by (\ref{eq:inverse_gap_estimate})}
 \\ \nonumber & \leq 2 \sum_{\ell=1}^{m_i(\varepsilon)} \log \left( \frac{B n}{\ell} \right)
    \\\nonumber & = 2\log \left( \frac{(B n)^{m_i(\varepsilon)}}{m_i(\varepsilon)!} \right)  
    \\  \label{eq:log_bound} & \le 2m_i(\varepsilon)\log\left( \frac{B e n}{m_i(\varepsilon)} \right),
      \end{align}
   where in the last inequality we used the bound $k! \ge \left(\frac{k}{e}\right)^k$. On the other hand, the definition of $m_i(\varepsilon)$ together with \eqref{eq:inverse_gap_estimate} gives us that $m_i(\varepsilon) \le B \varepsilon n$ for all $n$. Now,  we use that for every fixed $r$ the sequence $k \log\frac{r}{k}$ is monotone increasing in $k$ for $1\le k\le\frac{r}{e}$ in conjunction with the fact that for $\varepsilon$ small enough one has  $B \varepsilon n\leq en $, to upper bound \eqref{eq:log_bound} by $2B \varepsilon n \log(e/\varepsilon)$. All put together, for small enough $\varepsilon>0$, yields
   $$-\frac{2}{n(n-1)}S_-(n,\eps)\leq \frac{2n}{n(n-1)} (2B \varepsilon n \log(e/\varepsilon)) = \frac{4B n}{n-1} \varepsilon  \log(e/\varepsilon) .$$
   It follows that $\lim_{\varepsilon \to 0} \lim_{n\to \infty} \frac{2}{n(n-1)} S_-(n, \varepsilon) =0$. 

  The claim in (\ref{item:derivative_convergence}) is proven in a similar way. Namely, let $\delta_1^n\geq \cdots \geq \delta_{n-k_n}^n$ be the roots of $\left( \frac{1}{1-t}\right)_* \partial_x^{k_n}p_n(x)$. Now,  Riesz's theorem \cite{stoyanoff1925theoreme} yields, $\mesh(\delta^n)\geq \mesh(\alpha^n)$, and since $\mesh(\alpha^n)\geq \frac{1}{nB}$, we get
  $$\frac{1}{|\delta_i^n - \delta_j^n|} \leq \frac{B n}{|i-j|}, \quad \forall i\neq j.$$
   On the other hand, since $\mu_{\alpha^n}$ converges weakly to $\mu$, by \cite[Theorem 4.4]{hoskins2023dynamics} we know that $\mu_{\delta^n}$ converges weakly to $\mu^{\boxplus \frac{1}{1-t}}$. At this point, the proof of (\ref{item:derivative_convergence}) follows exactly as the proof of (\ref{item:entropy_convergence}). 

   The proof of \eqref{item:convolution_convergence} is analogous.  Let $\gamma_{1}^n\le  \dots\le  \gamma_{n}^n$ be the roots of $p_n\boxplus_n q_n$. First, since $\mu_{\alpha^n}$ and $\mu_{\beta^n}$ converge weakly to $\mu$ and $\nu$, respectively, by \cite[Corollary 5.5]{arizmendi2018cumulants} we have that $\mu_{\gamma^n}$ converges weakly to $\mu\boxplus \nu$. Second, by Theorem \ref{thm:preservation_of_mesh} we have that $\mesh(\gamma^n)\geq \mesh(\alpha^n)$, and therefore
   \begin{equation}
   \label{eq:bound_on_gamma_difference}
   \frac{1}{|\gamma_{i}^n-\gamma_{j}^n|} \le \frac{B n}{|i-j|}, \quad \forall i \neq j.  
      \end{equation}
We can then replicate the proof of (\ref{item:entropy_convergence}) without any change to obtain the advertised result. 
\end{proof}

We can now recover the theorem of Szarek and Voiculescu \cite{szarek1996volumes}. 

\begin{theorem}[Szarek--Voiculescu]
\label{thm:szarek-voiculescu}
    Let $\mu$ and $\nu$ be any two compactly supported probability measures on $\bR$. Then, 
    $$\exp(2\rchi(\mu)) + \exp(2\rchi(\nu)) \le \exp(2\rchi(\mu\boxplus\nu)). $$
\end{theorem}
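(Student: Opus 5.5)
The plan is to pass to the limit $n\to\infty$ in the finite free entropy power inequality (Theorem \ref{thm:ffepi}). I would do this first for measures with bounded density and then reach general compactly supported measures by regularizing with a semicircle.

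\emph{Step 1: bounded densities.} Assume first that $\mu$ and $\nu$ are Lebesgue absolutely continuous with bounded densities. By Observation \ref{obs:leb_abs_cont}, take $\alpha^n$ to be the $n$-quantile vectors of $\mu$, so that $\mu_{\alpha^n}\to\mu$ weakly and $\mesh(\alpha^n)\ge \frac{1}{nB}$. Do the same with $\beta^n$ for $\nu$, and set $p_n=\pmap{\alpha^n}$ and $q_n=\pmap{\beta^n}$.

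Lemma \ref{lem:convergence_lemma}(\ref{item:entropy_convergence}), applied to $\mu$ and to $\nu$ separately, gives $\rchi_n(p_n)\to\rchi(\mu)$ and $\rchi_n(q_n)\to\rchi(\nu)$. Part (\ref{item:convolution_convergence}) of the same lemma gives $\rchi_n(p_n\boxplus_n q_n)\to\rchi(\mu\boxplus\nu)$; it only needs the mesh bound on $\alpha^n$, which passes to the convolution by Theorem \ref{thm:preservation_of_mesh}.

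The roots of $p_n$ and $q_n$ are simple, so Theorem \ref{thm:ffepi} applies. It gives $\exp(2\rchi_n(p_n\boxplus_n q_n))\ge \exp(2\rchi_n(p_n))+\exp(2\rchi_n(q_n))$. Letting $n\to\infty$ yields the inequality for $\mu$ and $\nu$.

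\emph{Step 2: general measures.} Now let $\mu$ and $\nu$ be arbitrary compactly supported measures. Write $\sigma_\eps$ for the semicircle law of variance $\eps$, which has bounded density. By Proposition \ref{prop:free_young_ineq}, the measures $\mu_\eps:=\mu\boxplus\sigma_{\eps}$ and $\nu_\eps:=\nu\boxplus\sigma_\eps$ are absolutely continuous with densities bounded by $\|\rho_{\sigma_\eps}\|_\infty$, and they are compactly supported. Step 1 therefore applies to them.

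Using $\sigma_\eps\boxplus\sigma_\eps=\sigma_{2\eps}$, we have $\mu_\eps\boxplus\nu_\eps=(\mu\boxplus\nu)\boxplus\sigma_{2\eps}$. Step 1 then gives
$$\exp(2\rchi(\mu\boxplus\sigma_\eps))+\exp(2\rchi(\nu\boxplus\sigma_\eps))\le \exp(2\rchi((\mu\boxplus\nu)\boxplus\sigma_{2\eps})).$$
Letting $\eps\downarrow0$ and applying Proposition \ref{prop:continuity} to each of the three terms (with the measures $\mu$, $\nu$ and $\mu\boxplus\nu$) gives the claim. If some entropy equals $-\infty$, its term is $0$. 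Proposition \ref{prop:continuity}, understood as continuity into $[-\infty,\infty)$, handles this, or it can be treated directly by monotonicity.

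\emph{Main obstacle.} The analytic inputs (the entropy convergence lemma and the continuity of entropy under semicircular regularization) are already in place, so the only delicate point is the regularization in Step 2. One has to check that $\mu\boxplus\sigma_\eps$ has a bounded density with compact support, so that the mesh hypothesis of Lemma \ref{lem:convergence_lemma} holds. One also has to check that continuity at $\eps=0$ holds for the possibly singular measure $\mu$, including the case $\rchi(\mu)=-\infty$.
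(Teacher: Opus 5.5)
Your proposal is correct and follows the paper's proof essentially step for step. You first pass to the limit in Theorem \ref{thm:ffepi} for bounded densities, using quantile vectors and Lemma \ref{lem:convergence_lemma}, and then regularize by $\sigma_\eps$ via Proposition \ref{prop:free_young_ineq} and let $\eps\to 0$ using Proposition \ref{prop:continuity}. (Your appeal to simplicity of the roots of $p_n,q_n$ is unnecessary, since Theorem \ref{thm:ffepi} holds for all monic real-rooted polynomials.)
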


\begin{proof}
We will first prove the inequality in the case when $\mu$ and $\nu$ are compactly supported distributions with bounded density. In this case, let $(p_n)_{n=1}^\infty$ and $(q_n)_{n=1}^\infty$ be as in \Cref{lem:convergence_lemma}. By  \Cref{thm:ffepi} we have that $\exp(2\rchi_n(p_n))+\exp(2\rchi_n(q_n)) \le \exp(2\rchi_n(p_n\boxplus_n q_n))$. So, the claim is proven by taking $n\to \infty$ and invoking \Cref{lem:convergence_lemma}.  

To prove the claim for arbitrary compactly supported $\mu$ and $\nu$, use $\sigma_\varepsilon$ to denote the semicircular distribution of variance $\varepsilon$. By Proposition \ref{prop:free_young_ineq} we have that $\mu\boxplus \sigma_\varepsilon$ and $\nu\boxplus \sigma_\varepsilon$ are absolutely continuous with bounded density, so,  what we have already proven yields 
$$\exp(2\rchi(\mu \boxplus \sigma_\varepsilon))+ \exp(2\rchi(\nu \boxplus \sigma_\varepsilon)) \le \exp(2\rchi(\mu \boxplus \nu \boxplus \sigma_{2\varepsilon})).$$
On the other hand, by Proposition \ref{prop:continuity}, $\eps \mapsto \rchi(\mu \boxplus \sigma_\varepsilon)$ is continuous at $\varepsilon=0$. So, the proof is concluded by taking $\varepsilon\to 0$ in the above inequality.  
\end{proof}

The theorem of Shlyakhtenko and Tao \cite{shlyakhtenko2022fractional} can be obtained in the same way. Below, given a compactly supported probability measure $\mu$ on $\bR$ and $s\geq 1$ we will use $\mu^{\boxplus s}$ to denote the fractional free convolution of $\mu$. See \cite[\S 1.2]{shlyakhtenko2022fractional} for a discussion on this concept and \cite{nica1996multiplication} for the original source. As for polynomials, for $c>0$, we will use $c_* \mu$ to denote the probability measure obtained by rescaling $\mu$ by a factor of $c$.  

\begin{theorem}[Shlyakhtenko--Tao]
\label{thm:shlyakhtenko_tao_entropy}
    Let $\mu$ be any compactly supported probability measure on $\bR$. Then, for $s\geq 1$, $\chi\big(s^{-1/2}_* \mu^{\boxplus s}\big)$ is monotone non-decreasing in $s$. 
\end{theorem}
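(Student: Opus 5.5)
We follow the template of the proof of \Cref{thm:szarek-voiculescu}: first handle compactly supported $\mu$ with bounded density, then remove the density assumption by semicircular regularization. It is convenient to parametrize $s=\frac{1}{1-t}$ with $t\in[0,1)$, so that $s\ge 1$ ranges monotonically with $t$. For $\mu$ with bounded density $\le B$, let $\alpha^n$ be the vector of $n$-quantiles. By \Cref{obs:leb_abs_cont} we have $\mesh(\alpha^n)\ge \frac{1}{nB}$, and clearly $\mu_{\alpha^n}\to\mu$ weakly. Set $p_n:=\pmap{\alpha^n}$.

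Fix $0\le t<t'<1$ and choose integers $k_n\le k_n'$ with $k_n/n\to t$ and $k_n'/n\to t'$. Iterating \Cref{thm:monotonicity} from degree $n-k_n$ down to degree $n-k_n'$ gives
\[
\rchi_{n-k_n}\big(\widetilde{\partial^{k_n}p_n}\big)+\sum_{m=n-k_n'+1}^{n-k_n} C_m \;\le\; \rchi_{n-k_n'}\big(\widetilde{\partial^{k_n'}p_n}\big).
\]
Here the tilde denotes rescaling to variance $\var(p_n)$, and we use that rescaling commutes with differentiation up to a scalar. Since $C_m>0$, we may simply drop the $C_m$ terms and obtain the weak monotonicity $\rchi_{n-k_n}(\widetilde{\partial^{k_n}p_n})\le \rchi_{n-k_n'}(\widetilde{\partial^{k_n'}p_n})$.

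The main work is the limit $n\to\infty$. By \Cref{obs:variancescaling}, differentiating $k$ times multiplies the variance by $\frac{n-k-1}{n-1}\to 1-t$. Hence the variance-normalized derivative equals $c_n{}_*\partial^{k_n}p_n$, where
\[
c_n=\sqrt{\tfrac{n-1}{n-k_n-1}}\to(1-t)^{-1/2}=s^{1/2}.
\]
By \eqref{eq:rescaling}, its entropy is $\rchi(\tfrac{1}{1-t}{}_*\partial^{k_n}p_n)+\log(c_n(1-t))$. \Cref{lem:convergence_lemma}\,(\ref{item:derivative_convergence}) shows that the first term converges to $\rchi(\mu^{\boxplus s})$. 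The total is therefore $\rchi(\mu^{\boxplus s})+\log(s^{-1/2})=\rchi(s^{-1/2}{}_*\mu^{\boxplus s})$. Some care is needed because the degree is $n-k_n$ rather than $n$. The mesh bound is preserved by Riesz's theorem, so the argument of the lemma goes through with $n$ replaced by $n-k_n\asymp(1-t)n$, which changes only constants. Passing to the limit yields monotonicity in $s$ for the bounded density case.

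For general compactly supported $\mu$, apply the above to $\mu_\eps:=\mu\boxplus\sigma_\eps$, which has bounded density by \Cref{prop:free_young_ineq}. Using the semigroup property of fractional convolution, $\mu_\eps^{\boxplus s}=\mu^{\boxplus s}\boxplus\sigma_{s\eps}$, so the monotone quantity is $\rchi(s^{-1/2}{}_*(\mu^{\boxplus s}\boxplus\sigma_{s\eps}))$. By \Cref{prop:continuity} applied to $\mu^{\boxplus s}$, this converges as $\eps\to0$ to $\rchi(s^{-1/2}{}_*\mu^{\boxplus s})$, for each fixed $s$. The inequality between any two values $s<s'$ survives the limit.

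I expect the main obstacle to be justifying the convergence in the second step. This needs the precise normalization (that the tilde rescaling matches the $\frac{1}{1-t}$ rescaling in \Cref{lem:convergence_lemma}) and control of the entropy at degree $n-k_n$ uniformly via the mesh bound. The regularization step and the discrete telescoping are comparatively routine.
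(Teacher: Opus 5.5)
Your proposal is correct and follows essentially the same route as the paper. You iterate \Cref{thm:monotonicity} on quantile polynomials, pass to the limit with \Cref{lem:convergence_lemma}\,(\ref{item:derivative_convergence}), and remove the bounded-density assumption via \Cref{prop:free_young_ineq} and \Cref{prop:continuity}. The only cosmetic difference is that you compare two arbitrary values $s<s'$ directly by telescoping from degree $n-k_n$ to $n-k_n'$, whereas the paper uses the semigroup property $(\mu^{\boxplus a})^{\boxplus b}=\mu^{\boxplus ab}$ to reduce to comparing $s$ with $1$.
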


\begin{proof}
We want to show that if $s_1 >s_2 \geq 1$ then  $\chi\big((s_1)_*^{-1/2}\mu^{\boxplus s_1}\big) \geq \chi\big((s_2)_*^{-1/2}\mu^{\boxplus s_2}\big) $. But since, by definition of the fractional free convolution, $(\mu^{\boxplus a})^{\boxplus b}= \mu^{\boxplus ab} $ for all $a, b \geq 1$, note that it is enough to prove the statement in the case $s_1=1$. So, we fix $s\geq 1$ and aim to prove $\chi\big(s^{-1/2}_* \mu^{\boxplus s}\big)\geq \chi(\mu)$.

First assume that $\mu$ is absolutely continuous with bounded density.   Let $(p_n)_{n=1}^\infty$ be as in Lemma \ref{lem:convergence_lemma}, so that $\lim_{n\to \infty} \chi_n(p_n) = \chi(\mu)$. Now, by Theorem \ref{thm:monotonicity}, for every $n$ we have 
$\chi_n(p_n) < \chi_{n-1}\left( \sqrt{\tfrac{n-1}{n-2}}_* \partial_x p_n \right)$, and iterating this $k$ times gives
\begin{equation}
\label{eq:k_inequality}
\chi_n(p_n) < \chi_{n-k}\left( \sqrt{\tfrac{n-1}{n-k-1}}_* \partial_x^{k} p_n \right).
\end{equation}
So, if we take  $t$ such that  $\frac{1}{1-t}=s$ and $(k_n)_{n=1}^\infty$ so that $\lim_{n\to \infty }k_n/n = t$, when taking $n\to\infty$, Lemma \ref{lem:convergence_lemma} \ref{item:derivative_convergence}) in combination with (\ref{eq:k_inequality})  gives
$$\chi(\mu) \leq \chi(s^{-1/2}_* \mu^{\boxplus s}).$$
To obtain the claim for all compactly supported measures proceed as in the proof of Theorem \ref{thm:szarek-voiculescu} by applying Propositions \ref{prop:free_young_ineq} and Proposition \ref{prop:continuity}. 
\end{proof}

\subsection{Free Fisher information inequalities}

As for polynomials, given $c\geq 0$ and a compactly supported probability measure $\mu$, we will use $c_* \mu$ to denote the measure obtained by rescaling $\mu$ by a factor of $c$. And, as in  \cref{sec:limiting_free_entropy}, for $t \geq 0$ we will use $\sigma_t$ to denote the semicircular distribution of variance $t$. 

%\textcolor{red}{Define $c_*$ for measures, recall definition of $\sigma_t$.}

\begin{observation}
\label{obs:infinite_mono_stam}
Let $\mu$ and $\nu$ be two compactly supported absolutely continuous measures and fix $\lambda\in [0, 1]$. Then,   the function
$$F(t):= \chi\big(\slambda \mu \boxplus \sone \nu \boxplus \sigma_t \big) - \big(\lambda \chi\big(\mu\boxplus \sigma_t\big)+(1-\lambda)\chi\big(\nu\boxplus \sigma_t\big)\big)$$
is a non-increasing function for all $t\geq 0$. 
\end{observation}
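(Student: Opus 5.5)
The plan is to deduce the monotonicity of $F$ from the finite one (\Cref{obs:mono_from_stam}) by taking $n\to\infty$, rather than differentiating $F$ directly, which would need a free de Bruijn identity and a free Stam inequality. Fix $0\le t_1<t_2$; it suffices to show $F(t_2)\le F(t_1)$. As in \cref{sec:limiting_free_entropy}, I would first assume $\mu$ and $\nu$ have bounded densities, say bounded by $B$. By \Cref{obs:leb_abs_cont}, taking $n$-quantiles gives vectors $\alpha^n,\beta^n\in\bR^n$ with $\mu_{\alpha^n}\to\mu$, $\mu_{\beta^n}\to\nu$ weakly and $\mesh(\alpha^n),\mesh(\beta^n)\ge \frac{1}{nB}$. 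Set $p_n:=\pmap{\alpha^n}$ and $q_n:=\pmap{\beta^n}$, and define $F_n(t)$ as in \Cref{obs:mono_from_stam} with $p_n,q_n$. Then \Cref{obs:mono_from_stam} gives $F_n(t_2)\le F_n(t_1)$ for every $n$.

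Next I would identify the limit of each term of $F_n(t)$. The empirical root measure of $\check H_n$ converges to the standard semicircle, so $\sqrt{t}_*\check H_n$ has root measure tending to $\sigma_t$. The relevant polynomials are $(p_n)_t=p_n\boxplus_n\sqrt t_*\check H_n$, the analogous $(q_n)_t$, and $\slambda (p_n)_t\boxplus_n\sone (q_n)_t$. Using \Cref{lem:convergence_lemma}\ref{item:convolution_convergence}), applied with first argument $p_n$ (or $\slambda p_n$) and the remaining factors grouped as the second argument, each $\rchi_n$ term converges to the corresponding $\rchi$ term of $F(t)$. Two facts feed this. First, rescaling commutes with convolution, and $\sqrt{\lambda}_*\sqrt t_*\check H_n\boxplus_n\sqrt{1-\lambda}_*\sqrt t_*\check H_n=\sqrt t_*\check H_n$ by the heat-flow description \eqref{eq:heat flow of poly}, so the third polynomial equals $\slambda p_n\boxplus_n(\sone q_n\boxplus_n\sqrt t_*\check H_n)$. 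Second, the weak convergence of the second arguments follows from \cite[Corollary 5.5]{arizmendi2018cumulants}. Hence $F_n(t)\to F(t)$ for each fixed $t$, and letting $n\to\infty$ gives $F(t_2)\le F(t_1)$.

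The main obstacle is a mesh issue in the $\lambda$-scaled term. \Cref{lem:convergence_lemma} needs $\mesh(\sqrt\lambda\alpha^n)\ge\frac{1}{nB'}$, which holds with $B'=B/\sqrt\lambda$ when $\lambda>0$. If $\lambda=0$ I would instead use $\beta^n$ as the mesh-controlled argument, and for $\lambda\in\{0,1\}$ the statement is trivial anyway, since $F\equiv0$. The lemma's mesh hypothesis is on the first argument only, and \Cref{thm:preservation_of_mesh} propagates that bound to the convolution, so no control on the Hermite factor is needed.

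Finally, the statement only assumes absolute continuity, not bounded densities. To handle this, I would use \Cref{prop:free_young_ineq}: for any $\varepsilon>0$, the measures $\mu\boxplus\sigma_\varepsilon$ and $\nu\boxplus\sigma_\varepsilon$ have bounded densities, and $\sigma_a\boxplus\sigma_b=\sigma_{a+b}$, with $\sigma_\varepsilon$ splitting as $\sqrt\lambda_*\sigma_\varepsilon\boxplus\sqrt{1-\lambda}_*\sigma_\varepsilon$. So the bounded case, applied to these smoothed measures, gives monotonicity of $F$ on $[\varepsilon,\infty)$, because $F$ built from the smoothed measures is exactly $t\mapsto F(t+\varepsilon)$. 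Letting $\varepsilon\to0$ and using the continuity in \Cref{prop:continuity} at $t=0$ yields monotonicity on all of $[0,\infty)$.
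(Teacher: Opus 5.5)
Your proof is correct and follows the paper's route: quantile approximations with $\mesh(\alpha^n),\mesh(\beta^n)\ge\frac{1}{nB}$, monotonicity of each finite $F_n$ from \Cref{obs:mono_from_stam}, and pointwise convergence $F_n(t)\to F(t)$ via \Cref{lem:convergence_lemma}. Your grouping of the mixed term as $\slambda p_n\boxplus_n(\sone q_n\boxplus_n\sqrt{t}_*\check{H}_n)$ is just an equivalent way of organizing the same limit. Your final step is a worthwhile addition: you reduce to bounded densities via \Cref{prop:free_young_ineq}, the identity that $F$ built from $\mu\boxplus\sigma_\varepsilon,\nu\boxplus\sigma_\varepsilon$ equals $F(\cdot+\varepsilon)$, and \Cref{prop:continuity}. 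The paper's proof silently takes the bound $B$ from \Cref{obs:leb_abs_cont}, so it only covers bounded densities, whereas your argument covers the statement as written.
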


\begin{proof}
By Observation \ref{obs:leb_abs_cont} we can take  sequences of vectors $\alpha^n, \beta^n \in \bR^n$ such that $\mu_{\alpha^n}$ and $\mu_{\beta^n}$ converge weakly, respectively, to $\mu$ and $\nu$, and that $\mesh(\alpha^n)\geq \frac{1}{nB}$ and $\mesh(\beta^n) \geq \frac{1}{nB}$ for some fixed constant $B>0$. Then set $p_n := \pmap{\alpha^n}$ and $q_n := \pmap{\beta^n}$ and for every  $t\geq 0$ let $p_{n, t}:= p_n \boxplus_n \sqrt{t}_* \check{H}_n$ and $q_{n, t} := q_n \boxplus_n \sqrt{t}_* \check{H}_n$. 

By Theorem \ref{thm:preservation_of_mesh} we have that $\mesh(p_{n, t})\geq \frac{1}{nB}$ and $\mesh(q_{n, t})\geq \frac{1}{nB}$. So, if we define $$F_n(t):= \chi_n(\slambda p_{n,t} \boxplus_n \sone q_{n, t} ) - (\lambda \chi_n(p_{n, t})+(1-\lambda)\chi_n(q_{n, t})),$$ 
by Observation \ref{obs:mono_from_stam}, for every $n$, $F_n(t)$ is non-increasing in $t$, while by Lemma \ref{lem:convergence_lemma} we have that
$$F(t) = \lim_{n\to \infty} F_n(t),$$
for any fixed $t\geq 0$. Now, since each $F_n(t)$ is non-increasing, the same holds true for $F(t)$. 
\end{proof}

As a corollary we can now obtain the free Stam inequality proven by Voiculescu in \cite{voiculescu1998analogues}. 

\begin{theorem}[Voiculescu]
\label{thm:infinite_free_stam}
    Let $\mu$ and $\nu$ be absolutely continuous compactly supported probability measures with bounded density. Then, for all $\lambda\in [0, 1]$, we have  
    $$\Phi(\slambda \mu \boxplus \sone \nu )\leq \lambda \Phi(\mu) + (1-\lambda)\Phi(\nu).$$
\end{theorem}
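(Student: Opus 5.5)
The plan is to derive the free Stam inequality from the monotonicity of $F(t)$ in Observation \ref{obs:infinite_mono_stam}, using the free de Bruijn identity, in the same way that Observation \ref{obs:mono_from_stam} was derived from the finite Stam inequality, but run in reverse. The free de Bruijn identity of Voiculescu \cite{voiculescu1993analogues} says that for a compactly supported $\mu$,
$$\partial_t \chi(\mu\boxplus\sigma_t)\big|_{t=s} = \tfrac{1}{2}\Phi(\mu\boxplus\sigma_s),$$
where the derivative is one-sided at $s=0$ (or interpreted suitably). Apply this to the three terms of $F$, with $\slambda\mu\boxplus\sone\nu$ in place of $\mu$ for the first term. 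This gives, at least formally,
$$F'(0^+) = \tfrac12\Big(\Phi(\slambda \mu \boxplus \sone \nu) - \lambda\Phi(\mu) - (1-\lambda)\Phi(\nu)\Big).$$
Since $F$ is non-increasing by Observation \ref{obs:infinite_mono_stam}, $F'(0^+)\le 0$, which is exactly the claimed inequality. The hypotheses are the right ones: $\slambda\mu\boxplus\sone\nu$ is again compactly supported and absolutely continuous with bounded density by Proposition \ref{prop:free_young_ineq}, so Observation \ref{obs:infinite_mono_stam} applies as stated.

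In order, the steps are as follows.

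First, dispose of the trivial case: if $\Phi(\mu)=\infty$ or $\Phi(\nu)=\infty$ (and $\lambda\in(0,1)$), there is nothing to prove. The cases $\lambda\in\{0,1\}$ are equalities.

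Second, write $F(h)-F(0)\le 0$ for small $h>0$ and divide by $h$. It remains to identify the limits of the three difference quotients $\frac{1}{h}\big(\chi(\rho\boxplus\sigma_h)-\chi(\rho)\big)$ with $\tfrac12\Phi(\rho)$, for $\rho\in\{\mu,\nu,\slambda\mu\boxplus\sone\nu\}$.

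Third, justify the de Bruijn identity at $t=0$. The cleanest route is the integrated form
$$\chi(\rho\boxplus\sigma_h)-\chi(\rho)=\tfrac12\int_0^h\Phi(\rho\boxplus\sigma_s)\,\d s,$$
which holds for $h>0$ by Voiculescu's work together with the continuity in Proposition \ref{prop:continuity}. We then need $\Phi(\rho\boxplus\sigma_s)\to\Phi(\rho)$ as $s\downarrow0$. One direction, $\Phi(\rho\boxplus\sigma_s)\le\Phi(\rho)$, is itself the degenerate case of Stam's inequality (or Voiculescu's monotonicity of Fisher information under free convolution). The other direction, $\liminf_{s\downarrow 0}\Phi(\rho\boxplus\sigma_s)\ge\Phi(\rho)$, follows from lower semicontinuity of $\Phi$ under weak convergence.

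With these in hand, the difference quotients for $\mu$ and $\nu$ converge to $\tfrac12\Phi(\mu)$ and $\tfrac12\Phi(\nu)$. For the convolved measure, only the lower bound $\liminf_{h\downarrow 0}\frac1h(\cdots)\ge\tfrac12\Phi(\slambda\mu\boxplus\sone\nu)$ is needed, and that uses only lower semicontinuity. Combining this with $F(h)\le F(0)$ yields the result.

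The main obstacle is the third step, namely the behaviour of $\Phi$ at $s=0$. If $\Phi(\mu)$ or $\Phi(\nu)$ is finite, we must be careful that the inputs genuinely satisfy the Fisher-information version of de Bruijn. I would first try to rely on Voiculescu's lower semicontinuity of $\Phi$ together with the monotonicity $\Phi(\rho\boxplus\sigma_s)\le\Phi(\rho)$. The latter could alternatively be obtained from our own finite result (\Cref{thm:stams_inequality} with $q$ Hermite) plus a limiting argument, in the spirit of Lemma \ref{lem:convergence_lemma}.
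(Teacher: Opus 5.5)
Your argument is correct and is essentially the paper's proof: the paper also applies Voiculescu's free de Bruijn identity to the non-increasing function $F$ of Observation \ref{obs:infinite_mono_stam} at $t=0$. The paper cites that identity directly for the differentiability of $F$ and the formula for $F'(0)$, whereas you unpack that step via the integrated identity, lower semicontinuity of $\Phi$, and the bound $\Phi(\rho\boxplus\sigma_s)\le\Phi(\rho)$. For that last bound, cite Voiculescu's free Young inequality with $p=3$ (Remark \ref{rem:finite_free_young}; recall that $\Phi$ is $\frac{4\pi^2}{3}$ times the cube of the $L^3$-norm of the density). Do not call it a degenerate case of the very Stam inequality you are proving, which is circular, and do not derive it from finite-$n$ limits of $\Phi_n$, which the paper's remark on the limiting behaviour of $\Phi_n$ flags as delicate.
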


\begin{proof}
 Let $F(t)$ be as in Observation \ref{obs:mono_from_stam}. Then, by Voiculescu's free de Bruijn's identity \cite[Lemma 3.2]{voiculescu1993analogues} we know that $F(t)$ is differentiable and 
 $$F'(0) = \frac{1}{2}\big(\Phi(\slambda \mu \boxplus \sone \nu)-(\lambda\Phi(\mu)+(1-\lambda)\Phi(\nu))\big).$$
 On the other hand, by Observation \ref{obs:infinite_mono_stam}, we have that $F'(0)\leq 0$, so the claim follows. 
\end{proof}

With a very similar argument we can use the monotonicity for the finite free entropy and Fisher information to obtain the following result of Shlyakhtenko and Tao \cite{shlyakhtenko2022fractional}. 

\begin{theorem}[Shlyakhtenko--Tao]
\label{thm:infinite_free_mono}
    Let $\mu$ be an absolutely continuous compactly supported probability measure on $\bR$. Then, for $s\geq 1$, $\Phi(s_*^{-1/2} \mu^{\boxplus s})$ is monotone non-increasing in $s$. 
\end{theorem}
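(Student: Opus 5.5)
We would mirror the proof of \Cref{thm:infinite_free_stam}: first obtain a monotonicity statement for a suitable entropy functional along the semicircular flow by passing to the limit in the finite free setting, and then differentiate at $t=0$ using Voiculescu's free de Bruijn identity. As in the proof of \Cref{thm:shlyakhtenko_tao_entropy}, the semigroup property $(\mu^{\boxplus a})^{\boxplus b}=\mu^{\boxplus ab}$ reduces the claim to showing $\Phi(s^{-1/2}_*\mu^{\boxplus s})\le \Phi(\mu)$ for fixed $s\ge 1$. This uses the same rescaling reduction as in that proof, applied to $\mu^{\boxplus s_2}$, which is again absolutely continuous with bounded density. For the bounded-density fact we would cite Voiculescu's free Young inequality (\Cref{prop:free_young_ineq}) together with the regularizing property of fractional convolution, or alternatively Riesz's theorem on the mesh of derivatives combined with \Cref{obs:leb_abs_cont}.

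Fix $s=\frac1{1-\tau}$ and define
$$G(t):=\chi\big(s^{-1/2}_*(\mu\boxplus\sigma_t)^{\boxplus s}\big)-\chi(\mu\boxplus\sigma_t).$$
Using $(\mu\boxplus\sigma_t)^{\boxplus s}=\mu^{\boxplus s}\boxplus\sigma_{st}$ and the scaling $s^{-1/2}_*\sigma_{st}=\sigma_t$, we rewrite
$$G(t)=\chi\big(s^{-1/2}_*\mu^{\boxplus s}\boxplus\sigma_t\big)-\chi(\mu\boxplus\sigma_t).$$
The free de Bruijn identity then gives
$$G'(0)=\tfrac12\big(\Phi(s^{-1/2}_*\mu^{\boxplus s})-\Phi(\mu)\big).$$
It therefore suffices to show that $G$ is non-increasing on $[0,\infty)$.

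For this we would take mesh-controlled approximants $\alpha^n$ of $\mu$ via \Cref{obs:leb_abs_cont}, set $p_{n,t}:=p_n\boxplus_n\sqrt{t}_*\check H_n$, and consider
$$G_n(t):=\chi_{n-k_n}\Big(c_{n*}\,\partial_x^{k_n}p_{n,t}\Big)-\chi_n(p_{n,t}),$$
where $k_n/n\to\tau$ and $c_n$ is the variance-normalizing scale. The derivative computation \eqref{eq:diff_p_t}, iterated $k_n$ times, shows that $c_{n*}\partial_x^{k_n}p_{n,t}$ equals the rescaled derivative of $p_n$ convolved with $\sqrt{t}_*\check H_{n-k_n}$, up to a factor in the time parameter. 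That factor should be $\frac{n-k_n-1}{n-1}$ times the normalization, which tends to $1$ exactly. The finite free de Bruijn identity (\Cref{thm:ffdb}) together with the iterated \Cref{thm:fisher_monotonicity} then gives $G_n'(t)\le 0$. We need to check that the time parametrizations match; if they do not match exactly, we would absorb the discrepancy by comparing $\Phi$ at the rescaled time, with an error that is $o(1)$ as $n\to\infty$.

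Finally, $G_n(t)\to G(t)$ pointwise by \Cref{lem:convergence_lemma}. Parts (i) and (ii) handle the two terms, using \Cref{thm:preservation_of_mesh} to keep the mesh of $p_{n,t}$ at least $\frac{1}{nB}$, and \cite{hoskins2023dynamics} for the weak limit of derivatives of $p_{n,t}$. A pointwise limit of non-increasing functions is non-increasing, so $G'(0)\le 0$, which is the claim. The main obstacle is the bookkeeping in the third step: verifying that differentiating $k_n$ times commutes with the heat flow at precisely the right time scale, so that $G_n$ is exactly an integral of Fisher-information differences to which \Cref{thm:fisher_monotonicity} applies.
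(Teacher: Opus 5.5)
Your proposal is correct and follows the paper's proof essentially step for step. You reduce via the semigroup property to $\Phi(s^{-1/2}_*\mu^{\boxplus s})\le\Phi(\mu)$. You show that $G(t)=\chi(s^{-1/2}_*\mu^{\boxplus s}\boxplus\sigma_t)-\chi(\mu\boxplus\sigma_t)$ is non-increasing as a pointwise limit of finite functions $G_n$, which are monotone by the finite free de Bruijn identity, \eqref{eq:diff_p_t}, and iterated \Cref{thm:fisher_monotonicity}. You then differentiate at $t=0$ using Voiculescu's de Bruijn identity.

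The time bookkeeping you worry about is exact, so no $o(1)$ correction is needed: with $c_n^2=\frac{n-1}{n-k_n-1}$ one has $(c_n)_*\partial_x^{k_n}p_{n,t}=(c_n)_*\partial_x^{k_n}p_n\boxplus_{n-k_n}\sqrt{t}_*\check H_{n-k_n}$. Your explicit remark that $\mu^{\boxplus s_2}$ again has bounded density also fills in a point the paper's reduction leaves implicit.
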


\begin{proof}
Again, as it was argued in the proof of Theorem \ref{thm:shlyakhtenko_tao_entropy}, it is enough to  show that for any fixed $s\geq 1$ one has that $\Phi(s_*^{-1/2} \mu^{\boxplus s})- \Phi(\mu)\leq 0$. And this can be established via  Voiculescu's free de Bruijn's identity \cite[Lemma 3.2]{voiculescu1993analogues} by showing that 
$$G(t):=\chi(s_*^{-1/2} \mu^{\boxplus s}\boxplus \sigma_t)- \chi(\mu \boxplus \sigma_t),$$ is a non-increasing function of $t$. 

Now take $\alpha^n\in \bR^n$ such that $\mu_{\alpha^n}$ converges weakly to $\mu$ and that $\mesh(\alpha^n)\geq \frac{1}{nB}$ for some fixed constant $B>0$. Then set $p_n:= \pmap{\alpha^n}$. Now, take a sequence of integers $k_n$ such that $\lim_{n\to \infty }k_n/n=1-\frac{1}{s}$ and put 
$$q_n:=\sqrt{\tfrac{n-1}{n-k_n-1}}_* \partial_x^{k_n} p_n, $$
so that the root distribution of $q_n$ converges weakly to $s_*^{-1/2} \mu^{\boxplus s}$. Then, again by Lemma \ref{lem:convergence_lemma} it will be enough to show that the functions
$$G_n(t):= \chi_{n-k_n}(q_n \boxplus_n \sqrt{t}_* \check{H}_{n-k_n}) - \chi_n(p_n\boxplus_n \sqrt{t}_* \check{H}_n),$$
are a non-increasing function of $t$. The proof then follows by differentiating $G_n$ with respect to $t$ and applying Lemma \ref{thm:ffdb} in conjunction with (\ref{eq:diff_p_t}) and Theorem \ref{thm:fisher_monotonicity} to conclude that $G_n(t)$ is non-increasing for every $n$.  
\end{proof}

\begin{remark}[Subtleties in the limiting behavior of $\Phi_n$]
    Unlike  the case of the free entropy (discussed in  \cref{sec:limiting_free_entropy}) where the convergence of $\chi_n$ to $\chi$ holds as soon as weak convergence of the root distributions and a mesh condition are satisfied, the convergence of $\Phi_n$ to $\Phi$ is much more delicate. 
    
    In fact, for any compact absolutely continuous measure $\mu$, it is not hard to find examples of sequences of vectors $\alpha^n \in \bR^n$ with $\mesh(\alpha^n)  = \Omega(\frac{1}{n})$ and $\mu_{\alpha^n}$ converging weakly to $\mu$,  such that $\lim_{n\to \infty}\Phi_n(\pmap{\alpha^n}) \neq \Phi(\mu)$. In a nutshell, the discrepancy between the limiting behavior of $\chi_n$ and $\Phi_n$, resides in the fact that for $\alpha\in \bR^n$ with $\mesh(\alpha)=\Theta(\frac{1}{n})$, perturbations of individual entries of $\alpha$ that keep the mesh at order $\Theta(\frac{1}{n})$   have a $\Theta(\frac{1}{n})$ effect on $\Phi_n$ while only having an order $\Theta(\frac{1}{n^2})$ on $\chi_n$. So, by perturbing $\Omega(n)$ entries of $\alpha$ a microscopic amount one can macroscopically change the value of $\Phi_n$. So, even if for any given  compactly supported measure $\mu$ with bounded density, it is possible to carefully construct sequences of $\alpha^n\in \bR^n$ with $\mu_{\alpha^n}$ converging weakly to $\mu$ and $\lim_{n\to \infty} \Phi_n(\pmap{{\alpha^n}}) = \Phi(\mu)$, once one takes finite free convolutions or applies some other operation to $\alpha^n$, it is hard to control the limiting behavior of the finite free Fisher information of the resulting vectors. 

    For the above reason, we did not immediately see a way to directly derive Theorems \ref{thm:infinite_free_stam} and \ref{thm:infinite_free_mono} from Theorems \ref{thm:stams_inequality} and \ref{thm:fisher_monotonicity}, and ultimately chose the  quicker route given above. 
\end{remark}

\printbibliography

\appendix

\section{Proof that $C_n>0$ in the entropy monotonicity}\label{sec:cd neg}

Recall that $C_n=\rchi_{n-1}(\check{H}_{n-1})-\rchi_n(\check{H}_n)$. We have the relationship that 
\begin{equation*}
    \rchi_n(p)=\frac{1}{n(n-1)}\log\abs{\Disc p}%\label{eq:chid as disc}
\end{equation*}
when $p$ is monic of degree $n$; applied to $\check{H}_n=\frac{1}{\sqrt{n-1}}_*H_n$, whose discriminant is given by the recursion $H_{n+1}(x)=xH_n(x)-nH_{n-1}(x)$ and the explicit form found in \cite[6.71.2]{szego1975}, we obtain: 
\iffalse
Let $\He_n$ be the $n$-th physicist's Hermite polynomial: $H_n=\frac{1}{2^n}\cdot\sqrt{2}_*\He_n$. Then, $\check{H}_n=\frac{1}{2^n}\cdot\sqrt{\frac{2}{n-1}}_*\He_n$. We have the relationship that 
\begin{equation}
    \rchi_n(p)=\frac{1}{n(n-1)}\log\abs{\frac{\Disc p}{a^{2(n-1)}}}\label{eq:chid as disc}
\end{equation}
where $p$ has degree $n$ and $a$ is $p$'s leading coefficient. So, 
\begin{equation}\label{eq:chid Htilded}
    \rchi_n(\check{H}_n)=\rchi_n\lpr{\frac{1}{2^n}\cdot\sqrt{\tfrac{2}{n-1}}_*\He_n}=\rchi_n\lpr{\sqrt{\tfrac{2}{n-1}}_*\He_n}=\tfrac{1}{2}\log\tfrac{2}{n-1}+\rchi_n(\He_n)
\end{equation}
by \eqref{eq:rescaling}. $\He_n$ has leading coefficient $2^n$, and per \cite{mathworld}, $\Disc\He_n=2^{\frac{3}{2}n(n-1)}\prod\limits_{k=1}^nk^k$. From \eqref{eq:chid as disc}, 
\begin{equation}\label{eq:chid Hed}
    \rchi_n(\He_n)=-\frac{1}{2}+\frac{1}{n(n-1)}\sum\limits_{k=1}^nk\log k.
\end{equation}
Combining \eqref{eq:chid Htilded} and \eqref{eq:chid Hed} gives: 
\fi
\begin{align}
    \rchi_n(\check{H}_n)&=\frac{1}{2}\log\frac{2}{n-1}-\frac{1}{2}+\frac{1}{n(n-1)}\sum\limits_{k=1}^nk\log k\nonumber\\
    \rchi_{n-1}(\check{H}_{n-1})&=\frac{1}{2}\log\frac{2}{n-2}-\frac{1}{2}+\frac{1}{(n-1)(n-2)}\sum\limits_{k=1}^{n-1}k\log k\nonumber\\
    \rchi_n(\check{H}_n)-\rchi_{n-1}(\check{H}_{n-1})&=\frac{1}{2}\log\frac{n-2}{n-1}+\frac{1}{n-1}\log n-\frac{2}{n(n-1)(n-2)}\sum\limits_{k=1}^{n-1}k\log k\\
    &<\frac{1}{2}\log\frac{n-2}{n-1}+\frac{1}{n-1}\log n-\frac{2}{n(n-1)(n-2)}\int_1^{n-1}x\log x\d x\nonumber\\
    &=\frac{1}{2}\log\frac{n-2}{n-1}+\frac{1}{n-1}\log n -\frac{1}{2\ln 2\cdot n(n-1)(n-2)}\nonumber\\
    & \quad -\frac{n-1}{n(n-2)}\log(n-1).\label{eq:eval integral}
\end{align}
We use the Taylor series 
\begin{align}
    -\log\left(1-\frac{1}{n}\right)&=\sum\limits_{k=1}^\infty\frac{1}{kn^k}<\sum\limits_{k=1}^\infty\frac{1}{n^k}=\frac{1}{n-1}\nonumber\\
    \log\frac{n-2}{n-1}=\log\left(1-\frac{1}{n-1}\right)&=-\sum\limits_{k=1}^\infty\frac{1}{k(n-1)^k}<-\sum\limits_{k=1}^\infty\frac{1}{2^{k-1}(n-1)^k}=-\frac{1}{n-\frac{3}{2}}\nonumber
\end{align}
so that we may continue to find that
\begin{equation}
    \eqref{eq:eval integral}<\frac{p(n)}{2\ln2\cdot n(n-1)\left(n-\frac{3}{2}\right)(n-2)}-\frac{1}{n(n^2-3n+2)}\log n\label{eq:final expr}
\end{equation}
where $p(x)=(1-2\ln2)x^3+\left(4\ln2-\frac{7}{2}\right)x^2+(3+\ln2)x+3\ln2$. We numerically evaluate that $p$ has largest real root at $x\approx1.78$. Therefore, as $p$'s leading coefficient is negative, it is evident that for $n\ge3$, the right-hand side of \eqref{eq:final expr} is negative. 

\end{document}